\titleformat{\subsection}[runin]
  {\normalfont\large\bfseries}{\thesubsection}{0em}{}
  \titleformat{\subsubsection}[runin]
  {\normalfont\normalsize\bfseries}{\thesubsubsection}{0em}{}
\newcommand\restr[2]{{
  \left.\kern-\nulldelimiterspace 
  #1
  \vphantom{\big|}
  \right|_{#2}
  }}
\newtheorem{theorem}{Theorem}
\newtheorem{conjecture}{Conjecture}[subsection]
\newtheorem{corollary}[theorem]{Corollary}
\newtheorem{lemma}[theorem]{Lemma}
\newtheorem{prop}[theorem]{Proposition}
\theoremstyle{definition}
\newtheorem*{rem}{Remark}
\newtheorem*{ex}{Example}
\newcommand{\C}{\mathbb{C}}
\newcommand{\fm}{\mathfrak{m}}
\newcommand{\fn}{\mathfrak{n}}
\newcommand{\gl}{\mathrm{GL}}
\newcommand{\fs}{\mathfrak{s}}
\newcommand{\Irr}{\mathfrak{Irr}}
\newcommand{\Rep}{\mathfrak{Rep}}
\newcommand{\Ho}{\mathrm{Hom}}
\newcommand{\cus}{\mathrm{cusp}}
\newcommand{\scus}{\mathrm{scusp}}
\newcommand{\ol}[1]{\overline{#1}}
\newcommand{\De}{\Delta}
\newcommand{\He}{\mathscr{H}}
\newcommand{\Ms}{\mathcal{MS}}
\newcommand{\lp}{\preceq}
\newcommand{\ind}{\mathrm{Ind}}
\newcommand{\hra}{\hookrightarrow}
\newcommand{\tfm}{{\widetilde{\fm}}}
\newcommand{\mo}{\mathfrak{o}}
\newcommand{\iso}{\overset{\cong}{\rightarrow}}
\newcommand{\ql}{\ol{\mathbb{Q}}_\ell}
\newcommand{\fl}{{\ol{\mathbb{F}}_\ell}}
\newcommand{\zl}{{\ol{\mathbb{Z}}_\ell}}
\newcommand{\trho}{{\Tilde{\rho}}}
\newcommand{\rl}{\mathrm{r}_\ell}
\newcommand{\Z}{\mathrm{Z}}
\newcommand{\I}{\mathrm{I}}
\newcommand{\ain}[3]{#1\in\{#2,\ldots,#3\}}
\newcommand{\bbrh}{\mathbb{N}(\mathbb{Z}[\rho])}
\newcommand{\sch}{\mathscr{S}_{\fl}(M_n(\mathrm{D}))}
\newcommand{\schr}{\mathscr{S}_{R}(M_n(\mathrm{D}))}
\newcommand{\fk}{\mathfrak{k}}
\newcommand{\TD}{{\widetilde{\De}}}
\newcommand{\RR}{\mathfrak{R}}
\newcommand{\bd}{\mathrm{D}}
\newcommand{\ZZ}{\mathbb{Z}}
\newcommand{\tpsi}{\widetilde{\psi}}
\newcommand{\ra}{\rightarrow}
\newcommand{\sra}{\twoheadrightarrow}
\newcommand{\dd}{\,\mathrm{d}_l}
\newcommand{\ddd}{\,\mathrm{d}}
\newcommand{\dr}{\,\mathrm{d}_r}
\newcommand{\bs}{\backslash}
\newcommand{\Cc}{\mathrm{C}}
\newcommand{\Dr}{\mathcal{D}_{l,\rho}}
\newcommand{\Dl}{\mathcal{D}_{r,\rho}}
\newcommand{\Dlm}{\mathcal{D}_{r,\rho,max}}
\newcommand{\Drm}{\mathcal{D}_{l,\rho,max}}
\newcommand{\drm}{d_{l,\rho}}
\newcommand{\dlm}{d_{r,\rho}}
\newcommand{\soc}{\mathrm{soc}}
\newcommand{\cp}{C_{c}^\infty(\op P)}
\newcommand{\op}{{\overline{P}}}
\newcommand{\wf}{W_\mathrm{F}}
\newcommand{\vc}{\mathrm{CV}}
\newcommand{\jl}{\mathrm{J}_\ell}
\newcommand{\irs}{\mathfrak{Irr}^\square}
\newcommand{\pa}{\mathrm{Pairs}_r}
\newcommand{\vt}{\widetilde{V}}
\newcommand{\ic}{\mathrm{IC}}
\newcommand{\scu}{\mathfrak{C}^\square}
\newcommand{\st}{\mathrm{St}}
\newcommand{\com}{^{\complement}}
\title{On modular representations of inner forms of $\mathrm{GL}_n$ over a local non-archimedean field}\author{Johannes Droschl}\date{}
\begin{document}

\maketitle

\begin{center}
    \textbf{Abstract}
\end{center}

 Let $\mathrm{F}$ be a local non-archimedean field of residue characteristic $p$ and $\ol{\mathbb{F}}_\ell$ an algebraic closure of a finite field of characteristic $\ell \neq p$. We extend the results of \cite{LMbinary} concerning $\square$-irreducible representations of inner forms of $\mathrm{GL}_n(\mathrm{F})$ to representations over $\fl$. As applications, we compute the Godement-Jacquet $L$-factor for any smooth irreducible representation over $\fl$ and show that the local factors of a representation agree with the ones of its $\mathrm{C}$-parameter defined in \cite{Ccor}. Moreover, we reprove that the classification of irreducible representations via multisegments due to Vignéras and M\'inguez-S\'echerre is indeed exhaustive without using the results of \cite{Ariki}.
Finally, we characterize the irreducible constituents of certain parabolically induced representations, as was already done by Zelevinsky over $\C$. 

\noindent\textbf{Mathematics subject classification:} 22E50, 11F70
\tableofcontents
\section{Introduction}
Let $\mathrm{F}$ be a local non-archimedean field of residue characteristic $p$ and $G$ a reductive group over $\mathrm{F}$. The classification of irreducible, smooth admissible representations of $G(\mathrm{F})$ over an algebraically closed field $R$ is of great importance in the Langlands program. This has been achieved by Bernstein and Zelevinsky in \cite{Zel}, \cite{Bernshtein1976REPRESENTATIONSOT}, \cite{ASENS_1977_4_10_4_441_0} for the case $G(\mathrm{F})=\mathrm{GL}_n(\mathrm{F})$ and $R=\mathbb{C}$ and was later extended by Tadi\'c in \cite{Tadic1990} to the case $G(\mathrm{F})=\mathrm{GL}_n(\mathrm{D})$, where $\mathrm{D}$ is a central division algebra over $\mathrm{F}$ of dimension $d^2$ and $\text{char }\mathrm{F}=0$. For $\text{char }\mathrm{F}\neq 0$, the classification was completed in \cite{Bad}.

The main focus of this paper will lie on the case $G(F)=\mathrm{GL}_n(\mathrm{D})$  and $R$ of characteristic $\ell\coloneq \mathrm{char}\, R$ possibly non-zero. 
Moreover, we restrict ourselves to the case $\ell\neq p$ to ensure the existence of Haar measures on reductive groups over $\mathrm{F}$. The theory of such representations has been explored and developed in \cite{alma991023733359705251}, \cite{Vigneras1998InducedRO}.
Passing from the $\ell=0$ to the $\ell\neq 0$ case, many results carry over, however, also several new phenomena appear. For example the notions of \emph{cuspidal} and \emph{supercuspidal} no longer coincide. 
Vignéras \cite{Vigneras1998InducedRO} and M\'inguez-S\'echerre \cite{M-S} generalized the classification of Bernstein and Zelevinsky and Tadic to the case of $\ell\neq p$. 
The irreducible representations are classified by purely combinatorial objects called \emph{multisegments}, formal finite sums of segments, which we will define in a moment. To each multisegment $\fm$ an irreducible representation $\Z(\fm)$ of $\mathrm{GL}_n(\mathrm{D})$, for a suitable $n$, can be assigned and every irreducible representation of $\mathrm{GL}_n(\mathrm{D})$ is isomorphic to a representation coming from such a multisegment. However, in the case $\ell\neq 0$ it might happen that two multisegments give rise to isomorphic representations. Because of this one has to restrict the map $\fm\mapsto \Z(\fm)$ to so-called \emph{aperiodic} multisegments to obtain a bijection. 

\sloppy Before we come to the main results of the paper, we need to introduce some notation.
For the rest of the paper a representation will be a smooth representation of finite length of $G_n\coloneq \mathrm{GL}_n(\mathrm{D})$ over some fixed algebraically closed field ${R\in\{\fl,\ql\}}$.
We denote the image of a representation $\pi$ of  $G_n$ in the Grothendieck group of representations of  $G_n$ as $[\pi]$. We denote the socle of $\pi$, \emph{i.e.} its maximal semi-simple subrepresentation, by $\soc(\pi)$. If $\pi_1,\pi_2$ are representations of $G_{n_1}$ and $G_{n_2}$, we 
 let $P_{(n_1,n_2)}$ be the parabolic subgroup of $\mathrm{GL}_{n_1+n_2}(\mathrm{D})$ defined over $\mathrm{F}$ containing the upper triangular matrices with Levi-component the block diagonal matrices $G_{n_1}\times G_{n_2}$.
 We then denote by $\pi_1\times \pi_2$ the normalized parabolically induced representation of $\pi_1\otimes \pi_2$ and by $r_{(n_1,n_2)}$ normalized parabolic restriction with respect to $P_{(n_1,n_2)}$. For a character $\chi$ of $\mathrm{F}^*$, we will write by abuse of notation also $\chi$ for the character $g\mapsto \chi(\det\nolimits'(g))$ of $G_n$, where $\det\nolimits'$ denotes the reduced norm. Write $\Irr_n$ for the set of isomorphism classes of irreducible representations of $G_n$ and $\Irr\coloneq \bigcup_{n\ge 0} \Irr_n$.

Let $\rho$ be a cuspidal irreducible representation of $G_m$. Then there exists an unramified character $v_\rho$ such that the representation $\rho\times \rho \chi$ is reducible if and only if $\chi \cong v_\rho^{\pm 1}$ and we set \[{o(\rho)\coloneq \#\{[\rho v_\rho^k]:k\in \mathbb{Z}\}}.\] In \cite[Section 4]{MST} the authors proved that $o(\rho)$ is the order of $q(\rho)$ in $R^*$, where $q(\rho)$ is an explicit power of $q$ depending on $\rho$. 
Set \[e(\rho)\coloneq \begin{cases}
    o(\rho)&\text{ if }o(\rho)>1,\\
    \ell&\text{ if } o(\rho)=1.
\end{cases}\]
For $a\le b\in\mathbb{Z}$ and $\rho$ a cuspidal representation of $G_m$, we define a segment as a sequence \[[a,b]_\rho\coloneq (\rho v_\rho^a,\ldots,\rho v_\rho^b).\]
 Let the cuspidal support of $[a,b]_\rho$ be $[\rho v_\rho^a]+\ldots+[\rho v_\rho^b]$, its length ${l([a,b]_\rho)\coloneq b-a+1}$ and its degree $\deg([a,b]_\rho)\coloneq m\cdot l([a,b]_\rho)$.
To a segment $[a,b]_\rho$ of length $n$ we can also associate an irreducible subrepresentation \[\Z([a,b]_\rho)\] of
\[\rho v_\rho^a\times\ldots\times \rho v_\rho^b,\]
which corresponds to the trivial character of the affine Hecke-algebra ${\mathscr{H}_R(n,q(\rho))}$ under the map of \cite[§4.4]{MST}.
 Two segments $[a,b]_\rho$ and $[a',b']_{\rho'}$ are called equivalent if they have the same length and $[\rho v_\rho^{a+i}]=[\rho' v_{\rho'}^{a'+i}]$ for all $i\in\mathbb{Z}$ and equivalent segments give isomorphic representations. We let for $\rho$ a cuspidal representation $\mathcal{S}(\rho)$ be the set of segments of the form $[a,b]_\rho$ and we call it the set of $\rho$-segments.
A multisegment $\fm=\De_1+\ldots+\De_k$ is a formal sum of equivalence classes of segments and we extend the notion of cuspidal support, length, degree, and equivalence linearly to multisegments.
 We call a multisegment \emph{aperiodic} if it does not contain a sub-multisegment equivalent to
\[[a,b]_\rho+\ldots+[a+e(\rho)-1,b+e(\rho)-1]_\rho.\]
Denote by $\Ms_{ap}$ the set of equivalence classes of aperiodic segments.
For some fixed cuspidal representation $\rho$ we denote by $\Ms(\rho)$ the set of all multisegments of the form
\[\fm=[a_1,b_1]_\rho+\ldots+[a_k,b_k]_\rho\] and $\Ms(\rho)_{ap}$ the aperiodic multisegments contained in $\Ms(\rho)$. In \cite[Section 9.5]{M-S} the authors define for a multisegment $\fm=\De_1+\ldots+\De_k$ a particular irreducible subquotient $\Z(\fm)$ of \[\I(\fm)\coloneq [\Z(\De_1)\times\ldots.\times \Z(\De_k)]\]
such that $\Z(\fm)\cong \Z(\fn)$ implies $\fm=\fn$ if both are aperiodic.

Following \cite{LMbinary}, we call an irreducible representation $\pi$ of $G_n$ \emph{$\square$- irreducible} if $\pi\times\pi$ is irreducible. We study these representations in \Cref{S:sir} using the theory of intertwining operators introduced in \cite{Dat}, which we recall in \Cref{S:intertwiningoperators}. 
The main proposition of this section is then the following.
\begin{prop}[\emph{cf}. \cite{LMbinary}, \cite{KKKOa}, \cite{KKKOb}]
Let $\pi\in \Irr$.
The following are equivalent.
\begin{enumerate}
    \item $\pi$ is $\square$-irreducible.
    \item For all $\sigma\in \Irr$, $\mathrm{soc}(\pi\times\sigma)$ is irreducible and appears with multiplicity $1$ in $\pi\times\sigma$.
     \item For all $\sigma\in \Irr$, $\mathrm{soc}(\sigma\times\pi)$ is irreducible and appears with multiplicity $1$ in $\sigma\times\pi$.
\end{enumerate}
If any of the above statements hold true, the maps
\[\sigma\mapsto \mathrm{soc}(\sigma\times\pi), \, \sigma\mapsto \mathrm{soc}(\pi\times\sigma)\] are injective.
 \end{prop}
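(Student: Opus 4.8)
The plan is to prove the cycle of implications $(1)\Rightarrow(2)$, $(2)\Rightarrow(1)$, and symmetrically $(1)\Leftrightarrow(3)$, and then deduce the injectivity statement. The key input is the theory of intertwining operators of \cite{Dat} recalled in \Cref{S:intertwiningoperators}: for $\pi,\sigma\in\Irr$ there is a standard intertwining map $\pi\times\sigma\to\sigma\times\pi$ whose image is by definition $\soc(\pi\times\sigma)$ when this socle is irreducible, and more generally such maps compose well under parabolic induction. The ``square-irreducible'' hypothesis on $\pi$ should be used precisely to control the multiplicity of a common constituent when one iterates these operators.

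First I would establish $(1)\Rightarrow(2)$. Assuming $\pi\times\pi$ is irreducible, I want to show that for any $\sigma\in\Irr$ the representation $\pi\times\sigma$ has irreducible, multiplicity-one socle. The standard trick (following \cite{KKKOa}, \cite{KKKOb}, \cite{LMbinary}) is: suppose $\tau_1,\tau_2$ are two irreducible subrepresentations of $\pi\times\sigma$; then $\tau_1\times\pi$ and $\tau_2\times\pi$ both embed into $\pi\times\sigma\times\pi$, and one analyzes $\Ho_{G}(\tau_i\times\pi,\pi\times\sigma\times\pi)$. Using that $\pi\times\pi$ is irreducible one can move the two $\pi$ factors together and reduce, via Frobenius reciprocity and the geometric lemma computing $r_{(n_1,n_2)}(\pi\times\sigma\times\pi)$, to the statement that $\sigma$ occurs once in a suitable Jacquet module; irreducibility of $\pi\times\pi$ forces the relevant Hom-space to be one-dimensional, hence $\tau_1\cong\tau_2$ and the socle is irreducible with multiplicity one. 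The multiplicity-one assertion comes from the same dimension count applied to $\Ho_G(\soc(\pi\times\sigma),\pi\times\sigma)$.

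For $(2)\Rightarrow(1)$: apply hypothesis $(2)$ with $\sigma=\pi$. Then $\soc(\pi\times\pi)$ is irreducible and appears with multiplicity one in $\pi\times\pi$. One then invokes the general fact (a contragredient/duality argument: $\mathrm{cosoc}(\pi\times\pi)\cong\soc(\pi^\vee\times\pi^\vee)^\vee$, and $\pi^\vee$ also satisfies the hypotheses, or directly that $\pi\times\pi$ is self-dual up to the relevant twist) that $\mathrm{cosoc}(\pi\times\pi)$ is likewise irreducible with multiplicity one. An irreducible representation occurring in the socle \emph{and} the cosocle with total multiplicity one must equal the whole representation, so $\pi\times\pi$ is irreducible. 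The implications $(1)\Leftrightarrow(3)$ follow by the same arguments with the order of factors reversed (or by applying the $\sigma$-side statement to contragredients). Finally, for injectivity of $\sigma\mapsto\soc(\sigma\times\pi)$: if $\soc(\sigma_1\times\pi)\cong\soc(\sigma_2\times\pi)=:\tau$, then $\tau\hookrightarrow\sigma_i\times\pi$ for $i=1,2$, so $\tau\times\pi^\vee$ (using square-irreducibility of $\pi$, equivalently of $\pi^\vee$) maps to $\sigma_i\times\pi\times\pi^\vee$, and extracting $\sigma_i$ from the appropriate Jacquet module of $\tau\times\pi^\vee$ — which by part (2)/(3) is well-defined up to the unique top constituent — gives $\sigma_1\cong\sigma_2$; the multiplicity-one clauses guarantee this extraction is unambiguous.

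The main obstacle I anticipate is the bookkeeping in the Jacquet-module / geometric-lemma computation in $(1)\Rightarrow(2)$: one must show the intertwining operator $\pi\times\sigma\times\pi\to\pi\times\pi\times\sigma$ (swapping the last two factors, legitimate since $\pi\times\pi$ is irreducible so the swap $\pi\times\sigma\to\sigma\times\pi$ composed appropriately is an isomorphism on the $\pi\times\pi$ block only after the right normalization) restricts to an isomorphism on the relevant subspace, and that no ``extra'' copies of the test representation $\tau_i\times\pi$ appear from the other cells of the geometric lemma. Controlling these extra cells — showing the associated Hom-spaces vanish — is exactly where one leverages that the cuspidal support of $\pi$ interacts rigidly with itself, and is the technical heart of the argument; the rest is formal manipulation with socles, cosocles and contragredients.
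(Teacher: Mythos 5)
Your overall strategy --- controlling the socle of $\pi\times\sigma$ by embedding $\tau\times\pi$ into $\pi\times\sigma\times\pi$ and exploiting irreducibility of $\pi\times\pi$ --- is the one the paper takes (following \cite{KKKOa}, \cite{KKKOb}, \cite{LMbinary}); the paper's actual contribution is to establish the mod-$\ell$ calculus of intertwining operators (\Cref{L:inop}) and the gluing statement \Cref{C:3ind}, after which it imports the arguments of \cite{LMsquare} verbatim (\Cref{L:Dsi}, \Cref{L:si1}, \Cref{L:si2}). However, your write-up has two genuine gaps. First, in $(2)\Rightarrow(1)$: taking $\sigma=\pi$ only gives that $\pi\times\pi$ is SI, and your passage to the cosocle is unjustified. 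Hypothesis $(2)$ is assumed for $\pi$, not for $\pi^\lor$, and what one would need is $\mathrm{cosoc}(\pi\times\pi)^\lor=\soc(\pi^\lor\times\pi^\lor)$, about which $(2)$ says nothing; nor is $\pi\times\pi$ self-dual up to twist unless $\pi$ is. The correct route from SI to irreducible is via \Cref{L:Dsi}: the nonzero operator $R_{\pi,\pi}$ has image a nonzero subrepresentation (hence containing the irreducible socle) which is simultaneously a quotient, so multiplicity one forces $R_{\pi,\pi}$ to be injective; the same argument shows every nonzero endomorphism is invertible, so $\mathrm{End}(\pi\times\pi)=R$ and $R_{\pi,\pi}$ is a scalar, from which irreducibility follows by the geometric-lemma count of $\dim\Ho(\pi\times\pi,\pi\times\pi)$.

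Second, in $(1)\Rightarrow(2)$ and in the injectivity claim you correctly flag the ``technical heart'' --- ruling out contributions from the other cells of the geometric lemma and making the swap on the $\pi\times\pi$ block legitimate --- but you do not supply it, and it cannot be carried by ``rigidity of the cuspidal support of $\pi$ with itself'': over $\fl$ this rigidity fails badly (e.g. when $o(\rho)=1$ the whole line $\mathbb{Z}[\rho]$ collapses to a point). What replaces it is precisely \Cref{L:inop}(4)--(6), i.e. the factorization $R_{\pi_1\times\pi_2,\pi_3}=(R_{\pi_1,\pi_3}\times 1)\circ(1\times R_{\pi_2,\pi_3})$ together with the criterion for when this composite is nonzero, and \Cref{C:3ind}, which converts the inclusion $\sigma\times\pi_3\hra\pi_1\times\tau$ into the existence of an intermediate subrepresentation of $\pi_2$. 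Your final ``extraction of $\sigma_i$ from a Jacquet module of $\tau\times\pi^\lor$'' is likewise not well defined without these inputs: both $\sigma_1\otimes\pi$ and $\sigma_2\otimes\pi$ occur as quotients of the same Jacquet module of $\tau$, and it is exactly the multiplicity-one statement proved by the intertwining-operator argument that lets one conclude $\sigma_1\cong\sigma_2$.
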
 Note that over $\ql$, this was already proven in \cite{LMbinary}. We adapt their argument, which is originally due to \cite{KKKOa}, \cite{KKKOb}.
The proposition allows us to give the following definition. Let $\pi'\in \Irr$. If $\pi'\cong \mathrm{soc}(\sigma\times\pi)$ respectively $\pi'\cong \mathrm{soc}(\pi\times\sigma)$, we define the right-$\pi$-derivative $\mathcal{D}_{r,\pi}(\pi')\coloneq \sigma$ respectively the left-$\pi$-derivative $\mathcal{D}_{l,\pi}(\pi')\coloneq \sigma$. If $\pi'$ does not lie in the image of these maps, we set the derivatives to $0$.

One of the main differences between the $\fl$-case and the complex case is that not all cuspidal representations are $\square$-irreducible. Recall that
if $\rho$ is a cuspidal representation, then $\rho$ is $\square$-irreducible if and only if $o(\rho)>1$. The behavior of $\square$-irreducible and $\square$-reducible cuspidal representations differs quite drastically. For example, if $\rho$ is a $\square$-reducible cuspidal representation, its $L$-factor will always be trivial, and $\rho\times\rho$ it is of length $2$ and semisimple if $\ell>2$ and of length $3$ if $\ell=2$.
We say an irreducible representation has $\square$-irreducible cuspidal support if its cuspidal support consists of $\square$-irreducible cuspidal representations. 

In \Cref{S:Der} we study the specific situation where $\rho$ is a $\square$-irreducible cuspidal representation. 
The notion of $\rho$-derivatives was first introduced independently in \cite{Mder} and \cite{Jader} and further developed in \cite{ML} in the case $G_n$ and $R=\C$. Moreover, the theory was extended to classical groups in \cite{MA}. Here we extend the notion of derivatives to the case $\ell\neq 0$ and observe that most of the results carry over.
\begin{corollary}
 If $\Dl(\pi)\cong \Dl(\pi')\neq 0$ or $\Dr(\pi)\cong \Dr(\pi')\neq 0$, then $\pi\cong \pi'$. Furthermore,
\[\Dl(\pi)^\lor\cong \mathcal{D}_{l,\rho^\lor}(\pi^\lor).\]
\end{corollary}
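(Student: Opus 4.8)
The plan is to deduce both parts of the corollary from the proposition above together with the standard functorial properties of the smooth contragredient $(-)^\lor$. Recall that on representations of finite length $(-)^\lor$ is an exact, contravariant, involutive functor which preserves irreducibility and cuspidality, interchanges socle and cosocle (so $\mathrm{cos}(\pi^\lor)\cong\soc(\pi)^\lor$ and $\soc(\pi^\lor)\cong\mathrm{cos}(\pi)^\lor$, where $\mathrm{cos}$ denotes the maximal semisimple quotient), and commutes with parabolic induction in the form $(\pi_1\times\pi_2)^\lor\cong\pi_1^\lor\times\pi_2^\lor$ (see \cite{M-S}). In particular $(\rho\times\rho)^\lor\cong\rho^\lor\times\rho^\lor$ is irreducible, so $\rho^\lor$ is again a $\square$-irreducible cuspidal representation and the $\rho^\lor$-derivatives are defined.

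The injectivity statement is immediate. If $\Dl(\pi)\cong\Dl(\pi')\neq 0$, set $\sigma\coloneq\Dl(\pi)$; by the very definition of the derivative this means $\pi\cong\soc(\sigma\times\rho)$ and likewise $\pi'\cong\soc(\sigma\times\rho)$, hence $\pi\cong\pi'$. Here we use that, by the proposition, $\tau\mapsto\soc(\tau\times\rho)$ is well defined on isomorphism classes and injective, so that $\Dl$ is an honest partial inverse of it. The case $\Dr(\pi)\cong\Dr(\pi')\neq 0$ is identical, with $\soc(\rho\times-)$ in place of $\soc(-\times\rho)$.

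For the contragredient formula the additional ingredient, which is part of the $\square$-calculus of \Cref{S:sir} coming from the intertwining operators of \Cref{S:intertwiningoperators} (cf.\ \cite{LMbinary,KKKOa,KKKOb}), is that for $\square$-irreducible $\tau\in\Irr$ and any $\sigma\in\Irr$ one has
\[\mathrm{cos}(\sigma\times\tau)\cong\soc(\tau\times\sigma)\qquad\text{and}\qquad\mathrm{cos}(\tau\times\sigma)\cong\soc(\sigma\times\tau),\]
each side being the image of the up-to-scalar unique nonzero intertwining operator between $\sigma\times\tau$ and $\tau\times\sigma$, which is irreducible precisely because $\tau$ is $\square$-irreducible. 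Assume now $\Dl(\pi)\neq 0$ and put $\sigma\coloneq\Dl(\pi)$, so $\pi\cong\soc(\sigma\times\rho)$. Applying $(-)^\lor$ and the properties recalled above,
\[\pi^\lor\;\cong\;\mathrm{cos}\big((\sigma\times\rho)^\lor\big)\;\cong\;\mathrm{cos}(\sigma^\lor\times\rho^\lor)\;\cong\;\soc(\rho^\lor\times\sigma^\lor),\]
the last isomorphism by the displayed identity for the $\square$-irreducible $\rho^\lor$. By definition this says $\mathcal{D}_{l,\rho^\lor}(\pi^\lor)=\sigma^\lor\cong\Dl(\pi)^\lor$. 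Running the same chain for the pair $(\pi^\lor,\rho^\lor)$ and using $\pi^{\lor\lor}\cong\pi$, $\rho^{\lor\lor}\cong\rho$ shows conversely that $\mathcal{D}_{l,\rho^\lor}(\pi^\lor)\neq 0$ forces $\Dl(\pi)\neq 0$; hence $\Dl(\pi)=0$ if and only if $\mathcal{D}_{l,\rho^\lor}(\pi^\lor)=0$, in which case both sides of the claimed identity vanish. This establishes $\Dl(\pi)^\lor\cong\mathcal{D}_{l,\rho^\lor}(\pi^\lor)$ in all cases, and the symmetric computation gives $\Dr(\pi)^\lor\cong\mathcal{D}_{r,\rho^\lor}(\pi^\lor)$.

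The real content lies in the proposition and in the behaviour of the intertwining operators; the only thing requiring care is the bookkeeping of left/right factors and of the order of a product under $(-)^\lor$. In particular the argument uses that $(-)^\lor$ preserves, rather than reverses, the order of $\pi_1\times\pi_2$; this, together with the side-swap in the identity $\mathrm{cos}(\sigma\times\tau)\cong\soc(\tau\times\sigma)$, is exactly what turns $\Dl=\mathcal{D}_{r,\rho}$ into $\mathcal{D}_{l,\rho^\lor}$ after dualising. I therefore expect no genuine obstacle beyond this bookkeeping.
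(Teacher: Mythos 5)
Your proof is correct. The injectivity statement is handled exactly as in the paper: both of you observe that a nonzero derivative $\sigma$ reconstructs $\pi$ as $\soc(\sigma\times\rho)$ (resp.\ $\soc(\rho\times\sigma)$), so isomorphic derivatives force isomorphic representations; this is the content of \Cref{L:derivatives} and \Cref{C:derinj}. For the contragredient formula, however, you take a genuinely different route from the paper's \Cref{L:derdual}. The paper works at the level of Jacquet modules: it uses Bernstein's second adjunction in the form $r_{\overline{P}}(\pi^\lor)\cong r_P(\pi)^\lor$ together with the conjugacy of $\overline{P_{(n-lm,lm)}}$ and $P_{(lm,n-lm)}$ to translate the Jacquet-module criterion of \Cref{C:der} across the duality, which simultaneously yields the statements about $\dlm$ and the maximal derivatives $\Dlm$. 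You instead dualize the embedding $\pi\hra\sigma\times\rho$ directly, using $(\sigma\times\rho)^\lor\cong\sigma^\lor\times\rho^\lor$, the exchange of socle and cosocle under $(-)^\lor$, and the identity $\mathrm{cos}(\sigma^\lor\times\rho^\lor)\cong\soc(\rho^\lor\times\sigma^\lor)$ from \Cref{L:si2} applied to the $\square$-irreducible $\rho^\lor$. Your argument is shorter and purely formal (it needs no Jacquet-module computation and applies verbatim to any $\square$-irreducible $\rho$, and the symmetry under $(-)^{\lor\lor}$ makes the simultaneous vanishing of both sides transparent); the paper's argument is heavier but is the one that also delivers the maximal-derivative statements $\dlm(\pi)=d_{l,\rho^\lor}(\pi^\lor)$ and $\Dlm(\pi)^\lor\cong\mathcal{D}_{l,\rho^\lor,max}(\pi^\lor)$, which are used later (\emph{e.g.} in \Cref{T:derseg} and the $L$-factor computations) and which your socle-level argument does not directly produce.
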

For $\rho$ a $\square$-irreducible cuspidal representation, we moreover define four maps \[\soc(\rho,{}\cdot{}),\soc({}\cdot{},\rho),\Dl,\Dr\colon \Ms_{ap}(\rho)\ra \Ms_{ap}(\rho)\] using a combinatorial description such that
\[\Dr(\soc(\rho,\fm))=\fm=\Dl(\soc(\fm,\rho)).\]
We then can compute the $\rho$-derivatives of the corresponding representation $\Z(\fm)$ simply as follows.
\begin{prop}
Let $\fm$ be an aperiodic multisegment and $\rho$ a $\square$-irreducible cuspidal representation.
    \[\Dl(\Z(\fm))=\Z(\Dl(\fm)),\, \Dr(\Z(\fm))=\Z(\Dr(\fm)).\]
\end{prop}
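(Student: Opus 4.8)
The plan is to pare the statement down, via duality, to a single socle identity, and then to prove that identity by commuting $\rho$ through a standard module. By the Corollary above, $\Dl(\pi)^\lor\cong\mathcal D_{l,\rho^\lor}(\pi^\lor)$, and $\mathcal D_{l,\rho^\lor}$ is exactly the right-$\rho^\lor$-derivative; combining this with the standard fact $\Z(\fm)^\lor\cong\Z(\fm^\lor)$ for the duality involution $\fm\mapsto\fm^\lor$ on multisegments (see \cite{M-S}) and with the compatibility of the combinatorial maps $\soc(\rho,{}\cdot{}),\Dl,\Dr$ with that involution, the assertion about $\Dl$ for $\rho$ follows from the assertion about $\Dr$ for $\rho^\lor$ applied to $\fm^\lor$ (note $\rho^\lor$ is again $\square$-irreducible and $\fm^\lor$ again aperiodic). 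Hence I would only need to prove
\[\Dr(\Z(\fm))\cong\Z(\Dr(\fm))\]
for every $\square$-irreducible cuspidal $\rho$ and every aperiodic $\fm\in\Ms(\rho)$. By the definition of the right-$\rho$-derivative on representations and the injectivity of $\sigma\mapsto\soc(\rho\times\sigma)$ from the Proposition, this follows formally, once one unwinds the combinatorial identity $\Dr\circ\soc(\rho,{}\cdot{})=\mathrm{id}$ (and disposes of any boundary cases directly), from the key claim
\[\soc(\rho\times\Z(\fn))\cong\Z(\soc(\rho,\fn))\qquad\text{for every aperiodic }\fn\in\Ms(\rho)\]
and its left-handed mirror $\soc(\Z(\fn)\times\rho)\cong\Z(\soc(\fn,\rho))$.

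For the key claim I would proceed as follows. By the Proposition, $\square$-irreducibility of $\rho$ makes $\soc(\rho\times\Z(\fn))$ irreducible, and since its cuspidal support is supported on the line of $\rho$ it equals $\Z(\fk)$ for a unique aperiodic $\fk\in\Ms(\rho)$; the point is to show $\fk=\soc(\rho,\fn)$. For this it suffices to build an embedding $\Z(\soc(\rho,\fn))\hra\rho\times\Z(\fn)$: the source is then a nonzero irreducible subrepresentation, hence contained in — and, the socle being irreducible, equal to — $\soc(\rho\times\Z(\fn))$. To produce the embedding I would realize $\Z(\fn)$ as a subrepresentation of an ordered product $\Z(\De_1)\times\cdots\times\Z(\De_k)$ of the segments of $\fn$ (as in \cite[§9.5]{M-S}), so that $\rho\times\Z(\fn)\hra\rho\times\Z(\De_1)\times\cdots\times\Z(\De_k)$, and then commute the factor $\rho$ through the product one segment at a time, using at each step the explicit two-segment socle computations for $\rho\times\Z(\De)$ (irreducible when $\rho$ and $\De$ are unlinked; with socle an explicit $\Z$ of a multisegment of at most two segments when they are linked). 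Tracking these elementary moves should reproduce exactly the combinatorial recipe defining $\soc(\rho,{}\cdot{})$, and multiplicity-one of $\Z(\soc(\rho,\fn))$ in $\I(\soc(\rho,\fn))$ should force the resulting chain of embeddings to land inside the subrepresentation $\rho\times\Z(\fn)$, not merely in the larger product. An induction on $\deg(\fn)$ following the combinatorial definition organizes the argument.

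I expect the genuine obstacle to be precisely this last bookkeeping: identifying the outcome of pushing $\rho$ past $\Z(\De_1)\times\cdots\times\Z(\De_k)$ with the combinatorial definition of $\soc(\rho,{}\cdot{})$, carried out in the modular setting, where $\square$-irreducibility of $\rho$ and of the $\Z(\De_i)$ — available through the Proposition and the intertwining-operator machinery of \Cref{S:intertwiningoperators} — must be invoked in place of the genericity arguments used over $\C$ in \cite{ML}, all while checking that the multisegments produced stay aperiodic. Once the embedding is in place, the rest is formal. (One could instead hope to reduce to the characteristic-zero statement by reduction modulo $\ell$, but matching the combinatorial maps and controlling integrality there looks no simpler.)
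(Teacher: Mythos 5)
Your reduction of the statement to the single socle identity $\soc(\Z(\fn)\times\rho)\cong\Z(\soc(\fn,\rho))$ (via duality, the combinatorial identity $\Dl\circ\soc({}\cdot{},\rho)=\mathrm{id}$, and injectivity of derivatives) is sound and matches the framework of the paper. The gap is in the core claim, and it is not mere bookkeeping: commuting $\rho$ past the segments of $\fn$ one at a time, using only the two-segment socle computations for $\Z(\De)\times\rho$, can only ever reproduce the characteristic-zero recipe ``extend the longest segment ending in $-1\bmod o(\rho)$''. In the modular setting that recipe is wrong. The map $\soc(\fn,\rho)$ first strips off the paired part $\fn_{\rho,P}$ — maximal pairs $(\De_1,\De_2)$ with $b(\De_1)\equiv 0$, $b(\De_2)\equiv-1\bmod o(\rho)$ and $l(\De_2)\ge l(\De_1)$ — and only extends the longest extendable segment of the remaining free part, possibly creating a brand-new segment $[0,0]_\rho$ even though $\fn$ contains segments ending in $-1$. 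Concretely, for $o(\rho)=3$ and $\fn=[0,0]_\rho+[0,2]_\rho$ one has $\fn_{\rho,f}=0$ and $\soc(\fn,\rho)=\fn+[0,0]_\rho$, whereas the two-segment computation in isolation gives $\soc(\Z([0,2]_\rho)\times\rho)=\Z([0,3]_\rho)$ (\Cref{L:quot} with $\rho\cong\rho v_\rho^{3}$), so your procedure would output $\Z([0,0]_\rho+[0,3]_\rho)$. Whether a segment absorbs the extra $\rho$ is a global phenomenon: one needs, at minimum, that $\Z(\De_1+\De_2)$ for a maximal pair admits no right $\rho$-derivative, a statement about a linked two-row multisegment that segment-by-segment commutation cannot see. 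A secondary unsupported step is the realization of $\Z(\fn)$ as a \emph{subrepresentation} of an ordered product $\Z(\De_1)\times\cdots\times\Z(\De_k)$; \cite{M-S} defines $\Z(\fn)$ only as a distinguished subquotient of $\I(\fn)$, and the paper never uses such a sub realization.

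The paper's actual proof of \Cref{T:derseg} goes the route you explicitly set aside. It proves the maximal-derivative form $\Dlm(\Z(\fm))=\Z(\Dlm(\fm))$ by (i) choosing a $\rho$-derivative compatible lift $\tfm$ to $\ql$, importing the known result there, and using \Cref{T:liftgood} and \Cref{C:der} to show that $\Z(\Dlm(\fm))\otimes\rho^{\dlm(\fm)}$ occurs with multiplicity one in the relevant Jacquet module of $\rl(\Z(\Dlm(\tfm))\times\trho^{\dlm(\fm)})$; and (ii) a separate induction (\Cref{L:redderm}) showing $[r_\alpha(\Z(\fm))]\ge[\st(\Dlm(\fm))\otimes\rho^{\dlm(\fm)}]$, so that \Cref{T:N5} pins the socle down as $\Z(\fm)$. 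The pairing combinatorics enter there through \Cref{L:redgood} and the degeneracy bookkeeping, not through socle commutation. If you want to pursue your direct approach you must supply the missing two-segment derivative-vanishing lemma and the descent to the subrepresentation $\Z(\fn)\times\rho$ (via \Cref{C:3ind}); otherwise the reduction mod $\ell$ strategy, contrary to your closing remark, is the one that actually closes the argument.
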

As a corollary we obtain that $\Z(\soc(\rho,\fm))=\mathrm{soc}(\rho\times \Z(\fm))$. In particular, this is used to give a new proof of the following theorem.
\begin{theorem}\label{T:I3}
Let $\pi$ be an irreducible smooth representation of $G_n$ whose cuspidal support is $\square$-irreducible cuspidal. Then there exists a multisegment $\fm$ such that $\Z(\fm)\cong \pi$.
\end{theorem}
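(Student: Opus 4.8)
The plan is to argue by induction on $n$, and in fact to prove the slightly stronger statement that $\fm$ may be chosen aperiodic. I would first reduce to the case where the cuspidal support of $\pi$ lies on a single line. Indeed, if that support meets several lines, then $\pi\cong\pi_1\times\cdots\times\pi_t$ with each $\pi_i$ irreducible and with cuspidal support contained in a single line; since the product of irreducibles whose cuspidal supports lie on distinct lines is irreducible, and $\Z$ is multiplicative on such products, it suffices to treat each $\pi_i$ separately. Each $\pi_i$ again has $\square$-irreducible cuspidal support, so we may and do assume that the cuspidal support of $\pi$ consists of unramified twists $\rho v_\rho^k$ ($k\in\ZZ$) of a fixed $\square$-irreducible cuspidal $\rho$. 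If $n=0$ then $\pi=\Z(\emptyset)$, so suppose $n>0$.

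Since the cuspidal support of $\pi$ is non-empty, a standard consequence of the geometric lemma and second adjointness is that, for a suitable ordering $\tau_1,\dots,\tau_k$ of the cuspidal support, $\pi\hookrightarrow\tau_1\times\cdots\times\tau_k$; each $\tau_i$ is then a $\square$-irreducible cuspidal on the line of $\rho$. Put $\Theta\coloneq\tau_2\times\cdots\times\tau_k$. Choosing a composition series of $\Theta$ and applying the exact functor $\tau_1\times(-)$, the irreducibility of $\pi$ forces $\pi\hookrightarrow\tau_1\times\sigma$ for a single irreducible subquotient $\sigma$ of $\Theta$. Because $\tau_1$ is $\square$-irreducible, the Proposition characterising $\square$-irreducible representations shows $\soc(\tau_1\times\sigma)$ is irreducible, hence $\pi\cong\soc(\tau_1\times\sigma)$. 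Thus $\sigma$ is irreducible of degree strictly smaller than $n$, and its cuspidal support is that of $\pi$ with one copy of $[\tau_1]$ removed; in particular it is again $\square$-irreducible cuspidal and lies on the line of $\rho$ (equivalently, of $\tau_1$).

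By the induction hypothesis there is an aperiodic $\fn\in\Ms_{ap}(\tau_1)=\Ms_{ap}(\rho)$ with $\Z(\fn)\cong\sigma$. Applying the identity $\Z(\soc(\tau_1,\fn))=\soc(\tau_1\times\Z(\fn))$ --- valid as $\tau_1$ is a $\square$-irreducible cuspidal --- we get $\Z(\soc(\tau_1,\fn))\cong\soc(\tau_1\times\sigma)\cong\pi$, while $\fm\coloneq\soc(\tau_1,\fn)$ again lies in $\Ms_{ap}(\tau_1)\subseteq\Ms_{ap}$. This closes the induction.

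The step I expect to be the main obstacle is the one genuinely representation-theoretic input: that $\pi$ embeds, for some ordering, into the product of the cuspidals in its support (equivalently, that some left cuspidal derivative of $\pi$ is non-zero), since this cannot be seen from the combinatorics of multisegments alone. The remaining points --- the reduction to a single line and its compatibility with aperiodicity and with $\Z$ --- are routine, and once the Proposition on $\square$-irreducible representations and the identity expressing $\soc(\rho,-)$ via $\soc(\rho\times-)$ are available, the induction itself is essentially bookkeeping.
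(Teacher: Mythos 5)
Your proposal is correct and follows essentially the same route as the paper: induct on $n$, peel off a single $\square$-irreducible cuspidal from the support (the paper writes $\pi\hra\pi'\times\rho$ and uses the right-handed socle identity $\soc(\pi'\times\rho)=\Z(\soc(\fm',\rho))$ from Theorem \ref{T:derseg}, whereas you peel from the left — a mirror image of the same argument), and conclude by the induction hypothesis. The step you flag as the main obstacle is in fact immediate from the paper's definition of cuspidal support, and your preliminary reduction to a single cuspidal line, while harmless, is not needed in the paper's version.
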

\begin{rem}
There are two main reasons why such a new proof could be of interest. The first is that the original proof is quite involved and relies heavily on the techniques from geometric representation theory. Secondly, the existing proof does not give much information on the inverse $\Z^{-1}$ of $\Z$, which is something that we will need in the computations of the $L$-factors. Let us now expand on both of these points. We start by recalling the existing proof of the surjectivity of $\Z$ in \cite{M-S}. One first reduces the claim to the situation where all representations and multisegments involved have cuspidal support $\fs$ contained in the cuspidal line spanned by $\rho$.
Let $\Ms(\fs)$ be the set of aperiodic multisegments with cuspidal support $\fs$ and $\Irr(\fs)$ the set of isomorphism classes of irreducible representations with cuspidal support $\fs$.
It is then possible to define a central character $\chi_{\fs}$ of the affine Hecke-algebra $\mathscr{H}_R(n,q(\rho))$ with the property that there exists a bijection between $\Irr(\fs)$ and \[\mathfrak{Irr}(\mathscr{H}_R(n,q(\rho)),\fs),\] the set of isomorphism classes modules of $\mathscr{H}_R(n,q(\rho))$ with central character $\chi_{\fs}$.
The proof of the classification of simple modules of the Hecke algebra has two very different flavors, depending on whether $o(\rho)>1$ or $o(\rho)=1$. If $o(\rho)=1$, the central character of $\mathscr{H}_R(n,1)$ is always trivial and hence the representation theory of the Hecke-algebra reduces in this case to the representation theory of the group algebra $R[\mathrm{S}_n]$ or said differently, to the modular representation theory of the symmetric group. Although some questions on modular representations of the symmetric groups remain to this day unsolved, the classification of irreducible representations falls not among them. It is a classical result that the irreducible representations are classified by $\ell$-regular partitions and there is an explicit construction of these, see \Cref{S:repsym}. Thus one has an explicit description of the inverse map $\Z^{-1}$ in this case.

On the other hand, if we assume that $o(\rho)>1$, the situation is much more involved. Here one uses \cite[Theorem B]{Ariki} to show that $\Ms(\fs)$ and $\mathfrak{Irr}(\mathscr{H}_R(n,q(\rho)),\fs)$ have the same cardinality and hence obtains as a consequence that the irreducible representations are classified by aperiodic multisegments. In the proof of Theorem B of \cite{Ariki} the author first uses the geometric representation theory of \cite{Chriss1997RepresentationTA} to show that in the case $\ell\neq 0$, $\mathfrak{Irr}(\mathscr{H}_R(n,q(\rho)),\fs)$ is parametrized by simple modules with a particular central character $\Tilde{\chi}_\fs$ of an affine Hecke-algebra $\mathscr{H}_\C(n,\xi)$ over $\C$ at a root of unity $\xi$. Using geometric representation theory again, the author then argues that those are classified by the aperiodic multisegments. 
Thus we proved that $\Irr(\fs)$ is finite and of the same cardinality as $\Ms(\fs)$. Since the map $\fm\mapsto \Z(\fm)$ induces an injective map between these two sets, it has to be bijective. This gives unfortunately not the kind of information on the inverse map $\Z^{-1}$ we need later on. However, since a cuspidal representation is $\square$-irreducible if and only if $o(\rho)>1$, this second, harder, case is covered by our new surjectivity proof, and thus gives us a more representation-theoretic description of $\Z^{-1}$.
\end{rem}
Let us now return to the remaining contents of this paper. We continue by discussing how the Aubert-Zelevinsky dual of an irreducible representation interacts with taking derivatives.
Recall that the Aubert-Zelevinsky dual, see for example \cite{Zel}, \cite{Aub}, \cite{MoeglinetJ1986}, \cite{MSI}, is a map
\[(\cdot)^{*}\colon \Irr\ra \Irr,\, \pi\mapsto \pi^*\]
preserving the cuspidal support. For $\fm$ an aperiodic multisegment, we write $\langle\fm\rangle\coloneq\Z(\fm)^*$.
Using derivatives we then compute in \Cref{S:GJ} the Godement-Jacquet local $L$-factors$\mod\ell$ introduced in \cite{Mzeta}.
\begin{theorem}
    Fix an additive character $\psi$ of $F^*$ and
    let $\Delta=[a,b]_\rho$ be a segment over $R$. If $\rho\cong\chi$ for an unramified character $\chi$ of $\mathrm{F}$ and $q^d\neq 1$, then
        \[L(\langle\Delta\rangle,T)=\frac{1}{ 1-\chi(\varpi_{\mathrm{F}})q^{-db+{\frac{1-d}{2}}}T}, \]
        where $\varpi_\mathrm{F}$ is a uniformizer of $\mathfrak{o}_\mathrm{F}$.
        Otherwise \[L(\langle\Delta\rangle,T)=1.\]
    More generally, if $\fm=\De_1+\ldots +\De_k$ is an aperiodic multisegment
    then \[L(\langle\fm\rangle,T)=\prod_{i=1}^kL(\langle\De_i\rangle,T),\, \epsilon(T,\langle\fm\rangle,\psi)=\prod_{i=1}^k\epsilon(T,\langle\De_i\rangle,\psi)\] and
    \[\gamma(T,\langle\fm\rangle,\psi)=\prod_{i=1}^k\gamma(T,\langle\De_i\rangle,\psi).\]
\end{theorem}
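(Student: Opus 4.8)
The plan is to reduce everything to the computation of $L(\langle\Delta\rangle, T)$ for a single segment, and then handle the multiplicativity of the local factors over multisegments by a separate argument. Both halves should rest on the theory of $\rho$-derivatives developed in \Cref{S:Der} together with the explicit formula for the Godement-Jacquet local factors of $\langle\cdot\rangle$-representations established in \Cref{S:GJ}.

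First I would treat the single segment case. Write $\Delta=[a,b]_\rho$. The key observation is that $\langle\Delta\rangle = \Z([a,b]_\rho)^*$ is, up to the Aubert-Zelevinsky involution, the generalized Steinberg representation attached to $\Delta$, and its Godement-Jacquet $L$-factor should be computable from its (co)derivatives, which strip off the characters $\rho v_\rho^b$ one at a time. Concretely, one expects that the $L$-factor is governed entirely by the cuspidal lines appearing in the support, and only the line containing an unramified character of $\mathrm{F}$ (equivalently, $\rho$ itself an unramified character $\chi$, in which case $m=d=1$ forces $\rho v_\rho^j = \chi v_\rho^j$ to run over unramified characters) contributes a nontrivial Euler factor. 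When $q^d\neq 1$, the pole is at the "most positive" twist, which after normalizing by the half-integral shift $\tfrac{1-d}{2}$ coming from the normalized induction gives the displayed denominator $1-\chi(\varpi_{\mathrm{F}})q^{-db+\frac{1-d}{2}}T$; when $q^d=1$ (i.e. $o(\rho)=1$ on the unramified line) the relevant zeta integral has no pole and $L=1$. I would phrase this via the zeta integral $Z(\langle\Delta\rangle, T, \phi, f)$ of \cite{Mzeta}, use the (co)socle filtration / derivative structure of $\langle\Delta\rangle$ to identify which matrix coefficients survive, and extract the pole; alternatively, if \cite{Mzeta} already computes $L$-factors of essentially square-integrable representations in the $\ell=0$ spirit, one reduces mod $\ell$ and checks the shift.

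For the multiplicativity statement, the plan is to induct on the number $k$ of segments using the compatibility of $\langle\cdot\rangle$ with parabolic induction. Since $\langle\fm\rangle$ is a subquotient of $\langle\De_1\rangle\times\cdots\times\langle\De_k\rangle$ (apply $(\cdot)^*$ to the definition of $\Z(\fm)\hookrightarrow \I(\fm)$, noting $(\cdot)^*$ sends the standard module picture to the "other" standard module), and since the Godement-Jacquet $\gamma$-factor is known to be multiplicative along parabolic induction and inductive in the appropriate sense (this is the local functional equation for products, established $\mod\ell$ in \cite{Mzeta}), one gets $\gamma(T,\langle\fm\rangle,\psi)=\prod_i \gamma(T,\langle\De_i\rangle,\psi)$ directly. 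The $L$- and $\epsilon$-factor identities then follow because $L(\langle\De_i\rangle,T)$ is, by the single-segment computation, either $1$ or a single Euler factor with a pole that is not cancelled by any zero of the other $\gamma$-factors — here aperiodicity of $\fm$ is what guarantees there is no "unexpected" cancellation forcing the product $L$-factor to be strictly smaller than $\prod_i L(\langle\De_i\rangle,T)$. Concretely, I would show the product $\prod_i L(\langle\De_i\rangle,T)$ has no pole/zero collisions, deduce it equals $L(\langle\fm\rangle,T)$ from the $\gamma$-factor identity and the definition $\gamma = \epsilon \cdot L(\text{contragredient-ish})/L$, and then read off $\epsilon$.

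The main obstacle I expect is the single-segment computation, specifically pinning down the half-integral exponent $\tfrac{1-d}{2}$ and the precise value $\chi(\varpi_{\mathrm{F}})q^{-db}$ of the pole: this requires carefully tracking the normalization of the modulus character on $\mathrm{GL}_n(\mathrm{D})$ versus $\mathrm{GL}_n(\mathrm{F})$ (the factor $d$ and the $d^2$-dimensionality of $\mathrm{D}$ enter the valuation of the reduced norm), keeping the convention of $v_\rho$ consistent with \cite{MST}, and making sure the reduction-mod-$\ell$ does not shift the pole — the condition $q^d\neq 1$ in $R$ is exactly the nonvanishing needed for the geometric-series expansion of the Euler factor to make sense $\mod\ell$. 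The multiplicativity half, by contrast, I expect to be essentially formal once the functional equation for Rankin-Selberg-type products from \cite{Mzeta} is invoked, with aperiodicity used only to rule out pole cancellations.
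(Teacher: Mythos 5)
Your overall shape (single segment via derivatives plus reduction mod $\ell$; multisegments via multiplicativity of $\gamma$ and the functional equation) matches the paper's, but two of the load-bearing steps are missing or would fail as stated. For the single segment, "the $L$-factor should be computable from its (co)derivatives" is exactly the point that needs an argument: the mechanism in the paper is \Cref{P:L1}, which shows that for a \emph{strongly $P$-regular} pair $\sigma_1\otimes\sigma_2$ with cosocle $\pi$ one has $P(\sigma_2,T)\mid P(\pi,T)$, combined with \Cref{C:der} (the maximal $\rho$-derivative produces such a strongly $P$-regular pair) and \Cref{T:mainin}/\Cref{T:derseg} (which identify $\Dlm(\langle[0,b]_\rho\rangle)=\langle[1,b]_\rho\rangle$). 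This only yields one divisibility; the reverse divisibility $P(\langle[0,b]_\rho\rangle,T)\mid 1-\chi(\varpi_{\mathrm F})q^{-db+\frac{1-d}{2}}T$ is not an "alternative" but a necessary second half, obtained by lifting to $\ql$ and applying \Cref{L:su} and \Cref{L:lin}. Without both inequalities you get nothing, since over $\fl$ reduction mod $\ell$ and passage to subquotients each only give divisibility of the $P$-polynomials.

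For the multisegment case the gap is more serious. Knowing $\gamma(T,\langle\fm\rangle,\psi)=\prod_i\gamma(T,\langle\De_i\rangle,\psi)$ and $P(\langle\fm\rangle,T)\mid\prod_iP(\langle\De_i\rangle,T)$ does \emph{not} determine $L(\langle\fm\rangle,T)$, and your plan to "deduce it from the $\gamma$-factor identity and the definition $\gamma=\epsilon\cdot L^\lor/L$, then read off $\epsilon$" is circular: $\epsilon$ is \emph{defined} by that equation, so it carries no independent information until you know something about it. What the paper actually uses is (a) the derivative lower bound $P(\Dlm(\langle\fm\rangle),T)\mid P(\langle\fm\rangle,T)$, which pins the possible discrepancy down to $P(\langle\fm\rangle,T)P(\rho,T)^l=\prod_iP(\langle\De_i\rangle,T)$ for a single unramified $\square$-irreducible $\rho$ and some $0\le l\le k$, and the analogous statement for $\fm^\lor$; (b) the fact that $\epsilon$-factors are \emph{units} in $\fl[T,T^{-1}]$, so the functional equation forces the ratio $P(\rho,T)^l/P(\rho^\lor v_\rho^{-a},q^{-1}T^{-1})^{l'}$ to be a unit, leaving only $l=l'=0$ or $l=l'>0$ with a congruence on $a$; and (c) a combinatorial comparison of the multiplicities of $[0,0]_\rho$ and $[1,1]_\rho$ in $\fm$ versus $\fm^\lor$, via \Cref{T:derseg} and \Cref{T:invol}, which rules out $l=l'>0$. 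Your appeal to "aperiodicity guarantees no unexpected cancellation" gestures at (c) but does not supply it, and aperiodicity alone is not the operative hypothesis — the argument genuinely needs the derivative calculus and the duality between left derivatives of $\fm$ and right derivatives of $\fm^\lor$.
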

We recall now the definition of the map \[\Cc\colon \Irr_n\iso \{\Cc-\text{parameters of length }n\}, \]\emph{cf.} \cite{Ccor}. Let $W_\mathrm{F}$ be the Weil group of $\mathrm{F}$ and $\nu$ the unique unramified character of $W_\mathrm{F}$ acting on the Frobenius as $\nu(\mathrm{Frob})=q^{-1}$.
The set of $\Cc$-parameters over $\fl$ is then a subset of the equivalence classes of $W_\mathrm{F}$-semisimple Deligne $\fl$-representations, \emph{i.e.} pairs $(\Phi, U)$, where $\Phi$ is a semi-simple $W_\mathrm{F}$-representation over $\fl$ and $U\in \Ho_{W_\mathrm{F}}(\nu \Phi,\Phi).$ The length of such a representation refers to its dimension.
For a precise definition see \Cref{S:LLC}. In \cite{Ccor} to each such $\Cc$-parameter $(\Phi,U)$ an $L$-factor $L((\Phi,U),T)$, an $\epsilon$-factor $\epsilon(T,(\Phi,U),\psi)$ and a $\gamma$-factor $\gamma(T,(\Phi,U),\psi)$ is then associated.
\begin{corollary}
    The map $\Cc$ respects the local factors, \emph{i.e.} for $\pi\in \Irr$
    \[L(\pi,T)=L(\Cc(\pi),T),\, \epsilon(T,\pi,\psi)=\epsilon(T,\Cc(\pi),\psi),\, \gamma(T,\pi,\psi)=\gamma(T,\Cc(\pi),\psi).\]
\end{corollary}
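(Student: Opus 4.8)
The plan is to deduce the corollary from the theorem just stated by reducing everything to single segments. Every $\pi\in\Irr$ equals $\langle\fm\rangle=\Z(\fm)^*$ for a unique aperiodic multisegment $\fm=\De_1+\ldots+\De_k$, since $\fn\mapsto\Z(\fn)$ is a bijection from $\Ms_{ap}$ onto $\Irr$ and $(\cdot)^*$ is an involution; the theorem then gives $L(\pi,T)=\prod_{i=1}^k L(\langle\De_i\rangle,T)$, and likewise for $\epsilon$ and $\gamma$. On the parameter side, $\Z(\fm)$ is a subquotient of $\Z(\De_1)\times\ldots\times\Z(\De_k)$, so its Aubert dual $\langle\fm\rangle$ is a subquotient of $\langle\De_1\rangle\times\ldots\times\langle\De_k\rangle$ with the same cuspidal support; by the compatibility of $\Cc$ with the Langlands classification (see \cite{Ccor}) this forces $\Cc(\langle\fm\rangle)\cong\bigoplus_{i=1}^k\Cc(\langle\De_i\rangle)$. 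Since the $L$-, $\epsilon$- and $\gamma$-factors of $\Cc$-parameters are multiplicative under direct sums --- immediate from the definitions in \cite{Ccor} --- it remains to treat a single segment $\De=[a,b]_\rho$.

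Here I would first identify $\Cc(\langle\De\rangle)$. Since $\langle\De\rangle=\Z(\De)^*$ is the generalised Steinberg representation attached to $\De$ and $\Cc$ is, by construction, the local Langlands correspondence for $\mathrm{GL}$ precomposed with the Jacquet--Langlands transfer $\jl$ --- with $\rho$ a representation of $G_m$, so that $\jl$ lands in representations of $\mathrm{GL}_{md}(\mathrm{F})$ --- the parameter $\Cc(\langle\De\rangle)$ is the indecomposable Deligne $R$-parameter of $\jl(\langle\De\rangle)$, an essentially square-integrable representation of $\mathrm{GL}_N(\mathrm{F})$ with $N=\deg(\De)\cdot d$; when $\rho\cong\chi$ for an unramified character $\chi$ of $\mathrm{F}$ as in the theorem, $\jl(\langle\De\rangle)$ is an unramified twist of the Steinberg representation. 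Computing the $L$-factor of this Deligne parameter directly from the definitions of \cite{Ccor} then reproduces the formula of the theorem --- equal to $(1-\chi(\varpi_{\mathrm{F}})q^{-db+(1-d)/2}T)^{-1}$ when $q^d\neq 1$ in $R$ and to $1$ otherwise --- and gives $L(\Cc(\langle\De\rangle),T)=1$ when $\rho$ is not such a twist. Hence $L(\langle\De\rangle,T)=L(\Cc(\langle\De\rangle),T)$.

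For the $\epsilon$- and $\gamma$-factors of a single segment I would use that $\jl$ and the Langlands correspondence for $\mathrm{GL}_N(\mathrm{F})$ preserve the respective local constants in the normalisations fixed in \cite{Ccor} and \cite{Mzeta}; combined with the explicit $\epsilon$-factor of a generalised Steinberg computed in \cite{Mzeta}, this gives $\epsilon(T,\langle\De\rangle,\psi)=\epsilon(T,\Cc(\langle\De\rangle),\psi)$. The $\gamma$-factor equality then follows from the relation $\gamma(T,\pi,\psi)=\epsilon(T,\pi,\psi)\,L(\pi^\lor,q^{-1}T^{-1})/L(\pi,T)$, which holds on both sides and is compatible with $\Cc$ because $\Cc$ intertwines the contragredient with the dual of $\Cc$-parameters. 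Assembling the three equalities with the first paragraph then proves the corollary, uniformly for $R\in\{\ql,\fl\}$.

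The step I expect to be the main obstacle is the normalisation bookkeeping in the second paragraph: one must pin down exactly how $\Cc$ is normalised on cuspidals of the inner form --- the twist by a power of $\nu^{1/2}$ entering through Jacquet--Langlands, and whether the $L$-factor of a $\Cc$-parameter is read off $\ker U$ or $\mathrm{coker}\,U$ for a Deligne parameter $(\Phi,U)$ --- so that the Frobenius eigenvalue comes out as $\chi(\varpi_{\mathrm{F}})q^{-db+(1-d)/2}$ and not its inverse or a Tate twist of it, and one must similarly keep track of the dependence of the $\epsilon$-factor on $\psi$ and on the conductor and Haar-measure normalisations on both sides. Once the segment case is verified with these precise conventions, the remaining steps are formal.
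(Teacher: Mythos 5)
Your reduction of the $L$-factor statement to a single segment (multiplicativity on both sides, then an explicit computation of $L(\Cc(\langle\De\rangle),T)$ from the definition) is exactly the paper's route, and the formula $\Cc(\langle\fm\rangle)=\bigoplus_i[0,b_i-a_i]\otimes\nu^{a_i}\Cc(\rho_i)$ you need is recorded in the paper just before the statement. But two things go wrong. First, a misreading: in this subsection $\mathrm{D}=\mathrm{F}$ is assumed, and the map denoted $\jl$ in the paper is \emph{not} the Jacquet--Langlands transfer but the reduction-mod-$\ell$ surjection $\Irr_n(\ql)_e\ra\Irr_n(\fl)$; the map $\Cc$ is $\vc\circ\mathrm{V}$, with $\mathrm{V}$ pinned down by $\mathrm{V}(\jl(\mathrm{LLC}(\Phi,U)^*)^*)=\rl(\Phi,U)$. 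Your ``generalised Steinberg via Jacquet--Langlands'' framing therefore also misses the one genuinely mod-$\ell$ point in the segment computation: for a $\square$-reducible cuspidal $\rho$ the operator $U$ in $\Cc(\rho)=(\Phi,I_\Phi)$ is \emph{bijective}, so $\ker(U)^{I_{\mathrm{F}}}=0$ and $L(\Cc(\langle\De\rangle),T)=1$, which is what matches the triviality of the automorphic $L$-factor in that case; this is the effect of the $\vc$ (cyclic) part and has no analogue over $\ql$.

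Second, and more seriously, your treatment of the $\epsilon$- and $\gamma$-factors assumes what has to be proved: the assertion that ``$\jl$ and the Langlands correspondence preserve the respective local constants'' is precisely the content of the corollary over $\fl$, and it cannot be imported from $\ql$ for $\epsilon$, since $\epsilon$-factors are not stable under reduction mod $\ell$ (only $\gamma$ is, by the paper's Lemma on reduction of local factors, while the $L$-polynomial can drop). The paper's order of argument is the reverse of yours: one first proves the $\gamma$-identity directly --- using that $\gamma(T,\pi,\psi)$ depends only on the supercuspidal support of $\pi$, that $\gamma(T,(\Phi,U),\psi)$ depends only on $\Phi$, and multiplicativity on both sides, which reduces to a supercuspidal $\rho$, where one lifts to $\trho$ over $\ql$ and uses $\rl\circ\gamma=\gamma\circ\rl$ on both sides together with the $\ql$-LLC --- and only then deduces the $\epsilon$-identity from the functional equation, given that $L$ and $\gamma$ already agree. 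Your proposed direction (establish $\epsilon$ first, then get $\gamma$ from the functional equation) has no available input for the $\epsilon$ step, so the argument as written does not close.
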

In \Cref{S:Cons} we describe the structure of the representations of the form $\I(\fm)$, which is heavily motivated by the analysis done in \cite{Zel} in the case $R=\ql$. Fix a cuspidal representation $\rho$ and a cuspidal support $\fs$ of the form
\[\fs=d_0[\rho]+\ldots+d_{o(\rho)-1}[\rho v_\rho^{o(\rho)-1}],\] where $d_i$ be the multiplicity of $\rho v_\rho^i,\, \ain{i}{0}{o(\rho)-1}$ in $\fs$. Let $Q={A^+_{o(\rho)-1}}$ be the affine Dynkin quiver with $o(\rho)$ vertices numbered $\{0,\ldots,o(\rho)-1\}$ and an arrow from $i$ to $j$ if $j=i+1\mod o(\rho)$. Hence $\fs$ gives rise to a graded vector space \[V(\fs)\coloneq \bigoplus_{i=1}^{o(\rho)-1} \C^{d_i}.\]
We let $N(V(\fs))$ be the $\C$-variety of nilpotent representations of $Q$ with underlying vector space $V(\fs)$. Then $N(V(\fs))$ admits an action by
\[\gl_{d_0}(\C)\times\ldots\times \gl_{d_{o(\rho)-1}}(\C),\]
whose orbits are in bijection with irreducible representations $\Z(\fm)$ whose cuspidal support is $\fs$. We write $X_\fm$ for the orbit corresponding to $\Z(\fm)$ and write $\fn\lp \fm$ if
$X_\fm\subseteq \overline{X_\fn}$
\begin{theorem}
         Let $\fm,\fn\in \Ms(\rho)$ not necessarily aperiodic. Then $\Z(\fn)$ appears as a subquotient of $\I(\fm)$ if and only if the cuspidal supports of $\fn$ and $\fm$ agree and $\fn\lp \fm$.
\end{theorem}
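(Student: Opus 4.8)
Both implications begin with the remark that the cuspidal‑support condition is automatic on one side: normalized parabolic restriction is exact and $\I(\fm)$ is a subquotient of the product of the cuspidal representations constituting $\fs$, so every composition factor of $\I(\fm)$ shares the cuspidal support of $\fm$; since $\fn\lp\fm$ only makes sense once $\fn$ and $\fm$ have equal cuspidal support, I fix $\fs\in\Ms(\rho)$ and work inside the corresponding block. Two further normalizations are convenient. First, $X_\fm$ depends on $\fm$ only through $\Z(\fm)$, so the statement for a non‑aperiodic $\fn$ reduces to that for the aperiodic $\fn'$ with $\Z(\fn')\cong\Z(\fn)$, and I may take $\fn$ aperiodic. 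Second, I use the description of $\lp$ — the orbit‑closure order on $N(V(\fs))$ — as the transitive closure of the elementary intersection-union moves on multisegments (replacing a linked pair $\De,\De'$ by $\De\cup\De'$ and $\De\cap\De'$), exactly as in the classical type $A_\infty$ picture.

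For the direction $\fn\lp\fm\Rightarrow\Z(\fn)\leq\I(\fm)$, I pick a chain $\fm=\fm_0\gp\fm_1\gp\dots\gp\fm_t=\fn$ of such moves and rely on the key inequality
\[[\Z(\De\cup\De')\times\Z(\De\cap\De')]\leq[\Z(\De)\times\Z(\De')],\]
valid in the Grothendieck group for every linked pair $\De,\De'$. Granting it, and using exactness of $\pi\mapsto\pi\times\sigma$ and $\sigma\mapsto\pi\times\sigma$, each move can only shrink the class of $\I$: the multisegments $\fm_i$ and $\fm_{i+1}$ agree in all segments except the two involved, so multiplying the displayed inequality by the class of the product of the common factors gives $[\I(\fm_{i+1})]\leq[\I(\fm_i)]$, whence $[\I(\fm)]-[\I(\fn)]$ is effective, i.e. a non‑negative combination of classes of irreducibles. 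As $\Z(\fn)$ is by its construction in \cite{M-S} a subquotient of $\I(\fn)$, it is therefore a subquotient of $\I(\fm)$.

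For the converse $\Z(\fn)\leq\I(\fm)\Rightarrow\fn\lp\fm$, I carry out the Jacquet‑module bookkeeping of \cite{Zel}, which is insensitive to the characteristic: taking $a$ minimal with $[\rho v_\rho^a]$ in $\fs$, one compares the leading term of the Jacquet module of $\I(\fm)$ along the minimal parabolic with that of $\Z(\fn)$, and feeds the resulting combinatorial inequality into the elementary‑move description of $\lp$. When $o(\rho)>1$, so $\rho$ is $\square$‑irreducible, this is cleanly organized as an induction on $\deg\fm$ using the $\rho$‑derivative calculus of \Cref{S:Der} — the identities $\Dl(\Z(\fm))=\Z(\Dl(\fm))$ and $\Z(\soc(\rho,\fm))=\soc(\rho\times\Z(\fm))$ turn each step into a combinatorial one on multisegments. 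When $o(\rho)=1$ one passes instead through the equivalence of the block with a module category over an affine Hecke algebra, where the combinatorics is that of the modular representation theory of a symmetric group and the desired unitriangularity follows from a Specht filtration of the relevant permutation module together with the non‑negativity of Kostka numbers and decomposition numbers.

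The step I expect to be the main obstacle is the local inequality $[\Z(\De\cup\De')\times\Z(\De\cap\De')]\leq[\Z(\De)\times\Z(\De')]$ in characteristic $\ell>0$. Over $\ql$ the right‑hand product has length two and this is the familiar linked‑segment computation, but over $\fl$ the product of two segment representations may be longer and acquire extra constituents, so the inequality has to be extracted from the structure of the module induced from the trivial module along the appropriate parabolic subalgebra of $\mathscr{H}_R(l(\De)+l(\De'),q(\rho))$, by locating inside it the simple module attached to $\De\cup\De'+\De\cap\De'$; this one reduces to the rank‑one situation by iterated restriction, but the modular bookkeeping is delicate. The remaining points needing care are the precise combinatorial description of $\lp$ for the cyclic quiver $A^+_{o(\rho)-1}$ together with its generation by elementary moves, and the reductions — to cuspidal $\rho$ that need not be supercuspidal, and to non‑aperiodic multisegments — required to make the argument uniform.
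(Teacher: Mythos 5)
Your overall architecture coincides with the paper's: reduce the forward direction to a single elementary operation, and prove the converse by induction via Jacquet modules. But the proposal has three genuine gaps. First, the preliminary reduction to aperiodic $\fn$ is invalid: for non-aperiodic $\fn\in\Ms(\rho)$ the representation $\Z(\fn)$ need not equal $\Z(\fn')$ for any aperiodic $\fn'\in\Ms(\rho)$ with the same cuspidal support (for instance $\fn=[0]_\rho+\dots+[e(\rho)-1]_\rho$ yields the cuspidal representation $\st(\rho,e(\rho))$, which is $\Z$ of a singleton multisegment over a \emph{different} cuspidal representation), and $X_\fn$ depends on $\fn$ itself, not on $\Z(\fn)$. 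The paper treats arbitrary $\fn$ directly, via the degeneracy characterization of $\Z(\fn)$ in Theorems \ref{T:N4} and \ref{T:N5}. Relatedly, your opening claim that every composition factor of $\I(\fm)$ shares the cuspidal support of $\fm$ is false over $\fl$: only the supercuspidal support is preserved, and the paper must handle exactly this phenomenon in the forward direction.

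Second, you correctly isolate the inequality $[\Z(\De\cup\De')\times\Z(\De\cap\De')]\le[\Z(\De)\times\Z(\De')]$ as the crux of the forward direction, but you do not prove it; the route through a parabolic subalgebra of $\He_R(l(\De)+l(\De'),q(\rho))$ is left as ``delicate bookkeeping''. The paper's resolution is different and short: when $\rho$ is supercuspidal one lifts both segments to $\ql$, invokes Zelevinsky's identity there, and applies $\rl$, which commutes with parabolic induction (Theorem \ref{T:rel}); when $\rho$ is not supercuspidal one has $o(\rho)=1$, the relevant block is governed by $\He_R(n,1)$-modules on which the polynomial part acts trivially, i.e.\ by $R[\mathrm{S}_n]$-modules, and the inequality becomes the statement about Young permutation modules and Kostka numbers recalled in Section \ref{S:repsym}. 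Third, in the converse direction the phrase ``feeds the resulting combinatorial inequality into the elementary-move description of $\lp$'' hides the real content: after applying Lemma \ref{L:S33}, the Geometric Lemma for $r_{(\deg(\fm^-),\deg(\fm^1))}$, and the inductive hypothesis, one only obtains $\fn^1=\fm_1^1$ and $\fn^-\lp\fm_1^-+\fm_2$ for some decomposition $\fm=\fm_1+\fm_2$; one still needs a purely combinatorial lemma (the paper's Lemmas \ref{L:comblem} and \ref{L:meh2}) to lift this relation on truncated multisegments back to $\fn\lp\fm$. Without that lemma the induction does not close, and your derivative-based variant for $o(\rho)>1$ would need an analogous (and equally nontrivial) combinatorial bridge.
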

It is thus expected that the singularities of the space $N(V(\fs))$ allow one to obtain a deeper understanding of the irreducible representations with cuspidal support $\fs$. In this spirit, we extended \cite[Conjecture 4.2]{LMsquare} to representations over $\fl$.
\subsection*{Acknowledgements:}
I would like to thank Alberto M\'inguez for the fruitful discussions and patiently listening to many of my (wrong) ideas. I would also like to thank Nadir Matringe, Vincent Sécherre and Thomas Lanard for their useful remarks.  Finally, I would like to thank the anonymous referee who pointed out several mistakes in the first version of this paper. This work has been supported by the research projects P32333 and PAT4832423 of the Austrian Science Fund (FWF).
\section{Notation and preliminaries}
\counterwithin{theorem}{subsection}
\subsection{ ~}Let $\mathrm{F}$ be a local non-archimedean field, $\mathfrak{o}_{\mathrm{F}}$ its ring of integers with uniformizer $\varpi_\mathrm{F}$ and $\mathrm{k}\coloneq \mathfrak{o}_{\mathrm{F}}/(\varpi_{\mathrm{F}})$ its residue field. Let $p\coloneq \mathrm{char}\, \mathrm{k}$ and $q$ the cardinality of $\mathrm{k}$. Finally, let $\mathrm{D}$ be a division algebra of reduced degree $d$ over $\mathrm{F}$. Fix an algebraically closed field $R\in\{\fl,\ql\}$ with $\ell \neq p$. For a reductive group $G$ over $\mathrm{F}$, we let $\mathfrak{Rep}_G=\mathfrak{Rep}_G(R)$ be the category of smooth, finite length representations of $G(\mathrm{F})$ over $R$ with morphisms the $G(\mathrm{F})$-equivariant $R$-linear morphisms. We will focus on the case $G_n\coloneq \mathrm{GL}_n(\mathrm{D})$, in which case we write $\mathfrak{Rep}_n=\mathfrak{Rep}_{G_n}$
and set
\[\mathfrak{Rep}=\mathfrak{Rep}(R)\coloneq   \bigcup_{n\ge 0} \mathfrak{Rep}_n.\] 
We let $\mathfrak{R}_n(R)=\mathfrak{R}_n$ be the Grothendieck group of representations in $\mathfrak{Rep}_n$ and set \[\mathfrak{R}=  \mathfrak{R}(R)\coloneq \bigoplus_{n\ge0} \mathfrak{R}_n.\] 
Let $\mathfrak{Irr}_n=\Irr_n(R)$ be the set of isomorphism classes of irreducible smooth admissible representations of $G_n$ and set
\[\mathfrak{Irr}\coloneq   \bigcup_{n\ge0}\mathfrak{Irr}_n.\]
From now on a representation $\pi$ of $G_n$ means a smooth, finite length representation of $G_n$ over $R$ and its image in $\mathfrak{R}_n$ is denoted as $[\pi]$ and $\deg(\pi)=n$. We  write $\pi^\lor$ for the contragredient representation, $\tau\hra \pi$ if $\tau$ is a subrepresentation of $\pi$ and $\tau\le \pi$ if $\tau$ is a subquotient of $\pi$, \emph{i.e.} a quotient of a subrepresentation of $\pi$. We denote by $\soc(\pi)$ the socle of $\pi,$ \emph{i.e.} the maximal semi-simple subrepresentation of $\pi$ and by $\cos(\pi)$ the cosocle of $\pi,$ \emph{i.e.} the maximal semi-simple quotient of $\pi$.
For an element $[\mathrm{X}]\in \mathfrak{R}$, we write $[\mathrm{X}]\ge 0$ if the multiplicity of all isomorphism classes of irreducible representations in $[\mathrm{X}]$ is greater or equal than $0$ and we write $[\mathrm{X}]\ge [\mathrm{Y}]$ if $[\mathrm{X}]-[\mathrm{Y}]\ge 0$.
Furthermore, if $S$ is a set we let $\mathbb{N}(S)$ be the commutative monoid consisting of functions $S\rightarrow \mathbb{N}$ with finite support. Finally, we fix once and for all an additive character $\tpsi\colon F\ra \zl$ with open kernel and let $\psi$ be its reduction mod $\ell$.

\subsection{ ~}\label{S:parabind}
Let $n\in\mathbb{Z}_{> 0}$ and denote by $S_n$ the $n$-th symmetric group. A tuple $\alpha=(\alpha_1,\alpha_2,\ldots,\alpha_t),\, \alpha_i\in \mathbb{Z}_{>0}$ will be called a \emph{composition} of $n$ if $\sum_{i=1}^t\alpha_i=n$.
Its reverse composition $\ol{\alpha}$ is denoted by $\ol{\alpha}\coloneq (\alpha_t,\ldots,\alpha_1)$ and we call $\alpha$ ordered or a \emph{partition} if \[\alpha_1\ge\alpha_2\ge\ldots\ge \alpha_t.\] 
If $\alpha'=(\alpha_1',\ldots,\alpha_{t'}')$ is a second composition of $n$ we define the intersection $\alpha\cap\alpha'$ recursively as follows.
\[\alpha\cap\alpha'\coloneq \begin{cases}(\alpha_1,(\alpha_2',\ldots,\alpha_{t'}')\cap(\alpha_2,\ldots,\alpha_t))&\text{ if }\alpha_1=\alpha_1',\\
(\alpha_1,(\alpha_1'-\alpha_1,\ldots,\alpha_{t'}')\cap(\alpha_2,\ldots,\alpha_t))&\text{ if }\alpha_1<\alpha_1',\\
(\alpha_1',(\alpha_2',\ldots,\alpha_{t'}')\cap(\alpha_1-\alpha_1',\ldots,\alpha_t))&\text{ if }\alpha_1>\alpha_1'.
\end{cases}\]
If $\alpha$ and $\alpha'$ are compositions of $n$ we say $\alpha\le\alpha'$ if \[\sum_{i=1}^j\alpha_j\le \sum_{i=1}^j\alpha_j'\] for all $j\in\{1,\ldots,\mathrm{min}(t,t')\}$.
We let \[G_\alpha\coloneq G_{\alpha_1}\times\ldots\times G_{\alpha_t}\] and every irreducible representation $\pi$ of $G_\alpha$ is then of the form \[\pi\cong\pi_1\otimes\ldots\otimes \pi_t,\] for $[\pi_i]\in \Irr_{\alpha_i}$ and $i\in \{1,\ldots,t\}$.
As above let $\mathfrak{Rep}_\alpha$ be the category of smooth, finite length representations of $G_\alpha$. The Grothendieck group of representations in $\mathfrak{Rep}_\alpha$ of finite length is denoted by $\mathfrak{R}_\alpha$, and we write $\mathfrak{Irr}_\alpha$ for the set of isomorphism classes of irreducible representations of $G_\alpha$. 
For $\ain{j}{1}{t}$ let $(\beta_{1,j},\beta_{2,j},\ldots,\beta_{i_j,j})$ be a composition of $\alpha_j$. We then call \[(\beta_{1,1},\ldots,\beta_{i_1,1},\beta_{1,2},\ldots,\beta_{i_2,2},\ldots.,\beta_{1,t},\ldots,\beta_{i_t,t})\] a subcomposition of $\alpha$.

Recall the exact functors of \emph{parabolic induction} and \emph{parabolic restriction}. We fix a choice of a square root $\sqrt{q}$ in $R$. If $P=M\rtimes U$ is a parabolic subgroup defined over $\mathrm{F}$ of $G_\alpha$ with Levi-component $M$ and unipotent-component $U$, we let $\delta_P$ be the module of $P$, \emph{i.e.} the map $\delta_P\colon M\rightarrow \C^*$ \[\delta_P(m)=q^{v_P(m)},\] where $v_P$ is a certain morphism from $M$ to $\mathbb{Z}$ depending on $P$. Our choice of square root of $q$ allows us then to define the character $\delta_P^{\frac{1}{2}}$.
We denote then by \[\mathrm{Ind}_{P}^{G_\alpha }:\Rep_M\rightarrow \Rep_{G_\alpha}\text{ resp. }r_{G_\alpha,P}: \Rep_{G_\alpha}\rightarrow \Rep_M.\]
Recall that the functor $\mathrm{Ind}_{P}^{G_\alpha}$ is the left adjoint of $r_{G_\alpha,P}$, \emph{i.e.}
\[\Ho_{G_\alpha}(-,\mathrm{Ind}_{P}^{G_\alpha}(-))=\Ho_{M}(r_{G_\alpha,P}(-),-) \text{ (Frobenius reciprocity)}.\]
We also fix the minimal parabolic subgroup $P_0$ defined over $\mathrm{F}$ of upper triangular matrices in $G_\alpha$. Note that each parabolic subgroup $P$ defined over $\mathrm{F}$ containing $P_0$ corresponds to a subcomposition $\beta$ of $\alpha$ in such a way that the Levi subgroup of $P$ is $G_\beta$. If $\alpha=(n)$ we write $r_{\beta}\coloneq r_{G_n,P_\beta},\, \ind_\beta\coloneq \ind_{P_\beta}^{G_n}$ and if \[(\beta_{1,1},\ldots,\beta_{i_1,1},\beta_{1,2},\ldots,\beta_{i_2,2},\ldots.,\beta_{1,t},\ldots,\beta_{i_t,t})\] is a subcomposition of $\alpha$ and \[\pi=\pi_{1,1}\otimes\ldots\otimes\pi_{i_1,1}\otimes \pi_{1,2}\otimes\ldots\otimes\pi_{i_2,2}\otimes\ldots\otimes \pi_{1,t}\otimes\ldots\otimes\pi_{i_t,t}\] is an object of $\Rep_\beta$, we write
\[\pi_{1,1}\times\ldots\times \pi_{i_1,1}\otimes \pi_{1,2}\times\ldots\times \pi_{i_2,2}\otimes\ldots\otimes \pi_{1,t}\times \ldots\times \pi_{i_t,t}\coloneq \mathrm{Ind}_{P_\beta}^{G_\alpha}(\pi).\] 
Since both induction and reduction are exact, they induce functors on the corresponding Grothendieck groups.
In particular, parabolic induction induces a product
\[\times:\mathfrak{R}\times \mathfrak{R}\rightarrow \mathfrak{R}, ([\pi_1],[\pi_2])\mapsto [\pi_1\times\pi_2],\] which equips $\mathfrak{R}$ with the structure of an commutative algebra, see \cite[1.16]{alma991023733359705251} for $\ell>2$, \cite[Theorem 1.9]{Zel} for the case $R=\mathbb{C}$ and \cite[Proposition 2.6]{M-S} in general. If $\pi$ is a representation we will write $\pi^k\coloneq \pi\times\ldots\times \pi$ for the product of $k$ copies of $\pi$.
We will also use this opportunity to state two lemmas, which were proven in \cite{LMsquare} over $\ql$, but the presented proofs work as well \emph{muta mutandis} over $\fl$.
\begin{lemma}[{\cite[Lemma 2.1]{LMsquare}}]
    Let $P=M\rtimes U$ be a parabolic subgroup of $G=G_n$ and $Q_1=N_1\rtimes V_1,\, Q_2=N_2\rtimes V_2$ parabolic subgroups of $G$ containing $P$. Let $\pi\in \Rep(M),\, \tau_1\in\Rep(N_1),\, \tau_2\in\Rep(N_2)$ such that there exist injective morphisms
    \[\iota_1\colon\tau_1\ra \ind_{N_1\cap P}^{N_1}(\pi),\,  \iota_2\colon\tau_2\ra \ind_{N_2\cap P}^{N_2}(\pi)\]
    and there exists \[ \iota_3\colon\ind_{Q_1}^G(\tau_1)\ra \ind_{Q_2}^G(\tau_2)\] such that
    \[\ind_{Q_1}^G(\iota_1)=\ind_{Q_2}^G(\iota_2)\circ \iota_3.\]
    Then there exists a representation $\tau$ of $M$ and an injective map $\iota\colon\tau\ra \pi $ such that $\iota_1$ factors through $\ind_{N_1\cap P}^{N_1}(\iota)$ and $\ind_{N_2\cap P}^{N_2}(\iota)$ factors through $\iota_2$.
\end{lemma}
\begin{corollary}[{\cite[Corollary 2.2]{LMsquare}, \cite[Lemma 3.1]{KKKOb}}]\label{C:3ind}
    Let $n_1,n_2,n_3\in\ZZ_{>0}$ and 
    \[\pi_i\in\Rep(G_{n_i}),\, i\in\{1,2,3\}.\]
    Let $\sigma$ be a subrepresentation of $\pi_1\times\pi_2$ and $\tau$ a subrepresentation of $\pi_2\times\pi_3$ such that as subrepresentations of $\pi_1\times\pi_2\times\pi_3$
      \[\sigma\times\pi_3\hra \pi_1\times \tau.\]
    Then there exists a subrepresentation $\omega$ of $\pi_2$ such that     \[\sigma\hra \pi_1\times\omega ,\, \omega\times \pi_3\hra \tau.\]
    In particular, if $\pi_2$ is irreducible and $\sigma\neq 0$, then $\sigma=\pi_2$
\end{corollary}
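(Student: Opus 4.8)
The plan is to derive the statement directly from the preceding lemma (the quoted \cite[Lemma~2.1]{LMsquare}) by feeding it the three‑step parabolic. Put $G=G_{n_1+n_2+n_3}$ and let $P\subseteq G$ be the standard parabolic (i.e. containing the upper‑triangular Borel) with Levi $M=G_{n_1}\times G_{n_2}\times G_{n_3}$; inside $G$ let $Q_1\supseteq P$ be the standard maximal parabolic with Levi $N_1=G_{n_1+n_2}\times G_{n_3}$ and $Q_2\supseteq P$ the one with Levi $N_2=G_{n_1}\times G_{n_2+n_3}$. We apply the lemma with $\pi=\pi_1\otimes\pi_2\otimes\pi_3\in\Rep(M)$, $\tau_1=\sigma\otimes\pi_3\in\Rep(N_1)$ and $\tau_2=\pi_1\otimes\tau\in\Rep(N_2)$.

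The first step is to check that this data meets the hypotheses of the lemma. By transitivity of parabolic induction $\ind_{N_1\cap P}^{N_1}(\pi)=(\pi_1\times\pi_2)\otimes\pi_3$ and $\ind_{N_2\cap P}^{N_2}(\pi)=\pi_1\otimes(\pi_2\times\pi_3)$, so the assumption $\sigma\hra\pi_1\times\pi_2$ gives $\tau_1\hra\ind_{N_1\cap P}^{N_1}(\pi)$ and $\tau\hra\pi_2\times\pi_3$ gives $\tau_2\hra\ind_{N_2\cap P}^{N_2}(\pi)$. Likewise $\ind_{Q_1}^G(\tau_1)=\sigma\times\pi_3$ and $\ind_{Q_2}^G(\tau_2)=\pi_1\times\tau$, each canonically a subrepresentation of $\ind_P^G(\pi)=\pi_1\times\pi_2\times\pi_3$, and the hypothesis that $\sigma\times\pi_3\hra\pi_1\times\tau$ holds as subrepresentations of $\pi_1\times\pi_2\times\pi_3$ is exactly the assertion that the triangle in the lemma commutes. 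The lemma therefore produces a subrepresentation $\mu\hra\pi_1\otimes\pi_2\otimes\pi_3$ with $\sigma\otimes\pi_3\hra\ind_{N_1\cap P}^{N_1}(\mu)$ and $\ind_{N_2\cap P}^{N_2}(\mu)\hra\pi_1\otimes\tau$.

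It then remains to read off from $\mu$ a subrepresentation $\omega\hra\pi_2$ turning these relations into $\sigma\hra\pi_1\times\omega$ and $\omega\times\pi_3\hra\tau$. Since the third coordinate of $\tau_1=\sigma\otimes\pi_3$ is all of $\pi_3$, tracing the construction of $\mu$ in the proof of \cite[Lemma~2.1]{LMsquare} shows that $\mu$ may be taken of the form $\mu=\Sigma\otimes\pi_3$ for a $G_{n_1}\times G_{n_2}$‑subrepresentation $\Sigma\hra\pi_1\otimes\pi_2$; let $\omega\hra\pi_2$ be minimal with $\Sigma\hra\pi_1\otimes\omega$ (such an $\omega$ exists, being $\bigcap$ of all valid choices, because $\pi_1\otimes\omega'\cap\pi_1\otimes\omega''=\pi_1\otimes(\omega'\cap\omega'')$). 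Then $\ind_{N_1\cap P}^{N_1}(\mu)\hra(\pi_1\times\omega)\otimes\pi_3$, and cancelling the external factor $\pi_3$ gives $\sigma\hra\pi_1\times\omega$. The delicate step — which I expect to be the main obstacle — is the reverse inclusion $\omega\times\pi_3\hra\tau$: one has $\ind_{N_2\cap P}^{N_2}(\mu)\hra\pi_1\otimes\tau$ and $\ind_{N_2\cap P}^{N_2}(\mu)\hra\pi_1\otimes(\omega\times\pi_3)$ compatibly inside $\pi_1\otimes(\pi_2\times\pi_3)$, and one must compare these to conclude that all of $\omega\times\pi_3$ already lands in $\tau$. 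In the special case where $\pi_1$ and $\pi_3$ are irreducible this is transparent, since then every subrepresentation of $\pi_1\otimes V\otimes\pi_3$ is of the form $\pi_1\otimes W\otimes\pi_3$ and one reads off $\omega=W$ directly; the general case should follow from the same comparison argument applied to isotypic components.

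Finally, the ``in particular'' is formal: if $\pi_2$ is irreducible then the subrepresentation $\omega$ is either $0$ or $\pi_2$, and if $\sigma\neq0$ the inclusion $\sigma\hra\pi_1\times\omega$ forbids $\omega=0$, so $\omega=\pi_2$; hence $\pi_2\times\pi_3=\omega\times\pi_3\hra\tau\hra\pi_2\times\pi_3$, forcing $\tau=\pi_2\times\pi_3$.
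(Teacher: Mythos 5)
Your overall strategy is the right one and is exactly how the paper (following \cite[Corollary 2.2]{LMsquare}) deduces the corollary from the preceding lemma: the choice of $P$, $Q_1$, $Q_2$ and of $\tau_1=\sigma\otimes\pi_3$, $\tau_2=\pi_1\otimes\tau$ is correct; the observation, obtained by tracing the construction in the lemma's proof, that the resulting subrepresentation is $\mu=\Sigma\otimes\pi_3$ with $\Sigma=\{f(e):f\in\sigma\}\subseteq\pi_1\otimes\pi_2$ is correct; and the deduction of $\sigma\hra\pi_1\times\omega$ and of the ``in particular'' statement are fine (the conclusion $\sigma=\pi_2\times\pi_3$ in the statement is a typo for $\tau=\pi_2\times\pi_3$, which is what you prove). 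The genuine gap is the step you yourself flag: $\omega\times\pi_3\hra\tau$. Your proposed reduction to the irreducible case via ``isotypic components'' does not apply here, because $\pi_1$ is an arbitrary finite-length smooth representation, not a semisimple one; in particular you cannot assume that $\Sigma$ has the form $\pi_1\otimes\omega$, which is what makes the irreducible case ``transparent''.

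Here is how to close the step without any hypothesis on $\pi_1$. For a linear functional $\lambda$ on the space of $\pi_1$ set $\Sigma_\lambda:=(\lambda\otimes\mathrm{id}_{\pi_2})(\Sigma)\subseteq\pi_2$; since $\Sigma$ is $G_{n_1}\times G_{n_2}$-stable, $\Sigma_\lambda$ is a $G_{n_2}$-subrepresentation, and your minimal $\omega$ equals $\sum_\lambda\Sigma_\lambda$ (writing $s=\sum_j a_j\otimes b_j$ with the $a_j$ linearly independent exhibits each $b_j$ as $(\lambda_j\otimes\mathrm{id})(s)$ for suitable $\lambda_j$). Applying $\lambda\otimes\mathrm{id}$ pointwise to the values of functions defines a $G_{n_2+n_3}$-equivariant map $\ind_{N_2\cap P}^{N_2}(\Sigma\otimes\pi_3)\ra\pi_2\times\pi_3$ whose image is exactly $\Sigma_\lambda\times\pi_3$, by exactness of parabolic induction applied to the surjection $\Sigma\otimes\pi_3\sra\Sigma_\lambda\otimes\pi_3$; on the other hand this image is contained in $(\lambda\otimes\mathrm{id})(\pi_1\otimes\tau)\subseteq\tau$ by the second conclusion of the lemma. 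Summing over all $\lambda$ and using that parabolic induction commutes with sums of subrepresentations gives $\omega\times\pi_3=\sum_\lambda(\Sigma_\lambda\times\pi_3)\hra\tau$, as required.
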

\subsection{ Geometric Lemma}\label{S:geolem} One of the key tools we will use is the Geometric Lemma, \emph{cf.} \cite[Theorem 2.19]{alma991023733359705251}, \cite[Theorem 5.2]{ASENS_1977_4_10_4_441_0}. Let $A$ be the maximal torus defined over $\mathrm{F}$ of our minimal parabolic subgroup $P_0$ and $W(G,A)$ the Weyl group of $G_n$. The choice of $P_0$ corresponds to a choice of a basis $S$ of the root system $\phi$ of $G_n$ and hence of a set of positive roots $\phi^+$. Let $\alpha$ be a composition of $n$ corresponding to a parabolic subgroup containing $P_0$, which in turn corresponds to a subset $I\subseteq S$. For $w\in W(G,A)$ we let $\alpha^w$ be the composition corresponding to the subset $w(I)\subseteq S$.
For $\alpha_1,\alpha_2$ compositions of $n$, which correspond to subsets $I_1,I_2$ of $S$, we denote
\[W(\alpha_1,\alpha_2)\coloneq \{w\in W(G,A):w(I_1)\subseteq \phi^+ \text{ and } w^{-1}(I_2)\subseteq \phi^+\}\]
 and for $w\in W(\alpha_1,\alpha_2)$ we set $\alpha_2'\coloneq   \alpha_2\cap \alpha_1^w$ and $\alpha_1'=\alpha_1\cap \alpha_2^{w^{-1}}$. For $g\in G_{\alpha_2'}$ and $w\in W(\alpha_1,\alpha_2)$ the element $w^{-1}gw\in G_{\alpha_1'}$ and hence pulling back via the group morphism $g\mapsto w^{-1}gw$ allows us to define a functor
 \[
 w:\Rep_{\alpha_1'}\rightarrow \Rep_{\alpha_2'}.
 \] Set \[F(w)\coloneq   \ind_{P_{\alpha_2'}}^{G_{\alpha_2}}\circ w \circ r_{G_{\alpha_1},P_{\alpha_1'}}\colon \Rep_{\alpha_1}\rightarrow \Rep_{\alpha_2}.\] Let $\le$ the Bruhat order on $W(\alpha_1,\alpha_2)$ and $w_0\ge \ldots\ge w_k$ the corresponding ordering of the elements in $W(\alpha_1,\alpha_2)$. Then \[r_{\alpha_2}\circ \mathrm{Ind}_{\alpha_1}\colon\Rep_{\alpha_1}\rightarrow \Rep_{\alpha_2}\] is glued together from the exact functors $F(w), w\in W(\alpha_1,\alpha_2)$, \emph{i.e.} there exists a filtration
\[0=F_{-1}\subseteq F_{0}\subseteq\ldots \subseteq F_k=r_{\alpha_2}\circ \mathrm{Ind}_{\alpha_1}\]
of $r_{\alpha_2}\circ \mathrm{Ind}_{\alpha_1}$ such that \[F_i\bs F_{i-1}\cong F(w_i),\, \ain{i}{0}{k}.\]
As a consequence
\[[r_{\alpha_2}\circ \mathrm{Ind}_{\alpha_1}]=\sum_{w\in W(\alpha_1,\alpha_2)}[F(w)].\]

This can be described in a more combinatorial way, see for example \cite[Section 1.2]{ML}. 
Let $\alpha=(n),\, \alpha_1=(m_1,\ldots,m_t),\,\alpha_2=(n_1,\ldots,n_{t'})$, $\pi_1\otimes\ldots\otimes \pi_t\in \Rep_{\alpha_1}$ and $\mathrm{Mat}^{\alpha_1,\alpha_2}$ the set of $t\times {t'}$ matrices $B=(b_{i,j})$ with non-negative integer coefficients such that for all $(i,j)\in \{1,\ldots,t\}\times \{1,\ldots,t'\}$ 
\[\sum_{j=1}^{t'}b_{i,j} =m_i,\, \sum_{i=1}^tb_{i,j} =n_j.\]
For $B\in \mathrm{Mat}^{\alpha_1,\alpha_2}$ and $i\in \{1,\ldots,t\}$, $\beta_{B,i}\coloneq (b_{1,i},\ldots,b_{i,{t'}})$ is a composition of $m_i$. Let $l_i$ be the length of $r_{\beta_{B,i}}(\pi_i)$ and write the irreducible decomposition factors of $r_{\beta_{B,i}}(\pi_i)$ as
\[\sigma_i^{(k)}=\sigma_{i,1}^{(k)}\otimes\ldots\otimes \sigma_{i,{t'}}^{(k)},\,\ain{k}{1}{l_i}\]
with $[\sigma_{i,j}^{(k)}]\in \Irr_{b_{i,j}}$. For $j\in\{1,\ldots,{t'}\}$ and a sequence $\underline{k}=(k_1,\ldots,k_t)$ with $\ain{k_i}{1}{l_i}$ set 
\[\Sigma_j^{B,\underline{k}}\coloneq  \sigma_{1,j}^{(k_1)}\otimes\ldots\otimes \sigma_{t,j}^{(k_t)}.\]
Then \[[r_{\alpha_2}(\pi_1\times\ldots\times\pi_t)]=\sum_{B\in \mathrm{Mat}^{\alpha_1,\alpha_2},\,\underline{k}}[\Sigma_1^{B,\underline{k}}\otimes \ldots\otimes \Sigma_{t'}^{B,\underline{k}}].\]
\subsection{ }An irreducible representation $\rho$ of $G_m$ is called \emph{cuspidal} if $r_\alpha(\rho)=0$ for all nontrivial compositions $\alpha$ of $m$ and \emph{supercuspidal} if $\rho$ is not the subquotient of a nontrivially parabolically induced representation. We denote the corresponding subsets of $\Irr$ as $\mathfrak{C}$ resp. $\mathfrak{S}$. From Frobenius reciprocity follows immediately that if $\rho$ is supercuspidal it is cuspidal.

Let $\rho, \rho'$ be cuspidal representations of $G_m$ and $G_{m'}.$ Then by
\cite[§ 4.5]{MST} there exists an unramified character $v_\rho$ of $G_m$ such that $\rho\times\rho'$ is irreducible if and only if $\rho'\cong \rho v_\rho$ or $\rho'\cong \rho v_\rho^{-1}$. We set \[\mathbb{Z}[\rho]\coloneq   \{[\rho v_\rho^k],k\in\mathbb{Z}\}\]
and let $o(\rho)$ be the cardinality of $\mathbb{Z}[\rho]$.
One can associate to $\rho$ a finite extension $\mathrm{k}(\rho)$ of $\mathrm{k}$ of cardinality $q(\rho)$, see \cite[Section 4]{MST}, and define a number $e(\rho)$ as the smallest integer $k$ such that \[1+q(\rho)+\ldots+q(\rho)^{k-1}=0\mod \ell.\] Then $e(\rho)$ satisfies \[e(\rho)= \begin{cases}
    o(\rho)&\text{ if }o(\rho)>1,\\
    \ell&\text{ if } o(\rho)=1,
\end{cases}\] by \cite[Lemma 4.41]{MST}.

There exist surjective maps with finite fibers
\[\mathrm{cusp}:\Irr\rightarrow \mathbb{N}(\mathfrak{C})\text{ and }\mathrm{scusp}:\Irr\rightarrow \mathbb{N}(\mathfrak{S})\]
called \emph{cuspidal support} and \emph{supercuspidal support}.
They are defined by \[\cus^{-1}([\rho_1]+\ldots+[\rho_k])=\{ [\pi]\in \Irr: \pi \hra \rho_{\sigma(1)}\times\ldots\times \rho_{\sigma(k)}\text{ for some }\sigma\in S_k\},\]
\[\scus^{-1}([\rho_1]+\ldots+[\rho_k])=\{[\pi]\in \Irr: [\pi]\le[\rho_1\times\ldots\times \rho_k]\}.\]
We call $k$ the length of the cuspidal support $[\rho_1]+\ldots+[\rho_k]$. A cuspidal representation $\rho$ is called \emph{$\square$-irreducible} if $o(\rho)>1$ and we say an irreducible representation has $\square$-irreducible cuspidal support if its cuspidal support consists of $\square$-representations.
\begin{lemma}\label{L:H2}
Let $\pi$ be an irreducible representation and $\mathfrak{s}=\cus(\pi)$. We write \[\mathfrak{s}=\mathfrak{s}_1+\ldots+\mathfrak{s}_k\] with $\mathfrak{s_i}\in \mathbb{N}(\mathbb{Z}[\rho_i])$ and $\mathbb{Z}[\rho_i] \neq \mathbb{Z}[\rho_j]$ for $i\neq j$. Then there exist irreducible representations $\pi_1,\ldots,\pi_k$ with $\cus(\pi_i)=\mathfrak{s}_i$ and
\[\pi\cong \pi_1\times\ldots\times \pi_k\]
\end{lemma}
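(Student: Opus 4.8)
The plan is to induct on $k$, the case $k=1$ being trivial. For the inductive step it suffices to produce irreducible representations $\pi_1$ with $\cus(\pi_1)=\fs_1$ and $\pi'$ with $\cus(\pi')=\fs_2+\cdots+\fs_k$ together with an isomorphism $\pi\cong\pi_1\times\pi'$; applying the inductive hypothesis to $\pi'$ then gives the full factorization. The one input I would single out is that cuspidal representations on distinct lines commute: if $\mathbb{Z}[\rho]\neq\mathbb{Z}[\rho']$ then $\rho\times\rho'$ and $\rho'\times\rho$ are both irreducible (reducibility would force $\rho'\cong\rho v_\rho^{\pm1}\in\mathbb{Z}[\rho]$, contradicting \cite[\S 4.5]{MST}) and have the same class in $\RR$, hence are isomorphic. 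Applying $\ind$ to such an isomorphism and using transitivity of parabolic induction, one may freely interchange two neighbours lying on different lines inside any product of cuspidal representations.

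\emph{Extracting the factorization.} By definition of the cuspidal support, $\pi\hookrightarrow\rho_1\times\cdots\times\rho_n$ for cuspidal representations realizing $\fs$. Rearranging the factors as above, I may assume all cuspidals on the line of $\rho_1$ come first; let $\mu$ be their product and $\nu$ the product of the remaining factors, so $\pi\hookrightarrow\mu\times\nu=\ind_{P_{(\deg\mu,\deg\nu)}}(\mu\otimes\nu)$. Frobenius reciprocity yields a nonzero map $r_{(\deg\mu,\deg\nu)}(\pi)\to\mu\otimes\nu$; let $\pi_1\otimes\pi'$ be an irreducible quotient of its image. As a subquotient of $\mu\otimes\nu$ it has $\pi_1\le\mu$, $\pi'\le\nu$, so $\cus(\pi_1)=\fs_1$ and $\cus(\pi')=\fs_2+\cdots+\fs_k$ (the cuspidal support is constant on the irreducible constituents of a product of cuspidals). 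As a quotient of $r_{(\deg\mu,\deg\nu)}(\pi)$, Frobenius reciprocity gives $\pi\hookrightarrow\pi_1\times\pi'$. Hence everything comes down to showing that $\pi_1\times\pi'$ is irreducible.

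\emph{The key irreducibility statement.} I would prove the following more general claim: if $\tau,\sigma$ are irreducible and $\cus(\tau)$, $\cus(\sigma)$ are supported on disjoint sets of lines $\mathcal A$, $\mathcal B$, then $\tau\times\sigma$ is irreducible. Step one: applying the combinatorial form of the Geometric Lemma recalled in \Cref{S:geolem} to $r_{(\deg\tau,\deg\sigma)}(\tau\times\sigma)$, and using that no nonzero part of $\cus(\sigma)$ is supported on $\mathcal A$, the only matrix $B$ producing a constituent $\xi_1\otimes\xi_2$ with $\cus(\xi_1)=\cus(\tau)$ is the diagonal one; hence $\tau\otimes\sigma$ occurs in $r_{(\deg\tau,\deg\sigma)}(\tau\times\sigma)$ with multiplicity exactly one and is the unique constituent whose first tensor factor has cuspidal support $\cus(\tau)$. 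Step two: let $W$ be any irreducible constituent of $\tau\times\sigma$; then $\cus(W)=\cus(\tau)+\cus(\sigma)$, so $W$ embeds into a product of cuspidals that, after moving $\mathcal B$-cuspidals past $\mathcal A$-cuspidals, can be arranged with all $\cus(\tau)$-cuspidals in front. By Frobenius reciprocity the corresponding cuspidal tensor is a quotient of the Jacquet module of $W$ along the composition into the degrees of these cuspidals; since that parabolic refines $P_{(\deg\tau,\deg\sigma)}$, transitivity of Jacquet restriction exhibits a constituent $\xi_1\otimes\xi_2$ of $r_{(\deg\tau,\deg\sigma)}(W)$ with $\cus(\xi_1)=\cus(\tau)$, necessarily isomorphic to $\tau\otimes\sigma$ by step one. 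Thus $\tau\otimes\sigma$ is a constituent of $r_{(\deg\tau,\deg\sigma)}(W)$ for every constituent $W$ of $\tau\times\sigma$; comparing the multiplicity of $\tau\otimes\sigma$ on the two sides of $[r_{(\deg\tau,\deg\sigma)}(\tau\times\sigma)]=\sum_W m_W\,[r_{(\deg\tau,\deg\sigma)}(W)]$ forces $\sum_W m_W\le 1$, so $\tau\times\sigma$ is irreducible.

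The main obstacle is the irreducibility statement of the last paragraph, and within it the book-keeping in the Geometric Lemma showing that the disjointness of $\mathcal A$ and $\mathcal B$ forbids any mixing of the two families of lines — this is what underlies both the multiplicity-one statement of step one and the location of $\tau\otimes\sigma$ inside $r_{(\deg\tau,\deg\sigma)}(W)$ in step two. A lesser point to handle carefully is that the rearrangements of cuspidal factors are genuine isomorphisms of the ambient induced representations, which follows from transitivity of parabolic induction applied to $\rho\times\rho'\cong\rho'\times\rho$. (Alternatively, the lemma can be deduced from the classification results of \cite{M-S}.)
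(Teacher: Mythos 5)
Your overall strategy coincides with the paper's: embed $\pi$ into $\pi_1\times\cdots\times\pi_k$ with $\cus(\pi_i)=\fs_i$ by commuting cuspidal factors lying on distinct lines, then show the product is irreducible. The difference is that the paper does not prove the irreducibility at all — it cites \cite[Proposition 5.9]{M-S} and remarks that the method extends to $k>2$ — whereas you attempt a self-contained argument, and that is where a genuine gap appears.

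The gap is the repeated, tacit identification of cuspidal support with supercuspidal support. You assert that ``the cuspidal support is constant on the irreducible constituents of a product of cuspidals'' and, in Step two, that every irreducible constituent $W$ of $\tau\times\sigma$ satisfies $\cus(W)=\cus(\tau)+\cus(\sigma)$. Mod $\ell$ both statements are false in general: the product $\varrho\times\varrho v_\varrho\times\cdots\times\varrho v_\varrho^{n-1}$ with $n=e(\varrho)\ell^r$ has the cuspidal constituent $\st(\varrho,n)$, whose cuspidal support is the single class $[\st(\varrho,n)]$ rather than the sum of the factors; only the \emph{super}cuspidal support is constant on constituents. Nor does disjointness of the cuspidal lines $\mathbb{Z}[\rho_i]$ rescue this for free, because distinct cuspidal lines can have overlapping supercuspidal support — e.g.\ the line $\mathbb{Z}[\varrho]$ of a supercuspidal $\varrho$ and the singleton line of $\st(\varrho,n)$. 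In such a configuration the Geometric Lemma bookkeeping of your Step one only controls the supercuspidal supports of the pieces of $r(\sigma)$, so it does not by itself exclude an off-diagonal matrix $B$ contributing a first factor isomorphic to $\tau$; and in Step two a ``rogue'' constituent $W$ with $\cus(W)\neq\cus(\tau)+\cus(\sigma)$ would not be seen by your final multiplicity count, so irreducibility would not follow. The same issue already affects the extraction step, where you need $\cus(\pi_1)=\fs_1$ and not merely $\scus(\pi_1)=\scus(\fs_1)$. Ruling all of this out is precisely the content of the compatibility results on cuspidal supports in \cite[\S 5]{M-S}, culminating in Proposition 5.9; your closing parenthetical that the lemma ``can alternatively be deduced from the classification results of \cite{M-S}'' is in fact the route the paper takes, and in the current state of your argument it is not an alternative but a necessary input.
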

\begin{proof}
Since parabolic induction is exact, we can find irreducible representations $\pi_1,\ldots,\pi_k$ such that $\cus(\pi_i)=\mathfrak{s}_i$ and $\pi\hra \pi_1\times\ldots\times \pi_k$. In \cite[Proposition 5.9]{M-S} it was proven that if $k=2$, $\pi_1\times\ldots\times \pi_k$ is irreducible and their method extends easily to the general case $k>2$.
Thus the claim follows.
\end{proof}
\subsection{ Multisegments} We will now fix our notations regarding segments, the central combinatorial objects in the classification of \cite{Zel} and \cite{M-S}.
For $\rho$ a cuspidal representation of $G_m$ and integers $a\le b$ we define a \textit{segment} as the sequence \[[a,b]_\rho\coloneq (\rho v_\rho^a,\rho v_\rho^{a+1},\ldots,\rho v_\rho^b).\] The length of such a segment is \[l([a,b]_\rho)\coloneq b-a+1\] and its degree is \[\deg([a,b]_\rho)\coloneq m(b-a+1).\] 
We will also sometimes consider a formal, empty segment $[]$ of length and degree $0$.
Two segments $[a,b]_\rho$ and $[a',b']_{\rho'}$ are said to be equivalent if they have the same length and $[\rho v_\rho^{a+i}]=[\rho' v_{\rho'}^{a'+i}]$ 
for all $i\in \mathbb{Z}$.
The set of equivalence classes of segments will be denoted by $\mathcal{Seg}$ and for a fixed $\rho$ the set of segments of the form $[a,b]_\rho$ will be denoted as $\mathcal{S}(\rho)$ and we call them $\rho$-segments.

If $\Delta=[a,b]_\rho$ is a segment, we let $a_\rho(\Delta)=a$ and $b_\rho(\Delta)=b$ seen as elements in $\ZZ/ (o(\rho)\ZZ)$. We write 
 \[^-\Delta\coloneq[a+1,b]_\rho,\,\Delta^-\coloneq[a,b-1]_\rho,\,\Delta^+\coloneq[a,b+1]_\rho,\, ^+\Delta\coloneq[a-1,b]_\rho,\,[a]_\rho\coloneq [a,a]_\rho \]
 and $\De^\lor\coloneq [-b,-a]_{\rho^\lor}$. The operation $\De\mapsto {}^-\De$ will sometimes be called \emph{shortening the segment } $\De$ \emph{by }$1$\emph{ on the left} and similarly for the others.
 If $a+1>b$, $^-\De$ is the empty segment and similarly for the other operations.
A segment $\Delta=[a,b]_\rho$ \emph{precedes} $\Delta'=[a',b']_{\rho'}$ if the sequence \[(\rho v_\rho^a,\ldots,\rho v_\rho^b ,\rho' v_{\rho'}^{a'},\ldots,\rho' v_{\rho'}^{b'})\]
contains a subsequence which is, up to isomorphism, a segment of length greater than $l(\De)$ and $l(\De')$.
The segments $\De$ and $\De'$ are called \emph{unlinked} if $\De$ does not precede $\De'$ and $\De'$ does not precede $\De$.

A \emph{multisegment} $\fm=\De_1+\ldots+\De_k$ is a formal finite sum of equivalence classes of segments, \emph{i.e.} an element in $\mathbb{N}[\mathcal{S}]$. We extend the notion of length and degree linearly to multisegments as well as the dual operation $(-)^\lor$. The set of multisegments is denoted by $\mathcal{MS}$ and $\mathcal{MS}(\rho)$ is the set of multisegments $\fm=\De_1+\ldots+\De_k$ such that the equivalence class of $\De_i$ is contained in $\mathcal{Seg}(\rho)$ for all $\ain{i}{1}{n}$. 
A multisegment $\fm$ contains a multisegment $\fm'$ if there exists a multisegment $\fm''$ such that $\fm$ is equivalent to $\fm'+\fm''$. Moreover, $\fm$ is called \emph{aperiodic} if it does not contain a multisegment of the form
\[[a,b]_\rho+[a+1,b+1]_\rho+\ldots+[a+e(\rho)-1,b+e(\rho)-1]_\rho.\]
We let $\Ms_{ap}$ be the set of aperiodic multisegments and $\Ms(\rho)_{ap}\coloneq \Ms_{ap}\cap \mathcal{MS}(\rho)$.
Let $\fm=[a_1,b_1]_{\rho_1}+\ldots+[a_k,b_k]_{\rho_k}$ be a multisegment. We set \[\fm^1\coloneq   [b_1]_{\rho_1}+\ldots+[b_k]_{\rho_k},\] \[\fm^-\coloneq   [a_1,b_1-1]_{\rho_1}+\ldots+[a_k,b_k-1]_{\rho_k}\] and define recursively $\fm^{-(s+1)}\coloneq   (\fm^{-s})^-$ and $\fm^{s+1}\coloneq   (\fm^{-s})^{1}$.
Let $l$ be the largest natural number such that $\fm^l\neq 0$ and define the partition ${\mu_{\fm}\coloneq   (\deg (\fm^1),\ldots,\deg(\fm^l))}$ of $\deg(\fm)$.

 We call $\fm=\De_1+\ldots+\De_k$ {unlinked} if $\De_i$ and $\De_j$ are pairwise unlinked for all $i\neq j,\, \ain{i,j}{1}{k}$.
As for representations, there exists a surjective map
\[\mathrm{cusp}_\Ms:\Ms_{ap}\rightarrow \mathbb{N}(\mathfrak{C}),\]
which maps $[a,b]_\rho\mapsto [\rho v_\rho^a]+\ldots+[\rho v_\rho ^b]$ and is extended linearly to multisegments. We call a multisegment \emph{banal} if its cuspidal support does not contain the cuspidal line $\ZZ[\rho]$.
\subsection{ Classification}\label{S:cusclas}We are now going to recall the constructions of \cite{M-S} and \cite{alma991023733359705251}. Let $\rho$ be a supercuspidal representation of $G_m$ and $n=e(\rho)\ell^r$ for some $r\in\mathbb{Z}_{\ge0}$. Then the representation
\[\rho\times \rho v_\rho\times\ldots\times \rho v_\rho^{n-1}\] contains a unique cuspidal subquotient, which is denoted by $\st(\rho,n)$ and ${o(\st(\rho,n))=1}$. Moreover, every cuspidal non-supercuspidal representation is of the above form and if $\st(\rho,n)\cong  \st(\rho',n')$ then $n=n'$ and $\mathbb{Z}[\rho]=\mathbb{Z}[\rho']$, see \cite[Section 6]{M-S}. In particular, every $\square$-irreducible cuspidal representation is supercuspidal. We let $\scu(G_m)$ be the set of isomorphism classes of $\square$-irreducible cuspidal representations of $G_m$ and $\scu\coloneq\bigcup_{m\ge 0}\scu(G_m).$
We write \[\mathrm{scusp}_{\Ms}\colon \Ms\rightarrow \mathbb{N}(\mathfrak{S})\] for the linear map sending a segment
\[[a,b]_{\st(\rho, n)}\mapsto \sum_{i=0}^{n-1}([\rho v_\rho^{a+i}]+\ldots +[\rho v_\rho^{b+i}]).\]
Fix a cuspidal representation $\rho$ of $G_m$. 
To each segment $[a,b]_\rho$ we are going to associate two irreducible representations $\Z([a,b]_\rho)$ and $\mathrm{L}([a,b]_\rho).$ This requires us to first consider $\He_R(n,q(\rho))$, the $R$-Hecke-algebra generated by $S_1,\ldots,S_{n-1}, X_1,\ldots,X_n$ satisfying the relations
\begin{enumerate}
    \item $(S_i+1)(S_i-q(\rho))=0,\, 1\le i\le n-1$
    \item $S_iS_j=S_jS_i,\, |i-j|\ge 2$
    \item $S_iS_{i+1}S_i=S_{i+1}S_iS_{i+1},\, 1\le i\le n-2$
    \item $X_iX_j=X_jX_i,\, 1\le i,j\le n$
    \item $X_jS_i=S_iX_j,\, i\notin \{j,j-1\}$
    \item $S_iX_iS_i=q(\rho)X_{i+1},\, 1\le i\le n-1$
\end{enumerate}
We let $\Irr(\Omega_{\rho,n})^*$ be the set of all isomorphism classes of 
irreducible representations $\pi$ of $G_{nm}$ whose cuspidal support is up to inertial equivalence contained in $\bbrh$. 
By \cite[§4.4]{MST} there exists a bijection \[\xi_{\rho,n}\colon\Irr(\Omega_{\rho,n})^*\iso \{\text{isomorphism classes of simple }\mathscr{H}(n,q(\rho))\text{-modules}\}.\]
Let $a,b\in \mathbb{Z}$ such that $b-a+1=n$. Then $\mathscr{H}_R(n,q(\rho))$ has two $1$-dimensional modules, $\mathscr{Z}(a,b)$, defined by 
\[S_i\mapsto q(\rho),\, X_j\mapsto q(\rho)^{a+j-1}\] and $\mathcal{L}(a,b)$, defined by \[S_i\mapsto -1,\, X_j\mapsto q(\rho)^{b-j+1}.\]
To a segment $[a,b]_\rho$ we can associate now an irreducible subrepresentation $\Z([a,b]_\rho)$ resp. irreducible quotient $\mathrm{L}([a,b]_\rho)$ of \[\rho v_\rho ^a\times\ldots\times \rho v_\rho ^b\] by demanding \[\xi_{\rho,n}(\Z([a,b]_\rho)=\mathscr{Z}(a,b)\text{ and }\xi_{\rho,n}(\mathrm{L}([a,b]_\rho)=\mathcal{L}(a,b).\]
For the empty segment, we let $Z([])$ be the trivial representation of the trivial group.
Note that equivalent segments give rise to isomorphic representations.
The representations $\Z([a,b]_\rho)$ and $\mathrm{L}([a,b]_\rho)$ behave very well under parabolic restriction.
\begin{lemma}[{\cite[Lemma 7.16]{M-S}}]\label{L:N1}
Let $\ain{k}{a}{b}$. Then
\[r_{((k-a)m,(b-k+1)m)}(\Z([a,b]_\rho))=\Z([a,k-1]_\rho)\otimes \Z([k,b]_\rho),\]
\[r_{((b-k+1)m,(k-a)m)}(\mathrm{L}([a,b]_\rho))=\mathrm{L}([k,b]_\rho)\otimes \mathrm{L}([a,k-1]_\rho).\]
\end{lemma}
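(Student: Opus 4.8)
The plan is to read both identities off the defining Hecke-algebra description. By construction $\Z([a,b]_\rho)$ and $\L([a,b]_\rho)$ correspond, under the bijection $\xi_{\rho,n}$ of \cite[§4.4]{MST} with $n=b-a+1$, to the one-dimensional $\He_R(n,q(\rho))$-modules $\mathscr{Z}(a,b)$ and $\mathcal{L}(a,b)$. Writing $n_1=k-a$ and $n_2=b-k+1$, I would first invoke the compatibility of $\xi_{\rho,\bullet}$ with parabolic functors: the block equivalences underlying \cite{MST} intertwine parabolic induction $G_{n_1m}\times G_{n_2m}\to G_{nm}$ with induction along the parabolic subalgebra $\He_R(n_1,q(\rho))\otimes\He_R(n_2,q(\rho))\hra\He_R(n,q(\rho))$, and hence, passing to right adjoints, they intertwine $r_{(n_1m,n_2m)}$ on the relevant block with restriction of $\He_R(n,q(\rho))$-modules along that embedding. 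This reduces the first identity to computing the restriction of $\mathscr{Z}(a,b)$ to $\He_R(k-a,q(\rho))\otimes\He_R(b-k+1,q(\rho))$, and the second to the restriction of $\mathcal{L}(a,b)$ to $\He_R(b-k+1,q(\rho))\otimes\He_R(k-a,q(\rho))$, the order of the two factors being the one dictated by the parabolic $P_{((b-k+1)m,(k-a)m)}$.

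The second step is then an immediate computation of scalars. Under the standard embedding of a parabolic subalgebra, the generators $S_1,\dots,S_{n_1-1},X_1,\dots,X_{n_1}$ of the first factor map to the like-named generators of $\He_R(n,q(\rho))$ and the generators $S_1,\dots,S_{n_2-1},X_1,\dots,X_{n_2}$ of the second factor map to $S_{n_1+1},\dots,S_{n-1},X_{n_1+1},\dots,X_{n}$; since the restriction of a one-dimensional module is again one-dimensional, hence simple, it suffices to track where the $X_j$ go. For $\mathscr{Z}(a,b)\colon S_i\mapsto q(\rho),\,X_j\mapsto q(\rho)^{a+j-1}$ and $n_1=k-a$, the first factor acts by $\mathscr{Z}(a,k-1)$ and, because $X_{n_1+j}\mapsto q(\rho)^{a+n_1+j-1}=q(\rho)^{k+j-1}$, the second factor acts by $\mathscr{Z}(k,b)$, so $r_{((k-a)m,(b-k+1)m)}(\Z([a,b]_\rho))=\Z([a,k-1]_\rho)\otimes\Z([k,b]_\rho)$. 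For $\mathcal{L}(a,b)\colon S_i\mapsto -1,\,X_j\mapsto q(\rho)^{b-j+1}$ restricted to $\He_R(b-k+1,q(\rho))\otimes\He_R(k-a,q(\rho))$, the first factor acts by $\mathcal{L}(k,b)$ and, because $X_{(b-k+1)+j}\mapsto q(\rho)^{b-(b-k+1)-j+1}=q(\rho)^{(k-1)-j+1}$, the second factor acts by $\mathcal{L}(a,k-1)$, which gives the second identity.

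The point that needs care is the compatibility invoked at the start: that the block equivalences of \cite{MST}, and hence the bijections $\xi_{\rho,\bullet}$, are compatible with parabolic induction — equivalently that the types realizing these blocks are compatible with parabolic induction, so that their affine Hecke algebras sit inside one another as parabolic subalgebras. This is standard type-theoretic input but is where the real content lies. As a sanity check, and as a route that avoids it, one could instead bound $[r_{((k-a)m,(b-k+1)m)}(\Z([a,b]_\rho))]$ from above using the geometric lemma applied to $\rho v_\rho^a\times\cdots\times\rho v_\rho^b$ — all factors being cuspidal, the class of $r_{((k-a)m,(b-k+1)m)}(\rho v_\rho^a\times\cdots\times\rho v_\rho^b)$ is the sum over the ordered two-block splittings of the multiset $[\rho v_\rho^a]+\cdots+[\rho v_\rho^b]$ — and then single out the constituent $\Z([a,k-1]_\rho)\otimes\Z([k,b]_\rho)$ via Frobenius reciprocity applied to $\Z([a,b]_\rho)\hra(\rho v_\rho^a\times\cdots\times\rho v_\rho^{k-1})\times(\rho v_\rho^k\times\cdots\times\rho v_\rho^b)$; but promoting this to an equality needs a multiplicity-one statement that in the modular setting is cleanest to extract on the Hecke side, so the computation above is the route I would take.
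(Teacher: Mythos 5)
This lemma is quoted from \cite[Lemma 7.16]{M-S} and the paper gives no proof of its own, so the comparison is with the source: your argument — transfer to the affine Hecke algebra via $\xi_{\rho,n}$, use compatibility of the type-theoretic equivalences with parabolic induction (hence, by adjunction, of Jacquet functors with restriction to parabolic subalgebras), then restrict the one-dimensional modules $\mathscr{Z}(a,b)$ and $\mathcal{L}(a,b)$ — is essentially the proof in \cite{M-S}, and your scalar computations (including the reversed order of factors for $\mathrm{L}$) check out. You correctly isolate the only substantive input, namely the compatibility of the block equivalences with parabolic functors, which is indeed established in \cite{MST}; note also that since Jacquet functors preserve cuspidal support, the restriction stays within the relevant blocks, so the one-dimensionality on the Hecke side does give equality (not just containment) on the group side.
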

The structure of representations induced from representations of the form $\Z([a,b]_\rho)$ or $\mathrm{L}([a,b]_\rho)$ is more complex and will preoccupy us throughout \Cref{S:Cons}. A starting point is the following theorem.
\begin{theorem}[{\cite[Theorem 7.26]{M-S}}]\label{T:N2}
Let $\fm=\De_1+\ldots+\De_k$. Then the following are equivalent.
\begin{enumerate}
    \item $\Z(\De_1)\times\ldots\times \Z(\De_k)$ is irreducible.
    \item $\mathrm{L}(\De_1)\times\ldots\times \mathrm{L}(\De_k)$ is irreducible.
    \item $\fm$ is unlinked.
\end{enumerate}
\end{theorem}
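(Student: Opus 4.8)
The plan is to prove the chain of implications $(1)\Leftrightarrow(3)$ and $(2)\Leftrightarrow(3)$, exploiting the duality between $\Z$ and $\L$ to halve the work. Since $\L(\De)^\lor \cong \L(\De^\lor)$ and $\Z(\De)^\lor \cong \Z(\De^\lor)$, and parabolic induction commutes with taking contragredients up to reordering factors, $\Z(\De_1)\times\cdots\times\Z(\De_k)$ is irreducible iff $\Z(\De_1^\lor)\times\cdots\times\Z(\De_k^\lor)$ is; similarly for $\L$. But the more useful duality here is the Aubert--Zelevinsky involution together with the fact (from \cite{M-S}) that $\langle\De\rangle = \Z(\De)^* \cong \L(\De)$ on a single segment: applying the involution $(\cdot)^*$, which is exact on Grothendieck groups and sends irreducibles to irreducibles, turns a product of $\Z$'s into a product of $\L$'s and preserves irreducibility. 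Hence $(1)\Leftrightarrow(2)$ is formal, and it suffices to prove $(1)\Leftrightarrow(3)$.

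For the direction $(3)\Rightarrow(1)$, I would argue by induction on $k$, the key case being $k=2$ with $\De_1,\De_2$ unlinked; the general case follows because any reordering of pairwise-unlinked segments can be reached by transpositions of adjacent unlinked ones, and each such swap is an isomorphism $\Z(\De_i)\times\Z(\De_{i+1})\cong\Z(\De_{i+1})\times\Z(\De_i)$ by the $k=2$ case, combined with an inductive hypothesis on sub-products. For $k=2$: using \Cref{L:N1} one computes the relevant Jacquet modules of $\Z(\De_1)\times\Z(\De_2)$ via the Geometric Lemma of \Cref{S:geolem}. Unlinkedness forces the matrices $B$ contributing to $r_\alpha$ to be severely constrained, and one identifies that $\soc(\Z(\De_1)\times\Z(\De_2))$ and the cosocle have the same (irreducible) Jacquet module along a suitable parabolic, which by Frobenius reciprocity forces $\Z(\De_1)\times\Z(\De_2)$ to have a unique irreducible subrepresentation equal to its unique irreducible quotient, with multiplicity one; counting lengths then gives irreducibility. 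This is essentially the Zelevinsky argument, and the only care needed over $\fl$ is that \Cref{L:N1} and the Geometric Lemma are already established in this generality, so the combinatorics is unchanged.

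For the contrapositive $\neg(3)\Rightarrow\neg(1)$: suppose $\De_1$ precedes $\De_2$ (after reordering). Here one writes $\De_1\cup\De_2$ and $\De_1\cap\De_2$ for the linked segments' union and intersection, and exhibits $\Z(\De_1\cup\De_2)\times\Z(\De_1\cap\De_2)$ (or the appropriate $\Z$ of the two-segment multisegment $\{\De_1\cup\De_2,\De_1\cap\De_2\}$, with the convention that an empty intersection drops out) as a proper subquotient of $\Z(\De_1)\times\Z(\De_2)$ distinct from the socle; concretely one shows both $\Z(\De_1)\times\Z(\De_2)$ and this other representation are subquotients of the full induced representation $\rho v_\rho^{a_1}\times\cdots$ and compares cuspidal supports and Jacquet modules to see $\Z(\De_1)\times\Z(\De_2)$ has length $\ge 2$. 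The main obstacle is the $k=2$ irreducibility in $(3)\Rightarrow(1)$: one must ensure the Jacquet-module/socle argument does not secretly use multiplicity-one or semisimplicity facts that can fail in characteristic $\ell$. The safe route is to invoke \Cref{C:3ind} and the socle-irreducibility machinery: show by the Geometric Lemma that the "leading" Jacquet module of $\Z(\De_1)\times\Z(\De_2)$ is irreducible with multiplicity one, deduce the socle is irreducible with multiplicity one, run the same computation on the contragredient to get the cosocle irreducible with multiplicity one, and finally check that socle and cosocle coincide (they are both $\Z$ of the multisegment $\De_1+\De_2$) so that a proper subrepresentation and a proper quotient would together overflow the multiplicity-one constraint — forcing irreducibility. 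Since all the inputs (Geometric Lemma, \Cref{L:N1}, \Cref{C:3ind}) are available verbatim over $\fl$, no new char-$\ell$ input is needed.
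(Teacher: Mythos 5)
First, a point of order: the paper does not prove \Cref{T:N2} at all --- it is imported verbatim as \cite[Theorem 7.26]{M-S}, so there is no internal argument to compare yours against; your proposal is an attempt to reprove a quoted input of the paper. Judged on its own terms, the sketch reproduces the characteristic-zero (Zelevinsky) skeleton, and the steps you gloss over are precisely the ones that are hard mod $\ell$ and that force \cite{M-S} to a much longer proof.

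Concretely: (i) the reduction $(1)\Leftrightarrow(2)$ via the Aubert--Zelevinsky involution needs both that the involution carries irreducibles to irreducibles over $\fl$ and that $\Z(\De)^*\cong\L(\De)$; the paper's \Cref{T:invol}(3) only asserts the existence of one distinguished constituent $\pi^*$ of $\bd([\pi])$, not that $\bd([\pi])=\pm[\pi^*]$, and in \cite{M-S} these duality facts are developed alongside or after the irreducibility criterion, so your shortcut risks circularity. (ii) For $\neg(3)\Rightarrow\neg(1)$ you invoke the union/intersection decomposition, but the paper itself flags (\ref{E:caseql}) as a statement over $\ql$ only (\Cref{S:motiv}). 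Mod $\ell$ the linking relation is periodic: a segment of length $\ge o(\rho)$ precedes \emph{itself}, and when $o(\rho)=1$ any two $\rho$-segments are linked. In these wrap-around cases $\De_1\cup\De_2$ and $\De_1\cap\De_2$ need not exist as honest segments, any lift to $\ql$ is \emph{unlinked} (so reduction mod $\ell$ of an irreducible lift gives no reducibility), and one must instead pass to the affine Hecke algebra at a root of unity, respectively to $R[\mathrm{S}_n]$ when $o(\rho)=1$, to exhibit the extra constituent. (iii) In $(3)\Rightarrow(1)$ the ``leading Jacquet module'' count is likewise more delicate, because distinct terms of the geometric-lemma filtration can have identical cuspidal supports once these are read modulo $o(\rho)$, so multiplicity one of the relevant term is not automatic. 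None of this makes the statement false, but each point requires input (type theory, Hecke algebras at roots of unity) that the sketch does not supply, which is presumably why the paper treats the theorem as a black box.
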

If some of the segments $\De_i$ are linked, the induced representation is no longer irreducible. The following two lemmata already hint how these induced representations might look like.
\begin{lemma}[{\cite[Proposition 7.17]{M-S}}]\label{L:quot}
If $\rho\in \scu$ then $\Z([a,b]_\rho)$ is the socle of
\[\Z([a,b-1])\times \rho v_\rho ^b\text{ and } \rho v_\rho^{a}\times \Z([a+1,b])\]
and the cosocle of 
\[\Z([a+1,b])\times \rho v_\rho ^a\text{ and } \rho v_\rho^{b}\times \Z([a,b-1]).\]
\end{lemma}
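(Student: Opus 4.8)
The plan is to deduce the statement from the Hecke-algebra translation already in place, rather than arguing inside $\Rep_n$ directly. Recall that $\Z([a,b]_\rho)$ and $\L([a,b]_\rho)$ are defined by the one-dimensional $\He_R(n,q(\rho))$-modules $\mathscr{Z}(a,b)$ and $\mathcal{L}(a,b)$ via the bijection $\xi_{\rho,n}$, and that $\rho v_\rho^a\times\cdots\times\rho v_\rho^b$ corresponds under $\xi$ to the full induced module $\He_R(n,q(\rho))\otimes_{R}\mathbf{1}$ (the module induced from the trivial representation of the finite Hecke algebra, or rather its appropriate analogue adapted to the affine setting). Since $\rho\in\scu$ means $o(\rho)>1$, the parameter $q(\rho)$ has order $o(\rho)>1$ in $R^*$, so the relevant central character $\chi_\fs$ is ``generic enough'' that the module $\rho v_\rho^a\times\Z([a+1,b])$ has a one-dimensional head isomorphic to $\mathscr{Z}(a,b)$; concretely one computes, using \Cref{L:N1} and Frobenius reciprocity, that $\Ho_{G_n}(\rho v_\rho^a\times\Z([a+1,b]),\Z([a,b]_\rho))\neq 0$ and, since by \Cref{L:N1} the cosocle is constrained, it is exactly one-dimensional, giving the socle statement on the left; the statement for $\rho v_\rho^b\times\Z([a,b-1])$ and the cosocle statements follow by the same computation on the other side, and the two socle statements are related to the two cosocle statements by applying $(-)^\lor$ together with $\Z([a,b]_\rho)^\lor\cong\Z([-b,-a]_{\rho^\lor})$.

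More concretely, the key steps in order are as follows. First I would record that $\rho v_\rho^a\times\Z([a+1,b])$ has finite length and compute $r_{(m,(b-a)m)}$ of it via the geometric lemma of \Cref{S:geolem}; the only subquotient of the form $\rho v_\rho^a\otimes(\text{irreducible})$ occurring is $\rho v_\rho^a\otimes\Z([a+1,b])$, appearing with multiplicity one (here one uses the aperiodicity/$o(\rho)>1$ hypothesis to rule out the ``wrap-around'' contributions). Second, by \Cref{L:N1} the same restriction of $\Z([a,b]_\rho)$ is exactly $\rho v_\rho^a\otimes\Z([a+1,b])$, so by Frobenius reciprocity $\Z([a,b]_\rho)$ embeds in $\rho v_\rho^a\times\Z([a+1,b])$; third, any irreducible submodule $\tau$ of $\rho v_\rho^a\times\Z([a+1,b])$ must have $r_{(m,(b-a)m)}(\tau)\supseteq \rho v_\rho^a\otimes(\text{something})$, and the multiplicity-one statement forces $\tau\cong\Z([a,b]_\rho)$, so the socle is irreducible and equals $\Z([a,b]_\rho)$. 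The statement for $\Z([a,b-1])\times\rho v_\rho^b$ is symmetric, using the other restriction in \Cref{L:N1}. The cosocle statements then come from dualizing: $\soc(\pi)^\lor\cong (\pi^\lor)/\mathrm{rad}=\mathrm{cosoc}(\pi^\lor)$, $(\rho v_\rho^a\times\Z([a+1,b]))^\lor\cong \Z([a+1,b])^\lor\times(\rho v_\rho^a)^\lor$, and translating the indices through $\De\mapsto\De^\lor$.

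I expect the main obstacle to be the multiplicity-one bookkeeping in the geometric lemma computation: one must check that among all matrices $B\in\mathrm{Mat}^{\alpha_1,\alpha_2}$ and all choices of $\underline{k}$, the only term contributing a subquotient of $r_{(m,(b-a)m)}(\rho v_\rho^a\times\Z([a+1,b]))$ whose first tensor factor is $\rho v_\rho^a$ is the ``diagonal'' one, and that it contributes with multiplicity exactly one. This is where the cuspidality of $\rho$ (so that $r_\alpha(\rho)=0$ for nontrivial $\alpha$) and, crucially, $o(\rho)>1$ enter: the latter guarantees that $\rho v_\rho^a\not\cong\rho v_\rho^{c}$ for the other cuspidal constituents $\rho v_\rho^{c}$ of $\Z([a+1,b])$ appearing in its restriction, so no spurious matching of $\rho v_\rho^a$ with an ``interior'' constituent occurs. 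Everything else is a formal consequence of Frobenius reciprocity, exactness of parabolic induction, and the duality identity for $\Z$; alternatively, if one prefers, the whole lemma can be quoted from \cite[Proposition 7.17]{M-S} directly, and the above is the sketch of why it holds in the present ($\ell\neq 0$, $o(\rho)>1$) setting.
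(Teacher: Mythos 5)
The paper offers no proof of \Cref{L:quot} at all: it is imported verbatim from \cite[Proposition 7.17]{M-S}, so there is no internal argument to compare against. Your sketch is, in substance, the standard proof of that proposition, and its core is correct. The Hecke-algebra framing in your first paragraph is not actually used and could be deleted; the real argument is the one in your second paragraph. Concretely, the Geometric Lemma together with \Cref{L:N1} gives
$[r_{(m,(b-a)m)}(\rho v_\rho^a\times\Z([a+1,b]))]=[\rho v_\rho^a\otimes\Z([a+1,b])]+[\rho v_\rho^{a+1}\otimes(\rho v_\rho^a\times\Z([a+2,b]))]$,
and since $o(\rho)>1$ forces $\rho v_\rho^a\not\cong\rho v_\rho^{a+1}$, the constituent $\rho v_\rho^a\otimes\Z([a+1,b])$ occurs with multiplicity one; Frobenius reciprocity then yields both the embedding of $\Z([a,b]_\rho)$ and the fact that every irreducible submodule must contribute this constituent, so the socle is irreducible and equal to $\Z([a,b]_\rho)$. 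You also correctly isolate where $\rho\in\scu$ enters: the only coincidence to exclude is $\rho v_\rho^a\cong\rho v_\rho^{a+1}$, because \Cref{L:N1} computes the inner Jacquet module on the nose, so no ``interior'' cuspidal constituent can interfere even when $b-a+1\ge o(\rho)$.

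The one genuine slip is in the passage to the cosocle statements. For normalized induction one has $(\pi_1\times\pi_2)^\lor\cong\pi_1^\lor\times\pi_2^\lor$ with the factors in the \emph{same} order (consistent with $\Z(\fm)^\lor\cong\Z(\fm^\lor)$ in \Cref{T:Zinj}); your formula $(\rho v_\rho^a\times\Z([a+1,b]))^\lor\cong\Z([a+1,b])^\lor\times(\rho v_\rho^a)^\lor$ swaps them. Taken literally, dualizing with the swap converts the socle statement for $\rho v_\rho^a\times\Z([a+1,b])$ into a claim about the cosocle of a product of the form $\Z([a',b'-1]_{\rho'})\times\rho' v_{\rho'}^{b'}$, which is not one of the two cosocle statements of the lemma and is false in general (that cosocle is $\Z([a',b'-1]_{\rho'}+[b',b']_{\rho'})$). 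With the correct, unswapped identity the dualization does produce exactly the two stated cosocle assertions, with $\rho$ replaced by $\rho^\lor$ and $[a,b]$ by $[-b,-a]$, which is harmless since the lemma is proved for all $\rho\in\scu$ at once. Alternatively, the cosocle statements follow directly from the same multiplicity-one computation applied to irreducible quotients via the second adjunction.
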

Next, we recall the definition of a residually non-degenerate representation. We will use an alternative definition from the one in \cite{M-S}, however it was proven in Theorem 9.10 of said paper that the following definition is equivalent to theirs.
An irreducible representation $\pi$ is called \emph{residually non-degenerate} if there exists a multisegment $\fm=\De_1+\ldots+\De_k,\,\De_i=[a_i,b_i]_{\rho_i}$ for $\ain{i}{1}{k}$ such that\[[\pi]=[\mathrm{L}(\De_1)\times\ldots\times \mathrm{L}(\De_k)]\] and $l(\De_i)<e(\rho_i)$ for $\ain{i}{1}{k}$. More generally, for $\alpha=(\alpha_1,\ldots.,\alpha_t)$ a composition, we call a representation of the form $\pi=\pi_1\otimes\ldots\otimes\pi_t$ of $G_\alpha$ residually non-degenerate if each of the $\pi_i$'s is residually non-degenerate.
\begin{lemma}[{\cite[Corollary 8.5]{M-S}}]\label{C:N1}
If $\pi$ is residually non-degenerate and \[[\pi]\le [\Z(\De_1)\times\ldots\times \Z(\De_k)]\]
then $l(\De_i)=1$ for all $\ain{i}{1}{k}$.
\end{lemma}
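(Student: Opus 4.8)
The statement to prove is \Cref{C:N1}: if $\pi$ is residually non-degenerate and $[\pi] \le [\Z(\De_1) \times \ldots \times \Z(\De_k)]$, then every $\De_i$ has length $1$. The plan is to exploit the compatibility of $\mathrm{L}$- and $\Z$-representations with parabolic restriction (\Cref{L:N1}) together with the top layer of the geometric filtration and the combinatorial structure of the partition $\mu_\fm$ attached to the multisegment $\fm = \De_1 + \ldots + \De_k$. Write $\De_i = [a_i, b_i]_{\rho_i}$, and suppose for contradiction that some segment, say $\De_1$, has $l(\De_1) \ge 2$. By hypothesis there is a multisegment $\fn = \Gamma_1 + \ldots + \Gamma_r$ with $[\pi] = [\mathrm{L}(\Gamma_1) \times \ldots \times \mathrm{L}(\Gamma_r)]$ and $l(\Gamma_j) < e(\rho'_j)$ for all $j$.

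First I would pass to the graded pieces under the operators $(-)^1$ and $(-)^{-}$ defined on multisegments in the \emph{Multisegments} subsection. Applying iterated parabolic restriction along the partition $\mu_\fm = (\deg(\fm^1), \ldots, \deg(\fm^l))$ of $\deg(\fm)$ and using \Cref{L:N1}, one computes that $r_{\mu_\fm}(\Z(\De_1) \times \ldots \times \Z(\De_k))$ has, as a distinguished summand, the tensor product $\Z(\fm^1) \otimes \Z(\fm^2) \otimes \ldots$ where $\fm^s$ is a sum of \emph{singleton} segments, i.e. a cuspidal representation. The key point to extract is that the length $l$ equals $\max_i l(\De_i)$, so if $l(\De_1) \ge 2$ then $l \ge 2$ and the partition $\mu_\fm$ has at least two parts. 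On the other side, since each $\Gamma_j$ satisfies $l(\Gamma_j) < e(\rho'_j)$, the representation $\mathrm{L}(\Gamma_j)$ restricts (again by \Cref{L:N1}, the $\mathrm{L}$-version) into pieces of a very controlled shape, and one shows $\pi$ — being residually non-degenerate — has the property that $r_{\mu}(\pi) \ne 0$ forces $\mu$ to be refined by the partition $(\deg \rho'_1, \ldots)$ coming from the cuspidal supports, so the "depth'' of $\pi$ in the sense of the length of $\mu_{\fn}$ is exactly $1$. This is where the hypothesis $l(\Gamma_j) < e(\rho'_j)$ genuinely enters: it is exactly the condition that rules out the periodic/deep phenomena and keeps $\mathrm{L}(\Gamma_j)$ "residually generic''.

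The core of the argument is then a comparison: the multiplicity of $\Z(\fm^1) \otimes \ldots \otimes \Z(\fm^l)$ in $r_{\mu_\fm}$ of the induced product, versus what the residually non-degenerate $\pi$ can contribute. Concretely, I would argue that if $[\pi] \le [\Z(\De_1) \times \ldots \times \Z(\De_k)]$ and $l \ge 2$, then applying $r_{(\deg(\fm^1),\, \deg(\fm) - \deg(\fm^1))}$ and using exactness of parabolic restriction together with \Cref{L:N1}, the restriction $r(\pi)$ would have an irreducible constituent whose first tensor factor has cuspidal support $\mathrm{cusp}_\Ms(\fm^1) = \sum_i [\rho_i v_{\rho_i}^{b_i}]$ of total degree $\deg(\fm^1) < \deg(\fm)$, while its second factor carries the remaining cuspidal support; but for a residually non-degenerate $\pi$ one shows directly from the definition (writing $\pi$ as a constituent of $\mathrm{L}(\Gamma_1) \times \ldots \times \mathrm{L}(\Gamma_r)$ with $l(\Gamma_j) < e(\rho'_j)$, and using the $\mathrm{L}$-restriction formula of \Cref{L:N1}) that any such nonzero restriction already forces each $\De_i$ to contribute its \emph{entire} cuspidal support to a single level, i.e. $b_i = a_i$. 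The contradiction with $l(\De_1) \ge 2$ completes the proof.

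The main obstacle I anticipate is the bookkeeping in the geometric/combinatorial step: pinning down precisely which constituent of $r_{\mu_\fm}(\Z(\De_1) \times \ldots \times \Z(\De_k))$ survives and matching it against the constituents of $r_{\mu_\fm}(\mathrm{L}(\Gamma_1) \times \ldots \times \mathrm{L}(\Gamma_r))$ requires careful use of the Geometric Lemma (the matrix description in \Cref{S:geolem}) and a nonvanishing/multiplicity-one statement at the top layer. An alternative, possibly cleaner route would be to induct on $\deg(\fm)$: strip off the top row $\fm^1$, apply the inductive hypothesis to $\fm^-$, and feed back the constraint; the delicate point there is ensuring residual non-degeneracy is inherited by the relevant restriction, which again hinges on the inequality $l(\Gamma_j) < e(\rho'_j)$.
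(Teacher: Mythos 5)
This lemma is not proved in the paper at all: it is imported verbatim as \cite[Corollary 8.5]{M-S}, so there is no in-paper argument to compare against, and your proposal has to stand on its own. As written it does not. The pivotal step — ``for a residually non-degenerate $\pi$ one shows directly from the definition \dots that any such nonzero restriction already forces each $\De_i$ to contribute its entire cuspidal support to a single level, i.e.\ $b_i=a_i$'' — is a restatement of the lemma, not a derivation of it. Nowhere do you actually use the inequality $l(\Gamma_j)<e(\rho'_j)$ in a computation; you only assert that ``this is where the hypothesis genuinely enters.'' Moreover the intermediate claim that residual non-degeneracy forces ``the depth of $\pi$ in the sense of the length of $\mu_{\mathfrak n}$'' to be $1$ is false: for $\rho$ with $e(\rho)\ge 3$ the representation $\mathrm{L}([0,1]_\rho)$ is residually non-degenerate by definition, yet $\mu_{[0,1]_\rho}=(1,1)$ has length $2$. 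So the comparison of Jacquet-module constituents you sketch never closes.

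Two genuine routes exist. Within this paper's black-box framework, the cleanest is the last clause of \Cref{T:N5}: a residually non-degenerate $\pi$ is tautologically $(n)$-degenerate for the trivial partition $(n)$ of $n=\deg(\fm)$ (since $r_{(n)}(\pi)=\pi$), so $(n)\le\mu_\fm$, i.e.\ $n\le\deg(\fm^1)$; as $\deg(\fm^1)\le\deg(\fm)=n$ with equality exactly when every $\De_i$ has length $1$, the lemma follows in two lines — with the caveat that in \cite{M-S} Theorem 9.19 is itself downstream of Corollary 8.5, so this is circular at the source. The source's actual mechanism is the Hecke-algebra one: under $\xi_{\rho,n}$, residual non-degeneracy corresponds to the occurrence of the sign-type character ($S_i\mapsto -1$), while $\Z(\De_1)\times\cdots\times\Z(\De_k)$ corresponds to a module induced from the trivial-type character ($S_i\mapsto q(\rho)$) of a parabolic subalgebra that is non-trivial as soon as some $l(\De_i)\ge 2$; the hypothesis $l(\Gamma_j)<e(\rho'_j)$ is precisely the non-vanishing of $1+q(\rho)+\cdots+q(\rho)^{l-1}$ that makes these two types incompatible. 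Either ingredient is missing from your argument, and without one of them the proof does not go through.
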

Let $\rho$ be a cuspidal representation of $G_m$, $\alpha$ a composition of $nm$ and $\pi$ an irreducible representation of $G_{nm}$. We say $\pi$ is \emph{$\alpha$-degenerate} if  $r_\alpha(\pi)$ contains a residually non-degenerate representation. It follows from the definition given in \cite[§8]{M-S} that if $\alpha'$ is a permutation of $\alpha$ then $\pi$ is $\alpha$-degenerate if and only if it is $\alpha'$-degenerate.
For $\fm=\De_1+\ldots+\De_n$ we define
\[\I(\fm)=[\Z(\De_1)\times\ldots\times \Z(\De_k)],\]
which is well-defined since $\times$ is commutative in the Grothendieck group.
\begin{theorem}[{\cite[Section 8]{M-S}}]\label{T:N4}
Let $\fm$ be a multisegment of the form $\fm=[a_1]_{\rho_1}+\ldots+[a_k]_{\rho_k}$. Then $\I(\fm)$ contains a unique residually non-degenerate representation denoted $\Z(\fm)$ and any cuspidal representation is residually non-degenerate.
\end{theorem}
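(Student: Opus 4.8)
This statement is recalled from \cite[Section 8]{M-S}; the plan is to run the mod $\ell$, $\mathrm{GL}_n(\mathrm{D})$ incarnation of the Bernstein--Zelevinsky argument that a product of cuspidal representations has a unique non-degenerate constituent.

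Since $\Z([a_i]_{\rho_i}) \cong \rho_i v_{\rho_i}^{a_i}$ is cuspidal, $\I(\fm)$ is the class in $\mathfrak{R}$ of the product $\rho_1 v_{\rho_1}^{a_1} \times \cdots \times \rho_k v_{\rho_k}^{a_k}$ of cuspidal representations; in particular every irreducible subquotient of $\I(\fm)$ has cuspidal support $\cus(\fm)$. I would use the Bernstein--Zelevinsky-type derivatives for $\mathrm{GL}_n(\mathrm{D})$ introduced in \cite{M-S}, together with their \emph{residual} variants, which are exact functors on $\Rep$. The top residual derivative detects residual non-degeneracy: an irreducible $\pi$ is residually non-degenerate if and only if its top residual derivative is nonzero --- this is the comparison \cite[Theorem 9.10]{M-S} between the original definition and the $\mathrm{L}$-data definition recalled above. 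Two inputs are then needed. First, a Leibniz formula: by the geometric lemma of \Cref{S:geolem} in its form for the mirabolic subgroup (as in \cite{ASENS_1977_4_10_4_441_0}), for the \emph{top} derivative only a single term survives, so the top residual derivative of a product $\pi_1 \times \pi_2$ is built canonically out of those of $\pi_1$ and $\pi_2$. Second, that every cuspidal $\rho$ of some $G_m$ is residually non-degenerate: from the $\mathrm{L}$-description this is the triviality $\rho \cong \mathrm{L}([0]_\rho)$ with $l([0]_\rho) = 1 < e(\rho)$ (note $e(\rho) \ge 2$ in every case), which is exactly the last assertion of the theorem.

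Granting these, one iterates the Leibniz formula: the top residual derivative of $\I(\fm)$ is assembled from those of the cuspidals $\rho_i v_{\rho_i}^{a_i}$, each of which is ``as small as possible'' (the top derivative of a cuspidal representation), and one reads off that $\I(\fm)$ has, counted with multiplicity, exactly one residually non-degenerate Jordan--Hölder constituent, occurring once; this constituent is $\Z(\fm)$, which gives both existence and uniqueness. (If one prefers, \Cref{L:H2} and \Cref{T:N2} first reduce everything to the case $\fm \in \Ms(\rho)$ for a single $\rho$, residual non-degeneracy being compatible with that factorisation, but this is not logically necessary for the argument.)

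The real work lies in constructing and controlling these functors over $R = \fl$ and in the comparison \cite[Theorem 9.10]{M-S}. The naive twisted Jacquet (Whittaker) functor is not exact in characteristic $\ell$, so one must pass to the integral/residual derivatives; and since $\mathrm{GL}_n(\mathrm{D})$ is not quasi-split when $\mathrm{D} \ne \mathrm{F}$, the non-vanishing for cuspidals --- and the full bookkeeping identifying the irreducibles with nonzero top residual derivative as precisely those of the form $\mathrm{L}(\De_1) \times \cdots \times \mathrm{L}(\De_k)$ with $l(\De_i) < e(\rho_i)$ --- must be extracted from the explicit type-theoretic description of cuspidals rather than from a soft genericity argument. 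Once that machinery is in place, the Leibniz formula and the multiplicity count are formal.
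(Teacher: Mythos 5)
The paper gives no proof of this statement: it is quoted verbatim from \cite[Section 8]{M-S}, and the only ``proof'' on offer here is that citation. Your sketch --- exactness of the residual (integral) derivative functors over $\fl$, the Leibniz-type formula from the geometric lemma in which only one term survives for the top derivative, the equivalence of residual non-degeneracy with non-vanishing of the top residual derivative via \cite[Theorem 9.10]{M-S}, and the observation that cuspidals are residually non-degenerate since $l([0]_\rho)=1<e(\rho)$ --- is the strategy of that reference, so your proposal is consistent with the argument the paper is implicitly relying on rather than an alternative to it.
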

For $\fm$ a multisegment and $l$ maximal such that $\fm^l\neq 0$ we set \[\st(\fm)\coloneq    \Z(\fm^l)\otimes\ldots\otimes \Z(\fm^1).\] We can now define an irreducible representation $\Z(\fm)$ for a general multisegment $\fm$.
\begin{theorem}[{\cite[Theorem 9.19]{M-S}}]\label{T:N5}
Let $\fm$ be a multisegment. Then $\I(\fm)$ contains a unique irreducible subquotient denoted by  $\Z(\fm)$ which is $\mu_\fm$-degenerate. The multiplicity of $\Z(\fm)$ in $\I(\fm)$ is $1$ and the unique irreducible constituent of $r_{\overline{\mu_\fm}}(\Z(\fm))$ which is residually non-degenerate is $\st(\fm)$. 

If $\alpha=(\alpha_1,\ldots,\alpha_k)$ is an ordered composition of $\deg(\fm)$ and $\pi$ is an irreducible $\alpha$-degenerate subquotient of $\I(\fm)$ then $\alpha\le \mu_\fm$ with equality if and only if $\pi=\Z(\fm)$. 
\end{theorem}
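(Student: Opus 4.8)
The plan is induction on $\deg(\fm)$. The base case is when every segment of $\fm$ has length $1$, so that $\mu_\fm=(\deg\fm)$ and $r_{\overline{\mu_\fm}}$ is the identity; this is precisely \Cref{T:N4}. For the inductive step I peel off the ``top layer'' of $\fm$. Write $d^1=\deg(\fm^1)$ and $d^-=\deg(\fm^-)=\deg(\fm)-d^1<\deg(\fm)$, and record the identities: $\mu_\fm$ is $d^1$ followed by $\mu_{\fm^-}$, so $\overline{\mu_\fm}$ is $\overline{\mu_{\fm^-}}$ followed by $d^1$, and $\st(\fm)=\st(\fm^-)\otimes\Z(\fm^1)$. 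The statement I carry along the induction is the strengthened one: $\I(\fm)$ has a unique $\mu_\fm$-degenerate constituent $\Z(\fm)$, of multiplicity $1$ in $\I(\fm)$; and the unique residually non-degenerate constituent of $r_{\overline{\mu_\fm}}(\I(\fm))$ is $\st(\fm)$, it has multiplicity $1$, and it occurs only inside $r_{\overline{\mu_\fm}}(\Z(\fm))$.

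The heart is the computation of $r_{(d^-,d^1)}(\I(\fm))=r_{(d^-,d^1)}(\Z(\De_1)\times\ldots\times\Z(\De_k))$ by the Geometric Lemma of \Cref{S:geolem}. Each term there corresponds to cutting every $\De_i$ into a left and a right part (by \Cref{L:N1} the restriction of $\Z(\De_i)$ at such a cut is irreducible), and the ``diagonal'' cut --- left part $\De_i^-$, right part $[b_i]_{\rho_i}$ for all $i$ --- contributes exactly $[\I(\fm^-)]\otimes[\I(\fm^1)]$. I claim this is the \emph{only} term admitting a constituent $\sigma\otimes\tau$ with $\tau$ residually non-degenerate of degree $d^1$: by \Cref{C:N1} such a $\tau$ forces every right part to have length $\le 1$, and then the degree identity $\sum_i\deg(\text{right part}_i)=d^1=\sum_i\deg(\rho_i)$ with each summand at most $\deg(\rho_i)$ forces every right part to be $[b_i]_{\rho_i}$. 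Hence any such $\sigma\otimes\tau$ satisfies $\sigma\le\I(\fm^-)$, $\tau\le\I(\fm^1)$, so $\tau=\Z(\fm^1)$ by \Cref{T:N4}; and if moreover $\sigma$ is $\mu_{\fm^-}$-degenerate then $\sigma=\Z(\fm^-)$ by the induction hypothesis. Combining this with multiplicity one for $\fm^-$ (induction) and $\fm^1$ (\Cref{T:N4}), the class $\Z(\fm^-)\otimes\Z(\fm^1)$ occurs in $r_{(d^-,d^1)}(\I(\fm))$ with multiplicity exactly $1$; I define $\Z(\fm)$ to be the unique constituent of $\I(\fm)$ whose restriction contains it, which then has multiplicity $1$ in $\I(\fm)$. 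Using transitivity of parabolic restriction together with $\st(\fm^-)\le r_{\overline{\mu_{\fm^-}}}(\Z(\fm^-))$ and the residual non-degeneracy of each $\Z(\fm^s)$ (\Cref{T:N4}), one checks $\st(\fm)\le r_{\overline{\mu_\fm}}(\Z(\fm))$, so $\Z(\fm)$ is $\mu_\fm$-degenerate; running the same peeling in reverse shows that any $\mu_\fm$-degenerate constituent of $\I(\fm)$ must contain $\Z(\fm^-)\otimes\Z(\fm^1)$ under $r_{(d^-,d^1)}$ and hence equals $\Z(\fm)$, and it likewise pins $\st(\fm)$ as the unique residually non-degenerate constituent of $r_{\overline{\mu_\fm}}(\I(\fm))$ with the stated multiplicity.

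For the final assertion, apply the Geometric Lemma to $r_\alpha(\I(\fm))\ge r_\alpha(\pi)$: a residually non-degenerate constituent comes from a way of cutting each $\De_i$ into consecutive pieces spread over the parts of $\alpha$, and \Cref{C:N1} forces every nonempty piece to have length $1$; thus $\De_i$ occupies a subset $T_i$ of the parts with $|T_i|=l(\De_i)$, and $\alpha_j=\sum_{i:\,j\in T_i}\deg(\rho_i)$. Then for every $r$, $\sum_{j=1}^r\alpha_j=\max_{|J|=r}\sum_{j\in J}\alpha_j\le\sum_i\deg(\rho_i)\min(l(\De_i),r)=\sum_{s=1}^r\deg(\fm^s)=\sum_{s=1}^r(\mu_\fm)_s$, i.e. $\alpha\le\mu_\fm$; the ``staircase'' $T_i=\{l-l(\De_i)+1,\ldots,l\}$ (with $l$ the largest part-index used) realizes equality and recovers $\st(\fm)$, and any $\pi$ achieving $\alpha=\mu_\fm$ is $\mu_\fm$-degenerate, hence equals $\Z(\fm)$ by the uniqueness above. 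The main obstacle is the bookkeeping inside the Geometric Lemma, and especially the point that only the diagonal cut can produce a residually non-degenerate top factor --- that is where \Cref{C:N1} and the degree count carry the argument, the remainder being careful transitivity-of-restriction and multiplicity accounting.
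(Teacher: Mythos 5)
The paper does not actually prove this statement --- it is imported verbatim from M\'inguez--S\'echerre as \cite[Theorem 9.19]{M-S} --- so there is no internal proof to measure you against; what you have written is, in substance, a correct reconstruction of the original argument. Your key mechanism (only the ``diagonal'' term of the Geometric Lemma applied to $r_{(\deg\fm^-,\deg\fm^1)}(\I(\fm))$ can carry a residually non-degenerate second factor, forced by \Cref{C:N1} plus the degree count $\sum_i\deg(\text{right part}_i)=\deg(\fm^1)$) is exactly the mechanism the paper itself uses in the other direction in \Cref{L:S33}, and the induction on $\fm\mapsto(\fm^-,\fm^1)$ together with the $\min(l(\De_i),r)$ count for the inequality $\alpha\le\mu_\fm$ is how M--S proceed. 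I verified the bookkeeping: the identification $\overline{\mu_\fm}=(\overline{\mu_{\fm^-}},\deg\fm^1)$, $\st(\fm)=\st(\fm^-)\otimes\Z(\fm^1)$, the transitivity step pinning $\sigma\otimes\tau_1$ with $\tau_1=\Z(\fm^1)$ and $\sigma=\Z(\fm^-)$, and the inequality $\sum_{j\in J}\alpha_j=\sum_i|T_i\cap J|\deg\rho_i\le\sum_i\min(l(\De_i),r)\deg\rho_i=\sum_{s\le r}\deg\fm^s$ are all sound (note the $T_i$ need not be intervals, but your count never assumes they are).

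Two small caveats. First, both your base case and the multiplicity-one count in the inductive step need that the residually non-degenerate constituent of $\I(\fm^1)$ occurs with multiplicity \emph{one}; \Cref{T:N4} as quoted only asserts uniqueness of the isomorphism class. The multiplicity-one statement is true and is part of the cited result in M--S (\S 8), but you should say explicitly that you are using it, since it is exactly what propagates ``multiplicity $1$ in $\I(\fm)$'' through the induction. Second, the final ``if and only if'' cannot be read as a literal biconditional over all $\alpha$: $\Z(\fm)$ is typically $\alpha$-degenerate for many ordered $\alpha\lneq\mu_\fm$ as well. What you prove --- $\alpha\le\mu_\fm$ always, $\alpha=\mu_\fm$ forces $\pi=\Z(\fm)$, and $\Z(\fm)$ does achieve $\alpha=\mu_\fm$ --- is the intended content and is how the statement is used elsewhere in the paper.
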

The theorem gives rise to a map \[\Z:\mathcal{MS}\rightarrow \Irr.\]
\begin{prop}[{\cite[Proposition 9.28]{M-S}}]\label{L:sam}
    Let $\fm$ be a multisegment and write $\fm=\fm_1+\ldots +\fm_k$ as a sum of multisegments.
    Then
    \[[\Z(\fm)]\le [\Z(\fm_1)\times\ldots\times \Z(\fm_k)]\] and $\Z(\fm)$ appears with multiplicity $1$.
\end{prop}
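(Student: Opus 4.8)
The plan is to prove that $\Z(\fm)$ is a constituent of $\pi\coloneq\Z(\fm_1)\times\ldots\times\Z(\fm_k)$, which is precisely the claimed inequality. First, since parabolic induction is exact the product on $\RR$ is monotone, so from $\I(\fm_j)\ge[\Z(\fm_j)]$ (\Cref{T:N5}) and $\I(\fm)=\prod_{j}\I(\fm_j)$ (commutativity and associativity of $\times$ in $\RR$ together with the definition of $\I$) one gets $[\pi]\le\I(\fm)$. Next, \Cref{T:N5} tells us that $\Z(\fm)$ occurs in $\I(\fm)$ with multiplicity one and is the \emph{unique} $\mu_\fm$-degenerate irreducible subquotient of $\I(\fm)$ — for the last assertion of \Cref{T:N5}, applied with the ordered partition $\alpha=\mu_\fm$, forces an irreducible $\mu_\fm$-degenerate subquotient to equal $\Z(\fm)$. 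Hence it suffices to exhibit a single $\mu_\fm$-degenerate, equivalently (by permutation invariance of degeneracy) $\ol{\mu_\fm}$-degenerate, irreducible constituent of $\pi$: in view of $[\pi]\le\I(\fm)$ it is automatically $\Z(\fm)$.

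To produce such a constituent I would show that $\st(\fm)=\Z(\fm^l)\otimes\ldots\otimes\Z(\fm^1)$, which is residually non-degenerate, occurs in $r_{\ol{\mu_\fm}}(\pi)$; by exactness of $r_{\ol{\mu_\fm}}$ this forces some irreducible constituent $\sigma$ of $\pi$ with $\st(\fm)\le r_{\ol{\mu_\fm}}(\sigma)$, so that $\sigma$ is $\ol{\mu_\fm}$-degenerate. The key structural fact is that $\fm\mapsto\fm^-$ and $\fm\mapsto\fm^1$ act segment by segment, so $\fm^s=\fm_1^s+\ldots+\fm_k^s$ for every $s\ge 1$; in particular $\deg\fm^s=\sum_j\deg\fm_j^s$, and $\ol{\mu_\fm}$ refines — compatibly with the partitions $\ol{\mu_{\fm_j}}$, padded by trivial blocks to a common length $l$ — into $\prod_s\prod_j G_{\deg\fm_j^s}$. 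I would then apply the geometric lemma of \Cref{S:geolem} to $r_{\ol{\mu_\fm}}(\Z(\fm_1)\times\ldots\times\Z(\fm_k))$: using $\st(\fm_j)\le r_{\ol{\mu_{\fm_j}}}(\Z(\fm_j))$ for each $j$ (\Cref{T:N5}), the term attached to the matrix $B=(\deg\fm_j^s)_{j,s}$ and to the choice of the constituent $\st(\fm_j)$ inside each $r_{\ol{\mu_{\fm_j}}}(\Z(\fm_j))$ occurs with a positive coefficient, giving $r_{\ol{\mu_\fm}}(\pi)\ge\bigotimes_s\bigl(\Z(\fm_1^s)\times\ldots\times\Z(\fm_k^s)\bigr)$.

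It then remains to see that within each block $\Z(\fm^s)\le\Z(\fm_1^s)\times\ldots\times\Z(\fm_k^s)$, where $\fm^s=\fm_1^s+\ldots+\fm_k^s$ consists only of singletons; granting this, tensoring over $s$ yields $\st(\fm)=\bigotimes_s\Z(\fm^s)\le r_{\ol{\mu_\fm}}(\pi)$ and the proof concludes. A multisegment of singletons is determined by its cuspidal support, and by \Cref{T:N4} $\Z(\fm^s)$ is the unique residually non-degenerate irreducible representation with cuspidal support $\fs^{(s)}\coloneq\cus_\Ms(\fm^s)$ — any such representation has class at most $\I(\fm^s)$, the product of the cuspidals in $\fs^{(s)}$, which by \Cref{T:N4} contains a unique residually non-degenerate constituent. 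Since $[\Z(\fm_1^s)\times\ldots\times\Z(\fm_k^s)]\le\I(\fm^s)$, it is enough to know that this product has \emph{some} residually non-degenerate constituent; inducting on $k$ and using the uniqueness just noted, this reduces to the statement that the parabolic induction of two residually non-degenerate representations has a residually non-degenerate constituent, which belongs to the circle of results of \cite[Section 8]{M-S}. The hardest part is this last input, together with the bookkeeping in the geometric-lemma step — especially matching the reversal conventions between $\mu_\fm$ and $\ol{\mu_\fm}$.
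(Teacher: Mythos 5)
The paper itself does not prove this proposition: it is imported verbatim from \cite[Proposition 9.28]{M-S}, so there is no in-house argument to compare against. Your reconstruction is correct and follows what is essentially the standard route: reduce, via the uniqueness clause of \Cref{T:N5}, to exhibiting a single $\mu_\fm$-degenerate irreducible constituent of $\Z(\fm_1)\times\ldots\times\Z(\fm_k)$; produce one by locating $\st(\fm)$ inside $r_{\ol{\mu_\fm}}(\Z(\fm_1)\times\ldots\times\Z(\fm_k))$ through the geometric-lemma term indexed by the matrix $B=(\deg\fm_j^s)$ together with the constituent $\st(\fm_j)$ of each $r_{\ol{\mu_{\fm_j}}}(\Z(\fm_j))$; and identify the resulting singleton blocks using the uniqueness of the residually non-degenerate constituent in \Cref{T:N4}. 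The additivity $\fm^s=\fm_1^s+\ldots+\fm_k^s$, the orderedness of $\mu_\fm$, and the permutation-invariance of degeneracy are all correctly invoked. The one step not covered by the statements quoted in this paper is your final input: that a product of residually non-degenerate representations again admits a residually non-degenerate constituent. You flag this honestly, and it is indeed established in \cite[Section 8]{M-S} (it is the modular analogue of the heredity of non-degeneracy and is logically prior to their Proposition 9.28, so there is no circularity), but within the four corners of this paper it remains an external citation rather than a consequence of \Cref{T:N4} or \Cref{C:N1}. With that input granted, the proof is complete.
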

\begin{theorem}[{\cite[Theorem 9.36]{M-S}}]\label{T:Zinj}
The map $\Z:\Ms\rightarrow \Irr$ restricted to aperiodic multisegments is injective and $\Z(\fm)^\lor\cong \Z(\fm^\lor)$.
Moreover, if $\fm$ is an multisegment
\[\scus_\Ms(\fm)=\scus(\Z(\fm))\] and if $\fm$ is moreover aperiodic, then
\[\cus_{\Ms}(\fm)=\cus(\Z(\fm)).\]
\begin{lemma}\label{L:S33}
Let $\fm$ be a multisegment.
Then \[[r_{(\deg(\fm^-),\deg(\fm^1))}(\Z(\fm))]\ge [\Z(\fm^-)\otimes \Z(\fm^1)].\]
\end{lemma}
\begin{proof}
By \Cref{T:N5}
\[[r_{\overline{\mu_\fm}}(\Z(\fm))]\ge [\st(\fm)].\]
Let $\pi\in\Irr_{\deg(\fm^-)}$ such that 
\[[r_{\overline{\mu_{\fm^-}}}(\pi)]\ge [\st(\fm^-)],\]
\[[r_{(\deg(\fm^-),\deg(\fm^1)}(\I(\fm))]\ge [r_{(\deg(\fm^-),\deg(\fm^1))}(\Z(\fm))]\ge [\pi\otimes \Z(\fm^1)].\]
The first inequality implies that $\pi$ is $\mu_{\fm^-}$-degenerate. On the other hand, the second inequality implies, using \Cref{C:N1} and the Geometric Lemma, that $[\pi]\le \I(\fm^-)$. 
\Cref{T:N5} forces therefore $\pi\cong \Z(\fm^-)$.
\end{proof}
\end{theorem}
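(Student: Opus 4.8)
The plan is to exhibit $\Z(\fm^-)\otimes\Z(\fm^1)$ as a constituent of $r_{(\deg(\fm^-),\deg(\fm^1))}(\Z(\fm))$ by combining the structural description of $r_{\overline{\mu_\fm}}(\Z(\fm))$ from \Cref{T:N5} with a Geometric Lemma computation on $\I(\fm)$. I would begin with the bookkeeping identities $(\fm^-)^s=\fm^{s+1}$, which give $\overline{\mu_{\fm^-}}=(\deg(\fm^l),\ldots,\deg(\fm^2))$ and, viewing $\st(\fm^-)$ as a representation of $G_{\overline{\mu_{\fm^-}}}$, the factorization $\st(\fm)=\st(\fm^-)\otimes\Z(\fm^1)$. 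Since $\overline{\mu_\fm}$ refines the composition $(\deg(\fm^-),\deg(\fm^1))$, transitivity of parabolic restriction factors $r_{\overline{\mu_\fm}}$ through $r_{(\deg(\fm^-),\deg(\fm^1))}$; combining this with the inequality $[r_{\overline{\mu_\fm}}(\Z(\fm))]\ge[\st(\fm)]$ from \Cref{T:N5} and decomposing $[r_{(\deg(\fm^-),\deg(\fm^1))}(\Z(\fm))]=\sum_j[\pi_j\otimes\tau_j]$ into irreducible tensors, one extracts an index $j$ with $\tau_j\cong\Z(\fm^1)$ and $[r_{\overline{\mu_{\fm^-}}}(\pi_j)]\ge[\st(\fm^-)]$. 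Set $\pi\coloneq\pi_j$.

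The remaining and decisive task is to show $\pi\cong\Z(\fm^-)$, which I would deduce from the uniqueness clause of \Cref{T:N5} applied to $\fm^-$: it suffices to verify that $\pi$ is $\mu_{\fm^-}$-degenerate and that $[\pi]\le\I(\fm^-)$. For the first, each $(\fm^-)^i$ is a sum of length-one segments, so $\st(\fm^-)$ is residually non-degenerate by \Cref{T:N4}; then $[r_{\overline{\mu_{\fm^-}}}(\pi)]\ge[\st(\fm^-)]$ forces $\pi$ to be $\overline{\mu_{\fm^-}}$-degenerate, hence $\mu_{\fm^-}$-degenerate. For the second, since parabolic restriction is exact we have $[\pi\otimes\Z(\fm^1)]\le[r_{(\deg(\fm^-),\deg(\fm^1))}(\I(\fm))]$, and I would expand the right-hand side by the Geometric Lemma together with \Cref{L:N1}: each factor $\Z([a_i,b_i]_{\rho_i})$ contributes only terms $\Z([a_i,k_i-1]_{\rho_i})\otimes\Z([k_i,b_i]_{\rho_i})$, so every summand has second tensor factor a product of $\Z$'s of $\rho_i$-segments. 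As $\Z(\fm^1)$ is residually non-degenerate, \Cref{C:N1} forces, in any summand contributing $\Z(\fm^1)$ in the second slot, every segment $[k_i,b_i]_{\rho_i}$ to have length exactly one; the degree identity $\sum_i m_i(b_i-k_i)=0$ then gives $k_i=b_i$ for all $i$, so the unique such summand is $\bigl(\prod_i\Z([a_i,b_i-1]_{\rho_i})\bigr)\otimes\bigl(\prod_i\rho_i v_{\rho_i}^{b_i}\bigr)$, whose first tensor factor has class $\I(\fm^-)$. Thus $[\pi]\le\I(\fm^-)$ and \Cref{T:N5} forces $\pi\cong\Z(\fm^-)$, which completes the proof.

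I expect the Geometric Lemma step for $[\pi]\le\I(\fm^-)$ to be the main obstacle: one must argue carefully that no matrix in the Geometric Lemma with a ``full'' left segment $[a_i,b_i]_{\rho_i}$ — equivalently an empty right segment — can contribute $\Z(\fm^1)$ to the second tensor factor, and that \Cref{C:N1} indeed rules out right segments of length $\ge 2$, so that precisely the ``shorten every segment on the right by one'' term survives. Everything else is routine manipulation of the operations $\fm\mapsto\fm^-$ and $\fm\mapsto\fm^1$ together with transitivity of parabolic restriction.
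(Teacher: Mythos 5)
Your proposal is correct and follows essentially the same route as the paper's proof: extract $\pi$ with $[r_{\overline{\mu_{\fm^-}}}(\pi)]\ge[\st(\fm^-)]$ and $[\pi\otimes\Z(\fm^1)]\le[r_{(\deg(\fm^-),\deg(\fm^1))}(\Z(\fm))]$ from \Cref{T:N5}, then identify $\pi\cong\Z(\fm^-)$ via $\mu_{\fm^-}$-degeneracy together with $[\pi]\le\I(\fm^-)$, the latter coming from the Geometric Lemma, \Cref{L:N1} and \Cref{C:N1}. You merely fill in the details (the refinement/transitivity bookkeeping and the degree count forcing $k_i=b_i$) that the paper leaves implicit.
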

In the same article it was proven that $\Z$ is also surjective, using by passing to the Hecke-algebra and invoking a counting argument. We will reprove in \Cref{S:appd} this surjectivity for all representations with $\square$-irreducible cuspidal support.
\subsection{ Symmetric group}\label{S:repsym}
In this section we quickly review the representation theory of $\mathrm{S}_n$ over $R$, see for example \cite{Jamesbook}.
Let $\lambda=(\lambda_1,\ldots,\lambda_t)$ be a partition of $n$ and denote by $\textbf{1}_{n}$ the trivial representation of $\mathrm{S}_n$. We set \[\mathrm{S}_\lambda\coloneq \mathrm{S}_{\lambda_1}\times\ldots\times \mathrm{S}_{\lambda_t},\] \[\textbf{1}_\lambda\coloneq  \textbf{1}_{\lambda_1}\otimes\ldots\otimes \textbf{1}_{\lambda_t}\]
and  \[M^\lambda\coloneq \mathrm{Ind}_{\mathrm{S}_\lambda}^{\mathrm{S}_n}(\textbf{1}_\lambda)= \mathrm{Hom}_{\mathrm{S}_\lambda}(\mathrm{S}_n,\textbf{1}_\lambda),\]
called the \emph{Young permutation module}.
The representation $M^\lambda$ contains the \emph{Specht-module} $S^\lambda$ with multiplicity $1$, \emph{cf.} \cite[§4]{Jamesbook}. If $R=\ql$, $S^\lambda$ is irreducible for all $\lambda$ and the Specht-modules parameterize all irreducible representations of $\mathrm{S}_n$, however if $R=\fl$, $S^\lambda$ might be reducible. To bypass this issue in the case $R=\fl$, one equips $M^\lambda$ with a natural $\mathrm{S}_n$-invariant, non-degenerate and symmetric bilinear pairing $\langle\cdot,\cdot\rangle$. Under this pairing, let $(S^\lambda)^\bot$ be the orthogonal complement of $S^\lambda$ in $M^\lambda$ and set 
\[D_\lambda\coloneq S^\lambda/(S^\lambda\cap (S^\lambda)^\bot).\]
We recall that $\lambda$ is $\ell$-regular if for all
\[a_i\coloneq\#\{\ain{j}{1}{t}:\lambda_j=i\},\]
$a_i<\ell$.
Now $D_\lambda$ is irreducible if it is non-zero and it is non-zero if and only if $\lambda$ is $\ell$-regular. Moreover, if $\mu$ is a second partition of $n$, $D_\lambda$ appears in $M^{\mu}$ if and only if $\lambda\le \mu$. Finally $M^\mu$ is contained in $M^\lambda$ in the Grothendieck group of $R$-representations of $\mathrm{S}_n$ if and only if $\mu\le \lambda$. More precisely, the multiplicity of $S^\lambda$ in $M^\mu$ is the Kostka number $K_{\lambda,\mu}$.
\subsection{ Integral structures}\label{S:intintro}
Let $\zl$ be the ring of integers of $\ql$ with residue field $\fl$. We choose the square roots in \Cref{S:parabind} in a compatible way, \emph{i.e.} if $R=\ql$, we let $\sqrt{q}\in \zl$, such that its reduction modulo the maximal ideal in $\zl$ gives the chosen square root in $\fl$.

A $\ql$-representation $(\pi,V)$ of $G_n$ is called \emph{integral} if it is admissible and admits an integral structure, \emph{i.e.} a $G_n$-stable $\zl$-submodule $\mo$ of $V$ such that the natural map $\mo\otimes_{\zl} \ql\rightarrow V$ is an isomorphism. 
If $P=M\rtimes U$ is a parabolic subgroup of $G_n$ and $\sigma$ a representation of $M$ with integral structure $\mo$, then $\mathrm{Ind}_P^{G_n}(\mo)$ is an integral structure of $\mathrm{Ind}_P^{G_n}(\sigma)$, see \cite[II.4.14]{alma991023733359705251}.
For a representation $\pi$ with integral structure $\mathfrak{o}$ one can consider the reduction mod $\ell$ of $\pi$, which is defined as
\[\rl(\pi)\coloneq \mo\otimes_{\zl}  \fl.\]
This is not an invariant of $\pi$, it depends on the chosen integral structure. However, its image in the Grothendieck group is. This leads to the following theorem.
Let $\mathfrak{R}_n^{en}(\ql)$ be the subgroup of $\mathfrak{R}(\ql)$ generated by irreducible integral representations.
\begin{theorem}[{\cite[II.4.12]{alma991023733359705251},\cite[Theorem A]{MST}, \cite[Theorem 9.39]{M-S}, \cite[II.4.14, II.5.11]{alma991023733359705251}}]\label{T:rel}
The morphism \emph{reduction mod }$\ell$ of groups
\[\rl:\mathfrak{R}^{en}_n(\ql)\rightarrow \mathfrak{R}_n(\fl)\] is well defined. Moreover, $\rl$ satisfies the following properties.
\begin{enumerate}
    \item If $\trho$ is a cuspidal representation over $\ql$, it admits an integral structure if and only if its central character is integral.
    \item If $\rho$ is a supercuspidal representation over $\fl$ then there exists an integral cuspidal representation $\Tilde{\rho}$ of $G_m$ over $\ql$ such that $\rl(\Tilde{\rho})=\rho$. We refer to such $\trho$ as a lift of $\rho$.
    \item If $\Tilde{\rho},\,\rho$ are cuspidal such that $\rl(\Tilde{\rho})=\rho$ then for integers $a\le b$, $\Z([a,b]_{\Tilde{\rho}})$ admits an integral structure and
    \[\rl(\Z([a,b]_{\Tilde{\rho}})= \Z([a,b]_\rho).\]
    \item $\rl$ commutes with parabolic induction, \emph{i.e.} there is a natural isomorphism\[\mathrm{Ind}_P(\mo)\otimes_{\zl}  \fl\rightarrow \mathrm{Ind}_P(\mo\otimes_{\zl}  \ol{\mathbb{F}}_l).\]
\end{enumerate}
\end{theorem}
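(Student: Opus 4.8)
The statement is a synthesis of results already in the literature, so the plan is to assemble it by treating the five assertions in turn, indicating which reference supplies each and by what mechanism. For \emph{well-definedness of $\rl$} I would recall the Brauer--Nesbitt principle in the smooth admissible setting: if $\mo_1,\mo_2$ are two $G_n$-stable $\zl$-lattices in an admissible $\ql$-representation $V$, then for every compact open subgroup $K$ the modules $\mo_i^K$ are finitely generated torsion-free, hence free, $\zl$-modules spanning the same $\ql$-vector space $V^K$, and two lattices in a finite-dimensional module over the $\zl$-algebra $\mathcal{H}_{\zl}(G_n,K)$ have the same semisimplification after $-\otimes_{\zl}\fl$; passing to the limit over $K$ gives $[\mo_1\otimes_{\zl}\fl]=[\mo_2\otimes_{\zl}\fl]$ in $\mathfrak{R}_n(\fl)$, and extending $\ZZ$-linearly over irreducible integral representations yields the homomorphism. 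This is \cite[II.4.12]{alma991023733359705251}.

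For items (1) and (2) the easy half of (1) is immediate: the centre acts on an integral structure $\mo$ through its scalar central character, which must then take values in $\zl^\times$. For the converse, and for (2), I would invoke the type-theoretic description of cuspidal (resp.\ supercuspidal) representations: each is, up to unramified twist, compactly induced from a finite-dimensional representation $\Lambda$ of an open subgroup $J$ compact modulo the centre, and compact induction carries a $J$-stable $\zl$-lattice to a $G_n$-stable one. When the central character is integral one arranges $\Lambda$ to preserve a lattice, giving the converse of (1); and a supercuspidal $\fl$-type lifts to an integral $\ql$-type whose compact induction reduces to $\rho$, giving (2). These are \cite[Theorem A]{MST} and \cite[II.4.14, II.5.11]{alma991023733359705251}.

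For (4) one realises $\mathrm{Ind}_P$ on the space of smooth functions supported on finitely many $P$-cosets; for each compact open $K$ the $K$-fixed vectors form a free $\zl$-module with basis indexed by the relevant $P\backslash G_n/K$-data, compatibly with $-\otimes_{\zl}\fl$, which produces the natural isomorphism $\mathrm{Ind}_P(\mo)\otimes_{\zl}\fl\to\mathrm{Ind}_P(\mo\otimes_{\zl}\fl)$; this is \cite[II.4.14]{alma991023733359705251}. For (3), recall that $\Z([a,b]_\rho)$ is the subrepresentation of $\rho v_\rho^a\times\cdots\times\rho v_\rho^b$ corresponding under $\xi_{\rho,n}$ to the one-dimensional $\He_R(n,q(\rho))$-module $\mathscr{Z}(a,b)$. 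I would set up the affine Hecke algebra over $\zl$ --- here one uses that $q(\trho)$ and $q(\rho)$ agree mod $\ell$, which follows from the explicit formula for $q(\rho)$ in \cite[Section 4]{MST} --- so that the rank-one $\zl$-module $\mathscr{Z}(a,b)$ reduces to $\mathscr{Z}(a,b)$ over $\fl$; since (by \Cref{L:quot} applied over $\ql$) $\Z([a,b]_{\trho})$ is the socle of the integral induced representation, its integral structure reduces, via (4), to a subquotient of $\rho v_\rho^a\times\cdots\times\rho v_\rho^b$ which the Hecke-algebra dictionary identifies with $\Z([a,b]_\rho)$. This is \cite[Theorem 9.39]{M-S}.

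The delicate point is (3): the argument hinges on the Hecke-algebra correspondence being defined integrally and on $\Z([a,b]_\rho)$ still being the representation attached to the trivial character mod $\ell$, which is exactly what forces the congruence $q(\trho)\equiv q(\rho)\bmod\ell$ and makes the explicit computation of $q(\rho)$ in \cite{MST} indispensable. By contrast, well-definedness and (4) are formal consequences of flat base change and exactness of induction, while (1)--(2) are contained in the type-theoretic classification and can simply be cited.
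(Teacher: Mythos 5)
Your proposal is correct and matches the paper's treatment: the paper offers no proof of this theorem beyond the citations in its header, and your attribution of each item to \cite[II.4.12, II.4.14, II.5.11]{alma991023733359705251}, \cite[Theorem A]{MST} and \cite[Theorem 9.39]{M-S}, together with the standard mechanisms (Brauer--Nesbitt for well-definedness, type-theoretic lifting for (1)--(2), flat base change for (4), and the integral Hecke-algebra dictionary for (3)), is exactly how these results are established in the cited references.
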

A lift of a segment $[a,b]_\rho$ over $\fl$ to $\ql$ is a segment $[a',b']_\trho$ such that $\trho$ is a lift of $\rho$ and $a'\le b'\in \ZZ$ with $a-b=a'-b'$ and $a=a'\mod o(\rho)$. A multisegment $\tfm=\TD_1+\ldots+\TD_k$ over $\ql$ is a lift of a mulitsegment $\fm=\De_1+\ldots+\De_k$ over $\fl$ if $\TD_i$ is a lift of $\De_i$ for $\ain{i}{1}{k}$. In this case we write\[\rl(\tfm)=\fm.\]
\begin{theorem}[{\cite[Theorem 9.39]{M-S}}]\label{T:liftgood}
    Let $\fm$ be a multisegment and $\tfm$ a lift of $\fm$. Then $\Z(\tfm)$ admits an integral structure and $\rl(\Z(\tfm))$ contains $\Z(\fm)$ with multiplicity $1$.
\end{theorem}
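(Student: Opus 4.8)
The plan is to bootstrap from the segment case (\Cref{T:rel}(3)) to an arbitrary multisegment by means of the degeneracy characterisation of $\Z(\fm)$ from \Cref{T:N5}. For integrality, \Cref{T:rel}(3) gives that each $\Z(\TD_i)$ is integral, \Cref{T:rel}(4) gives that $\Z(\TD_1)\times\cdots\times\Z(\TD_k)$ carries an integral structure, and since an irreducible --- hence admissible --- subquotient of an integral finite-length representation is again integral (intersect, resp.\ push forward, the given integral structure; cf.\ \cite{alma991023733359705251}), the representation $\Z(\tfm)$ is integral. The same two parts of \Cref{T:rel} yield $\rl(\I(\tfm))=\I(\fm)$. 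As $\rl(\Z(\tfm))\le\I(\fm)$ and $\Z(\fm)$ has multiplicity $1$ in $\I(\fm)$ by \Cref{T:N5}, the multiplicity of $\Z(\fm)$ in $\rl(\Z(\tfm))$ is at most $1$, so it remains to show $[\Z(\fm)]\le[\rl(\Z(\tfm))]$.

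By \Cref{T:N5}, any irreducible $\mu_\fm$-degenerate subquotient of $\I(\fm)$ is $\Z(\fm)$, so it is enough to exhibit one such constituent of $\rl(\Z(\tfm))$. I would proceed by induction on $\deg(\fm)$, relying on two preliminary observations. First, a purely combinatorial one: if $\tfm$ lifts $\fm$ then $\tfm^{-}$ lifts $\fm^{-}$, $\tfm^{1}$ lifts $\fm^{1}$, and $\mu_\tfm=\mu_\fm$ --- immediate from the definition of a lift, since lifting preserves segment lengths and degrees and since the congruences mod $o(\rho)$ defining a lift are preserved by the operations $(\cdot)^{-}$ and $(\cdot)^{1}$. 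Second, reduction mod $\ell$ commutes with parabolic restriction on integral representations: the Jacquet functor commutes with $-\otimes_{\zl}\fl$ and the normalisations agree because of the compatible choice of $\sqrt q$ in \Cref{S:intintro} (cf.\ \cite{alma991023733359705251}); hence $\rl\circ r_\alpha=r_\alpha\circ\rl$ on the relevant Grothendieck groups.

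If $\fm=[a_1]_{\rho_1}+\cdots+[a_k]_{\rho_k}$ is a sum of singletons, then $\mu_\fm=(\deg\fm)$, and $\Z(\fm)$ is by \Cref{T:N4} the \emph{unique} residually non-degenerate constituent of $\I(\fm)$. Over $\ql$, $\Z(\tfm)$ is likewise the unique residually non-degenerate constituent of the product of cuspidals $\I(\tfm)$ (\Cref{T:N4} over $\ql$). One then invokes the compatibility of residual non-degeneracy with reduction mod $\ell$: the reduction of a residually non-degenerate integral $\ql$-representation has a residually non-degenerate constituent --- this is in substance the content of \cite[\S 8]{M-S}, cf.\ the equivalence of the two definitions of residual non-degeneracy in \cite[Theorem 9.10]{M-S}, and alternatively can be seen from the compatibility of $\rl$ with the Bernstein--Zelevinsky derivative functors that detect non-degeneracy. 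That constituent lies in $\rl(\I(\tfm))=\I(\fm)$ and is residually non-degenerate, hence equals $\Z(\fm)$ by \Cref{T:N4}. \textbf{This singleton case is the main obstacle of the argument}; everything around it is formal manipulation of Grothendieck groups.

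If $\fm$ is not a sum of singletons, then $\fm^{-}\neq0$, so $\deg\fm^{-}<\deg\fm$ and $\deg\fm^{1}<\deg\fm$. Over $\ql$, \Cref{L:S33} gives $[r_{(\deg\tfm^{-},\deg\tfm^{1})}(\Z(\tfm))]\ge[\Z(\tfm^{-})\otimes\Z(\tfm^{1})]$; applying $\rl$, using that it commutes with $r_\alpha$, and invoking the inductive hypothesis for the lifts $\tfm^{-}$ of $\fm^{-}$ and $\tfm^{1}$ of $\fm^{1}$, we obtain $[r_{(\deg\fm^{-},\deg\fm^{1})}(\rl\Z(\tfm))]\ge[\Z(\fm^{-})\otimes\Z(\fm^{1})]$. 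Thus $\rl(\Z(\tfm))$ has a constituent $\pi$ with $[r_{(\deg\fm^{-},\deg\fm^{1})}(\pi)]\ge[\Z(\fm^{-})\otimes\Z(\fm^{1})]$. Now $\Z(\fm^{-})$ is $\mu_{\fm^{-}}$-degenerate by \Cref{T:N5} and $\Z(\fm^{1})$ is residually non-degenerate by \Cref{T:N4}, so $\pi$ is $(\mu_{\fm^{-}},\deg\fm^{1})$-degenerate; since $\mu_{\fm^{-}}=(\deg\fm^{2},\deg\fm^{3},\dots)$ this partition is a permutation of $\mu_\fm=(\deg\fm^{1},\deg\fm^{2},\dots)$, and degeneracy is insensitive to permuting the partition, so $\pi$ is $\mu_\fm$-degenerate. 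Since $\pi\le\rl(\Z(\tfm))\le\I(\fm)$, \Cref{T:N5} forces $\pi=\Z(\fm)$. Together with the multiplicity bound from the first paragraph, this shows $\Z(\fm)$ occurs in $\rl(\Z(\tfm))$ with multiplicity exactly $1$, completing the induction.
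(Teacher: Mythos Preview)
The paper does not supply its own proof of this statement; it is quoted verbatim from \cite[Theorem~9.39]{M-S}. Your argument is sound and in fact reconstructs the essential logic of the original proof in \cite{M-S}: integrality via the product of lifted segments, the multiplicity bound via $\rl(\I(\tfm))=\I(\fm)$ and \Cref{T:N5}, and existence of $\Z(\fm)$ in $\rl(\Z(\tfm))$ via the degeneracy characterisation. Your inductive step is clean --- the passage from \Cref{L:S33} over $\ql$ through $\rl$ and the permutation-invariance of $\alpha$-degeneracy is exactly right.

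You have correctly isolated the one substantive point: the singleton base case, i.e.\ that the reduction of a residually non-degenerate integral representation retains a residually non-degenerate constituent. Your sketch via Bernstein--Zelevinsky-type derivatives is the right mechanism; in \cite{M-S} this is handled directly in \S8--9 (the derivative functors there are defined integrally and commute with $-\otimes_{\zl}\fl$, so nonvanishing of the top derivative persists). If you want to make this paragraph self-contained rather than a pointer, that is where the work goes; the rest of your argument is formal.
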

\subsection{ Haar-measures}
If one tries to translate the computation of local $L$-factors from $\ql$ to $\fl$, one quickly runs into the problem that, contrary to the situation in $\ql$, for a compactly supported smooth function $f\colon G\ra R$ the following identity does not necessarily have to hold:
\[\int\displaylimits_{G_n}f(g)\ddd g=\int\displaylimits_P\int\displaylimits_{K_n}f(pk)\ddd k\dd p,\]
where $P$ is a parabolic subgroup of $G$, $K_n$ is the open compact subgroup $K_n\coloneq \mathrm{GL}_n(
\mathfrak{o}_\mathrm{D})$ and$\ddd g,\ddd k,\dd p$ are left Haar-measures on the respective groups, see for example in \cite{Mzeta}.
However one can save the situation via the following method, \emph{cf.} \cite[§2.2]{KM}. 
For $H$ a closed subgroup of $G_n$ let $C_c^\infty(H,R)$ be the set of smooth, compactly supported functions on $H$ taking values in $R$. If $H'\subseteq H$ is a closed subgroup, we let $C_c^\infty(H'\bs H)= C_c^\infty(H'\bs H,\delta,R)$ be the space of smooth functions on $H$ which transform by $\delta\coloneq \delta_{H}^{-1}\delta_{H'}$ under left translation by $H'$.
We let $C_{c,en}^\infty(H'\bs H,\delta,\ql)$ be the functions with values in $\zl$ and we denote reduction mod $\ell$ by
\[\rl\colon C_{c,en}^\infty(H'\bs H,\delta,\ql) \ra C_{c}^\infty(H'\bs H,\delta,\fl).\]
We will denote from now on by $\dd h$ a left Haar-measure on $H$ and by $\dr h$ a right Haar-measure on $H$, and we choose measures compatible with $\rl$, i.e. for $f\in C_{c,en}^\infty(H'\bs H,\delta,\ql)$, 
\[\int\displaylimits_{H'\bs H} f(h)\dr h\in \zl\]
and \[\rl\left(\int\displaylimits_{H'\bs H} f(h)\dr h\right)=\int\displaylimits_{H'\bs H} \rl(f(h))\dr h\]
and similarly for $\dd h$.
In \cite[§2.2]{KM} the authors show that if one replaces the left Haar-measure $\dd p$ with a right Haar-measure $\dr p$ on $P$ and $(K_n\cap P )\bs K_n$, above identity holds also in $\fl$:
\begin{equation}\label{E:int2}\int\displaylimits_{G_n}f(g)\ddd g=\int\displaylimits_P\int\displaylimits_{(K_n\cap P )\bs K_n}f(pk)\delta_P^{-1}(p)\ddd k\dr p.\end{equation}
Moreover for $H$ a closed subgroup of $G_n$ and $f\in C^\infty_c(H\bs G_n)$ we can find $F\in C^\infty_c(G_n)$ such that
\[f(g)=\int\displaylimits_HF(gh)\delta^{-1}(h)\dr h\] and write in this situation $f=F^H$. As a consequence we have for $f\in C^\infty_c(P\bs G_n)$ and $P$ a parabolic subgroup of $G_n$
\begin{equation}\label{E:int}\int\displaylimits_{P\bs G_n}F^P(g)\ddd g=\int\displaylimits_{(K_n\cap P )\bs K_n}\int\displaylimits_PF(pk)\delta_P^{-1}(p)\dr p\ddd k=\int\displaylimits_{(K_n\cap P )\bs K_n}F^P(k)\ddd k.\end{equation}
Finally, we recall the following facts about smooth functions on $G_n$. Let $C^\infty(G_n)$ denote the representation of smooth $R$-valued functions on which $G_n\times G_n$ acts by left-right translation. Its dual, $C_c^\infty(G_n)$ is given by the compactly supported smooth functions often referred to as \emph{Schwartz-functions} and such a Schwartz-function $\phi$ acts on $f\in C^\infty(G_n)$ by \[\phi(f)\coloneq \int\displaylimits_{G_n}f(g)\phi(g)\ddd g,\] where $\phi\in C_c^\infty(G_n)$. Finally, if $P$ is a parabolic subgroup of $G_n$ with opposite parabolic subgroup $\op$, let
$C_c^\infty(\op P)$ be the set of Schwartz-functions on $\op P$ on which $\op \times P$ acts by left-right translation. It is the dual of $C^\infty(\op P)$, given by just smooth functions on $\op P$.
\subsection{ Godement-Jacquet local $L$-factors}
In this section we will recall the Godement-Jacquet $L$-functions for general irreducible representations, \emph{cf.} \cite{godement1972zeta} and \cite{Mzeta}. 
Let us quickly review their construction. Let $(\pi,V)$ be a representation over $R$ of $G_n$ and let for $v\in V,v^\lor\in V^\lor$ \[f\coloneq g\mapsto v^\lor( \pi(g)v)\] be a matrix coefficient of $\pi$. In general we say that a smooth function $f$ is a matrix coefficient of $\pi$ if it is the finite sum of functions of the above form. Let $\schr$ be the space of Schwartz-functions on $M_n(\mathrm{D})$, \emph{i.e.} the space locally constant functions with compact support $f\colon M_n(\mathrm{D})\rightarrow R$.
For $H\subseteq G_n$, we write for $N\in \ZZ$, \[H(N)\coloneq \{h\in H:\lvert\det\nolimits'( h)\lvert =q^{-N}\}.\]
For a matrix coefficient $f$ we denote by \[f^\lor\coloneq g\mapsto f(g^{-1})\] and for $\phi\in\schr$ we let $\widehat{\phi}$ be its Fourier transform with respect to our fixed additive character $\psi$, see \cite[Section 1.3]{Mzeta}.
For $f$ a matrix coefficient of $\pi$ and $\phi\in\schr$ one can then construct a formal Laurent series $Z(\phi,T,f)\in R((T))$, which is linear in $f$ and $\phi$.
The coefficient of $T^N$ of $Z(\phi,T,f)$
is given by the integral
\[\int\displaylimits_{G_n(N)}\phi(g)f(g)\ddd g.\]
\begin{theorem}[{\cite[Theorem 2.3]{Mzeta}}]\label{T:Lfon}
     Let $(\pi,V)$ be a representation of $G_n$ which is a subquotient of a representation induced from irreducible representations.
    \begin{enumerate}
        \item There exists a non-zero $P'(\pi,T)\in R[T]$ such that for each matrix coefficient $f$ and $\phi\in\schr$
        \[P(\pi,T)Z(\phi,T,f)\in  R[T,T^{-1}].\]
       \item There exists $\gamma(T,\psi,f)\in R(T)$ such that for each matrix coefficient $f$ and $\phi\in\schr$
        \[Z(\widehat{\phi},T^{-1}q^{-\frac{{dn+1}}{2}},f^\lor)=\gamma(T,\pi, \psi)Z(\phi,Tq^{-\frac{{dn-1}}{2}},f).\]
    \end{enumerate}
\end{theorem}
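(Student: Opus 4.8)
The statement is the classical Godement--Jacquet functional equation and meromorphic continuation, now in the modular setting; the plan is to follow the strategy of \cite{godement1972zeta} as adapted to $\fl$ in \cite{Mzeta}, reducing from a general subquotient of an induced representation to the cuspidal case. First I would record what happens for $\pi$ cuspidal: here the matrix coefficients are compactly supported modulo the centre, so the integral defining the coefficient of $T^N$ in $Z(\phi,T,f)$ vanishes for $N\ll 0$, and hence $Z(\phi,T,f)\in R((T))$ is in fact a Laurent polynomial, giving part (1) with $P_0(\pi,T)=1$; the functional equation (2) for cuspidal $\pi$ is then established by a direct manipulation of the integral using the Fourier transform on $M_n(\mathrm{D})$ and the unitarity (up to the shift by $q^{-\frac{dn+1}{2}}$) of that transform, exactly as in the complex case but using the modular Haar-measure identity \eqref{E:int2} in place of the naive one.

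The core of the argument is the passage from cuspidal $\pi$ to a general $\pi$ which is a subquotient of $\sigma_1\times\cdots\times\sigma_r$ with $\sigma_i$ irreducible --- hence, refining, with $\sigma_i$ cuspidal. I would first reduce to $\pi$ itself being of the form $\sigma_1\times\cdots\times\sigma_r$: matrix coefficients of a subquotient are, up to the relevant integrals, controlled by matrix coefficients of the full induced representation (a coefficient of a sub or quotient extends/descends appropriately), so $Z(\phi,T,f)$ for the subquotient is an $R[T,T^{-1}]$-linear combination of the $Z$'s for the induced representation, and $\gamma$ is unchanged because it is forced by the functional equation which is preserved under the reduction. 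For $\pi=\sigma_1\times\cdots\times\sigma_r$ parabolically induced, the key computational input is that a matrix coefficient of $\pi$ can be written as an integral over $(K_n\cap P)\backslash K_n$ of translates of a product of matrix coefficients of the $\sigma_i$ against a Schwartz function on the Levi; unfolding the zeta integral via \eqref{E:int} and \eqref{E:int2} then expresses $Z(\phi,T,f)$ for $\pi$ in terms of the zeta integrals of the $\sigma_i$. This gives $P_0(\pi,T)$ as (a divisor of) the product of the $P_0(\sigma_i,\cdot)$ suitably shifted, hence part (1), and multiplicativity of $\gamma$ under parabolic induction, hence part (2) with $\gamma(T,\pi,\psi)$ equal to a product of the $\gamma(T,\sigma_i,\cdot)$ up to the normalizing powers of $q$ coming from $\delta_P$.

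The main obstacle is precisely the bookkeeping in this unfolding: over $\fl$ one cannot use the naive decomposition $\int_{G_n}=\int_P\int_{K_n}$, so every step that in \cite{godement1972zeta} splits an integral over $G_n$ along a parabolic must instead be run through the corrected identity \eqref{E:int2} with the modular character $\delta_P^{-1}$ inserted and a \emph{right} Haar-measure on $P$; one must check that these $\delta_P$-twists assemble exactly into the shifts $q^{-\frac{dn\pm 1}{2}}$ appearing in the functional equation and do not, e.g., introduce spurious factors that are zero or non-invertible in $\fl$. A secondary point to be careful about is convergence/formal-series manipulations: since we work with formal Laurent series in $R((T))$ rather than genuine convergent integrals, "interchanging integral and sum'' must be read as an identity of coefficients of $T^N$, each of which is a finite integral over $G_n(N)$ (a set which is compact modulo centre intersected with the relevant region), so no analytic subtlety arises --- but one must verify that the region decompositions induced by $|\det'|$ on $G_n$ versus on the Levi factors are compatible, which is where the $q$-powers are tracked. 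Once the cuspidal case and the inductive step are in place, the theorem follows by induction on $r$.
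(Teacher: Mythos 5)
This theorem is not proved in the paper: it is imported from \cite[Theorem 2.3]{Mzeta}, and the only argument the paper supplies is the remark immediately following the statement, namely that the proof given there for irreducible $\pi$, combined with \cite[Proposition 2.5]{Mzeta} (recalled as \Cref{L:su}: for a subquotient $\tau\le\pi$ one has $P(\tau,T)\mid P(\pi,T)$ and $\gamma(T,\tau,\psi)=\gamma(T,\pi,\psi)$), extends the result to all subquotients of representations induced from irreducibles. Your overall architecture --- cuspidal base case, reduction from a subquotient to the full induced representation, and unfolding of the zeta integral along a parabolic using the corrected measure identity (\ref{E:int2}) with $\delta_P^{-1}$ and a right Haar measure --- does reproduce the strategy of the cited reference and of \Cref{L:Lind} later in the paper, so the inductive step is sound in outline.

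There is, however, a genuine gap in your base case. From compact support of $\phi$ in $M_n(\mathrm{D})$ and compact support of $f$ modulo the centre you can only conclude that the coefficient of $T^N$ vanishes for $N\ll 0$; this makes $Z(\phi,T,f)$ an element of $R((T))$ (which it is by construction) and not a Laurent polynomial. Since the centre is non-compact and $\phi$ need not vanish near $0\in M_n(\mathrm{D})$, the coefficients for $N\gg 0$ do not vanish in general: already for $\pi=\chi$ an unramified character of $\mathrm{D}^*$ the zeta integral is a geometric series and $L(\pi,T)=(1-\chi(\varpi_{\mathrm{F}})q^{(1-d)/2\,}T)^{-1}\neq 1$ when $q^d\neq 1$ (compare \Cref{T:JL}), so the claim $P_0(\pi,T)=1$ for all cuspidal $\pi$ is false. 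The correct cuspidal argument splits $\phi$ into a summand supported on $G_n$ and a summand supported near the singular matrices, kills the latter for $n>1$ using cuspidality (vanishing of integrals of matrix coefficients over unipotent radicals), and handles $n=1$ by the Tate-type computation; this is carried out in \cite[Theorem 3.1]{Mzeta}. With that base case repaired, and with the subquotient step made precise as in \Cref{L:su}, your induction goes through.
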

Note that the statement of this theorem in \cite{Mzeta} is only stated for irreducible representations, however the proof they give, together with \cite[Proposition 2.5]{Mzeta} show that it is true for all the above representations.
\begin{corollary}[{\cite[Corollary 2.4]{Mzeta}}]\label{C:fracid}
    Let $\mathcal{L}(\pi)$ be the $R$-subspace of $R((T))$ generated by
    $Z(\phi,q^{-\frac{dn-1}{2}}T,f)$ where $f$ ranges over all matrix coefficients of $\pi$ and $\phi\in\schr$ over all Schwartz-functions. Then $\mathcal{L}(\pi)$ is a fractional ideal of $R[T,T^{-1}]$ containing the constant functions. It admits a generator 
    \[\frac{1}{P(\pi,T)},\, P(\pi,T)\in R[T]\] which can be normalized by the condition $P(\pi,0)=1$.
\end{corollary}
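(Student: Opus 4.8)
The plan is to extract the structural information from \Cref{T:Lfon} and combine it with elementary commutative algebra over the Laurent polynomial ring $R[T,T^{-1}]$. First I would observe that by part (1) of \Cref{T:Lfon}, every element $Z(\phi,q^{-\frac{dn-1}{2}}T,f)$ becomes, after multiplication by the fixed polynomial $P_0(\pi,T)$, an element of $R[T,T^{-1}]$; hence $\mathcal{L}(\pi)\subseteq \frac{1}{P_0(\pi,T)}R[T,T^{-1}]$, so $\mathcal{L}(\pi)$ is contained in a free rank-one $R[T,T^{-1}]$-module. Next, linearity of $Z$ in $\phi$ and $f$ (stated just before \Cref{T:Lfon}) shows $\mathcal{L}(\pi)$ is an $R[T,T^{-1}]$-submodule: multiplying $\phi$ by powers of $T$ corresponds to shifting the support condition $G_n(N)$, which lands back among the generating integrals, so $\mathcal{L}(\pi)$ is stable under multiplication by $T^{\pm1}$ and by $R$, i.e. it is a fractional ideal.

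The key algebraic point is that $R[T,T^{-1}]$ is a principal ideal domain (it is the localization of the PID $R[T]$ at the multiplicative set $\{T^k\}$, $R$ being a field). Therefore the submodule $P_0(\pi,T)\cdot\mathcal{L}(\pi)\subseteq R[T,T^{-1}]$ is a genuine ideal, hence principal, generated by some $Q(T)\in R[T,T^{-1}]$; dividing back out, $\mathcal{L}(\pi)$ is generated by $Q(T)/P_0(\pi,T)$. I would then normalize: writing $Q(T)/P_0(\pi,T)$ in lowest terms as a ratio of a unit-times-polynomial, multiply the numerator and denominator by an appropriate power of $T$ and a unit of $R$ so that the denominator is a polynomial $P(\pi,T)\in R[T]$ with $P(\pi,0)=1$ and the numerator is $1$. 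This last step uses that the constant function $1$ lies in $\mathcal{L}(\pi)$ — which I would justify by the standard choice of $\phi$ and matrix coefficient (take $\phi$ the characteristic function of a small neighborhood of the identity and $f$ a matrix coefficient with $f(1)\neq 0$, so that the $N=0$ coefficient of the zeta integral is a nonzero constant and all other coefficients vanish), forcing the generator of the fractional ideal to divide $1$ in $R[T,T^{-1}]$, i.e. to have the form $1/P(\pi,T)$ after normalization.

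The main obstacle I anticipate is not the algebra but the verification that $1\in\mathcal{L}(\pi)$ and, relatedly, that $\mathcal{L}(\pi)\neq 0$: one must exhibit $\phi$ and $f$ with a nonzero zeta integral, and in the modular setting $\ell\neq 0$ one must be slightly careful that the relevant volume integrals do not vanish mod $\ell$ — this is where the compatible choice of Haar measures from \Cref{S:intintro} and the integrality discussion enter, and where I would lean on \cite[Corollary 2.4]{Mzeta} and \cite[Proposition 2.5]{Mzeta} rather than reprove it. Everything else is formal: the containment in a free module from part (1) of \Cref{T:Lfon}, the module structure from linearity and support-shifting, and principality of $R[T,T^{-1}]$.
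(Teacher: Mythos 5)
Your argument is correct and is essentially the standard Godement--Jacquet argument used in the cited reference: containment in $\frac{1}{P_0(\pi,T)}R[T,T^{-1}]$ from part (1) of \Cref{T:Lfon}, stability under $T^{\pm1}$ via rescaling $\phi$ and $f$, principality of the Laurent polynomial ring, and the nonvanishing $Z(\chi_K,T,f)=\mathrm{vol}(K)f(1)$ for a small pro-$p$ subgroup $K$ (whose volume is invertible since $\ell\neq p$) to get $1\in\mathcal{L}(\pi)$ and hence a generator of the form $1/P(\pi,T)$ after normalizing by a unit $cT^k$. The paper itself does not reprove this corollary but imports it from \cite[Corollary 2.4]{Mzeta}, whose proof proceeds exactly along these lines, so there is nothing to add.
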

One then defines
\[L(\pi,T)\coloneq \frac{1}{ P(\pi,T)}\]
and $\epsilon(T,\pi, \psi)$ by 
\[\gamma(T,\pi, \psi)= \epsilon(T,\pi, \psi){\frac{L(\pi^\lor,q^{-1}T^{-1})}{ L(\pi,T)}}.\]
Then \[\epsilon(T,\pi, \psi)\epsilon(q^{-1}T^{-1},\pi^\lor , \psi)=\omega(-1),\]
where $\omega$ is the central character of $\pi$ and $\epsilon(T,\pi, \psi)=C_{\pi,\psi}T^{k(\pi,\psi)}$ for some constant $C_{\pi,\psi}\in R$ and $k(\pi,\psi)\in \ZZ$. 
If $R=\ql$ and $\pi$ is an entire representation then $P(\pi,T)$ has integer coefficients and hence so do the rational functions $\gamma(T,\pi, \psi)$ and $\epsilon(T,\pi, \psi)$. We write for a rational function $F\in \zl[T,T^{-1}]$, $\rl(F)\in \fl[T,T^{-1}]$ for the coefficient-wise reduction $\mod \ell$ of $F$ and $\rl\left(\frac{1}{ F}\right)\coloneq \frac{1}{ \rl(F)}$ if $\rl(F)\neq 0$.
If $R=\C$, setting $T=q^{-s}$ gives the usual Godement-Jacquet $L$-factors of \cite{godement1972zeta}.
\begin{lemma}[{\cite[Corollary 4.2]{Mzeta}}]\label{L:lin}
    Let $\widetilde{\pi}$ be an entire admissible representation over $\ql$ and $\mo$ an integral structure of $\widetilde{\pi}$. Assume that both $\widetilde{\pi}$ and $\mo\otimes_\zl\fl$ satisfy the condition of \Cref{T:Lfon}. Then $\rl(\gamma(T,\widetilde{\pi}, \widetilde{\psi}))$ is well defined, the polynomial $P(\mo\otimes_\zl\fl,T)$ divides $\rl(P(\widetilde{\pi},T))$ in $\fl[T]$ and
         \[\gamma(T,\mo\otimes_\zl\fl, \psi)=\rl(\gamma(T,\widetilde{\pi}, \widetilde{\psi}))= \rl(\epsilon(T,\widetilde{\pi}, \widetilde{\psi})){\frac{\rl(L(\widetilde{\pi}^\lor,q^{-1}T^{-1}))}{ \rl((L(\widetilde{\pi},T))}}.\]
\end{lemma}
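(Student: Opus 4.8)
The statement to prove is Lemma~\ref{L:lin}, asserting that for an integral admissible $\widetilde{\pi}$ over $\ql$ with integral structure $\mo$, the polynomial $P(\mo\otimes_\zl\fl,T)$ divides $\rl(P(\widetilde{\pi},T))$, and that the $\gamma$-factor of the reduction equals the reduction of the $\gamma$-factor. The plan is to trace both assertions back to the integral zeta-integrals $Z(\phi,T,f)$ and the functional equation of \Cref{T:Lfon}, compatibly with reduction mod $\ell$.

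\emph{First I would set up the integral structures carefully.} Given $\mo$ an integral structure of $\widetilde\pi$, the dual $\mo^\lor$ is an integral structure of $\widetilde\pi^\lor$, and for $v\in\mo$, $v^\lor\in\mo^\lor$ the matrix coefficient $f=(g\mapsto v^\lor(\widetilde\pi(g)v))$ takes values in $\zl$; moreover its reduction $\rl(f)$ is precisely a matrix coefficient of $\mo\otimes_\zl\fl$, and every matrix coefficient of the reduction arises this way (by surjectivity of $\mo\otimes\fl\to\mo\otimes\fl$ and bilinearity). Likewise, picking $\phi\in\schq$ with values in $\zl$, its Fourier transform $\widehat\phi$ (with respect to the chosen integral additive character $\widetilde\psi$ and the compatible Haar measure from \Cref{S:intintro}) again takes values in $\zl$, and $\rl(\widehat\phi)=\widehat{\rl(\phi)}$. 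Each coefficient of $T^N$ in $Z(\phi,T,f)$ is the integral $\int_{G_n(N)}\phi(g)f(g)\ddd g$; with the measure chosen compatibly with $\rl$, this lies in $\zl$ and its reduction is the corresponding coefficient for the reduced data. Hence $Z(\phi,T,f)\in\zl((T))$ and $\rl(Z(\phi,T,f))=Z(\rl(\phi),T,\rl(f))$.

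\emph{Next, the divisibility.} By \Cref{T:Lfon}(1) applied over $\ql$, $P(\widetilde\pi,T)Z(\phi,q^{-\frac{dn-1}{2}}T,f)\in\ql[T,T^{-1}]$, and in fact lies in $\zl[T,T^{-1}]$ since $Z\in\zl((T))$ and $P(\widetilde\pi,T)\in\zl[T]$ (the latter by the entirety hypothesis, as recalled just before the lemma). Reducing mod $\ell$ and using the previous paragraph, $\rl(P(\widetilde\pi,T))\cdot Z(\rl(\phi),q^{-\frac{dn-1}{2}}T,\rl(f))\in\fl[T,T^{-1}]$ for all reduced data, hence for all matrix coefficients and Schwartz functions of $\mo\otimes\fl$. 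Therefore $\rl(P(\widetilde\pi,T))$ lies in the fractional-ideal denominator, i.e. $\frac{1}{\rl(P(\widetilde\pi,T))}\in\mathcal{L}(\mo\otimes\fl)$, which by the generator property means $P(\mo\otimes\fl,T)\mid \rl(P(\widetilde\pi,T))$ in $\fl[T]$ (both being normalized to have constant term $1$, after noting $\rl(P(\widetilde\pi,0))=\rl(1)=1$ so the reduction does not drop degree at $0$).

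\emph{Finally, the $\gamma$-factor identity.} Apply the functional equation \Cref{T:Lfon}(2) over $\ql$: $Z(\widehat\phi,T^{-1}q^{-\frac{dn+1}{2}},\widehat f)=\gamma(T,\widetilde\pi,\widetilde\psi)Z(\phi,Tq^{-\frac{dn-1}{2}},f)$. Since $\gamma(T,\widetilde\pi,\widetilde\psi)\in\zl(T)$ (entirety hypothesis) and both zeta integrals lie in $\zl((T))$, reducing mod $\ell$ and using the compatibility of $Z$, $\widehat{(\cdot)}$ and $\rl$ gives $Z(\widehat{\rl(\phi)},T^{-1}q^{-\frac{dn+1}{2}},\widehat{\rl(f)})=\rl(\gamma(T,\widetilde\pi,\widetilde\psi))Z(\rl(\phi),Tq^{-\frac{dn-1}{2}},\rl(f))$. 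Choosing $\phi,f$ so that $Z(\rl(\phi),Tq^{-\frac{dn-1}{2}},\rl(f))$ is a nonzero element of $\fl((T))$ (possible since $\mathcal{L}(\mo\otimes\fl)$ contains the constants, hence is nonzero), and comparing with \Cref{T:Lfon}(2) applied over $\fl$ to $\mo\otimes\fl$, we conclude $\gamma(T,\mo\otimes\fl,\psi)=\rl(\gamma(T,\widetilde\pi,\widetilde\psi))$. The displayed rewriting in terms of $\epsilon$ and $L$ then follows by unwinding the definition $\gamma=\epsilon\,L(\pi^\lor,q^{-1}T^{-1})/L(\pi,T)$ over $\ql$ and reducing, using $\rl(1/F)=1/\rl(F)$ where the denominators do not vanish mod $\ell$.

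\emph{Expected main obstacle.} The delicate point is the compatibility of the zeta integral with reduction mod $\ell$: one must know that $\int_{G_n(N)}\phi f\,\ddd g$ lies in $\zl$ and reduces correctly. Here the subtlety flagged in \Cref{S:intintro}--- that the naive Iwasawa-type decomposition of Haar measure fails mod $\ell$ --- is what forces the use of the right-Haar-measure correction \eqref{E:int2}; I would need to check that the measure normalization used in \cite{Mzeta} to define $Z(\phi,T,f)$ is the one compatible with $\rl$, or else absorb the discrepancy into a unit. A second, more routine obstacle is verifying one can choose test data over $\fl$ realizing a nonzero (indeed unit) zeta integral, so that the functional equation can be divided through; this follows from the corollary after \Cref{T:Lfon} together with surjectivity of $\rl$ on matrix coefficients, but should be stated explicitly.
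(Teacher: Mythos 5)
The paper gives no proof of this lemma: it is imported verbatim from \cite[Corollary 4.2]{Mzeta}, and your argument is essentially the proof given there — reduce the coefficients of the zeta integrals and the functional equation of \Cref{T:Lfon} modulo $\ell$, using that (with measures and the additive character chosen integrally, as in \Cref{S:intintro}) every matrix coefficient and Schwartz function of $\mo\otimes_\zl\fl$ lifts to integral data over $\ql$. One slip in the divisibility step: what you have actually shown is $\mathcal{L}(\mo\otimes_\zl\fl)\subseteq \rl(P(\widetilde{\pi},T))^{-1}\fl[T,T^{-1}]$, not ``$\rl(P(\widetilde{\pi},T))^{-1}\in\mathcal{L}(\mo\otimes_\zl\fl)$'' — the latter, read literally, would give divisibility in the wrong direction — but the conclusion you then draw, $P(\mo\otimes_\zl\fl,T)\mid\rl(P(\widetilde{\pi},T))$ after matching constant terms, is the correct consequence of the containment you actually established.
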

Another useful lemma is the following:
\begin{lemma}[{\cite[Proposition 2.5]{Mzeta}}]\label{L:su}
    Let $\pi$ be a representation of $G_n$ satisfying the conditions of \Cref{T:Lfon}. Moreover, let $\tau$ be a subquotient of $\pi$. Then $P(\tau,T)$ divides $P(\pi, T)$ and \[\gamma(T,\pi,\psi)=\gamma(T,\tau,\psi).\] 
\end{lemma}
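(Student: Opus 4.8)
The plan is to reduce everything to the behaviour of the zeta integral $Z(\phi,T,f)$ under short exact sequences, and to the multiplicativity of the $\gamma$-factor via the functional equation of \Cref{T:Lfon}. First I would recall that the space $\mathcal{L}(\pi)$ of \Cref{T:Lfon} is built from matrix coefficients of $\pi$, and that a matrix coefficient of a subquotient $\tau$ of $\pi$ can be realized, after possibly passing to a subrepresentation and a quotient, as (a component of) a matrix coefficient of $\pi$ itself: if $0\to\pi'\to\pi''\to\tau\to 0$ with $\pi''\hra\pi$ a subrepresentation, then for $v\in\pi''$ lifting $\bar v\in\tau$ and $v^\lor\in\tau^\lor=(\pi'')^{\lor}/(\pi')^{\lor,\perp}$ lifted to $\pi^\lor$, the function $g\mapsto v^\lor(\pi(g)v)$ restricted appropriately agrees with the matrix coefficient of $\tau$. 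Hence $\mathcal{L}(\tau)\subseteq\mathcal{L}(\pi)$ as fractional ideals of $R[T,T^{-1}]$, and since $\mathcal{L}(\pi)$ is generated by $1/P(\pi,T)$ while $\mathcal{L}(\tau)$ is generated by $1/P(\tau,T)$, containment of ideals forces $P(\tau,T)\mid P(\pi,T)$ in $R[T]$ (both normalized to have constant term $1$).

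For the equality $\gamma(T,\pi,\psi)=\gamma(T,\tau,\psi)$, I would argue that $\gamma$ depends only on the datum of the (twisted) functional equation relating $Z(\widehat\phi,\ldots,\widehat f)$ and $Z(\phi,\ldots,f)$, and this relation is inherited by matrix coefficients coming from a subquotient. Concretely: pick a matrix coefficient $f$ of $\tau$ which is nonzero and realize it inside a matrix coefficient $F$ of $\pi$ as above; then $\widehat f$ is correspondingly a matrix coefficient of $\tau^\lor$ realized inside $\widehat F$. Applying part (2) of \Cref{T:Lfon} for $\pi$ to $F$ and for $\tau$ to $f$, and using that the zeta integrals agree on the nose for these matched coefficients, the scalar $\gamma(T,\pi,\psi)$ and $\gamma(T,\tau,\psi)$ must coincide as elements of $R(T)$ — one only needs $Z(\phi,Tq^{-\frac{dn-1}{2}},f)\neq 0$ for some choice, which holds since $\mathcal{L}(\tau)$ contains the constants.

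The main obstacle I anticipate is the realization step: making precise that a matrix coefficient of a subquotient $\tau$ of $\pi$ really does occur as a matrix coefficient of $\pi$ (or at least that $\mathcal{L}(\tau)$ sits inside $\mathcal{L}(\pi)$), since $\tau$ is a subquotient, not merely a subrepresentation or a quotient, so one must handle the intermediate $\pi''$ and the dual filtration carefully, and check that admissibility is preserved so the matrix coefficients are well-defined. Once that bookkeeping is done, the divisibility and the $\gamma$-equality both follow formally. I would also remark that this is exactly the content of \cite[Proposition 2.5]{Mzeta}, so in the write-up it suffices to cite that and perhaps sketch the realization argument; the only subtlety in our setting is that we allow $R=\fl$, but the argument is insensitive to the characteristic of $R$ since it uses no averaging or semisimplicity, only exactness of the relevant functors and the formal properties of $Z(\phi,T,f)$.
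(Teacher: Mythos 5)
Your proposal is correct and is essentially the argument of the cited reference: the paper itself offers no proof of this lemma beyond pointing to \cite[Proposition 2.5]{Mzeta}, whose proof is exactly the realization of matrix coefficients of a subquotient as matrix coefficients of the ambient representation (giving $\mathcal{L}(\tau)\subseteq\mathcal{L}(\pi)$ and hence the divisibility), followed by feeding these matched coefficients into the functional equation to identify the $\gamma$-factors. The only slip is notational — $\tau^\lor$ is the annihilator of $\pi'$ inside $(\pi'')^\lor$, not a quotient of it — but this does not affect the argument.
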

\section{Intertwining operators and square-irreducible representations}
We will quickly discuss the intertwining operators of \cite{Dat} and square-irreducible representations, \emph{cf.} \cite{LMsquare}.
\subsection{ Intertwining operators}\label{S:intertwiningoperators}
We start with the intertwining operators in arbitrary characteristic introduced in \cite[§7]{Dat} and state some lemmas, which are probably already well-known to the experts.
Fix for the rest of the section a parabolic subgroup $P=P_{(n_1,n_2)}=M\rtimes U$ with Levi-component $M=G_{n_1}\times G_{n_2}$ of $G=G_n$ and $\sigma$ a smooth representation of $M$ of finite length. We write $ R(T)$ and denote by $\sigma_{R(T)}=\sigma\otimes_RR(T)$ the basechange of $\sigma$ to ${R(T)}$. Moreover, we equip ${R(T)}$ with a valuation $v\colon {R(T)}\ra \mathbb{R}$ which restricts to $0$ on $R$ and satisfies $v(T)<0$. Finally, we denote by $\psi_{un}:M\ra {R(T)}$ the generic unramified character sending \[(m_1,m_2)\mapsto T^{-\log_q(\lvert \det\nolimits'(m_1)\lvert )}.\] We write $P'=P_{(n_2,n_1)}=M'\rtimes U'$ with Levi-component $M'=G_{n_2}\times G_{n_1}$ and we write $\overline{\sigma}$ for the representation obtained by twisting $\sigma$ with the inner automorphism given by the element
\begin{equation}\label{E:w}w\coloneq \begin{pmatrix}
    0&1_{n_1}\\
    1_{n_2}&0
\end{pmatrix}.\end{equation}
For example, if $\sigma=\sigma_1\otimes \sigma_2$, we have $\overline{\sigma_1\otimes\sigma_2}=\sigma_2\otimes \sigma_1$.
We define the $R(T)$-representation \[\sigma_{un}\coloneq \sigma_{R(T)}\otimes\psi_{un}.\] 
The representation $\sigma_{un}$ is admissible. Indeed, 
for any open compact subgroup $K\subseteq G$, we have $\lvert\det\nolimits'(K)\lvert=1$, and hence the character $\psi_{un}$ does not play a role when considering $K$-fixed vectors. The claim follows then from \cite[Lemma III.1(ii)]{Ghen}.
By \cite[Lemma 3.7]{Dat}
\[\Ho_{G_n}(\ind_P^G(\sigma_{un}),\ind_{P'}^G(\overline{\sigma}_{un}))\] is non-zero and contains a certain class of non-zero morphisms $J_P(\sigma)$'s in it, which
are up to a choice of a Haar-measure uniquely determined by the following condition. If $f\in \ind_P^G(\sigma_{un})$ is compactly supported in $PwP'$, \emph{cf}. (\ref{E:w}), then
\[J_P(\sigma)(f)(1)=\int\displaylimits_{{U'}}f(w{u'}1)\ddd u'.\]
For $k\in\ZZ$ we denote by $\mathfrak{l}_k\coloneq \sigma\otimes_R (T-1)^kR[T]_{(T-1)}\otimes_{R[T]_{(T-1)}}\psi_{un}$.
\begin{lemma}
Let $\sigma\in \Rep(M)$.
Then there exists a morphism
\[J_P(\sigma)\colon\ind_P^G(\sigma_{un})\ra\ind_{P'}^G(\overline{\sigma}_{un}),\]which sends  $\ind_P^G(\mathfrak{l}_0)$ to $\ind_{P'}^G(\overline{\mathfrak{l}_0})$ and defines a non-zero morphism $R_\sigma$
\[
\begin{tikzcd}
    \ind_P^G(\mathfrak{l}_0)\arrow[r," J_P(\sigma)"]\arrow[d,"T\mapsto 1",twoheadrightarrow]&\ind_{P'}^G(\overline{\mathfrak{l}_0})\arrow[d,"T\mapsto 1",twoheadrightarrow]\\ 
\ind_P^G(\sigma)\arrow[r,"R_\sigma"]&\ind_{P'}^G(\overline{\sigma})
\end{tikzcd}
\]
Such $R_\sigma$ is called an \emph{intertwining operator}.
\end{lemma}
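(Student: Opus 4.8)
The plan is to take the operators $J_P(\sigma)$ provided by \cite[Lemma 3.7]{Dat} — which are non-zero, $G$-equivariant and $R(T)$-linear maps $\ind_P^G(\sigma_{un})\ra\ind_{P'}^G(\overline{\sigma}_{un})$, pinned down up to a scalar (a choice of Haar measure on $U'$) by the integral formula on sections supported in a single coset $PwP'g$ — and to renormalise such a $J_P(\sigma)$ by a suitable integer power of $T-1$ so that it carries the lattice $\ind_P^G(\mathfrak{l}_0)$ into $\ind_{P'}^G(\overline{\mathfrak{l}_0})$ without becoming divisible by $T-1$ there. Reducing modulo $T-1$ will then yield $R_\sigma$. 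Throughout write $A\coloneq R[T]_{(T-1)}$, which is a discrete valuation ring with maximal ideal generated by $T-1$ and fraction field $R(T)$. I will use freely that normalised parabolic induction is exact and commutes with the base changes $-\otimes_AR(T)$ and $-\otimes_AA/(T-1)=-\otimes_AR$ (this is standard since $P\backslash G$ is compact; compare \Cref{T:rel}): in particular $\ind_P^G(\mathfrak{l}_0)$ is a $G$- and $A$-stable lattice in $\ind_P^G(\sigma_{un})$ with $\ind_P^G(\mathfrak{l}_0)\otimes_AR(T)=\ind_P^G(\sigma_{un})$, and similarly on the target, while $\mathfrak{l}_0\otimes_AR=\sigma$ and $\overline{\mathfrak{l}_0}\otimes_AR=\overline{\sigma}$ because $\psi_{un}$, and hence $\overline{\psi_{un}}$, takes values in $T^{\ZZ}\subseteq A^*$ and specialises to the trivial character at $T=1$.

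First I would fix one such $J_0\coloneq J_P(\sigma)$ and consider
\[S\coloneq \{k\in\ZZ\colon (T-1)^kJ_0(\ind_P^G(\mathfrak{l}_0))\subseteq\ind_{P'}^G(\overline{\mathfrak{l}_0})\},\]
which is upward closed because $(T-1)\overline{\mathfrak{l}_0}\subseteq\overline{\mathfrak{l}_0}$. I claim $S$ is a proper, non-empty subset of $\ZZ$. It is non-empty because $\ind_P^G(\mathfrak{l}_0)$ is finitely generated as an $A[G]$-module (parabolic induction preserves finite generation over the Noetherian ring $A$; equivalently one may use that $J_0$ depends on $T$ through rational functions with only finitely many poles, as in the construction in \cite[§7]{Dat}), so the images of the finitely many generators all lie in $(T-1)^{-N}\ind_{P'}^G(\overline{\mathfrak{l}_0})$ for some common $N$, and then $N\in S$. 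It is proper because, were $S=\ZZ$, we would have $J_0(\ind_P^G(\mathfrak{l}_0))\subseteq\bigcap_{m\ge0}(T-1)^m\ind_{P'}^G(\overline{\mathfrak{l}_0})$; picking an $R$-basis of $\overline{\sigma}$ exhibits $\overline{\mathfrak{l}_0}$, and hence $\ind_{P'}^G(\overline{\mathfrak{l}_0})$, as $(T-1)$-adically separated over the DVR $A$, so this intersection vanishes, forcing $J_0$ to be zero on a generating set of $\ind_P^G(\sigma_{un})$ over $R(T)$ and hence $J_0=0$, contradicting \cite[Lemma 3.7]{Dat}. Thus $k_0\coloneq\min S$ exists, and I would put $J_P(\sigma)\coloneq(T-1)^{k_0}J_0$.

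To finish: by the choice of $k_0$ one has $J_P(\sigma)(\ind_P^G(\mathfrak{l}_0))\subseteq\ind_{P'}^G(\overline{\mathfrak{l}_0})$, whereas $k_0-1\notin S$ gives $J_P(\sigma)(\ind_P^G(\mathfrak{l}_0))\not\subseteq(T-1)\ind_{P'}^G(\overline{\mathfrak{l}_0})=\ind_{P'}^G((T-1)\overline{\mathfrak{l}_0})$. Being $A$-linear, $J_P(\sigma)$ maps $(T-1)\ind_P^G(\mathfrak{l}_0)$ into $(T-1)\ind_{P'}^G(\overline{\mathfrak{l}_0})$, so it induces a $G$-equivariant, $A$-linear map on the quotients by $T-1$; under the identifications $\ind_P^G(\mathfrak{l}_0)/(T-1)\ind_P^G(\mathfrak{l}_0)\cong\ind_P^G(\sigma)$ and $\ind_{P'}^G(\overline{\mathfrak{l}_0})/(T-1)\ind_{P'}^G(\overline{\mathfrak{l}_0})\cong\ind_{P'}^G(\overline{\sigma})$ from the first paragraph, this induced map is the sought $R_\sigma$: it is non-zero precisely because the image was not contained in $(T-1)\ind_{P'}^G(\overline{\mathfrak{l}_0})$, and the square commutes tautologically, the vertical maps being the specialisations $T\mapsto1$ and $R_\sigma$ being the map they induce out of $J_P(\sigma)$.

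The parts that are essentially bookkeeping are the exactness and base-change compatibilities of parabolic induction and the identifications $\mathfrak{l}_0\otimes_AR=\sigma$, $\overline{\mathfrak{l}_0}\otimes_AR=\overline{\sigma}$ of the reductions. The step I expect to be the real obstacle is the well-definedness of $k_0$, that is, that $S$ is a non-trivial upward-closed subset of $\ZZ$: the lower bound relies on the $(T-1)$-adic separatedness of the target lattice together with $J_0\neq0$, while the non-emptiness relies on controlling the $(T-1)$-denominators of $J_0$ — either via finite generation of $\ind_P^G(\mathfrak{l}_0)$ over $A[G]$, or via the rationality in $T$ of Dat's intertwining operators — and one should check that these finiteness inputs survive the passage to $A$- and $R(T)$-coefficients.
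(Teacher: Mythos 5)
Your proposal is correct, and the overall strategy coincides with the paper's: start from Dat's operator $J_P(\sigma)$ on $\ind_P^G(\sigma_{un})$, rescale by the unique power of $T-1$ that makes it carry the lattice $\ind_P^G(\mathfrak{l}_0)$ into $\ind_{P'}^G(\overline{\mathfrak{l}_0})$ without the image falling into $(T-1)\ind_{P'}^G(\overline{\mathfrak{l}_0})$, and then specialize at $T=1$. Where you differ is in how the existence of that optimal power is established. The paper passes through Frobenius reciprocity: it views $J_P(\sigma)$ as a retraction $r\colon r_{P'}(\ind_P^G(\sigma_{un}))\ra \overline{\sigma}_{un}$, observes that $r$ sends the integral structure $r_{P'}(\ind_P^G(\mathfrak{l}_0))$ to an integral structure of $\overline{\sigma}_{un}$, classifies the $G$- and $R[T]_{(T-1)}$-stable integral structures of $\overline{\sigma}_{un}$ as the $\overline{\mathfrak{l}_k}$, and normalizes on the Jacquet-module side; this has the side benefit of defining the pole order $r(\sigma)$ in the form used throughout \Cref{L:inop}. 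You instead work directly on the induced representation, getting the upper bound on denominators from finite generation of $\ind_P^G(\mathfrak{l}_0)$ over $R[T]_{(T-1)}[G]$ (which does hold, since $\sigma\in\Rep(M)$ has finite length and parabolic induction preserves finite generation) and the lower bound from $(T-1)$-adic separatedness of the target lattice together with $J_P(\sigma)\neq 0$. Your route is arguably more self-contained, since it avoids the classification of integral structures of $\overline{\sigma}_{un}$; its only cost is that the normalizing exponent is characterized extrinsically, so to recover the paper's $r(\sigma)$ and its behaviour under restriction to subrepresentations one still has to check (via the lattice-compatibility of the adjunction) that your normalization agrees with the one obtained from the retraction. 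The remaining ingredients you defer to bookkeeping — exactness of $\ind_P^G$ and the identification $\ind_P^G(\mathfrak{l}_0)/(T-1)\ind_P^G(\mathfrak{l}_0)\cong \ind_P^G(\sigma)$ — are exactly the points the paper verifies by passing to $K$-fixed vectors, so nothing essential is missing.
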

\begin{proof}
Recall that the Geometric Lemma gives inclusions
\[\begin{tikzcd}
    \overline{\sigma_{un}}\arrow[r,hookrightarrow]& r_{P'}(\ind_P^G({\sigma_{un}}))\\
    \overline{\mathfrak{l}_0}\arrow[r,hookrightarrow]\arrow[u,hookrightarrow]& r_{P'}(\ind_P^G({\mathfrak{l}_0}))\arrow[u,hookrightarrow]
\end{tikzcd}\]
which are determined by a suitable choice of Haar-measures, \emph{i.e.} up to a scalar in $R[T]_{(T-1)}$.
We will use the notation of \cite[§2]{Dat} and note that by \cite[Lemma 3.7]{Dat} $\sigma_{un}$ is $(\op,P)$-regular.
Following \cite[Lemma 2.10]{Dat}, we can therefore define a retraction $r$ of the inclusion
\[\begin{tikzcd}
  \overline{\sigma_{un}}\arrow[r,hookrightarrow]\arrow[rr,bend right=30, "1"]&  r_{P'}(\ind_P^G(\sigma_{un}))\arrow[r,"r"]& \overline{\sigma_{un}},
\end{tikzcd}\]
Moreover, this retraction defines by Frobenius reciprocity the intertwiner $J_P(\sigma)$ up to scalar multiplication in $R(T)$.

The lattice $r_{P'}(\ind_P^G(\mathfrak{l}_0))$ is an integral structure of $r_{P'}(\ind_P^G(\sigma_{un}))$ in the sense of \cite[I.9]{alma991023733359705251} by \cite[Proposition 6.7]{Dat}.
 The image of $r_{P'}(\ind_P^G(\mathfrak{l}_0))$ under $r$ is an $R[T]_{(T-1)}$-integral structure $\mathfrak{o}$ of $\overline{\sigma_{un}}$ by \cite[I.9.3]{alma991023733359705251}. 
 We can now fix the maximal $k\in\mathbb{Z}$ such that $\mathfrak{o}\subseteq \mathfrak{l}_k$. The existence of such $k$ follows from \cite[Remark p.80]{alma991023733359705251}, in which it is shown that any two integral structures of an admissible representation are commensurable.
We thus obtain the following diagram.
\[\begin{tikzcd}
  \overline{\mathfrak{l}_0}\arrow[r,hookrightarrow]\arrow[rr,bend right=30, "\iota"]&  r_{P'}(\ind_P^G(\mathfrak{l}_{0}))\arrow[r,"r"]& \overline{\mathfrak{l}_{k}},
\end{tikzcd}\]
where $\iota\colon \mathfrak{l}_0\hra \mathfrak{l}_{k}$ is the natural inclusion and in particular $k\le 0$.
We now multiply $r$ by $(T-1)^{-k}$ and hence the image of $r'\coloneq r(T-1)^{-k}$ is contained in $\overline{\mathfrak{l}_{0}}$ but not in $\overline{\mathfrak{l}_{1}}$.
We thus have a diagram
\[\begin{tikzcd}
  \overline{\mathfrak{l}_0}\arrow[r,hookrightarrow]\arrow[rr,bend right=30, "(T-1)^{-k}\cdot"]&  r_{P'}(\ind_P^G(\mathfrak{l}_{0}))\arrow[r,"r'"]& \overline{\mathfrak{l}_{0}}
\end{tikzcd}\]
and we fix $J_P(\sigma)$ to the intertwiner coming from Frobenius reciprocity applied to $r'$. Moreover, we set $r(\sigma)\coloneq -k\ge 0$, which is well defined. Indeed, the only possible choice we made is choosing the inclusion \[ \overline{\sigma_{un}}\hra r_{P'}(\ind_P^G(\sigma_{un}))\] up to a scalar in $R[T]_{(T-1)}$, which does not affect $k$.
Next observe that the sequence
\[\begin{tikzcd} 0\arrow[r]&\ind_P^G(\mathfrak{l}_1)\arrow[r]&\ind_P^G(\mathfrak{l}_0)\arrow[r,"T\mapsto 1"]&\ind_P^G(\sigma)\arrow[r]&0
\end{tikzcd} \] is exact since the representation in the sequence are admissible.
We thus have the following commutative diagram.
\[
\begin{tikzcd}
\ind_P^G(\sigma_{un})\arrow[r,"J_P(\sigma)"]&\ind_{P'}^G(\overline{\sigma}_{un})\\
    \ind_P^G(\mathfrak{l}_0)\arrow[r," J_P(\sigma)"]\arrow[d,"T\mapsto 1",twoheadrightarrow]\arrow[u,hookrightarrow]&\ind_{P'}^G(\overline{\mathfrak{l}_0})\arrow[d,"T\mapsto 1",twoheadrightarrow]\arrow[u,hookrightarrow]\\ 
\ind_P^G(\sigma)\arrow[r,"R_\sigma"]&\ind_{P'}^G(\overline{\sigma})
\end{tikzcd}
\]
Now if $R_\sigma$ vanishes, $J_P(\sigma)( \ind_P^G(\mathfrak{l}_0))\subseteq \ind_{P'}^G(\overline{\mathfrak{l}_1})$, however applying Frobenius reciprocity would give a contradiction to the construction of $J_P(\sigma)$.
\end{proof}
 Note that $J_P(\sigma)$ in the above lemma is unique up to a unit in $R[T]_{(T-1)}$ and $R_\sigma$ is unique up to a unit in $R$. 
 We call $r(\sigma)$ the order of the pole of the intertwining operator at $\sigma$. Note that $r(\sigma)=0$ if and only if for all $f\in \ind_P^G(\sigma)$ with support contained in $PwP'$,
 \[R_\sigma(f)(g)=\int_{U'}f(wu')\ddd u'.\]
 If $\sigma=\pi_1\otimes\pi_2$, we will also denote by
\[R_{\pi_1,\pi_2}\colon \pi_1\times\pi_2\ra \pi_2\times \pi_1\] the so obtained morphism and $r_{\pi_1,\pi_2}$ the order of vanishing.
The next lemma is the generalization of the list of properties needed to deal with square irreducible representations and collected for $R=\C$ in \cite[Lemma 2.3]{LMsquare}.
\begin{lemma}\label{L:inop}
    Let $0\ra \sigma_1\hra\sigma\sra\sigma_2\ra 0$ be a short exact sequence of representations of $M$  and $\pi_1,\, \pi_2,\, \pi_3$ representations of $G_{k_i},\, i\in \{1,2,3\}$.
    \begin{enumerate}
        \item If $M=G$, $R_\sigma$ is a scalar.
        \item The order of the poles satisfies $r(\sigma)\ge r(\sigma_1)$ and \[\restr{J_P(\sigma)}{\sigma_1}=(T-1)^{r(\sigma)-r(\sigma_1)}J_P(\sigma_1).\]
        \item $R_\sigma$ restricts either to an intertwining operator or to $0$, \emph{i.e.} up to a scalar \[\restr{R_\sigma}{\sigma_1}=\begin{cases}
            R_{\sigma_1}&\text{ if }r(\sigma)=r(\sigma_1),\\
            0&\text{ otherwise.}
        \end{cases}\]
        \item If $\sigma=\pi_1\times\pi_2\otimes\pi_3$, $r_{\pi_1\times\pi_2,\pi_3}\le r_{\pi_1,\pi_2}+r_{\pi_2,\pi_3}$
        and  \[(T-1)^{ r_{\pi_1,\pi_2}+r_{\pi_2,\pi_3}-r_{\pi_1\times\pi_2,\pi_3}}J_P(\sigma)=(J_P(\pi_1\otimes\pi_3)\times 1_{\pi_2})\circ (1_{\pi_1}\times J_P(\pi_2\otimes \pi_3)).\] Therefore, \[(R_{\pi_1,\pi_3}\times 1_{\pi_2})\circ (1_{\pi_1}\times R_{\pi_2,\pi_3})=\begin{cases}
            R_{\pi_1\times\pi_2,\pi_3}&\text{ if }r_{\pi_1\times\pi_2,\pi_3}= r_{\pi_1,\pi_2}+r_{\pi_2,\pi_3},\\
            0&\text{ otherwise,}
        \end{cases}\]
         up to a scalar.
        \item Moreover, $r_{\pi_1\times\pi_2,\pi_3}= r_{\pi_1,\pi_2}+r_{\pi_2,\pi_3}$ if at least one of the $R_{\pi_i,\pi_3}$, $i\in \{1,2\}$ is an isomorphism or if $\pi_3$ is irreducible.
        \item If $R_\sigma$ is an isomorphism, then $ R_{\overline{\sigma}}\circ R_\sigma$ is a scalar.
    \end{enumerate}
\end{lemma}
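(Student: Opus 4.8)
The plan is to follow the proof of \cite[Lemma 2.3]{LMsquare} almost verbatim: the point is that, once the operators $J_P(\sigma)$ and $R_\sigma$ have been constructed over $R(T)$ as in the previous lemma, every assertion is a formal manipulation of these operators together with the integral structures $\mathfrak{l}_k=\sigma\otimes_R(T-1)^kR[T]_{(T-1)}\otimes\psi_{un}$, and nothing in those manipulations is sensitive to the characteristic of $R$. The recurring device will be to prove each identity first for the generic unramified twists over $R(T)$, and then to read off the statement about the $R_\bullet$ by comparing the powers of $(T-1)$ carried by the two sides and specialising $T\mapsto 1$. Part (1) is immediate from this point of view: if $M=G$ then $P=P'=G$, the functor $\ind_P^G$ is the identity and $U'=\{1\}$, so $J_P(\sigma)$ is the endomorphism of $\sigma_{un}$ given by the defining integral formula, i.e. the identity up to a unit of $R[T]_{(T-1)}$, whence $R_\sigma$ is the corresponding scalar in $R$.

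For (2) and (3): since $r_{P'}\circ\ind_P^G$ is exact, the short exact sequence $0\to\sigma_1\to\sigma\to\sigma_2\to0$ produces a commutative ladder relating the integral structures $r_{P'}(\ind_P^G((\sigma_i)_{un}))\subseteq r_{P'}(\ind_P^G(\sigma_{un}))$ and the retractions $r$ constructed in the proof of the previous lemma. Restricting the retraction attached to $\sigma$ to the sub-object determined by $\sigma_1$ gives a retraction for $\sigma_1$; matching up the minimal power of $(T-1)$ needed to make each retraction land in $\overline{\mathfrak{l}_0}$ but not in $\overline{\mathfrak{l}_1}$ yields simultaneously $r(\sigma)\ge r(\sigma_1)$ and the identity $\restr{J_P(\sigma)}{\sigma_1}=(T-1)^{r(\sigma)-r(\sigma_1)}J_P(\sigma_1)$. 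Specialising $T\mapsto 1$ in this identity gives (3): the factor $(T-1)^{r(\sigma)-r(\sigma_1)}$ becomes $1$ when $r(\sigma)=r(\sigma_1)$ and $0$ otherwise, so $\restr{R_\sigma}{\sigma_1}$ is $R_{\sigma_1}$ or $0$ accordingly.

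Parts (4), (5), (6) all rest on the multiplicativity (cocycle relation) of the generic operators $J_P$ with respect to reduced decompositions of Weyl-group elements, which belongs to the formalism of \cite{Dat}. Taking for $w$ the permutation moving the third block past the first two, this identifies $J_P(\sigma_{un})$, up to a scalar in $R(T)$, with $(J_P(\pi_1\otimes\pi_3)\times 1_{\pi_2})\circ(1_{\pi_1}\times J_P(\pi_2\otimes\pi_3))$; comparing the $(T-1)$-adic orders of the two sides on the standard lattices gives the inequality $r_{\pi_1\times\pi_2,\pi_3}\le r_{\pi_1,\pi_3}+r_{\pi_2,\pi_3}$ and the displayed factorisation of $J_P(\sigma)$, and applying $T\mapsto 1$ together with (3) gives the dichotomy for the composite of the $R_\bullet$, which is (4). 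For (5), recall that an intertwining operator $R_{\pi_i,\pi_3}$ is by construction never zero, so if one of $1_{\pi_1}\times R_{\pi_2,\pi_3}$ or $R_{\pi_1,\pi_3}\times 1_{\pi_2}$ is an isomorphism then the composite in (4) is nonzero and equality of the orders follows; the case $\pi_3$ irreducible is obtained, exactly as in \cite{LMsquare}, by playing this factorisation off against the symmetric one coming from moving $\pi_1\times\pi_2$ past $\pi_3$ and using (6). Finally, for (6): when $R_\sigma$ is an isomorphism, $J_P(\sigma)$ carries $\ind_P^G(\mathfrak{l}_0)$ isomorphically onto $\ind_{P'}^G(\overline{\mathfrak{l}_0})$; the cocycle relation for the trivial decomposition $w\cdot w=1$ gives $J_{P'}(\overline{\sigma})\circ J_P(\sigma)=c(T)\cdot\mathrm{id}$ for some $c(T)\in R(T)$, and the preceding isomorphism statement forces $c(T)$ to be a unit of $R[T]_{(T-1)}$, so $T\mapsto 1$ yields $R_{\overline\sigma}\circ R_\sigma=c(1)\cdot\mathrm{id}$ with $c(1)\in R^{*}$ (and in passing shows $R_{\overline\sigma}$ is itself an isomorphism).

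The step I expect to be the main obstacle is the $(T-1)$-adic bookkeeping underlying (4)--(6): the cocycle relation of \cite{Dat} is naturally an identity of operators over $R(T)$ only up to an unspecified scalar, and one has to check carefully that, once restricted to the lattices $\mathfrak{l}_0$, this scalar tightens to precisely the power of $(T-1)$ claimed — which is exactly what makes the comparison of orders $r(\,\cdot\,)$ work. The secondary difficulty is the case $\pi_3$ irreducible in (5), which, unlike the other assertions, is not a purely formal consequence of the construction and genuinely uses the interplay of the two factorisations together with (6).
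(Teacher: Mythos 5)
Your proposal for parts (1)--(4) and (6) follows the paper's proof essentially verbatim: (1) is immediate, (2)--(3) come from the exactness of $r_{P'}\circ\ind_P^G$ and the commutative ladder of lattices $\mathfrak{l}_k$, (4) from the cocycle property of the $J_P$'s in \cite{Dat} plus a comparison of $(T-1)$-adic orders, and (6) from $J_{P'}(\overline\sigma)\circ J_P(\sigma)$ being a scalar in $R(T)$ (this is \cite[Proposition 7.8(i)]{Dat}).

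The one place where your sketch goes astray is the case of (5) where $\pi_3$ is irreducible. You describe it as "playing the factorisation off against the symmetric one coming from moving $\pi_1\times\pi_2$ past $\pi_3$ and using (6)," but that is not how the argument runs, and (6) plays no role there. The actual mechanism (as in \cite[Theorem 2.3]{LMsquare}) is: if $r_{\pi_1\times\pi_2,\pi_3}<r_{\pi_1,\pi_3}+r_{\pi_2,\pi_3}$ then by (4) the composite $(R_{\pi_1,\pi_3}\times 1_{\pi_2})\circ(1_{\pi_1}\times R_{\pi_2,\pi_3})$ vanishes, which yields an inclusion $\pi_1\times\mathrm{Im}(R_{\pi_2,\pi_3})\hra\mathrm{Ker}(R_{\pi_1,\pi_3})\times\pi_2$ inside $\pi_1\times\pi_3\times\pi_2$; after conjugating by the appropriate Weyl element this puts you exactly in the setting of \Cref{C:3ind} (the three-term induction lemma of Kashiwara--Kim--Kato--Oh type, which the paper establishes precisely for this purpose), and the irreducibility of the middle factor $\pi_3$ forces $\mathrm{Ker}(R_{\pi_1,\pi_3})=\pi_3\times\pi_1$, contradicting the fact that an intertwining operator is nonzero by construction. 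Without invoking \Cref{C:3ind} (or an equivalent statement) this case does not close, so you should replace the appeal to (6) and the "symmetric factorisation" with that lemma.
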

\begin{proof}
The first point is trivially true.
    Since the following diagram commutes up to a non-zero scalar in ${R(T)}$, (2) and (3) also follow immediately.
    \[\begin{tikzcd}
    \ind_P^G({\sigma_2}_{un})\arrow[r, "J_P(\sigma_2)"]&\ind_{P'}^G(\overline{\sigma_2}_{un})\\
    \ind_P^G(\sigma_{un})\arrow[u,twoheadrightarrow]\arrow[r, "J_P(\sigma)"]&\ind_{P'}^G(\overline{\sigma}_{un})\arrow[u,twoheadrightarrow]\\
\ind_P^G({\sigma_1}_{un})\arrow[u,hookrightarrow]\arrow[r,"J_P(\sigma_1)"]&\ind_{P'}^G(\overline{\sigma_1}_{un})\arrow[u,hookrightarrow]
    \end{tikzcd}\]
The first part of (4) follows from the following, up to multiplication by some non-zero element of ${R(T)}$, commutative diagram.
\[\begin{tikzcd}
\ind_P^G((\pi_1\times\pi_2\otimes\pi_3)_{un})\arrow[rr,"J_P(\pi_1\times\pi_2\otimes\pi_3)"]\arrow[d,"1_{\pi_1}\times J_P(\pi_2\otimes\pi_3)" left]&&\ind_{P'}^G((\pi_3\otimes\pi_1\times\pi_2)_{un})\\
\ind_{P_{k_1,k_3,k_2}}^G((\pi_1\otimes\pi_3\otimes\pi_2)_{un})\arrow[rru,"J_P(\pi_1\otimes\pi_3)\times 1_{\pi_2}",bend right=13]&&
\end{tikzcd}\]
To see that it commutes, apply \cite[Proposition 7.8(ii)]{Dat} to the bottom two arrows and \cite[Proposition 7.8(iii)]{Dat} to the top arrow. Then the commutativity follows from \cite[Proposition 7.8(i)]{Dat}.
For (6), \cite[Proposition 7.8(i)]{Dat} tells us that $J_P(\sigma)\circ J_{P'}(\overline{\sigma})$ is a scalar in $R(T)$.
Thus, after multiplying by the suitable normalization factors and setting $T=1$, the claim follows. Moreover, it also is easy to check that this diagram implies the claims regarding the poles.

It remains to prove (5) and here we argue as in \cite[Theorem 2.3]{LMsquare}. If one of the $R_{\pi_i,\pi_3}$ is an isomorphism, the composition \[(R_{\pi_1,\pi_3}\times 1_{\pi_2})\circ (1_{\pi_1}\times R_{\pi_2,\pi_3})\neq 0\] and hence $r_{\pi_1\times\pi_2,\pi_3}= r_{\pi_1,\pi_2}+r_{\pi_2,\pi_3}$. If $\pi_3$ is irreducible and $r_{\pi_1\times\pi_2,\pi_3}< r_{\pi_1,\pi_2}+r_{\pi_2,\pi_3}$, then
\[(R_{\pi_1,\pi_3}\times 1_{\pi_2})\circ (1_{\pi_1}\times R_{\pi_2,\pi_3})=0\] and hence \[\pi_1\times \mathrm{Im}(R_{\pi_2\times\pi_3})\hra \mathrm{Ker}(R_{\pi_1,\pi_3})\times \pi_2\hra \pi_1\times\pi_3\times \pi_2.\]
Twisting with the inner automorphism given by
    \[w=\begin{pmatrix}
        0&0&1_{k_2}\\
        0&1_{k_3}&0\\
        1_{k_1}&0&0\\
    \end{pmatrix},\]
we obtain that 
\[\mathrm{Im}(R_{\pi_2\times\pi_3})^{w_1}\times \pi_1\hra \pi_2\times\mathrm{Ker}(R_{\pi_1,\pi_3})^{w_2}\hra \pi_2\times\pi_3\times \pi_1,\]
where ${*}^{w'}$ denotes the twist by the inner automorphism$g\mapsto w'gw'^{-1}$, and  \[w_1=\begin{pmatrix}
        0&1_{k_2}\\
        1_{k_3}&0
    \end{pmatrix},\, w_2=\begin{pmatrix}
        0&1_{k_3}\\
        1_{k_1}&0
    \end{pmatrix}.\]
By \Cref{C:3ind}, the irreducibility of $\pi_3$ implies that $\mathrm{Ker}(R_{\pi_1,\pi_3})=\pi_3\times \pi_1$, a contradiction.
\end{proof}
\subsection{ Square irreducible representations}\label{S:sir}
Using the above mod $\ell$-analogous of the results of \cite{LMsquare}
, one can arrive via the same arguments to the following results.
We say $\Pi\in \Rep(G_n)$ is \emph{SI} if $\Pi$ is socle-irreducible, and its socle appears with multiplicity one in its composition series. Similarly, we say $\Pi$ is \emph{CSI}, if is cosocle-irreducible, and its cosocle appears with multiplicity one in its composition series.
A representation $\pi\in \Rep_n$ is called square-irreducible or $\square$-irreducible if $\pi\times\pi\in \Irr_{2n}$.
The following three theorems follow \emph{muta mutandis} as in \cite{LMsquare}, thanks to \Cref{L:inop} and \Cref{C:3ind}.
\begin{lemma}[{\cite[Corollary 2.5]{LMsquare}, \cite[Corollary 3.3]{KKKOb}}]\label{L:Dsi}
    A representation of  $\pi\in \Rep_n$ is $\square$-irreducible if and only if one of the following equivalent conditions holds.
    \begin{enumerate}
        \item $\pi\times\pi$ is irreducible.
        \item $\pi\times \pi$ is SI.
        \item $\dim_R\mathrm{End}_{G_{2n}}(\pi\times\pi)=1$.
        \item $R_{\pi,\pi}$ is a scalar.
    \end{enumerate}
\end{lemma}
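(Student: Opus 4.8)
The plan is to prove the chain of equivalences $(1)\Leftrightarrow(2)\Leftrightarrow(3)\Leftrightarrow(4)$ by going around a cycle, using the intertwining-operator machinery from \Cref{L:inop} together with \Cref{C:3ind}, exactly as in \cite[Corollary 2.5]{LMsquare}. The implications $(2)\Rightarrow(1)$ and $(2)\Rightarrow(3)$ are formal: an irreducible representation is trivially SI, and $\mathrm{End}_{G_{2n}}$ of an irreducible representation is $R$ since $R$ is algebraically closed. The implication $(3)\Rightarrow(4)$ is also immediate: the intertwining operator $R_{\pi,\pi}$ is a self-map of $\pi\times\pi$ lying in $\mathrm{End}_{G_{2n}}(\pi\times\pi)$, and by \Cref{L:inop}(6) (applied with $\sigma=\overline\sigma=\pi\otimes\pi$) its composite with itself is a scalar, so if the endomorphism algebra is one-dimensional then $R_{\pi,\pi}$ is itself a scalar.

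The substantive directions are $(4)\Rightarrow(2)$ and the route back, say $(1)\Rightarrow(3)$ or $(1)\Rightarrow(2)$; I will do $(4)\Rightarrow(2)$ and $(2)\Rightarrow(1)$ already being free, it remains to close the loop from $(1)$. For $(4)\Rightarrow(2)$: first recall that $R_{\pi,\pi}$ is always nonzero (it is an intertwining operator in the sense of the previous lemma), so if it is a scalar it is an isomorphism. Then by \Cref{L:inop}(5), applied with $\pi_1=\pi_2=\pi_3=\pi$, we get that $r_{\pi\times\pi,\pi}=2r_{\pi,\pi}=0$, i.e.\ $R_{\pi,\pi}$ has no pole. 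Now suppose $\pi\times\pi$ were reducible; pick a proper nonzero subrepresentation. Using \Cref{C:3ind} with $\sigma,\tau$ suitable subrepresentations of $\pi\times\pi$ built from this decomposition, together with the fact that $R_{\pi,\pi}$ being a scalar forces $\pi\times\pi$ to be multiplicity-free and indecomposable, one derives a contradiction; this is precisely the argument in \cite{LMsquare} and it goes through \emph{mutatis mutandis} because the only ingredients are \Cref{L:inop} and \Cref{C:3ind}, both now available over $R\in\{\fl,\ql\}$.

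To close the cycle I would show $(1)\Rightarrow(4)$ (or $(1)\Rightarrow(2)$): if $\pi\times\pi$ is SI with socle of multiplicity one, then $\mathrm{soc}(\pi\times\pi)\cong\mathrm{soc}(\pi\times\pi)$ appears once, and by self-duality of $\pi\times\pi$ under the twist $\sigma\mapsto\overline\sigma$ (which swaps the two factors, hence fixes $\pi\times\pi$ up to isomorphism) the cosocle also has multiplicity one and is isomorphic to the socle; a nonzero $G_{2n}$-endomorphism then must be an isomorphism, forcing $\mathrm{End}_{G_{2n}}(\pi\times\pi)$ to be a division algebra, hence $R$ since $R$ is algebraically closed and $\pi\times\pi$ has finite length — giving $(3)$. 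The main obstacle, as in the complex case, is the reducibility-to-contradiction step inside $(4)\Rightarrow(2)$: one must carefully track which subrepresentations of the triple product $\pi\times\pi\times\pi$ to feed into \Cref{C:3ind}, and verify that the vanishing of the pole order $r_{\pi,\pi}$ genuinely rules out a nontrivial direct-sum or non-split decomposition. I do not expect any characteristic-$\ell$-specific difficulty here, precisely because the two black boxes \Cref{L:inop} and \Cref{C:3ind} have already been established over $\fl$ in the preceding subsections; the proof is a transcription of \cite[Corollary 2.5]{LMsquare}.
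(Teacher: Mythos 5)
Your strategy matches the paper's, which offers no argument of its own here beyond asserting that the statement follows \emph{muta mutandis} from \cite{LMsquare} once \Cref{L:inop} and \Cref{C:3ind} are available over $R$; so the overall route (the cycle $(1)\Rightarrow(3)\Rightarrow(4)\Rightarrow(2)\Rightarrow(1)$ driven by those two black boxes) is the intended one. Two points in your reconstruction are off, though neither is fatal. First, in $(4)\Rightarrow(2)$ you assert $r_{\pi\times\pi,\pi}=2r_{\pi,\pi}=0$: the vanishing of $r_{\pi,\pi}$ neither follows from \Cref{L:inop}(5) nor is needed. What (4)--(5) of that lemma give is \emph{additivity} of pole orders, hence that $(R_{\pi,\pi}\times 1)\circ(1\times R_{\pi,\pi})$ equals $R_{\pi\times\pi,\pi}$ up to a nonzero scalar; since both factors are the scalar $c\neq 0$, one gets $R_{\pi\times\pi,\pi}=c^{2}\cdot\mathrm{id}$ on $\pi\times\pi\times\pi$. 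The step you leave as ``suitable subrepresentations'' is then concrete: for $0\neq\Sigma\subseteq\pi\times\pi$, the restriction of $R_{\pi\times\pi,\pi}$ to $\Sigma\times\pi$ is nonzero (it is $c^{2}\,\mathrm{id}$), hence by \Cref{L:inop}(3) it is the intertwining operator of $\Sigma\otimes\pi$ and its image lies in $\pi\times\Sigma$; this produces the inclusion $\Sigma\times\pi\hra\pi\times\Sigma$ required by \Cref{C:3ind} (with $\pi_1=\pi_2=\pi_3=\pi$, $\sigma=\tau=\Sigma$), and irreducibility of $\pi$ forces $\Sigma=\pi\times\pi$. No appeal to ``multiplicity-free and indecomposable'' occurs or is available at that stage. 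Second, your $(1)\Rightarrow(3)$ via a self-duality of $\pi\times\pi$ under factor swap is shaky: swapping the factors is implemented by the intertwining operator $R_{\pi,\pi}$, not by an a priori automorphism, so you cannot deduce that the cosocle equals the socle with multiplicity one. The standard argument uses only the socle: if $\varphi\neq 0$ and $\varphi(\sigma)=0$ for $\sigma=\mathrm{soc}(\pi\times\pi)$, then $\varphi$ factors through $(\pi\times\pi)/\sigma$ and its nonzero image, being a subrepresentation, contains $\sigma$, contradicting multiplicity one; hence $\varphi$ restricts to $\lambda\,\mathrm{id}_\sigma$ on $\sigma$ and $\varphi-\lambda\,\mathrm{id}$ vanishes by the same reasoning, giving $\mathrm{End}_{G_{2n}}(\pi\times\pi)=R$ directly. (Also, $(3)\Rightarrow(4)$ needs no input from \Cref{L:inop}(6), whose hypothesis you have not verified at that point: $R_{\pi,\pi}$ is an element of the one-dimensional endomorphism algebra, full stop.)
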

Note that a cuspidal representation is square-irreducible if and only if it is $\square$-irreducible cuspidal and we denote the subset of square-irreducible representations of $\Irr_n$ by $\irs(G_n)$. More generally, for $\De=[a,b]_\rho$ a segment, $\Z(\De)$ is $\square$-irreducible if and only if $b-a+1<o(\rho)$.
\begin{lemma}[{\cite[Lemma 2.8]{LMsquare}, \cite[Theorem 3.1]{KKKOa}}]\label{L:si1}
    Let $\pi\in \irs(G_n)$ and $\tau\in \Irr_m$. Then 
    both $\pi\times\tau$ and $\tau\times \pi$ are SI and CSI.
\end{lemma}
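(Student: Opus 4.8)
The plan is to transplant the argument of \cite[Lemma 2.8]{LMsquare} (which goes back to \cite[Theorem 3.1]{KKKOa}) to the present mod $\ell$ setting: every ingredient used there over $\C$ is now available, namely the normalized intertwining operators and their formal properties (\Cref{S:intertwiningoperators}, \Cref{L:inop}) together with the exchange lemma \Cref{C:3ind}.

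First I would reduce to a single SI statement. Since $(\sigma_1\times\sigma_2)^\lor\cong\sigma_2^\lor\times\sigma_1^\lor$, and $\irs$ is stable under $(\cdot)^\lor$ — because $(\pi\times\pi)^\lor\cong\pi^\lor\times\pi^\lor$ stays irreducible — and since $(\cdot)^\lor$ is an exact anti-equivalence interchanging socle with cosocle and preserving composition multiplicities, the CSI assertion for $\pi\times\tau$ is exactly the SI assertion for $\tau^\lor\times\pi^\lor$, and the CSI assertion for $\tau\times\pi$ is the SI assertion for $\pi^\lor\times\tau^\lor$. Hence it suffices to prove that $\pi\times\tau$ has irreducible socle occurring with multiplicity one whenever $\pi\in\irs$ and $\tau\in\Irr$; the companion statement for $\tau\times\pi$ follows by the symmetric argument, with the roles of the two sides interchanged.

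For the core I would follow \cite{KKKOa} in the form given in \cite{LMsquare}. Write $\Pi=\pi\times\tau$ and let $R_{\pi,\tau}\colon\pi\times\tau\to\tau\times\pi$ and $R_{\tau,\pi}\colon\tau\times\pi\to\pi\times\tau$ be the normalized intertwining operators, each unique up to an $R^\times$-scalar. The hypothesis $\pi\in\irs$ enters through \Cref{L:Dsi}(4): $R_{\pi,\pi}$ is a scalar. Using this together with \Cref{L:inop}(4)--(6) one shows, exactly as in \cite{LMsquare}, that for every $\sigma\in\Irr$ the space $\Ho_G(\pi\times\sigma,\sigma\times\pi)$ is one-dimensional, spanned by $R_{\pi,\sigma}$, and symmetrically on the other side. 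Granting this, let $\omega_1,\omega_2\hra\Pi$ be irreducible submodules (possibly equal). Inducing by $\pi$ gives $\pi\times\omega_i\hra\pi\times\pi\times\tau=(\pi\times\pi)\times\tau$ with $\pi\times\pi$ \emph{irreducible}; applying \Cref{C:3ind} to the diagram formed by $R_{\pi,\tau}$, $R_{\tau,\pi}$ and $R_{\pi\times\pi,\tau}$ on $(\pi\times\pi)\times\tau$ forces both embeddings to factor through the image of one and the same intertwining operator, whence $\omega_1\cong\omega_2$; a further application of \Cref{C:3ind}, combined with the one-dimensionality of the relevant Hom-space, pins the multiplicity of $\omega\coloneq\omega_1$ in $\Pi$ to $1$.

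The step I expect to be the real work is not this bookkeeping — which is copied from \cite{LMsquare} — but verifying that the two structural inputs survive in characteristic $\ell$: that the normalized operators $R_{\pi,\sigma}$ are genuinely unique up to an $R$-scalar (the content of the remark following \Cref{L:inop}, which rests on the lattice-and-retraction arguments of \cite{Dat} recalled in \Cref{S:intertwiningoperators} remaining valid mod $\ell$), and that \Cref{C:3ind} holds over $\fl$ (it does, by the comment there that the proof of \cite{LMsquare} goes through verbatim). A minor point to flag is that cuspidal representations need not be $\square$-irreducible mod $\ell$, so one may assume nothing about the cuspidal support of $\tau$; but the argument uses only $\square$-irreducibility of $\pi$ and irreducibility of $\tau$, so this causes no trouble, and the proof of \cite[Lemma 2.8]{LMsquare} then applies essentially verbatim.
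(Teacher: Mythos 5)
Your proposal matches the paper exactly: the paper gives no independent argument for this lemma, stating only that it follows \emph{muta mutandis} from \cite[Lemma 2.8]{LMsquare} thanks to \Cref{L:inop} and \Cref{C:3ind}, which are precisely the two structural inputs you identify and verify. Your reconstruction of the KKKO-style argument (reduction of CSI to SI via the contragredient, one-dimensionality of the Hom-space forced by $R_{\pi,\pi}$ being a scalar, and the exchange lemma pinning down the socle and its multiplicity) is a correct filling-in of the details the paper leaves implicit.
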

\begin{lemma}[{\cite[Theorem 4.1.D]{LMbinary}}]\label{L:si2}
Let $\pi\in \irs(G_n)$.
    The maps
    \[\mathrm{soc}(\pi\times {}\cdot{})=\mathrm{cos}({}\cdot{}\times \pi)\colon \Irr\ra \Irr,\, \mathrm{soc}({}\cdot{}\times \pi)=\mathrm{cos}(\pi\times {}\cdot{})\colon \Irr\ra \Irr\]
    are injective.
\end{lemma}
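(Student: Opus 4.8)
The plan is to deduce the injectivity of the socle maps directly from the previously established $\square$-irreducibility package, in particular from \Cref{L:Dsi}, \Cref{L:si1}, and the three-induction \Cref{C:3ind}. First I would record the identification $\soc(\pi\times\tau)=\mathrm{cos}(\tau\times\pi)$: since $\pi$ is $\square$-irreducible, \Cref{L:si1} tells us $\pi\times\tau$ and $\tau\times\pi$ are SI and CSI, so each has an irreducible socle and an irreducible cosocle, and applying the contragredient (using $(\pi\times\tau)^\lor\cong\tau^\lor\times\pi^\lor$ together with the fact that $\pi^\lor$ is again $\square$-irreducible) interchanges socle and cosocle; combining this with the analogous statement over the reversed product identifies $\soc(\pi\times\tau)$ with $\mathrm{cos}(\tau\times\pi)$ up to the bookkeeping of twisting by the long Weyl element, exactly as in \cite[§4]{LMbinary}. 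This reduces the two asserted equalities of maps to a single verification and means it suffices to prove injectivity of $\tau\mapsto\soc(\pi\times\tau)$.

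Next I would prove injectivity. Suppose $\soc(\pi\times\tau_1)\cong\soc(\pi\times\tau_2)=:\omega$ for $\tau_1,\tau_2\in\Irr$. By the SI property from \Cref{L:si1} we get embeddings $\omega\hra\pi\times\tau_1$ and $\omega\hra\pi\times\tau_2$. Dualizing the second (using $\pi^\lor$ again $\square$-irreducible and that $\pi^\lor\times\tau_2^\lor$ is CSI) yields a surjection $\pi^\lor\times\tau_2^\lor\sra\omega^\lor$, equivalently an embedding of $\omega$ into something of the form $\tau_2\times\pi$ after untwisting; more precisely one obtains, inside $\pi\times\tau_1\times?$ or via Frobenius reciprocity an inclusion $\pi\times\tau_1\hra\pi\times(\text{something})$ controlled by $\tau_2$. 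The clean way to run this is the standard "bootstrap" argument of \cite{KKKOa}: from $\omega\hra\pi\times\tau_1$ and $\omega$ being a quotient of $\tau_2\times\pi$ one builds a nonzero map $\tau_2\times\pi\to\pi\times\tau_1$, hence a nonzero $\sigma\hra\pi\times\tau_1$ with $\sigma$ also a subrepresentation of $\tau_2\times\pi$; feeding this into \Cref{C:3ind} with the roles $(\pi,\tau_1,\cdot)$ or $(\tau_2,\pi,\cdot)$ and exploiting that $\pi$ is irreducible forces the relevant induced representation to be everything, which after comparing cuspidal supports (lengths and $\scus$ are preserved, so $\tau_1$ and $\tau_2$ have the same cuspidal support and the same degree) pins down $\tau_1\cong\tau_2$.

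Concretely, here is the order of steps I would carry out. (1) State and prove the socle–cosocle flip $\soc(\pi\times\tau)\cong\mathrm{cos}(\tau\times\pi)$ via contragredient, citing \Cref{L:si1} and $(\cdot)^\lor$ exchanging sub and quotient, and noting $\pi^\lor\in\irs$. (2) Reduce injectivity of both maps to injectivity of $\tau\mapsto\soc(\pi\times\tau)$. (3) Given $\soc(\pi\times\tau_1)\cong\soc(\pi\times\tau_2)=\omega$, produce a nonzero morphism between $\pi\times\tau_1$ and a product built from $\pi$ and $\tau_2$ using SI/CSI of \Cref{L:si1}. (4) Apply \Cref{C:3ind} with $\pi$ irreducible to collapse the inclusion, obtaining $\tau_1\cong\tau_2$, possibly after invoking preservation of cuspidal support under $\times$ (Frobenius reciprocity plus the Geometric Lemma) to match degrees. (5) Conclude. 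I would keep the proof short by explicitly saying this is the argument of \cite[Theorem 4.1.D]{LMbinary}, whose only inputs are \Cref{L:Dsi}, \Cref{L:si1}, \Cref{C:3ind}, all available over $R\in\{\fl,\ql\}$.

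The main obstacle I anticipate is not any single deep step but making the socle–cosocle identification and the three-induction bootstrap genuinely characteristic-free: one must check that the contragredient argument does not secretly use semisimplicity of $\Rep$ (it does not — it only uses exactness of $(\cdot)^\lor$ on finite-length admissibles, valid over any field with $\ell\ne p$) and that \Cref{C:3ind} is applied with the irreducibility hypothesis on $\pi$ rather than on $\tau_i$. Provided those are handled, the proof is a faithful transcription of \cite{LMbinary}, \cite{KKKOa}, \cite{KKKOb}.
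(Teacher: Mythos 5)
Your proposal matches the paper's treatment: the paper gives no independent proof of this lemma, stating only that it (together with \Cref{L:Dsi} and \Cref{L:si1}) follows \emph{muta mutandis} from \cite{LMsquare}/\cite{LMbinary} once \Cref{L:inop} and \Cref{C:3ind} are available over $R\in\{\fl,\ql\}$, which is exactly the strategy and the list of inputs you identify. One caution on your step (3): the image of the composite $\tau_2\times\pi\sra\omega\hra\pi\times\tau_1$ is a \emph{quotient} of $\tau_2\times\pi$, not a subrepresentation, so \Cref{C:3ind} cannot be fed that triple as written; in the actual \cite{KKKOb} bootstrap one starts from the two embeddings $\omega\hra\pi\times\tau_i$ and the induced inclusions of $\omega\times(\cdot)$ before invoking the three-term lemma, but since you explicitly defer to the argument of \cite[Theorem 4.1.D]{LMbinary} this is a matter of transcription rather than a missing idea.
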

\begin{lemma}\label{L:sipowers}
    If $\pi\in \irs(G_n)$ then $\pi^k\in \irs(G_{nk})$ for all $k\in\ZZ_{>0}$.
\end{lemma}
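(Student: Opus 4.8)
The plan is to reduce the assertion to the statement that $\pi^k$ is irreducible for every $k\ge 1$, and to prove \emph{that} by strong induction on $k$. Indeed, once all powers $\pi^k$ are known to be irreducible, we have $\pi^k\times\pi^k\cong\pi^{2k}$ (associativity of $\times$ on representations), which is irreducible, and hence $\pi^k\in\irs(G_{nk})$ by the equivalences in \Cref{L:Dsi}. The base cases $k=1$ and $k=2$ of the induction are, respectively, trivial and the standing hypothesis $\pi\in\irs(G_n)$.

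For the inductive step, fix $k\ge 3$, assume that $\pi^j$ is irreducible for all $1\le j<k$, and write $\pi^{k-1}=\pi\times\pi^{k-2}$. I would examine the intertwining operator $R_{\pi^{k-1},\pi}\colon \pi^{k-1}\times\pi\to\pi\times\pi^{k-1}$ of \Cref{S:intertwiningoperators} and show that it is an isomorphism. Applying \Cref{L:inop}(4) and (5) with $\pi_1=\pi$, $\pi_2=\pi^{k-2}$ and $\pi_3=\pi$ — the last being irreducible, so that the relevant orders of poles add up — one gets, up to a nonzero scalar,
\[R_{\pi^{k-1},\pi}=\bigl(R_{\pi,\pi}\times 1_{\pi^{k-2}}\bigr)\circ\bigl(1_\pi\times R_{\pi^{k-2},\pi}\bigr).\]
Now $R_{\pi,\pi}$ is a nonzero scalar since $\pi$ is square-irreducible (\Cref{L:Dsi}(4)), so the first factor is an isomorphism; and $R_{\pi^{k-2},\pi}\colon \pi^{k-2}\times\pi\to\pi\times\pi^{k-2}$ is a nonzero morphism between two copies of the irreducible representation $\pi^{k-1}$ (irreducible by the induction hypothesis), hence an isomorphism by Schur's lemma, whence $1_\pi\times R_{\pi^{k-2},\pi}$ is an isomorphism by exactness of parabolic induction. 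Therefore $R_{\pi^{k-1},\pi}$ is an isomorphism.

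To conclude, I would feed this back into the socle/cosocle formalism. By \Cref{L:si1} the representation $\pi^k=\pi\times\pi^{k-1}$ is both SI and CSI, and by \Cref{L:si2} (applied to $\pi^{k-1}$) we have $\soc(\pi^{k-1}\times\pi)=\mathrm{cos}(\pi\times\pi^{k-1})$. Since $R_{\pi^{k-1},\pi}$ is an isomorphism it identifies the socles of its source and target, so $\soc(\pi\times\pi^{k-1})\cong\soc(\pi^{k-1}\times\pi)\cong\mathrm{cos}(\pi\times\pi^{k-1})$. A finite-length module $V$ that is SI and CSI with $\soc V\cong\mathrm{cos}V$ must be irreducible: writing $S\hookrightarrow V$ for the simple socle and $V\twoheadrightarrow S$ for the simple cosocle, the composite $S\to V\to S$ is either an isomorphism, forcing $V\cong S$ because $\soc V$ is simple, or zero, in which case $S$ is a composition factor of both $\ker(V\twoheadrightarrow S)$ and $V/\ker(V\twoheadrightarrow S)$ and hence occurs with multiplicity at least two in $V$, contradicting that $\pi^k$ is SI. Thus $\pi^k$ is irreducible, which closes the induction.

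The step requiring the most care is the factorisation of $R_{\pi^{k-1},\pi}$: one must check the hypothesis of \Cref{L:inop}(5) so that the displayed identity holds on the nose (up to a nonzero scalar) and does not collapse to $0$ — this is exactly where the irreducibility of $\pi$, in the role of $\pi_3$, is used, together with the irreducibility of the smaller powers supplied by the induction hypothesis. Everything else is formal: exactness of parabolic induction, Schur's lemma, and the consequences of square-irreducibility recorded in \Cref{L:Dsi}, \Cref{L:si1} and \Cref{L:si2}.
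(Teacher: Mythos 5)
Your proof is correct, and it reaches the conclusion by a genuinely different route than the paper. The paper never leaves the world of intertwining operators: it runs a double induction showing first that $R_{\pi^k,\pi}$ is a scalar, then that $R_{\pi^k,\pi^{k-1}}$ is, then that $R_{\pi^k,\pi^k}$ is, each time via the factorisation of \Cref{L:inop}(4)--(6), and concludes by criterion (4) of \Cref{L:Dsi}. You instead prove the stronger intermediate statement that \emph{every} power $\pi^j$ is irreducible, and your finishing move is the socle--cosocle argument: $\pi^k$ is SI and CSI by \Cref{L:si1}, its socle and cosocle coincide by \Cref{L:si2}, and a finite-length SI module whose simple socle equals its simple cosocle is irreducible (your dichotomy for the composite $S\to V\to S$ is exactly right, the multiplicity-one clause in the definition of SI killing the "zero" branch). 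What your approach buys is an explicit irreducibility statement for all powers, not just the square, and a more self-contained endgame; what the paper's buys is uniformity, staying entirely inside the $R$-operator calculus. One remark: the portion of your argument devoted to showing that $R_{\pi^{k-1},\pi}$ is an isomorphism is correct but dispensable, since you only use it to identify $\soc(\pi^{k-1}\times\pi)$ with $\soc(\pi\times\pi^{k-1})$ --- and these two representations are already canonically equal, both being $\mathrm{Ind}_{P_{(n,\ldots,n)}}(\pi\otimes\cdots\otimes\pi)$ by transitivity of parabolic induction. With that observation your proof reduces to \Cref{L:si1}, \Cref{L:si2} and the socle--cosocle lemma alone, bypassing \Cref{L:inop} entirely.
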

\begin{proof}
    By \Cref{L:Dsi} it is enough to show that $R_{\pi^k,\pi^k}$ is a scalar.
    We start by showing that $R_{\pi^k,\pi}$ is a scalar for all $k$, which we do by induction on $k$.
    By assumption, we know that $R_{\pi,\pi}$ is a scalar.    
By \Cref{L:inop}(4) and (5) \[R_{\pi^k,\pi}=(R_{\pi^{k-1},\pi}\times 1_{\pi})\circ(1_{\pi^{k-1}}\times R_{\pi,\pi})\] is a scalar by the induction hypothesis on $k$. By \Cref{L:inop}(6) also $R_{\pi,\pi^k}$ is a scalar.
    Next we prove by induction on $k$ that $R_{\pi^k,\pi^k}$ is a scalar. Again by \Cref{L:inop}(4) and (5), we know that \[R_{\pi^k,\pi^{k-1}}=(R_{\pi^{k-1},\pi^{k-1}}\times 1_{\pi})\circ(1_{\pi^{k-1}}\times R_{\pi,\pi^{k-1}})\] is a scalar. By \Cref{L:inop}(6) also $R_{\pi^{k-1},\pi^k}$ is a scalar. Applying a third time \Cref{L:inop}(4) and (5)
    we see that
    \[R_{\pi^k,\pi^{k}}=(R_{\pi^{k-1},\pi^{k}}\times 1_{\pi})\circ(1_{\pi^{k-1}}\times R_{\pi,\pi^{k}}),\] which therefore is a scalar.
\end{proof}
\begin{lemma}\label{L:nosi}
    Let $\rho$ be a cuspidal representation such that $o(\rho)=1$ and $\fm \in \Ms(\rho)_{ap}$. Then $\Z(\fm)$ is not square-irreducible.
\end{lemma}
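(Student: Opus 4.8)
The plan is to transfer the question to the modular representation theory of the symmetric group. Since $o(\rho)=1$, the integer $q(\rho)$ equals $1$ in $R^*$, so for every $N$ the Hecke algebra $\He_R(N,q(\rho))$ equals $\He_R(N,1)$, and, as recalled in the introduction, the full subcategory of $\bigoplus_{N\ge 0}\mathfrak{Rep}_{Nm}$ spanned by the representations all of whose irreducible subquotients have cuspidal support in $\Na(\ZZ[\rho])=\Na([\rho])$ is equivalent to $\bigoplus_{N\ge 0}R[\mathrm{S}_N]\text{-mod}$, an equivalence which moreover carries parabolic induction to induction of symmetric-group representations. By \Cref{T:Zinj} we have $\cus(\Z(\fm))=N[\rho]$ with $N=\deg(\fm)/m$; we may assume $\fm\neq 0$, so $N\ge 1$. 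Under the equivalence, $\Z(\fm)$ corresponds to an irreducible $R[\mathrm{S}_N]$-module $V$ and $\Z(\fm)\times\Z(\fm)$ to $\mathrm{Ind}_{\mathrm{S}_N\times\mathrm{S}_N}^{\mathrm{S}_{2N}}(V\otimes V)$. It thus suffices to show that this induced module is reducible.

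To see this I would compute $\mathrm{End}_{\mathrm{S}_{2N}}\big(\mathrm{Ind}_{\mathrm{S}_N\times\mathrm{S}_N}^{\mathrm{S}_{2N}}(V\otimes V)\big)$ via Mackey's formula: it is a direct sum over the $(\mathrm{S}_N\times\mathrm{S}_N)$-double cosets of $\mathrm{S}_{2N}$, of which there are at least two as soon as $N\ge 1$. The trivial coset contributes $\mathrm{End}_{\mathrm{S}_N\times\mathrm{S}_N}(V\otimes V)\neq 0$. The coset of the block transposition $w$ (the permutation of $\{1,\ldots,2N\}$ exchanging $i$ and $i+N$) contributes $\Ho_{\mathrm{S}_N\times\mathrm{S}_N}(V\otimes V,\,{}^w(V\otimes V))$; conjugation by $w$ stabilizes $\mathrm{S}_N\times\mathrm{S}_N$, merely exchanging the two factors, so ${}^w(V\otimes V)\cong V\otimes V$ and this contribution is again $\mathrm{End}_{\mathrm{S}_N\times\mathrm{S}_N}(V\otimes V)\neq 0$. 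Hence $\dim_R\mathrm{End}_{\mathrm{S}_{2N}}\big(\mathrm{Ind}_{\mathrm{S}_N\times\mathrm{S}_N}^{\mathrm{S}_{2N}}(V\otimes V)\big)\ge 2$, and since $R$ is algebraically closed the induced module cannot be irreducible. Transporting back, $\Z(\fm)\times\Z(\fm)$ is reducible, i.e. $\Z(\fm)$ is not square-irreducible.

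The step requiring care — the only nonformal one — is the compatibility of the equivalence with the two inductions: it amounts to saying that the functors identifying the representations with cuspidal support in $\Na([\rho])$ with modules over the affine Hecke algebra intertwine parabolic induction with Hecke-algebra induction, and that at parameter $q(\rho)=1$ with the trivial central character the latter is induction of $R[\mathrm{S}_N]$-modules; I would extract this from the construction of $\xi_{\rho,\bullet}$ in \cite[\S4.4]{MST} and from \cite{M-S}. If one prefers to avoid invoking a full equivalence of categories, it is enough to know that the induced map on Grothendieck groups is a ring isomorphism onto $\bigoplus_N K_0(R[\mathrm{S}_N])$ and that $\mathrm{Ind}_{\mathrm{S}_N\times\mathrm{S}_N}^{\mathrm{S}_{2N}}(V\otimes V)$ has at least two composition factors, which again follows from the $\dim\mathrm{End}\ge 2$ computation.
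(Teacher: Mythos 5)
Your argument is correct, but it is quite different from the one in the paper. The paper argues by contradiction using \Cref{L:sipowers}: if $\Z(\fm)$ were $\square$-irreducible then so would be every power $\Z(\fm)^{k}$, whence $\Z(\fm)^{2k}\cong \Z(2k\fm)$ by \Cref{L:sam}; but once $2k>\ell=e(\rho)$ the multisegment $2k\fm$ is no longer aperiodic (with $o(\rho)=1$ all shifts of a segment are equivalent, so aperiodicity just bounds multiplicities by $\ell$), and the cuspidal support of $\Z(2k\fm)$ then involves non-supercuspidal cuspidals and leaves $\mathbb{N}(\ZZ[\rho])$ — contradicting the cuspidal support of $\Z(\fm)^{2k}$. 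Your route instead passes to $R[\mathrm{S}_N]$ via $\xi_{\rho,N}$ at $q(\rho)=1$ and shows $\dim_R\mathrm{End}_{\mathrm{S}_{2N}}\bigl(\mathrm{Ind}_{\mathrm{S}_N\times\mathrm{S}_N}^{\mathrm{S}_{2N}}(V\otimes V)\bigr)\ge 2$ by Mackey's formula (the identity coset and the block transposition each contribute a nonzero Hom-space), forcing reducibility by Schur's lemma. The compatibility step you flag is legitimate and is in fact exactly what the paper invokes elsewhere, namely \cite[Proposition 4.19]{MST} in the proof of \Cref{T:cons}: constituents of the parabolic induction with cuspidal support $2N[\rho]$ correspond, with multiplicities, to composition factors of the Hecke-algebra induction, and at $q=1$ with trivial central character this is induction of $R[\mathrm{S}_N]$-modules; so at least two constituents of $\mathrm{Ind}(V\otimes V)$ yield at least two constituents of $\Z(\fm)\times\Z(\fm)$. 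What each approach buys: the paper's proof is shorter and stays entirely within machinery already established in the text (it needs the theory of intertwining operators behind \Cref{L:sipowers}, plus the cuspidal-support behaviour of $\Z$ on periodic multisegments), while yours is more elementary on the group-theory side (Mackey plus Schur, with no use of intertwining operators or of powers higher than the square) but leans on the Hecke-algebra dictionary and its compatibility with induction.
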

\begin{proof}
    Assume otherwise that $\Z(\fm)$ is square-irreducible. Then by \Cref{L:sipowers} for all $k\in \ZZ_{>0}$ we have that $\Z(\fm)^k$ is square-irreducible and in particular irreducible. Hence $\Z(\fm)^{k}\cong \Z(k\fm)$ by \Cref{L:sam}. But if $k>\ell=e(\rho)$, the multisegment $k\fm$ is no longer aperiodic, and hence its cuspidal support is no longer contained in $\mathbb{N}(\ZZ[\rho])$. But on the other hand, the cuspidal support of $\Z(\fm)^{k}$ must be contained in $\Ms(\rho)$, a contradiction.
\end{proof}

\subsection{ P-regular representations}\label{S:preg}
Motivated by the notion of $(P,Q)$
-regularity introduced in \cite[§2]{Dat}, we will introduce a similar condition called $P$-regularity. Note that being $(P,Q)$-regular is a slightly more fine-tuned notion, but for our purposes the following more blunt definition is sufficient.

Let $P$ be a parabolic subgroup of $G=G_n$ with Levi-decomposition $P=M\rtimes U$ and opposite parabolic subgroup $\op$.
A representation $\sigma$ of $M$ of finite length is called $P$-\emph{regular} if
the natural restriction map
        \[\Ho_{G\times G}(C_c^\infty(G),\ind_\op^G(\sigma)\otimes \ind_P^G(\sigma^\lor))\ra \]\[\ra\Ho_{\op\times P}(C_c^\infty(\op P),\ind_\op^G(\sigma)\otimes \ind_P^G(\sigma^\lor))\]
is an isomorphism of $R$-vector spaces.
\begin{lemma}\label{L:preg}
Let $\sigma\in\Rep(M)$.
    \begin{enumerate}
        \item If $\sigma$ is irreducible and $P$-regular, then 
         \[\dim_R\Ho_{G\times G}(C_c^\infty(G),\ind_\op^G(\sigma)\otimes \ind_P^G(\sigma^\lor))=1.\]
        \item If $\sigma$ is irreducible and it appears in $r_{\op}(\ind_{\op}^G(\sigma))$ with multiplicity $1$,
        it is $P$-regular.
    \end{enumerate}
\end{lemma}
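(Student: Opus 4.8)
\emph{Proof plan.} The plan is to compute both $\Ho$-spaces — the right-hand one is a \emph{local} invariant attached to the open Bruhat cell $\op P$, the left-hand one a \emph{global} one — and then to compare them. The key general fact is that the restriction map is \emph{always} injective: since $\op$ and $P$ are opposite parabolics, $\op P$ is the open Bruhat cell of $G$, so extension by zero gives an $\op\times P$-equivariant inclusion $\cp\hra C_c^\infty(G)$; as the $G\times G$-translates of $\op P$ cover $G$, a partition-of-unity argument shows that the $G\times G$-translates of $\cp$ generate $C_c^\infty(G)$, so a $G\times G$-morphism out of $C_c^\infty(G)$ is determined by its restriction to $\cp$. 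No hypothesis on $\sigma$ is needed for this, and it supplies half of the second statement for free.

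Next I would identify the target. Using the Bruhat decomposition $\op P\cong U^-\times M\times U$ as an $\op\times P$-variety, together with the standard description of $\restr{\ind_\op^G(\sigma)}{\op}$ and $\restr{\ind_P^G(\sigma^\lor)}{P}$ on the open cell — the $P$-analogue of the $(P,Q)$-regularity computation in \cite[§2]{Dat} — the $U^-$- and $U$-directions absorb the corresponding tensor factors of $\cp$, and one is left with $\Ho_{M\times M}\bigl(C_c^\infty(M),\sigma\otimes\sigma^\lor\bigr)\cong\mathrm{End}_M(\sigma)$; when $\sigma$ is irreducible this is one-dimensional by Schur's lemma (recall $R$ is algebraically closed), a canonical generator being \emph{integration over the big cell}. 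The source I would identify by a matrix-coefficient argument, using that $C_c^\infty(G)$ is the smooth dual of $C^\infty(G)=\mathrm{Ind}_{\Delta G}^{G\times G}(\mathbf 1)$ and that the induced representations are admissible, giving $\Ho_G\bigl(\ind_\op^G(\sigma^\lor),\ind_P^G(\sigma^\lor)\bigr)$, which Bernstein's second adjunction rewrites as $\Ho_M\bigl(\sigma^\lor,r_P(\ind_P^G(\sigma^\lor))\bigr)$.

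Granting these computations, the two statements follow. For the first, $P$-regularity says the restriction map is an isomorphism and its target is one-dimensional, so its source is too. For the second, the restriction map is injective with a one-dimensional target, so it suffices to show the source is one-dimensional (equivalently, nonzero); here the multiplicity-one hypothesis enters. Since $\widetilde{r_\op(\pi)}\cong r_P(\widetilde\pi)$ and $[\ind_\op^G(\tau)]=[\ind_P^G(\tau)]$ in $\mathfrak R$, the hypothesis is equivalent to $\sigma^\lor$ occurring in $r_P(\ind_P^G(\sigma^\lor))$ with multiplicity one; combined with the fact that $\sigma^\lor$ is always a quotient of this Jacquet module (Frobenius reciprocity) and with the non-vanishing of the intertwining operator $\ind_\op^G(\sigma^\lor)\to\ind_P^G(\sigma^\lor)$ (\emph{cf.} \Cref{S:intertwiningoperators}), it forces $\dim\Ho_M(\sigma^\lor,r_P(\ind_P^G(\sigma^\lor)))=1$. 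Hence the injective restriction map between two one-dimensional spaces is an isomorphism, i.e. $\sigma$ is $P$-regular.

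The main obstacle is precisely this computational step: identifying the target through the open Bruhat cell while bookkeeping correctly the normalized modulus characters of $U^\pm$, $M$, $\op$ and $P$, and likewise the matrix-coefficient identification of the source. Once these are in place the comparison — a one-dimensional local invariant versus a global $\Ho$-space controlled by a Jacquet-module multiplicity — is purely formal.
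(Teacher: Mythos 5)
Your computation of the target space is exactly the paper's proof of part (1): Frobenius reciprocity identifies $r_{\op\times P}(C_c^\infty(\op P))$ with $C_c^\infty(M)$, and Schur's lemma gives dimension one. For part (2), however, you take a genuinely different route. The paper never computes the source; it applies Frobenius reciprocity twice to turn a nonzero $G\times G$-morphism into a nonzero morphism out of the double Jacquet module $r_{\op\times\{1\}}\bigl(\ind_{P\times M}^{G\times M}(C_c^\infty(M))\bigr)$, runs the Geometric Lemma to filter this by Bruhat cells $P w \op$, and uses the multiplicity-one hypothesis to show the morphism cannot live on any cell other than the open one $F(1)(C_c^\infty(M))=r_{\op\times P}(C_c^\infty(\op P))$ — this is precisely where the hypothesis does its work. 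You instead claim injectivity of the restriction map outright (transitivity of $G\times G$ on $G$ plus a locally constant partition of unity), and then pin down the source as $\Ho_G(\ind_\op^G(\sigma^\lor),\ind_P^G(\sigma^\lor))\cong\Ho_M(\sigma^\lor,r_P(\ind_P^G(\sigma^\lor)))$ via a matrix-coefficient duality and Bernstein's second adjunction, getting nonvanishing from the intertwining operator of \Cref{S:intertwiningoperators} and the upper bound from multiplicity one. Your toolkit is heavier (second adjointness over $\fl$, the matrix-coefficient identification of $\Ho_{G\times G}(C_c^\infty(G),A\otimes B)$, both of which you only sketch), whereas the paper needs only the Geometric Lemma and ordinary Frobenius reciprocity.

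There is one point you must confront before this can stand. If your partition-of-unity injectivity is correct as you state it, then the multiplicity-one hypothesis becomes redundant: injectivity plus the one-dimensional target already bounds the source by one, so the only remaining content is nonvanishing of the source, which you extract from the intertwining operator alone — and that operator is nonzero for \emph{every} $\sigma$. You would thus be proving that every irreducible $\sigma$ is $P$-regular, which trivializes the definition and makes both the hypothesis of part (2) and the careful setup of \Cref{L:helpder} pointless. An argument that renders a stated hypothesis inert is a red flag: either you have indeed proved a strictly stronger statement (in which case you should say so explicitly and verify very carefully that the ``natural restriction map'' of the definition really is precomposition with extension by zero, and that your identification of the source is an isomorphism and not merely an inclusion of the matrix-coefficient maps), or the injectivity step is not capturing what the author's restriction map actually does. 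The paper's proof is structured so that the hypothesis genuinely selects the open Bruhat cell; I would recommend you either locate and explain the discrepancy or adopt the paper's filtration argument, which is safe on this point.
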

\begin{proof}
For the first claim, note that by Frobenius reciprocity, we have 
\[\dim_R\Ho_{\op\times P}(C_c^\infty(\op P),\ind_\op^G(\sigma)\otimes \ind_P^G(\sigma^\lor))=\]\[=\dim_R \Ho_M(r_{\op\times P}(C_c^\infty(\op P)),\sigma\otimes \sigma^\lor)=\]\[=\dim_R \Ho_M(C_c^\infty(M),\sigma\otimes \sigma^\lor)=\dim_R\Ho_M(\sigma,\sigma)=1.\]

For the second claim, note that  
by the first part and the existence of intertwining operators, it follows that it is enough to show that a non-zero map $f\colon C_c^\infty(G)\mapsto \ind_\op^G(\sigma)\otimes \ind_P^G(\sigma^\lor)$ restricts to a non-zero map on $C_c^\infty(\op P)$. We now apply Frobenius reciprocity with respect to $r_P$ to obtain a non-zero map 
\[f'\colon r_{\{1\}\times P}(C_c^\infty(G)))\cong \ind_{P\times M}^{G\times M}(C_c^\infty(M))\ra \ind_\op^G(\sigma)\otimes \sigma^\lor\]
and Frobenius reciprocity with respect to $r_{\op}$ to obtain a non-zero map
\[f''\colon  r_{\op\times \{1\}}(\ind_{P\times M}^{G\times M}(C_c^\infty(M)))\ra \sigma\otimes \sigma^\lor.\]
After applying the Geometric Lemma to the left side, we will show that $f''$ restricts to a non-zero map on \[F(1)(C_c^\infty(M))=C_c^\infty(M)\hra r_{\op\times \{1\}}(\ind_{P\times M}^{G\times M}(C_c^\infty(M)))\ra \sigma\otimes \sigma^\lor.\]
Indeed, let $w_i\in G$ be a permutation matrix and assume that $P w_i \op$ is the smallest element in $P\bs G/ \op$ such that $f'$ does not vanish on $F(w_i)(C_c^\infty(M))$, \emph{i.e.} we obtain a non-zero map
\[f''\colon F(w_i)(C_c^\infty(M))\ra \sigma\otimes\sigma^\lor.\]
Using the specific form of $F(w_i)$, \emph{cf.} \cite[Geometric Lemma]{ASENS_1977_4_10_4_441_0}, and applying first Bernstein reciprocity and then Frobenius reciprocity we obtain a non-zero
morphism
\[C_c^\infty(M)\ra F(w_i^{-1})(\sigma) \otimes \sigma^\lor.\]
where here $F(w_i^{-1})(\sigma)$ is now the subquotient coming from the Geometric Lemma applied to $r_\op(\ind_\op^G(\sigma))$.
This in turn gives a non-zero morphism
\[\sigma\ra F(w_i^{-1})(\sigma).\]
By assumption $\sigma$ appears only with multiplicity $1$ in $r_\op(\ind_{\op}^G(\sigma))$ and $F(1)(\sigma)\cong \sigma$. Thus $w_i^{-1}\in \op$ and hence $P w_i \op= P\op$. We therefore showed that $f''$ does not vanish on $F(1)(C_c^\infty(M))$.
But now $F(1)(C_c^\infty(M))=r_{\op\times P}(C_c^\infty(\op P))$ and thus by Frobenius reciprocity
$f$ does not vanish on on $C_c^\infty(\op P)$.
\end{proof}
    Assume now $P=P_{(n_1,n_2)}$, $\sigma=\sigma_1\otimes\sigma_2$ is $P$-regular, irreducible and either $\sigma_1$ or $\sigma_2$ is $\square$-irreducible. In this case we call $\sigma$ \emph{strongly} $P$-regular.
    By \Cref{L:si1} $\pi\coloneq \mathrm{cos}(\ind_P^G(\sigma))\cong \mathrm{soc}(\ind_\op^G(\sigma))$ is irreducible.
    Moreover, by \Cref{L:preg}(1), the only morphism up to a scalar in \[\Ho_{G}(\ind_P^G(\sigma),\ind_\op^G(\sigma) )\] is given by
    \[\ind_P^G(\sigma)\sra \pi\hra \ind_\op^G(\sigma),\]
    since 
    \[\dim_R\Ho_{G}(\ind_P^G(\sigma),\ind_\op^G(\sigma) )=\]\[=\dim_R\Ho_{G\times G}(C_c^\infty(G),\ind_\op^G(\sigma)\otimes \ind_P^G(\sigma^\lor) )=1.\]
    For the next corollary, we identify for every admissible representation $\sigma$ of $M$
    \[\ind_\op^G(\sigma)\otimes \ind_P^G(\sigma^\lor)\cong \ind_\op^G(\sigma^\lor)^\lor\otimes \ind_P^G(\sigma)^\lor\]
\begin{corollary}\label{C:preg}
    Let $\sigma\in \Irr(M)$ be a strongly $P$-regular representation and let $\pi=\mathrm{soc}(\ind_\op^G(\sigma))$.
    Let \[ T\in \Ho_{\op\times P}(C_c^\infty(\op P),\ind_\op^G(\sigma)\otimes \ind_P^G(\sigma^\lor)),\]
    written as
    \[\phi\mapsto (f_1\otimes f_2\mapsto T(\phi)(f_1\otimes f_2)),\, f_1\otimes f_2\in \ind_\op^G(\sigma^\lor)\otimes \ind_P^G(\sigma).\]
    Then for all $ f_1\otimes f_2\in \ind_\op^G(\sigma^\lor)\otimes \ind_P^G(\sigma)$ there exists a matrix coefficient $f$ of $\pi$ such that for all $\phi\in C_c^\infty(\op P)$ 
    \[T(\phi)(f_1\otimes f_2)=\int\displaylimits_{\op P}\phi(p)f(p)\ddd p.\]
\end{corollary}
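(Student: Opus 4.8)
The plan is to identify the space of intertwiners in the statement with a one-dimensional space, exhibit a distinguished element of that space coming from an honest matrix coefficient, and conclude that the given $T$ is a scalar multiple of it. First I would use $P$-regularity together with \Cref{L:preg}(1) to note that
\[\dim_R \Ho_{\op\times P}(C_c^\infty(\op P),\ind_\op^G(\sigma)\otimes \ind_P^G(\sigma^\lor))=\dim_R\Ho_{G\times G}(C_c^\infty(G),\ind_\op^G(\sigma)\otimes \ind_P^G(\sigma^\lor))=1.\]
So it suffices to produce one nonzero element of the left-hand space of the desired form, since then $T$ is a scalar multiple of it and the scalar can be absorbed into $f$.

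Next I would construct that element. Under the identification $\ind_\op^G(\sigma)\otimes\ind_P^G(\sigma^\lor)\cong \ind_\op^G(\sigma^\lor)^\lor\otimes\ind_P^G(\sigma)^\lor$, a pair $f_1\otimes f_2$ is a linear functional on $\ind_\op^G(\sigma^\lor)\otimes \ind_P^G(\sigma)$; I want, for fixed $f_1\otimes f_2$, a matrix coefficient $f$ of $\pi=\soc(\ind_\op^G(\sigma))$ whose pairing against $\phi\in C_c^\infty(\op P)$ reproduces $T(\phi)(f_1\otimes f_2)$. The idea is to use the canonical $G$-embedding $\pi\hra \ind_\op^G(\sigma)$ and the canonical $G$-surjection $\ind_P^G(\sigma)\sra\pi$ (which, by strong $P$-regularity and \Cref{L:preg}(1), span the respective one-dimensional Hom-spaces, as already observed before the corollary). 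Composing $\ind_P^G(\sigma^\lor)\sra \pi^\lor$ and $\pi^\lor\hra \ind_\op^G(\sigma^\lor)^\lor$ dualizes to give maps relating $f_1,f_2$ to a vector $v\in\pi$ and a functional $v^\lor\in\pi^\lor$; then $f(g)\coloneq v^\lor(\pi(g)v)$ is a matrix coefficient of $\pi$, and the integral $\int_{\op P}\phi(p)f(p)\,\mathrm{d}p$ defines, as $\phi$ varies, an element of the Hom-space in question. It is nonzero because $\pi\times\pi$-type irreducibility (via \Cref{L:si1}, $\pi$ is irreducible) ensures the matrix coefficient is nonzero for suitable $v,v^\lor$, and the restriction to $C_c^\infty(\op P)$ remains nonzero precisely by $P$-regularity, which says the restriction map $\Ho_{G\times G}(\ldots)\to\Ho_{\op\times P}(\ldots)$ is an isomorphism, not merely injective.

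The main obstacle is making the second step precise: one must check that the construction $f_1\otimes f_2\mapsto (\phi\mapsto \int_{\op P}\phi(p)f(p)\,\mathrm dp)$ really lands in $\Ho_{\op\times P}(C_c^\infty(\op P),\ind_\op^G(\sigma)\otimes\ind_P^G(\sigma^\lor))$ and is $\op\times P$-equivariant, and that the resulting element is nonzero — i.e. that the matrix coefficient $f$ attached to a generic $f_1\otimes f_2$ does not vanish identically on $\op P$. For the first point I would write out the $\op\times P$-action on $C_c^\infty(\op P)$ explicitly (left–right translation) and check it matches the $\delta_\op^{-1}$-twisted action on $\ind_\op^G(\sigma)\otimes\ind_P^G(\sigma^\lor)$ after restriction, which is a routine unwinding of the normalizations in \Cref{S:parabind}. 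For nonvanishing, the cleanest route is the dimension count: the space of such integral-against-matrix-coefficient maps is contained in the one-dimensional Hom-space, and it is nonzero because its image under the isomorphism of \Cref{L:preg} (the restriction-from-$G$ map, inverted) is the standard matrix-coefficient pairing on $C_c^\infty(G)$, which is manifestly nonzero. Combining, $T$ equals our constructed map up to scalar, and rescaling $v^\lor$ (hence $f$) by that scalar gives the stated identity for all $f_1\otimes f_2$ and all $\phi\in C_c^\infty(\op P)$.
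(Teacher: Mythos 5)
Your proposal is correct and follows essentially the same route as the paper: both use \Cref{L:preg}(1) to see that the relevant Hom-space is one-dimensional and that the inclusion of $\Ho_{G\times G}(C_c^\infty(G),\pi\otimes\pi^\lor)$ into $\Ho_{G\times G}(C_c^\infty(G),\ind_\op^G(\sigma)\otimes\ind_P^G(\sigma^\lor))$ is an isomorphism, identify morphisms out of $C_c^\infty(G)$ with integration against matrix coefficients of $\pi$, and then invoke the definition of $P$-regularity to restrict to $C_c^\infty(\op P)$. The paper's write-up is more compact but contains no ingredient beyond what you describe.
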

\begin{proof}
    Since $\sigma$ is irreducible, \Cref{L:preg}(1) tells us that the natural inclusion 
    \[\Ho_{G\times G}(C_c^\infty(G),\pi\otimes \pi^\lor)\hra \Ho_{G\times G}(C_c^\infty(G),\ind_\op^G(\sigma)\otimes \ind_P^G(\sigma)^\lor)\]
is an isomorphism. Note that every morphism in the former space is given by sending a matrix coefficient $f$ and a Schwartz-function $\phi\in C_c^\infty(G)$ to
    \[\int\displaylimits_Gf(g)\phi(g)\ddd g.\]
Now by $P$-regularity every map in \[\Ho_{\op\times P}(C_c^\infty(\op P),\ind_\op^G(\sigma)\otimes \ind_P^G(\sigma^\lor))\] is just the restriction of a map in \[\Ho_{G\times G}(C_c^\infty(G),\ind_\op^G(\sigma)\otimes \ind_P^G(\sigma)^\lor),\] hence it has to be of the form
\[\phi\in C_c^\infty(\op P)\mapsto \int\displaylimits_{\op P}\phi(p)f(p)\ddd p.\] 
\end{proof}
Next, we see that strongly $P$-regular representation will interact nicely with $L$-factors. Recall that for $\pi$ an irreducible representation, we had
\[P(\pi,T)=\frac{1}{L(\pi,T)}.\]
\begin{prop}\label{P:L1}
Let $P=P_{(n_1,n_2)}=M\rtimes U$ be a parabolic subgroup of $G_n$ and $\sigma=\sigma_1\otimes \sigma_2$ a strongly $P$-regular representation. Set $\pi=\mathrm{cos}(\ind_P^{G}(\sigma))$. Then
    $P(\sigma_2,T)$ divides $P(\pi,T)$.
\end{prop}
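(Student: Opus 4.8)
The goal is to show that the Godement--Jacquet local factor of $\sigma_2$ divides that of $\pi = \mathrm{cos}(\ind_P^G(\sigma))$, where $\sigma = \sigma_1\otimes\sigma_2$ is strongly $P$-regular. By \Cref{L:su} it would suffice to exhibit $\sigma_2$ (or rather a representation whose $L$-factor is $P(\sigma_2,T)$) as a subquotient of $\pi$, but of course this is false in general; instead the idea must be to compare zeta integrals directly. Concretely, I would produce, for every matrix coefficient $f_2$ of $\sigma_2$ and every Schwartz function $\phi_2 \in \mathscr{S}_R(M_{n_2}(\mathrm{D}))$, a matrix coefficient $f$ of $\pi$ and a Schwartz function $\phi \in \mathscr{S}_R(M_n(\mathrm{D}))$ such that the zeta integral $Z(\phi,T,f)$ equals $Z(\phi_2,T,f_2)$ up to a factor that is a unit in $R[T,T^{-1}]$ (a monomial times a unit), possibly after also pairing with the trivial/unramified contribution of the $G_{n_1}$-block. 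This gives an inclusion of fractional ideals $\mathcal{L}(\sigma_2) \subseteq \mathcal{L}(\pi)$ in the notation of \cite[Corollary 2.4]{Mzeta}, and since $L(\pi,T) = 1/P(\pi,T)$ generates $\mathcal{L}(\pi)$ and $L(\sigma_2,T)$ generates $\mathcal{L}(\sigma_2)$, the divisibility $P(\sigma_2,T) \mid P(\pi,T)$ follows.

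\textbf{Carrying it out.} First I would set up matrix coefficients of $\pi$ via the strong $P$-regularity: by \Cref{C:preg}, for $f_1 \otimes f_2 \in \ind_{\op}^G(\sigma^\lor)\otimes\ind_P^G(\sigma)$ and $\phi \in C_c^\infty(\op P)$ there is a matrix coefficient $f$ of $\pi$ with $T(\phi)(f_1\otimes f_2) = \int_{\op P}\phi(p)f(p)\,\mathrm{d} p$, where $T$ is the canonical element of $\Ho_{\op\times P}(C_c^\infty(\op P),\ind_\op^G(\sigma)\otimes\ind_P^G(\sigma^\lor))$. The matrix coefficient on the left of $\op P$ is, by construction, built from the pairing $\ind_\op^G(\sigma^\lor)\times\ind_P^G(\sigma)\to R$, and on the open cell its restriction to $M$ decomposes as (matrix coefficient of $\sigma_1$) $\otimes$ (matrix coefficient of $\sigma_2$) up to the modulus character $\delta_P$. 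The integration formulas \eqref{E:int2} and \eqref{E:int} from \Cref{S:intintro} (the Haar-measure subsection) are exactly what is needed to replace the integral over $G_n(N)$ appearing in $Z(\phi,T,f)$ by an iterated integral over $K_n$, over $\op P$, and then over $M$, so that the inner $M$-integral produces $Z(\phi_2, T', f_2)$ for a suitable reparametrization $T' = q^? T$ and suitable Schwartz function $\phi_2$ on $M_{n_2}(\mathrm{D})$ obtained by restricting/projecting $\phi$; the $G_{n_1}$-block contributes a zeta integral of $\sigma_1$ which, by choosing $\phi_1$ supported near $K_{n_1}$, can be made a nonzero monomial in $T$, hence a unit in $R[T,T^{-1}]$. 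Keeping track of powers of $q$ and of $\delta_P$ is routine but must be done carefully; the upshot is an identity of the form $Z(\phi,T,f) = (\text{unit}) \cdot Z(\phi_2, q^? T, f_2)$.

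\textbf{The main obstacle.} The delicate point is the interchange of the $G_n(N)$-integration in the definition of $Z(\phi,T,f)$ with the Iwasawa-type decomposition $G_n = \op P K_n$ (or $P\,\overline{P}$-cell decomposition), because over $R = \fl$ one cannot freely split Haar integrals — this is precisely the phenomenon flagged in the Haar-measure subsection. One has to use the corrected identity \eqref{E:int2} with the right Haar measure on $P$ and the twist by $\delta_P^{-1}$, and check that the matrix coefficient $f$ of $\pi$ furnished by \Cref{C:preg}, together with the Schwartz function $\phi$, are compatible with this corrected decomposition — i.e. that $\phi$ can be taken of product form adapted to $\op P$ and that the support conditions ($|g| = q^{-N}$) translate correctly into support conditions on the $M$-component. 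A secondary subtlety is that $f$ is only a matrix coefficient of $\pi$ and not an arbitrary one, so one must verify the family of $Z(\phi,T,f)$ so obtained already generates a fractional ideal containing $\mathcal{L}(\sigma_2)$ up to units; this is where strong $P$-regularity (giving the one-dimensionality in \Cref{L:preg}(1) and hence that \emph{every} relevant bilinear pairing comes from matrix coefficients of $\pi$) does the essential work. Once the zeta-integral identity is established, the divisibility statement is immediate from \cite[Corollary 2.4]{Mzeta} and the definition $L(\pi,T) = 1/P(\pi,T)$.
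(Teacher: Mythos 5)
Your proposal follows essentially the same route as the paper: reduce to the inclusion of fractional ideals $\mathcal{L}(\sigma_2)\subseteq\mathcal{L}(\pi)$, take test data of product form supported on the open cell $\op P\cong\overline{U}\times M\times U$ so that the $\sigma_1$-factor is normalized to a unit and the unipotent integrals collapse, and invoke strong $P$-regularity through \Cref{C:preg} to recognize the resulting $\op\times P$-equivariant bilinear form as integration against a genuine matrix coefficient of $\pi$ (the paper, like you, models this on \cite[Theorem 2.7]{JL2}). The only cosmetic difference is that the paper works directly with the cell decomposition of $\op P$ and arranges exact equality of zeta integrals rather than passing through the Iwasawa decomposition and an equality up to units.
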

Before we come to the proof, let us observe some parallels between this statement and a special case of it, namely \cite[Theorem 2.7]{JL2}. In there Jacquet uses the following fact, which only holds true over $\ql$ in full generality. For a multisegment $\fm$ over $\ql$, let $\langle \fm\rangle$ be the representation associated to $\fm$ via the Langlands classification. If $\rho v_\rho ^b$ a cuspidal representation in the cuspidal support of $\fm$ with $b$ maximal, let $\De_1+\ldots+\De_k$ be the sub-multisegment of $\fm$ consisting of all segments of the form $[a,b]_\rho$.
Then \[\bigtimes_{i=1}^k\langle\De_i\rangle\] is square-irreducible and \[\sigma=\bigtimes_{i=1}^k\langle\De_i\rangle\otimes \langle \fm-\sum_{i=1}^k\De_i\rangle\]
is strongly-$P$ regular with \[\mathrm{cos}(\ind_P^{G_n}(\sigma))=\langle \fm\rangle.\]
Thus \Cref{P:L1} can be seen as a generalization of this fact over $\ql$ and as we will see, also its proof follows essentially the same idea.

Before we start with the proof, we define the following.
Let $H\colon G_n\times G_n\ra R$ be a smooth function such that
\[H(u_1mg_1,u_2mg_2)=H(g_1,g_2)\]
for all $u_1\in \overline{U},\, u_2\in U,\, m\in M,g_1,g_2\in G_n$
and \[m\mapsto H(g_1,mg_2)\] is a matrix coefficient of $\sigma\otimes\delta_p^{\frac{1}{2}}$.
Then for $\phi\in \cp$, 
   define the integral
    \[T_H( \phi)\coloneq \int\displaylimits_{U\times M\times\overline{U}}H(1,m) \phi(\overline{u}^{-1}um)\ddd \overline{u}\ddd u\ddd m.\]
\begin{lemma}\label{L:matrixcoefficient}
    The integrals $T_H(\phi)$ are well defined for $\phi\in \cp$ and define a non-zero $\op\times P$-equivariant morphism
    \[\cp\ra \ind_\op^{G_n}(\sigma) \otimes \ind_P^{G_n}(\sigma^\lor).\]
\end{lemma}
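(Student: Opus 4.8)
The plan is to verify well-definedness of the integral $T_H(\phi)$ by a standard convergence/support argument, then check $\overline{P}\times P$-equivariance by a change of variables, and finally exhibit non-vanishing by relating $T_H$ to the canonical pairing coming from $P$-regularity. First I would address convergence: since $\phi\in\cp$ is compactly supported and locally constant on $\overline{P}P$, and the map $(\overline{u},u,m)\mapsto \overline{u}^{-1}um$ is (on the big cell) an isomorphism onto an open subset of $G_n$ whose fibres over $\overline{P}P$ are handled by the product decomposition, the integrand $H(1,m)\phi(\overline{u}^{-1}um)$ has compact support in the $(\overline u,u,m)$-variables; local constancy of $H$ and $\phi$ then makes the integral a finite sum, hence well-defined over any $R$ (this is exactly why one integrates over $\overline{P}P$ rather than over $G_n$, where \eqref{E:int2} would be needed). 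Here one uses that $m\mapsto H(g_1,mg_2)$ being a matrix coefficient of $\sigma\otimes\delta_P^{1/2}$ means it is smooth and its growth is controlled, but compactness of the $\phi$-support already forces the $m$-integral to be over a compact set.

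Next I would check the equivariance. Writing out $T_H(\overline{p}\cdot\phi\cdot p)$ for $\overline p=\overline u_0 m_0\in\overline P$, $p=u_0' m_0'\in P$ and substituting $\overline u\mapsto$ (conjugate/translate), $u\mapsto$ (conjugate/translate), $m\mapsto m_0^{-1} m m_0'^{-1}$, the invariance property $H(u_1 m g_1,u_2 m g_2)=H(g_1,g_2)$ together with the transformation law of $H$ under $M$ in the middle variable and the modulus factors $\delta_P^{1/2}$ absorbed into the definition of induction should produce precisely the transformation rule defining $\ind_{\overline P}^{G_n}(\sigma)\otimes\ind_P^{G_n}(\sigma^\lor)$; concretely $T_H$ factors as $\phi\mapsto (f_1\otimes f_2\mapsto \langle \text{something}\rangle)$ where $f_1\in\ind_{\overline P}^{G_n}(\sigma)$ and $f_2\in\ind_P^{G_n}(\sigma^\lor)$ are read off from $H$ via Frobenius reciprocity. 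This is the routine but bookkeeping-heavy part; the modulus characters and the choice of Haar measures $\dd\overline u\,\dd u\,\dd m$ have to be matched carefully against the normalizations in \Cref{S:parabind} and \Cref{S:geolem}.

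For non-vanishing I would argue as follows: by \Cref{L:preg}(1), since $\sigma$ is irreducible and (strongly) $P$-regular,
\[
\dim_R\Ho_{\overline P\times P}(\cp,\ind_{\overline P}^{G_n}(\sigma)\otimes\ind_P^{G_n}(\sigma^\lor))=1,
\]
so it suffices to produce one $H$ and one $\phi$ with $T_H(\phi)\ne 0$. Taking $H(g_1,g_2)=\langle \sigma(\cdot)v,v^\lor\rangle$-type data supported near the identity and $\phi$ the characteristic function of a small neighbourhood of $1$ in $\overline P P$ on which all the relevant vectors are fixed, the integral collapses (up to a positive measure factor that is invertible in $R$, by our compatible choice of measures) to $H(1,1)=\langle v,v^\lor\rangle\ne 0$. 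Alternatively, and more cleanly, one observes that $T_H$ is manifestly a non-zero element of the Geometric-Lemma subspace $F(1)(C_c^\infty(M))=r_{\overline P\times P}(\cp)\ra \sigma\otimes\sigma^\lor$ appearing in the proof of \Cref{L:preg}(2), and that subspace maps non-trivially; since the full Hom-space is one-dimensional, $T_H$ spans it and in particular is non-zero.

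The main obstacle I expect is the equivariance computation: getting the modulus factors $\delta_P^{\pm1/2}$ and the left/right Haar measure conventions to line up so that $T_H(\phi)$ genuinely lands in $\ind_{\overline P}^{G_n}(\sigma)\otimes\ind_P^{G_n}(\sigma^\lor)$ rather than in a twist of it. Everything else — convergence and non-vanishing — is either formal (finiteness of the sum) or a one-line consequence of \Cref{L:preg}. I would organize the write-up so that the measure normalizations are fixed once at the start, making the change of variables in the equivariance step purely mechanical.
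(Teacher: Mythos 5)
Your proposal is correct and follows essentially the same route as the paper's (very terse) proof: convergence from the compact support of $\phi$ in $\op P$, and the identification of the admissible $H$'s with linear combinations of $H(g_1,g_2)=f^\lor(g_1)(f(g_2))$ for $f^\lor\in\ind_\op^{G_n}(\sigma^\lor)$, $f\in\ind_P^{G_n}(\sigma)$ — which is exactly your ``read off $f_1\otimes f_2$ from $H$'' step and which makes the target space and the $\op\times P$-equivariance immediate. The paper does not even spell out non-vanishing; your direct evaluation on a $\phi$ supported near $1$ (with invertible volume, since $\ell\neq p$) is the intended argument and, unlike your alternative via \Cref{L:preg}, does not require $P$-regularity, so it is the preferable one to record.
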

\begin{proof}
By the assumption on $\phi$ the integral converges. Note that above $H$'s are precisely linear combinations of functions of the form
\[H(g_1,g_2)=f^\lor(g_1)(f(g_2))\] for $f^\lor\in \ind_\op^{G_n}(\sigma^\lor),\, f\in \ind_{P}^{G_n}(\sigma)$.
\end{proof}
\begin{proof}[Proof of \Cref{P:L1}]
Having introduced the above machinery, we can now mimic the proof of \cite[Theorem 2.7]{JL2}.
We recall \[\delta_P(m_1,m_2)=\lvert \det\nolimits'(m_1)\lvert^{dn_2}\lvert \det\nolimits'(m_2)\lvert^{-dn_1}.\]
Note that the claim of the proposition is equivalent to \[\mathcal{L}(\sigma_2)\subseteq \mathcal{L}(\pi),\]\emph{cf}. \Cref{C:fracid}.
To show this, we need to find for each ${\phi_2}\in \mathscr{S}_{R}(M_{n_2}(\mathrm{D}))$ and matrix coefficient ${f_2}$ of $\sigma_2$, a ${\phi}\in \mathscr{S}_{R}(M_{n}(\mathrm{D}))$ and a matrix coefficient ${f}$ of $\pi$ such that
    \[Z({\phi},Tq^{-\frac{dn-1}{2}},{f})=Z({\phi_2},Tq^{-\frac{dn_2-1}{2}},{f_2}).\]
Choose now a coefficient $f_1$ of $\sigma_1$ and $\phi_1\in \mathscr{S}_{R}(M_{n_1}(\mathrm{D}))$ supported in a suitable neighbourhood of $1_n$ such that $Z(\phi_1,Tq^{-\frac{dn_1-1}{2}},f_1)=1$ and choose \[\phi_{1,2}\in C_c^\infty(\overline{U}),\,\phi_{2,1}\in C_c^\infty(U)\] which are supported in a suitable neighbourhood of $1_n$ such that 
\[\phi(\overline{u}um)=\phi_{1,2}(\overline{u})\phi_{2,1}(u)\phi_1(m_1)\phi_2(m_2),\, m=(m_1,m_2)\]
    satisfies
    \[\int\displaylimits_{\overline{U}\times U}\phi(\overline{u}^{-1}um)\ddd \overline{u}\ddd u =\phi_1(m_1)\phi_2(m_2).\]
Note that in particular for all $N\in\mathbb{Z}$, \[\phi\chi_{G_n(N)}\in \cp,\] where $\chi_{G_n(N)}$ is the characteristic function of $G_n(N)$ and $\phi$ can be extended by $0$ to a function on $\schr$. Moreover, the coefficient of $T^N$ in \[Z(\phi_2, Tq^{-\frac{dn_2-1}{ 2}}, f_2)=Z(\phi_1, Tq^{-\frac{dn_1-1}{ 2}}, f_1)Z(\phi_2, Tq^{-\frac{dn_2-1}{ 2}}, f_2)\] is
    \[\int\displaylimits_{M(N)}\lvert \det\nolimits'(m_1)\lvert^{\frac{dn_1-1}{2}}\lvert \det\nolimits'(m_2)\lvert^{\frac{dn_2-1}{2}}f_1(m_1)f_2(m_2)\]\[\int\displaylimits_{\overline{U}\times U}\phi(\overline{u}^{-1}um)\ddd \overline{u}\ddd u \ddd (m_1,m_2),\] which can be rewritten as
\begin{equation}\label{E:intu}=\int\displaylimits_{M(N)}\lvert \det\nolimits'(m)\lvert^{\frac{dn-1}{2}}\delta_{P}(m)^{\frac{1}{2}}f_1(m_1)f_2(m_2)\int\displaylimits_{\overline{U}\times U}\phi(\overline{u}^{-1}um)\ddd \overline{u}\ddd u  \ddd m.\end{equation}
Note that here we used that $\phi_1(m_1)$ is supported in a sufficiently small neighborhood of $1$ and hence in the support of $\phi_1$, we have $\lvert \det\nolimits'(m_1)\lvert =1$.
We can now find $H$ as in \Cref{L:matrixcoefficient} such that $H(1,m)=f(m_1)f(m_2)\delta_{P}(m_1,m_2)^{\frac{1}{2}}$ and hence we find by \Cref{C:preg} a matrix coefficient $f$ of $\pi$, which is independent of $N$, such that above integral (\ref{E:intu}) equals 
    \[\int_{G_n}\lvert \det\nolimits'(g)\lvert^{\frac{dn-1}{2}}f(g)(\phi\chi_{G_n(N)})(g)\ddd u g=\int_{G_n(N)}\lvert \det\nolimits'(g)\lvert^{\frac{dn-1}{2}}f(g)\phi(g)\ddd g.\]
    This in turn is nothing else than the coefficient of $T^N$ in
    \[Z({\phi},Tq^{-\frac{dn-1}{2}},{f}).\] Thus $\mathcal{L}(\sigma_2)\subseteq \mathcal{L}(\pi)$.
\end{proof}
\section{Derivatives}\label{S:Der}
In this section we define derivatives with respect to a $\square$-irreducible cuspidal representation, following the ideas of \cite{Mder} and \cite{Jader}, who establish all results in this section in the case $R=\ql$. Derivatives will also give rise to most of the $P$-regular representations we will encounter throughout the rest of the paper.
\subsection{ Basic properties of derivatives}
Let $\rho,\pi\in \Irr$. We call a representation $\pi'\in \Irr$ a \emph{right-$\rho$-derivative} respectively a \emph{left-$\rho$-derivative} of $\pi$ if 
\[\pi\hra \pi'\times \rho\text{ respectively }\pi\hra \rho\times \pi'.\]
As a consequence of \Cref{L:si1} and \Cref{L:si2} we obtain the following lemma and its corollary.
\begin{lemma}\label{L:derivatives}
    Let $\rho\in \irs$ and $\pi\in \Irr$.
    The left- and right-$\rho$-derivative of $\pi$ are up to isomorphism unique. We denote them by $\mathcal{D}_{l,\rho}(\pi)$ and $\mathcal{D}_{r,\rho}(\pi)$ respectively and set them to $0$ if no such representation exists.

    Moreover, if $\mathcal{D}_{r,\rho}(\pi)\neq 0$, then $\pi=\mathrm{soc}(\mathcal{D}_{r,\rho}(\pi)\times \rho)$ and it appears in its decomposition series with multiplicity $1$. Analogously, if $\mathcal{D}_{l,\rho}(\pi)\neq 0$, then $\pi=\mathrm{soc}(\rho\times \mathcal{D}_{l,\rho}(\pi))$ and it appears in its decomposition series with multiplicity $1$.
\end{lemma}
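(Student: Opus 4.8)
The plan is to exploit the characterization of $\square$-irreducible representations and the injectivity statements already established. First I would address uniqueness of the right-$\rho$-derivative. Suppose $\sigma_1,\sigma_2\in\Irr$ both satisfy $\pi'\hra\sigma_i\times\rho$. Since $\rho\in\irs$, \Cref{L:si1} tells us that $\sigma_i\times\rho$ is SI, so its socle is irreducible and appears with multiplicity one; in particular $\soc(\sigma_i\times\rho)\cong\pi'$ for each $i$. Now \Cref{L:si2} says that the map $\sigma\mapsto\soc(\sigma\times\rho)$ is injective on $\Irr$, so $\sigma_1\cong\sigma_2$. This gives well-definedness of $\Dl(\pi')$ (and symmetrically $\Dr(\pi')$ using the other injectivity statement in \Cref{L:si2}). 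The convention of setting the derivative to $0$ when no such $\sigma$ exists is then unambiguous.

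For the second part, suppose $\Dr(\pi')\neq 0$, say $\sigma=\Dr(\pi')$, so by definition $\pi'\hra\sigma\times\rho$. Since $\rho\in\irs$, \Cref{L:si1} applies: $\sigma\times\rho$ is SI. Hence its socle is irreducible, and since $\pi'$ is an irreducible subrepresentation, we must have $\pi'\cong\soc(\sigma\times\rho)$. The SI condition moreover includes that the socle appears with multiplicity one in the composition series, which is exactly the assertion that $\pi'$ occurs in $\sigma\times\rho$ with multiplicity one. The statement for $\Dl$ is identical with $\sigma\times\rho$ replaced by $\rho\times\sigma$, again using that $\rho\times\sigma$ is SI by \Cref{L:si1}.

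I do not anticipate a genuine obstacle here: the lemma is essentially a repackaging of \Cref{L:si1} and \Cref{L:si2}. The only point requiring a small amount of care is making sure the two injectivity maps in \Cref{L:si2} are matched to the correct side ($\soc({}\cdot{}\times\rho)$ for the right derivative and $\soc(\rho\times{}\cdot{})$ for the left), and that the hypothesis $\rho\in\irs$ is what licenses invoking \Cref{L:si1} in the first place. Everything else is formal.
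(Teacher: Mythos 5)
Your argument is correct and is exactly the route the paper takes: the lemma is presented there as an immediate consequence of \Cref{L:si1} (products with a $\square$-irreducible factor are SI, giving irreducibility and multiplicity one of the socle) and \Cref{L:si2} (injectivity of the socle maps, giving uniqueness of the derivative). The matching of the two socle maps to the left and right derivatives is handled correctly as well.
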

\begin{corollary}\label{C:derinj}
    Let $\pi_1,\, \pi_2\in \Irr$ and $\rho\in\irs$. If $\mathcal{D}_{l,\rho}(\pi_1)\cong \mathcal{D}_{l,\rho}(\pi_2)\neq 0$ or $\mathcal{D}_{r,\rho}(\pi_1)\cong \mathcal{D}_{r,\rho}(\pi_2)\neq 0$, then $\pi_1\cong \pi_2$. 
\end{corollary}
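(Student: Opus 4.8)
The plan is to deduce this immediately from the uniqueness statement in \Cref{L:derivatives} together with the injectivity of the socle maps recorded in \Cref{L:si2}. Suppose first that $\mathcal{D}_{r,\rho}(\pi_1)\cong \mathcal{D}_{r,\rho}(\pi_2)=:\sigma\neq 0$. By the second part of \Cref{L:derivatives}, $\pi_i\cong \mathrm{soc}(\sigma\times\rho)$ for $i=1,2$, so $\pi_1\cong\pi_2$ directly — no appeal to injectivity of a map is even needed, the value $\mathrm{soc}(\sigma\times\rho)$ is determined by $\sigma$. The left-derivative case is symmetric, using $\pi_i\cong\mathrm{soc}(\rho\times\sigma)$.

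Alternatively, and perhaps more in the spirit of how the corollary will be cited later, one phrases it via \Cref{L:si2}: since $\rho\in\irs$, the map $\tau\mapsto\mathrm{soc}(\tau\times\rho)$ is injective on $\Irr$, and $\pi_i$ is its value at $\mathcal{D}_{r,\rho}(\pi_i)$; hence equal derivatives force $\pi_1\cong\pi_2$. Either way the argument is one line once \Cref{L:derivatives} is in hand.

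There is no real obstacle here; the content has already been absorbed into \Cref{L:derivatives} (whose proof in turn rests on \Cref{L:si1} and \Cref{L:si2}). The only point to be careful about is the hypothesis $\mathcal{D}_{l,\rho}(\pi_i)\neq 0$ (resp. $\mathcal{D}_{r,\rho}(\pi_i)\neq 0$): the derivatives are set to $0$ when no embedding $\pi'\hra\rho\times\sigma$ (resp. $\pi'\hra\sigma\times\rho$) exists, and for such $\pi'$ the reconstruction formula $\pi'\cong\mathrm{soc}(\rho\times\mathcal{D}_{l,\rho}(\pi'))$ fails; this is exactly why the nonvanishing assumption cannot be dropped, and it should be stated explicitly at the start of the proof.
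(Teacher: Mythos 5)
Your argument is correct and matches the paper's, which records this corollary as an immediate consequence of \Cref{L:derivatives} (itself resting on \Cref{L:si1} and \Cref{L:si2}): the reconstruction $\pi_i\cong\mathrm{soc}(\mathcal{D}_{r,\rho}(\pi_i)\times\rho)$ determines $\pi_i$ from its nonzero derivative. Your remark about why the nonvanishing hypothesis is essential is accurate and consistent with the conventions set up in the paper.
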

\begin{lemma}\label{L:dervanish}
Let $\rho\in \scu(G_m)$ and $\pi\in \Irr_n$. Then $\Dl(\pi)\neq 0$ if and only if there exists an irreducible representation $\pi'\in \Irr_{n-m}$ such that 
        \[[\pi'\otimes\rho]\le [r_{(n-m,m)}(\pi)].\]
        Similarly, $\Dr(\pi)\neq 0$ if and only if there exists an irreducible representation $\pi'\in \Irr_{n-m}$ such that 
        \[[\rho\otimes\pi']\le [r_{(m,n-m)}(\pi)].\]
\end{lemma}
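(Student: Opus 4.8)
The plan is to prove the statement about right-derivatives; the left-derivative case follows by the symmetric argument (or by applying the right-derivative statement to the contragredient via the identity $\Dl(\pi)^\lor \cong \mathcal{D}_{r,\rho^\lor}(\pi^\lor)$ together with $r_{(m,n-m)}(\pi)^\lor \cong r_{(n-m,m)}(\pi^\lor)$ after reordering). For the forward direction, suppose $\pi$ admits a right-derivative $\pi' = \Dr(\pi) \neq 0$. By \Cref{L:derivatives} we have $\pi \hra \pi' \times \rho$, so by Frobenius reciprocity $r_{(n-m,m)}(\pi) \hra r_{(n-m,m)}(\pi'\times\rho)$ is non-zero, and since $\rho$ is cuspidal the Geometric Lemma (in the combinatorial form recalled in \Cref{S:geolem}) computes $[r_{(n-m,m)}(\pi'\times\rho)]$ as a sum whose terms are $[\pi'\otimes\rho]$ plus terms coming from proper parabolic restrictions of $\pi'$ twisted against $\rho$; in particular $[\pi'\otimes\rho]$ is itself a subquotient of $r_{(n-m,m)}(\pi'\times\rho)$. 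To conclude $[\pi'\otimes\rho]\le [r_{(n-m,m)}(\pi)]$ I will instead argue directly: the nonzero map $\pi \hra \pi'\times\rho$ yields, by Frobenius reciprocity, a nonzero map $r_{(n-m,m)}(\pi)\to \pi'\otimes\rho$; since $\pi'\otimes\rho$ is irreducible, this map is surjective, so $\pi'\otimes\rho$ is a quotient of $r_{(n-m,m)}(\pi)$, hence a subquotient.

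For the converse, suppose $[\pi'\otimes\rho]\le [r_{(n-m,m)}(\pi)]$ for some $\pi'\in\Irr_{n-m}$. I want to produce an embedding $\pi\hra \tau\times\rho$ for some irreducible $\tau$, which by \Cref{L:derivatives} (uniqueness of the right-$\rho$-derivative, valid since $\rho\in\irs$) forces $\Dr(\pi)\neq 0$. The idea is to use second adjunction (the right adjoint property of parabolic restriction with respect to the opposite parabolic): $r^{\op}_{(n-m,m)}(\pi)$ is the right adjoint datum, and $\Ho_{G_n}(\pi, \tau\times\rho) = \Ho_{G_n}(\pi, \ind_{P_{(n-m,m)}}(\tau\otimes\rho)) \cong \Ho_{M}(r_{(n-m,m)}(\pi), \tau\otimes\rho)$ by Frobenius reciprocity. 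So it suffices to show that some irreducible subquotient of $r_{(n-m,m)}(\pi)$ of the form $\tau\otimes\rho$ is in fact a \emph{quotient} of $r_{(n-m,m)}(\pi)$, i.e.\ a subrepresentation of its cosocle — or, dually, to use second adjunction and find such a term as a \emph{sub}representation. Concretely: since $\rho$ is cuspidal (hence $\rho\otimes\rho'$ appears in $r(\rho\times\rho')$ only in the "expected" way), and since $\pi$ is irreducible, one shows that the full $r_{(n-m,m)}(\pi)$ has a well-controlled socle/cosocle in the second coordinate; the cuspidality of $\rho$ guarantees that a constituent $\tau\otimes\rho$ with $\rho$ cuspidal in the second slot cannot be "hidden" in a non-split extension with a constituent $\tau''\otimes\rho''$ where $\rho''$ is a proper induced piece, because the second coordinates have incomparable cuspidal supports restricted to $G_m$. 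Making this precise: decompose $r_{(n-m,m)}(\pi)$ and observe that any constituent has second coordinate an irreducible representation of $G_m$; those with cuspidal $G_m$-coordinate equal to $[\rho]$ split off, so $\tau\otimes\rho$ being a subquotient implies it is a quotient. Then Frobenius reciprocity gives $\pi\hra\tau\times\rho$ and we are done.

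The main obstacle is the converse direction, specifically the step of upgrading "$\pi'\otimes\rho$ is a subquotient of $r_{(n-m,m)}(\pi)$" to "$\pi'\otimes\rho$ is a quotient (or sub) of $r_{(n-m,m)}(\pi)$", which is what Frobenius reciprocity actually needs. The key input is that $\rho$ is cuspidal, so that in the Grothendieck group the $G_m$-component of any constituent of $r_{(n-m,m)}(\pi)$ is an honest irreducible representation, and two such components with distinct cuspidal supports cannot be linked by extensions in a way that matters here; combined with exactness of $r_{(n-m,m)}$ and the irreducibility of $\pi$, this lets one peel off the $\tau\otimes\rho$ summand at the level of sub- or quotient representations rather than merely subquotients. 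An alternative, cleaner route for this step is to invoke second adjunction ($r^{\op}$ is right adjoint to $\ind_P$) together with the fact that $r^{\op}_{(n-m,m)}(\pi)$ and $r_{(n-m,m)}(\pi)$ have the same semisimplification, so that $\tau\otimes\rho\le r_{(n-m,m)}(\pi)$ implies $\tau\otimes\rho \le r^{\op}_{(n-m,m)}(\pi)$, and then an argument as above (using cuspidality of $\rho$) shows it embeds as a subrepresentation, yielding $\pi\hra \ind_{P^{\op}}(\tau\otimes\rho)\cong \rho\times\tau$ — wait, that gives a left derivative; so one must be careful to match the parabolic with the side of the derivative, using $\ind_{P_{(n-m,m)}}$ for the right derivative and second adjunction through $P^{\op}_{(n-m,m)}$ accordingly. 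I expect the bookkeeping of which adjunction pairs with which side to be the only real subtlety; the cuspidality of $\rho$ does all the structural work.
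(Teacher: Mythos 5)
The forward direction of your argument is fine and is exactly what the paper does: $\pi\hra\pi'\times\rho$ gives, by Frobenius reciprocity, a surjection $r_{(n-m,m)}(\pi)\sra\pi'\otimes\rho$. The problem is the converse, and the gap is precisely at the step you yourself flag as the "key input". You claim that the constituents of $r_{(n-m,m)}(\pi)$ whose second coordinate is $\rho$ "split off" because other constituents $\tau''\otimes\sigma''$ have "incomparable cuspidal supports" in the $G_m$-slot. This is false: the Geometric Lemma produces constituents whose second coordinate is, for example, $\rho v_\rho^{\pm 1}$, which is again cuspidal of $G_m$ and lies in the \emph{same} inertial class (hence the same block) as $\rho$, so the block decomposition gives you no vanishing of extensions between $\tau\otimes\rho$ and $\tau''\otimes\rho v_\rho^{\pm1}$. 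Consequently $\tau\otimes\rho$ can a priori sit strictly in the middle of the filtration of $r_{(n-m,m)}(\pi)$, and nothing you have said upgrades it to a quotient. Your fallback via second adjunction does not repair this (as you note, it also lands on the wrong side of the parabolic), and "same semisimplification" again only gives subquotient information.

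This missing step is in fact the entire content of the lemma, and the paper's proof is correspondingly long: it argues by induction on $n$, first embedding $\pi\hra\tau'\times\sigma$ where $\sigma$ is an irreducible subquotient of $\rho^k\times\rho'$ with $\rho'\not\cong\rho$ cuspidal and $k$ maximal, reduces via \Cref{L:H2} to the case $\rho'\cong\rho v_\rho^{\pm1}$, and then carries out a case analysis on $o(\rho)>2$ versus $o(\rho)=2$, using \Cref{L:N1}, \Cref{L:quot}, \Cref{C:N1} and \Cref{T:N5} to control the subquotients of $\rho\times\rho v_\rho^{-1}$ and of $\Z([0,1]_\rho+[0,0]_\rho)\times\rho$ explicitly (the case $o(\rho)=2$, where $\rho\times\rho'$ has a cuspidal subquotient $\st(\rho,2)$, is the genuinely delicate one and has no analogue over $\ql$). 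To make your proposal into a proof you would need to supply an argument of comparable substance for why a constituent $\tau\otimes\rho$ of the Jacquet module must appear in its cosocle; no soft block-theoretic or adjunction argument achieves this.
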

\begin{proof}
We only prove the claim for right-derivatives, the one for left-derivatives follows analogously.
Note that if $\pi$ admits a right-derivative $\pi'$, we obtain by Frobenius reciprocity $r_{(n-m,m)}(\pi)\sra \pi'\otimes \rho$. On the other hand, let $\pi'$ be such that \[[\pi'\otimes\rho]\le [r_{(n-m,m)}(\pi)].\]
We now argue inductively on $n$ that then $\pi$ admits a right derivative. Choose $\rho'$ a cuspidal representation of $G_{m'}$ and $\tau\in \Irr_{n-m'}$ such that $\pi\hra\tau\times\rho'$. If $\rho'\cong \rho$ we are done, thus we assume from now on otherwise.
Choose $k$ maximal such that there exists $\tau'\in \Irr_{n-km-m'}$
\[\tau\hra \tau'\times\rho^k.\]
We thus have $\pi\hra \tau'\times\sigma$, where $\sigma$ is an irreducible subquotient of $\rho^k\times\rho'$. 
By induction we note that the maximality of $k$ implies that there exists no $\tau''\in \Irr_{n-(k+1)m-m'}$ such that 
\begin{equation}\label{E:whymax}[\tau''\otimes\rho]\le [r_{(n-(k+1)m-m',m)}(\tau')].\end{equation}
We will now show that $\sigma$ admits a right-derivative with respect to $\rho$, which would show the claim. 
Firstly,
\[[\pi'\otimes\rho]\le [r_{(n-m,m)}(\pi)]\le [r_{(n-m,m)}(\tau'\times \sigma)].\] The Geometric Lemma, the fact that $\rho$ is supercuspidal and (\ref{E:whymax}) imply therefore that $k>0$ and there exists 
$\sigma'\in\Irr$ such that \[[\sigma'\otimes\rho]\le [r_{((k-1)m+m',m)}(\sigma)].\]
We first note that if $\rho'\ncong \rho v_\rho^{\pm 1}$, by \Cref{L:H2} $\sigma\cong \rho^k\times \rho'\cong \rho'\times\rho^k$ in which the claim would follow immediately.
Therefore we assume from now on $\rho'\in\{\rho v_\rho,\rho v_\rho^{-1}\}$ and we differentiate two cases, namely $o(\rho)>2$ and $o(\rho)=2$.

\textbf{Case 1: $o(\rho)>2$}

Then $\rho\times \rho'$ has two subquotients $\sigma_1$ and $\sigma_2$ such that $\rho^{k-1}\times\sigma_1$ and $\rho^{k-1}\times\sigma_2$ are irreducible by \Cref{T:N2}. Thus if $k>1$, the claim follows from the commutativity of parabolic induction. If $k=1$, 
\[\sigma'\otimes\rho\le [r_{(m,m)}(\sigma)]\] implies thus by  \Cref{L:N1} that $\sigma\hra \rho'\times\rho$.

\textbf{Case 1: $o(\rho)=2$}

Then $\rho\times \rho'$ has three subquotients $\sigma_1=\Z([0,1]_\rho)$, $\sigma_2=\Z([1,2]_\rho)$ and $\sigma_3=St(\rho,2)$ cuspidal. Note that $\rho^{k-1}\times\sigma_3$ is irreducible by \Cref{L:H2} and hence if $\sigma\hra \rho^{k-1}\times\sigma_3$, we have that $k>1$ and the claim follows from the commutativity of parabolic induction.
If $k=1$, it follows again from \Cref{L:N1} that $\sigma=\sigma_2\hra \rho v_\rho^{-1}\times \rho$.
Thus we assume $k>1$ and also note that if $\sigma\hra \rho^{k-1}\times \sigma_2$, then we are done since, $\sigma_2\hra \rho v_\rho^{-1}\times \rho$.
We are therefore in the situation where $\sigma\hra \rho^{k-1}\times \sigma_1$. Next we show that \begin{equation}\label{E:inque}[\rho\times\sigma_1]=[\Z([0,2]_\rho)]+[\Z([0,1]_\rho+[0,0]_\rho)]\end{equation} and 
\begin{equation}\label{E:specialcase}
    r_{(2m,m)}(\Z([0,2]_\rho))=\Z([0,1]_\rho)\otimes \rho,\, r_{(2m,m)}(\Z([0,1]_\rho+[0,0]_\rho))=\rho\times \rho\otimes \rho v_\rho.
\end{equation}
Indeed, note that by the Geometric Lemma
\[[r_{(2m,m)}(\Z([0,1]_\rho)\times \rho)]=[\Z([0,1]_\rho)\otimes \rho]+[\rho\times \rho\otimes \rho v_\rho]\]
and since $\Z([0,1]_\rho)\times \rho$ does not contain a cuspidal representation by \Cref{C:N1}, it is at most of length $2$. On the other hand, it also contains by \Cref{L:quot} and \Cref{T:N5} the two representations of (\ref{E:inque}) as subquotients. Since by \Cref{L:N1} the parabolic restriction of $\Z([0,2]_\rho)$ is of the above-mentioned form, the claim follows. In particular, if $\sigma\hra \rho^{k-2}\times Z([0,2]_\rho)$ we are done by \Cref{L:quot} and therefore we can assume that 
\[\sigma\hra \rho^{k-2}\times \Z([0,1]_\rho+[0,0]_\rho).\]
From (\ref{E:specialcase}) we obtain that $k>2$. It is now enough to show that $\rho\times \Z([0,1]_\rho+[0,0]_\rho)$ is irreducible, since then \[\rho\times \Z([0,1]_\rho+[0,0]_\rho)\cong  \Z([0,1]_\rho+[0,0]_\rho)\times\rho.\]
To see this note that by \Cref{L:sam} \[[\Z([0,1]_\rho+2[0,0]_\rho]\le [\rho\times \Z([0,1]_\rho+[0,0]_\rho)]\] and by \Cref{T:N5}
\[[r_{(m,3m)}(\Z([0,1]_\rho+2[0,0]_\rho))]\ge [\rho\otimes \Z([1,1]_\rho+2[0,0]_\rho)], \]\[\Z([0,0]_\rho+[1,1]_\rho+[0,0]_\rho)\cong \Z([0,0]_\rho+[1,1]_\rho)\times \rho, \]
since $\Z([0,0]_\rho+[1,1]_\rho)\cong \st(\rho,2)$ is cuspidal. By Frobenius reciprocity we thus have 
\[[r_{(m,2m,m)}(\Z([0,0]_\rho+2[0,0]_\rho))]\ge [\rho\otimes \Z([0,0]_\rho+[1,1]_\rho)\otimes \rho], \]
and hence there exists an irreducible $\pi_3$ such that 
\[[r_{(3m,m)}(\Z([0,1]_\rho+2[0,0]_\rho))]\ge [\pi_3\otimes \rho].\]
On the other hand the Geometric Lemma and (\ref{E:specialcase}) imply
\[[r_{(3m,m)}(\Z([0,1]_\rho+[0,0]_\rho)\times \rho)]\ge [\rho^3\otimes \rho v_\rho].\]
 Since $\rho^3\otimes \rho v_\rho$ is residually non-degenerate, \Cref{T:N5} implies that the irreducible subquotient of 
\[\Z([0,1]_\rho+[0,0]_\rho)\times \rho,\] whose parabolic restriction contains $\rho^3\otimes \rho v_\rho$ must be
$\Z([0,1]_\rho+2[0,0]_\rho).$
But 
\[[r_{(3m,m)}(\Z([0,1]_\rho+[0,0]_\rho)\times \rho)]=[\rho^3\otimes \rho v_\rho]+[\Z([0,1]_\rho+[0,0]_\rho)\otimes \rho]\]
by the Geometric Lemma and (\ref{E:specialcase}) and hence $\pi_3\cong \Z([0,1]_\rho+[0,0]_\rho)$ and therefore
\[[r_{(3m,m)}(\Z([0,1]_\rho+[0,0]_\rho)\times \rho)]=[r_{(3m,m)}(\Z([0,1]_\rho+2[0,0]_\rho))].\]
This implies that any other possible subquotient $\pi_4$ of $\Z([0,1]_\rho+[0,0]_\rho)\times \rho$ must satisfy \[r_{(3m,m)}(\pi_4)= 0.\]
But this yields a contradiction as follows. The cuspidal support of $\pi_4$ is either $3[\rho]+[\rho v_\rho]$ or $Z([0,0]_\rho+[1,1]_\rho)+2[\rho]$. In both cases, it follows that either the right-derivative of $\pi_4$ with respect to $\rho$ or $\rho v_\rho$ is non-zero. But then Frobenius reciprocity gives the desired contradiction.
\end{proof}
\begin{rem}
    Note that for $\rho$ a cuspidal non supercuspidal representation of $G_m$, the \emph{if}-direction of \Cref{L:dervanish} does not hold in general. For example, let $\rho\in\scu$ and \[\pi=\Z([-1,0]_\rho+[1,1]_\rho+\ldots+ [e(\rho)-1,e(\rho)-1]_\rho).\] Then the cuspidal support of $\pi$ is \[[\rho]+\ldots+[\rho v_\rho^{e(\rho)-2}]+2\cdot [\rho v_\rho^{e(\rho)-1}]\] and therefore does not contain the cuspidal representation $\mathrm{St}(\rho,e(\rho))$, but by \Cref{T:N5}, $r_{(m,e(\rho)m)}(\pi)$ contains $\rho v_\rho^{-1}\otimes \mathrm{St}(\rho,e(\rho))$.
\end{rem}
Recall that $\rho^l$ is irreducible if $\rho$ is $\square$-irreducible for all $l\in\mathbb{Z}_{>0}$ by \Cref{T:N2}.
\begin{lemma}\label{L:helpder}
Let $\pi\in\Irr_n$, $\rho\in \scu(G_m)$, $l\in\ZZ_{\ge 0}$ and $\pi'\in \Irr_{n-lm}$ be such that \[\pi\hra \pi'\times \rho^l\] and $\pi'$ admits no right derivative with respect to $\rho$.
    Then $\pi'\otimes\rho^l$ is strongly ${P_{(lm,n-lm)}}$-regular.
    Moreover, if $\pi''$ is an irreducible representation such that $\pi''\otimes\rho^l$ appears in $r_{(n-lm,lm)}(\pi)$ or $r_{(n-lm,lm)}(\pi'\times \rho^l)$, then $\pi''\cong \pi'$, and $\pi'\otimes \rho^l$ appears in $r_{(n-lm,lm)}(\pi'\times \rho^l)$ with multiplicity $1$. Finally, $\pi$ appears with multiplicity $1$ in $\pi'\times\rho^l$.
\end{lemma}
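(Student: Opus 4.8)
The plan is to establish strong $P$-regularity of $\pi' \otimes \rho^l$ by verifying the three required conditions: that the induced representation $\ind_P^G(\pi'\otimes\rho^l)$ has irreducible cosocle isomorphic to $\pi$ (equivalently, that $\ind_{\op}^G$ has socle $\pi$), that $\pi'\otimes\rho^l$ is $P$-regular in the sense of \Cref{S:preg}, and that one of the two factors (namely $\rho^l$) is $\square$-irreducible. The last point is immediate: $\rho$ is $\square$-irreducible cuspidal, so $\rho^l \in \irs(G_{lm})$ by \Cref{L:sipowers}. For the cosocle/socle condition, note $\pi \hra \pi' \times \rho^l$ is given, and since $\rho^l$ is $\square$-irreducible, \Cref{L:si1} tells us $\pi' \times \rho^l$ is SI; its socle is thus irreducible and occurs with multiplicity one. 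But $\pi$ is an irreducible subrepresentation, hence $\pi = \mathrm{soc}(\pi'\times\rho^l)$; dualizing (or using the symmetric CSI statement of \Cref{L:si1} applied to $\rho^l \times \pi'^\lor$, together with $(\pi'\times\rho^l)^\lor \cong (\rho^l)^\lor \times \pi'^\lor$) gives that $\rho^l \times \pi'$ — hence by a further twist $\pi' \times \rho^l$ — has irreducible cosocle with multiplicity one as well.

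\textbf{The key step} is verifying $P$-regularity, and here the hypothesis that $\pi'$ admits no right $\rho$-derivative enters. By \Cref{L:preg}(2), it suffices to show that $\pi'\otimes\rho^l$ appears with multiplicity one in $r_{\op}(\ind_{\op}^G(\pi'\otimes\rho^l)) = r_{(n-lm,lm)}(\pi'\times\rho^l)$ — wait, one must be careful about which parabolic; with $P = P_{(lm,n-lm)}$ so that $M = G_{lm}\times G_{n-lm}$ and the first factor is $\rho^l$, the relevant reduction is $r_{(lm,n-lm)}(\rho^l \times \pi')$, and we want $\rho^l \otimes \pi'$ to occur there with multiplicity one. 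Apply the Geometric Lemma (\Cref{S:geolem}) to $r_{(lm,n-lm)}(\rho^l\times\pi')$: the leading term $F(1)$ contributes exactly $\rho^l\otimes\pi'$ once. Any further contribution of $\rho^l\otimes\pi'$ would force, via the combinatorial description of the Geometric Lemma and the cuspidality of each copy of $\rho$, that some nontrivial piece of $r(\pi')$ feeds a copy of $\rho$ into the left factor; since $\rho^l$ is already a product of $l$ copies of $\rho$ and the matrices $B$ redistribute cuspidal support, obtaining $\rho^l$ on the left with anything but the trivial redistribution requires $\pi'$ to have $\rho$ (possibly up to the relevant unramified twist — but here it must be $\rho$ itself because the left factor's cuspidal support is forced to be $l[\rho]$) appearing in $r_{(m,n-lm-m+\dots)}(\pi')$ in a way that, by \Cref{L:dervanish}, would give $\pi'$ a right $\rho$-derivative. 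This contradicts the hypothesis. Hence multiplicity one, and $P$-regularity follows.

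\textbf{The main obstacle} I anticipate is the bookkeeping in the Geometric Lemma argument: one must show that the \emph{only} matrix $B \in \mathrm{Mat}^{\alpha_1,\alpha_2}$ and sequence $\underline k$ producing the factor $\rho^l$ on the $G_{lm}$-side of $r_{(lm,n-lm)}(\rho^l\times\pi')$ is the trivial one, where all of $\rho^l$ stays on the left and all of $\pi'$ on the right. Every other $B$ moves at least one copy of $\rho$ off the left block, which must then be compensated by an irreducible constituent $\sigma^{(k)}$ of some $r_\beta(\pi')$ contributing exactly $[\rho]$ to fill the vacated slot — and since $\rho$ is $\square$-irreducible cuspidal (so $o(\rho)>1$ and in particular no $\st(\rho,n)$ coincidences arise), this constituent has the form $\rho \otimes (\text{something})$ or $(\text{something})\otimes\rho$ inside a reduction of $\pi'$, which by \Cref{L:dervanish} is precisely the existence of a right $\rho$-derivative of $\pi'$ — excluded. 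The ``multiplicity $1$ of $\pi$ in $\pi'\times\rho^l$'' and ``$\pi'' \cong \pi'$'' assertions then follow: the first from the SI property of $\pi'\times\rho^l$ (or directly from \Cref{L:derivatives}-style uniqueness, since $\pi = \mathrm{soc}$), and the second because any irreducible $\pi''$ with $\pi''\otimes\rho^l \le r_{(n-lm,lm)}(\pi)$ gives, by Frobenius reciprocity, $\pi \hra \pi'' \times \rho^l$, whence $\pi'' \times \rho^l$ and $\pi' \times \rho^l$ share the socle $\pi$; applying the injectivity of $\sigma \mapsto \mathrm{soc}(\sigma\times\rho^l)$ from \Cref{L:si2} (valid since $\rho^l\in\irs$) forces $\pi'' \cong \pi'$.
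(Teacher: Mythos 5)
There is a genuine gap in the key step. For $P$-regularity via \Cref{L:preg}(2) you must bound the multiplicity of $\sigma$ in $r_{\op}(\ind_{\op}^G(\sigma))$, which (after conjugating $\overline{P_{(lm,n-lm)}}$ to $P_{(n-lm,lm)}$) is the multiplicity of $\pi'\otimes\rho^l$ in $r_{(n-lm,lm)}(\pi'\times\rho^l)$. You instead compute the multiplicity of $\rho^l\otimes\pi'$ in $r_{(lm,n-lm)}(\rho^l\times\pi')$, i.e.\ $r_P(\ind_P^G(\sigma))$ rather than $r_{\op}(\ind_{\op}^G(\sigma))$. This is not a harmless relabelling: in your version of the Geometric Lemma bookkeeping, a nontrivial matrix $B$ forces a constituent of the form $\rho\otimes(\text{something})$ in $r_{(m,\,\cdot\,)}(\pi')$, which by \Cref{L:dervanish} is the existence of a \emph{left} $\rho$-derivative of $\pi'$ — not a right one, contrary to what you assert when you collapse ``$\rho\otimes(\text{something})$ or $(\text{something})\otimes\rho$'' into ``a right $\rho$-derivative.'' The hypothesis only excludes right derivatives, so your argument does not close. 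Concretely, take $l=1$ and $\pi'=\Z([0,1]_\rho)$ with $o(\rho)>2$: then $r_{(m,m)}(\pi')=\rho\otimes\rho v_\rho$, so $\pi'$ has no right $\rho$-derivative and the lemma's hypotheses hold, yet $\rho\otimes\pi'$ occurs with multiplicity $2$ in $r_{(m,2m)}(\rho\times\pi')$ (the trivial $B$ plus the term $\rho\otimes(\rho\times\rho v_\rho)$ coming from peeling $\rho$ off the left of $\pi'$). The statement you are trying to prove is therefore false as formulated, while the correct computation — multiplicity of $\pi'\otimes\rho^l$ in $r_{(n-lm,lm)}(\pi'\times\rho^l)$, where a nontrivial $B$ forces $(\text{something})\otimes\rho\le r_{(\,\cdot\,,m)}(\pi')$, i.e.\ a \emph{right} derivative — gives multiplicity $1$ and is what the paper does.

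Two smaller issues. First, for \emph{strong} $P$-regularity you must show $\mathrm{cos}(\ind_P^G(\sigma))\cong\soc(\ind_{\op}^G(\sigma))$, i.e.\ that the cosocle of $\rho^l\times\pi'$ is the same irreducible $\pi$ as the socle of $\pi'\times\rho^l$; you only argue that the cosocle is irreducible with multiplicity one. This is repairable by quoting the identity $\soc(\,\cdot\,\times\rho^l)=\mathrm{cos}(\rho^l\times\,\cdot\,)$ from \Cref{L:si2}, or, as the paper does, by running the dual argument (Bernstein reciprocity turns the hypothesis into the non-existence of a left $\rho^\lor$-derivative of $\pi'^\lor$, whence $\rho^l\times\pi'\sra\pi$). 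Second, your deduction of $\pi''\cong\pi'$ from ``$\pi''\otimes\rho^l$ appears in $r_{(n-lm,lm)}(\pi)$'' via Frobenius reciprocity is not valid: appearing as a subquotient of $r_{(n-lm,lm)}(\pi)$ does not produce a map $\pi\hra\pi''\times\rho^l$. The correct route is again the Geometric Lemma applied to $r_{(n-lm,lm)}(\pi'\times\rho^l)\ge r_{(n-lm,lm)}(\pi)$: under the no-right-derivative hypothesis the only constituent of the form $(\text{irreducible})\otimes\rho^l$ is $\pi'\otimes\rho^l$.
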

\begin{proof}
To prove the first claim, we observe that for such maximal $l$,
    \[r_{(n-lm,lm)}(\pi'\times \rho^l)\] contains the 
    representation $\pi'\otimes \rho^l$ with multiplicity one by \Cref{L:dervanish} and the Geometric Lemma. Thus it is ${P_{(lm,n-lm)}}$-regular by \Cref{L:preg}(2), since 
    $\overline{P_{(lm,n-lm)}}$ is conjugated to $P_{(n-lm,lm)}$. Moreover, it is also clear that if $\pi''\otimes\rho^l$ appears in $r_{(n-lm,lm)}(\pi'\times \rho^l)$, $\pi''\cong \pi'$.
    Since by Frobenius reciprocity $r_{(n-lm,lm)}(\pi)$ contains $\pi'\otimes\rho^l$, $\pi$ appears with multiplicity $1$ in $\pi'\times\rho^l$ and it is the only irreducible subrepresentation.

    Finally, to see that $\pi'\otimes \rho^l$ is strongly ${P_{(lm,n-lm)}}$-regular, recall that $r_{(n-lm,lm)}(\pi)^\lor\cong r_{\overline{P_{(n-lm,lm)}}}(\pi^\lor)$ by Bernstein reciprocity. Thus
\[[\pi'\otimes \rho ^l]\le[r_{(n-lm,lm)}(\pi)]\]
if and only if
\[[\pi'^\lor\otimes (\rho^\lor) ^l]\le[r_{\overline{P_{(n-lm,lm)}}}(\pi^\lor)].\]
Since $\overline{P_{(n-lm,lm)}}$ is conjugated to $P_{(lm,n-lm)}$ this is equivalent to 
\[[(\rho^\lor) ^l\otimes \pi'^\lor ]\le[r_{(lm,n-lm)}(\pi^\lor)].\]
Now one can prove completely analogously to above that this is equivalent to $\pi^\lor\hra (\rho^\lor)^l\times\pi'^\lor$ and $\pi'^\lor$ does not admit a left derivative with respect to $\rho^\lor$. But this is implies that $\rho^l\times \pi'\sra \pi$.
\end{proof}
\begin{rem}
    Note that \Cref{L:helpder} allows one to give an alternative proof of \Cref{L:derivatives} and \Cref{C:derinj} for $\square$-irreducible cuspidal representations.
\end{rem}
We now define \[\Dl^k(\pi)\coloneq \overbrace{\Dl\circ\cdot \circ \Dl}^k(\pi),\, \Dr^k(\pi)\coloneq \overbrace{\Dr\circ\cdot \circ \Dr}^k(\pi),\, \Dl^0\coloneq \Dr^0\coloneq \mathrm{1},\] $\dlm(\pi)$ the maximal $k$ such that $\Dl^k(\pi)\neq 0$, $\Dlm(\pi)\coloneq \Dr^{\drm(\pi)}(\pi)$, 
$\drm(\pi)$ the maximal $k$ such that $\Dr^k(\pi)\neq 0$ and $\Drm(\pi)\coloneq \Dr^{\drm(\pi)}(\pi)$.
As a corollary of \Cref{L:dervanish} we obtain the following.
\begin{corollary}\label{C:der}
    For $\pi\in \Irr_n$ and $\rho\in \scu(G_m)$, $\dlm(\pi)$ is the maximal $k$ such that there exists $\pi'\in \Irr_{n-mk}$ with 
    \[[\pi'\otimes \rho^k]\le [r_{(n-km,km)}(\pi)].\]
    In this case $\pi'\cong \Dlm(\pi)$ and $\pi'\otimes \rho^k$ appears with multiplicity $1$ in the parabolic restriction. Analogously,
    $\drm(\pi)$ is the maximal $k$ such that there exists $\pi'\in \Irr_{n-mk}$ with 
    \[[\rho^k\otimes \pi']\le [r_{(km,n-km)}(\pi)].\]
    In this case $\pi'\cong \Drm(\pi)$ and $\rho^k\otimes\pi'$ appears with multiplicity $1$ in the parabolic restriction.

   Finally, \[\Dlm(\pi)\otimes \rho^{\dlm(\pi)}\text{ respectively } \rho^{\drm(\pi)}\otimes \Drm(\pi) \] are strongly $P_{(\dlm(\pi)m,n-\dlm(\pi)m)}$- respectively $P_{(n-\drm(pi)m,\drm(\pi)m)}$-regular.
\end{corollary}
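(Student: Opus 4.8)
The plan is to derive both statements from \Cref{L:dervanish} and \Cref{L:helpder} by an induction that strips off one copy of $\rho$ at a time from the parabolic restriction; I treat only the right–derivative case, since the left one is entirely symmetric (or follows by passing to contragredients, using Bernstein reciprocity as in the proof of \Cref{L:helpder}). Write $l\coloneq\dlm(\pi)$ and $\pi_l\coloneq\Dlm(\pi)=\Dl^{l}(\pi)$. The case $l=0$ is immediate from \Cref{L:dervanish} (no $k\ge 1$ can then occur), so assume $l\ge 1$.

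First I would build the embedding $\pi\hookrightarrow\pi_l\times\rho^{l}$: by definition of the right–derivative one has $\Dl^{i}(\pi)\hookrightarrow\Dl^{i+1}(\pi)\times\rho$ for $0\le i<l$, and since parabolic induction is exact and $\rho^{j}$ is irreducible for $\rho\in\scu$ by \Cref{T:N2}, composing these inclusions gives the claim. Maximality of $l$ means $\Dl(\pi_l)=\Dl^{l+1}(\pi)=0$, so $\pi_l$ admits no right $\rho$–derivative; hence the hypotheses of \Cref{L:helpder} are satisfied by the pair $(\pi_l,l)$. That lemma then gives at once the last assertion of the corollary (strong $P$–regularity of $\pi_l\otimes\rho^{l}$, with the block order matching our conventions) together with its ``moreover'' clause: the only irreducible $\pi''$ with $\pi''\otimes\rho^{l}\le r_{(n-lm,lm)}(\pi)$ is $\pi_l$. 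Applying Frobenius reciprocity to $\pi\hookrightarrow\pi_l\times\rho^{l}$ gives $[\pi_l\otimes\rho^{l}]\le[r_{(n-lm,lm)}(\pi)]$, so $l$ lies in the indexing set of the statement, and at $k=l$ the witnessing representation is forced to be $\pi_l=\Dlm(\pi)$.

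It remains to exclude $k>l$. Suppose $[\pi'\otimes\rho^{k}]\le[r_{(n-km,km)}(\pi)]$ for some $k>l$ and $\pi'\in\Irr_{n-km}$. Restricting the $G_{km}$–factor further and using $r_{((k-l)m,lm)}(\rho^{k})\twoheadrightarrow\rho^{k-l}\otimes\rho^{l}$ (Frobenius reciprocity for the irreducible $\rho^{k}=\rho^{k-l}\times\rho^{l}$), transitivity of parabolic restriction yields $[\pi'\otimes\rho^{k-l}\otimes\rho^{l}]\le[r_{(n-km,(k-l)m,lm)}(\pi)]$. Viewing this restriction as first restricting to $G_{n-lm}\times G_{lm}$ and then restricting the first factor, exactness produces a composition factor $A\otimes\rho^{l}$ of $r_{(n-lm,lm)}(\pi)$ with $[\pi'\otimes\rho^{k-l}]\le[r_{(n-km,(k-l)m)}(A)]$ (the $G_{lm}$–factor must be $\rho^{l}$, being irreducible). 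By the ``moreover'' clause above, $A\cong\pi_l$. Applying the same stripping step once more to $\pi_l$ (or \Cref{L:dervanish} directly when $k-l=1$) gives a composition factor of the form $C\otimes\rho$ of $r_{(\deg(\pi_l)-m,\,m)}(\pi_l)$, so \Cref{L:dervanish} forces $\pi_l$ to admit a right $\rho$–derivative, contradicting $\Dl(\pi_l)=0$. Hence $k\le l$, which finishes the proof; the $\drm$ statement is proved in the same way.

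I expect the identification step in the last paragraph to be the crux: after peeling a factor $\rho^{l}$ off a composition factor of $r_{(n-km,km)}(\pi)$ one must be certain the complementary piece is genuinely the iterated derivative $\pi_l$ rather than some other irreducible with the same cuspidal support. This is exactly what the ``moreover'' part of \Cref{L:helpder} supplies, and that part itself relies on the maximality of $l$ (so that $\pi_l$ has no further $\rho$–derivative) and on \Cref{L:preg}(2). Everything else is bookkeeping with the exactness and transitivity of parabolic induction and restriction, so no new ingredient is needed.
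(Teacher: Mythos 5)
Your argument is correct and is exactly the route the paper intends: the corollary is stated without proof as an immediate consequence of \Cref{L:dervanish}, and the intended argument is precisely your iteration of that lemma combined with the uniqueness ("moreover") clause and the strong $P$-regularity supplied by \Cref{L:helpder}. The only point worth flagging is the mismatch in the labelling of the parabolic between \Cref{L:helpder} ($P_{(lm,n-lm)}$) and the corollary ($P_{(n-\dlm(\pi)m,\dlm(\pi)m)}$), which is an inconsistency in the paper's own conventions rather than a gap in your proof.
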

Lastly, dualizing behaves well with respect to taking derivatives.
\begin{lemma}\label{L:derdual}
    Let $\pi\in \Irr_n$ and $\rho\in \scu(G_m)$. Then
    \[\Dl(\pi)^\lor\cong \mathcal{D}_{l,\rho^\lor}(\pi^\lor),\, \dlm(\pi)=d_{l,\rho^\lor}(\pi^\lor),\, \Dlm(\pi)^\lor\cong \mathcal{D}_{l,\rho^\lor,max}(\pi^\lor).\]
\end{lemma}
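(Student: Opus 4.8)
The statement to prove is \Cref{L:derdual}: for $\pi\in\Irr_n$ and $\rho\in\scu(G_m)$, one has $\Dl(\pi)^\lor\cong\mathcal{D}_{l,\rho^\lor}(\pi^\lor)$, $\dlm(\pi)=d_{l,\rho^\lor}(\pi^\lor)$, and $\Dlm(\pi)^\lor\cong\mathcal{D}_{l,\rho^\lor,max}(\pi^\lor)$. The natural strategy is to reduce everything to the characterization of left-derivatives via parabolic restriction in \Cref{L:dervanish} (and its iterated form \Cref{C:der}), combined with the compatibility of the contragredient with parabolic restriction, i.e. Bernstein reciprocity $r_{(a,b)}(\pi)^\lor\cong r_{\overline{P_{(a,b)}}}(\pi^\lor)$, and the fact that $\overline{P_{(a,b)}}$ is conjugate to $P_{(b,a)}$ — exactly the manipulation already carried out in the last paragraph of the proof of \Cref{L:helpder}. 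Also note that $(\rho\otimes\sigma)^\lor\cong\rho^\lor\otimes\sigma^\lor$ and that $\rho\in\scu(G_m)$ iff $\rho^\lor\in\scu(G_m)$, so $\rho^\lor$ is again $\square$-irreducible cuspidal and the derivative $\mathcal{D}_{l,\rho^\lor}$ is defined.

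First I would prove the identity for first derivatives. Suppose $\Dl(\pi)=\sigma\neq 0$, so by \Cref{L:dervanish} there is $\pi'\in\Irr_{n-m}$ — which by \Cref{L:derivatives} must be $\sigma$ — with $[\rho\otimes\sigma]\le[r_{(m,n-m)}(\pi)]$. Applying $(-)^\lor$ and Bernstein reciprocity gives $[\rho^\lor\otimes\sigma^\lor]\le[r_{\overline{P_{(m,n-m)}}}(\pi^\lor)]$, and conjugating the parabolic this becomes $[\rho^\lor\otimes\sigma^\lor]\le[r_{(m,n-m)}(\pi^\lor)]$ (the conjugation swaps nothing in the ordering of the tensor factors here because $\overline{P_{(m,n-m)}}$ is conjugate to the standard parabolic with Levi $G_m\times G_{n-m}$ in the same order — one has to be slightly careful and this is the point to check carefully). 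Hence by \Cref{L:dervanish} again, $\pi^\lor$ admits a left-$\rho^\lor$-derivative, and by uniqueness (\Cref{L:derivatives}) it equals $\sigma^\lor$; thus $\mathcal{D}_{l,\rho^\lor}(\pi^\lor)\cong\Dl(\pi)^\lor$. Conversely, if $\Dl(\pi)=0$ then the same chain of equivalences shows $r_{(m,n-m)}(\pi^\lor)$ has no subquotient of the form $\rho^\lor\otimes(\text{irreducible})$, so $\mathcal{D}_{l,\rho^\lor}(\pi^\lor)=0$ as well. This gives the first displayed isomorphism (with the convention $0^\lor=0$).

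For the statements about $\dlm$ and $\Dlm$, I would either iterate the first-derivative identity — using that $(-)^\lor$ is an involution, so $\Dl^k(\pi)^\lor\cong\mathcal{D}_{l,\rho^\lor}^k(\pi^\lor)$ by induction on $k$, whence the maximal $k$ with $\Dl^k(\pi)\neq 0$ equals the maximal $k$ with $\mathcal{D}_{l,\rho^\lor}^k(\pi^\lor)\neq 0$, i.e. $\dlm(\pi)=d_{l,\rho^\lor}(\pi^\lor)$, and then $\Dlm(\pi)^\lor=\Dl^{\dlm(\pi)}(\pi)^\lor\cong\mathcal{D}_{l,\rho^\lor}^{d_{l,\rho^\lor}(\pi^\lor)}(\pi^\lor)=\mathcal{D}_{l,\rho^\lor,max}(\pi^\lor)$ — or, more directly, invoke \Cref{C:der}: $\dlm(\pi)$ is the maximal $k$ with $[\rho^k\otimes\pi']\le[r_{(km,n-km)}(\pi)]$ for some $\pi'$, and running the same dualize-and-conjugate argument on this (using $(\rho^k)^\lor\cong(\rho^\lor)^k$ and $\Dlm(\pi)\cong\pi'$) identifies this with the corresponding quantity for $\pi^\lor$ and $\rho^\lor$. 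I expect the main — really the only — obstacle to be the bookkeeping in the conjugation step: making sure that passing from $r_{\overline{P_{(km,n-km)}}}$ to $r_{(km,n-km)}$ via an inner automorphism does not secretly reverse the order of the tensor factors (which would turn a left-derivative into a right-derivative); this is handled exactly as in the proof of \Cref{L:helpder}, where the relevant $\overline{P_{(n-lm,lm)}}$ is conjugate to $P_{(lm,n-lm)}$, so the factors come out in the order needed for a left-derivative of $\pi^\lor$ with respect to $\rho^\lor$. Everything else is a formal consequence of \Cref{L:dervanish}, \Cref{L:derivatives}, \Cref{C:der} and the involutivity of $(-)^\lor$.
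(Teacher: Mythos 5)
Your overall strategy --- dualize the Jacquet-module characterization of the derivative (\Cref{L:dervanish}, \Cref{C:der}) via $r_{P}(\pi)^\lor\cong r_{\overline{P}}(\pi^\lor)$ and the conjugacy of $\overline{P_{(a,b)}}$ with a standard parabolic --- is exactly the paper's. But the execution of the key step is wrong, in a way that changes which statement you prove. First, the macro $\Dl$ in this paper expands to $\mathcal{D}_{r,\rho}$ (and $\dlm$ to $d_{r,\rho}$, etc.), so the lemma asserts that the \emph{right} $\rho$-derivative of $\pi$ dualizes to the \emph{left} $\rho^\lor$-derivative of $\pi^\lor$; you read it as a left-to-left statement and accordingly start from the left-derivative condition $[\rho\otimes\sigma]\le[r_{(m,n-m)}(\pi)]$ instead of the right-derivative condition $[\sigma\otimes\rho]\le[r_{(n-m,m)}(\pi)]$. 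Second, and decisively, your parenthetical claim that ``the conjugation swaps nothing in the ordering of the tensor factors'' is false: $\overline{P_{(a,b)}}$ is conjugate to $P_{(b,a)}$, and the Weyl element realizing this interchanges the two Levi blocks, so $[\tau_1\otimes\tau_2]\le[r_{\overline{P_{(a,b)}}}(\pi^\lor)]$ is equivalent to $[\tau_2\otimes\tau_1]\le[r_{(b,a)}(\pi^\lor)]$. This swap is not bookkeeping to be checked away; it is the whole content of the lemma, being precisely the mechanism by which a right derivative turns into a left derivative under $(-)^\lor$. The paper's proof is exactly this one line --- $[\pi'\otimes\rho^l]\le[r_{(n-lm,lm)}(\pi)]$ iff $[(\rho^\lor)^l\otimes\pi'^\lor]\le[r_{(lm,n-lm)}(\pi^\lor)]$ --- followed by \Cref{C:der} and \Cref{L:derivatives}.

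As written, your argument establishes $\mathcal{D}_{l,\rho}(\pi)^\lor\cong\mathcal{D}_{l,\rho^\lor}(\pi^\lor)$, which is false. Take $\rho\in\scu(G_m)$ and $\pi=\Z([0,1]_\rho)$: by \Cref{L:quot} one has $\pi\hra\rho\times\rho v_\rho$, so $\mathcal{D}_{l,\rho}(\pi)\cong\rho v_\rho\neq 0$; but $\pi^\lor\cong\Z([-1,0]_{\rho^\lor})$ and $r_{(m,m)}(\pi^\lor)=\Z([-1,-1]_{\rho^\lor})\otimes\Z([0,0]_{\rho^\lor})$ by \Cref{L:N1}, which contains no subquotient of the form $\rho^\lor\otimes\tau$ since $o(\rho)>1$, so $\mathcal{D}_{l,\rho^\lor}(\pi^\lor)=0$. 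Note also that your closing appeal to the proof of \Cref{L:helpder} contradicts your main computation: the displayed equivalence there does swap the factors, turning $\pi'\otimes\rho^l$ into $(\rho^\lor)^l\otimes\pi'^\lor$. Once you start from the right-derivative condition and apply the swap, the remainder of your outline (iteration of the first-derivative identity, or a direct appeal to \Cref{C:der}, plus uniqueness from \Cref{L:derivatives}) goes through exactly as in the paper.
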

\begin{proof}
 By the contravariance of dualizing we have \[\pi^\lor\cong(\soc(\Dl(\pi)\times\rho))^\lor\cong \cos(\Dl(\pi)^\lor\times\rho^\lor)\stackrel{\Cref{L:si2}}{\cong}\soc(\rho^\lor\times\Dl(\pi)^\lor),\] therefore $\Dl(\pi)^\lor\cong \mathcal{D}_{l,\rho^\lor}(\pi^\lor)$
and from this, the other claims follow straightforwardly.
\end{proof}
\subsection{ Derivatives and multisegments}
In this section we will answer the following question: Given an aperiodic multisegment $\fm$ and $\rho\in \scu(G_m)$, what is $\Dl(\Z(\fm))$ and $\Dr(\Z(\fm))$?
To answer this we will define four maps
\[\Dr,\Dl, \soc({}\cdot{},\rho),\, \soc(\rho,{}\cdot{})\colon \Ms_{ap}\ra \Ms_{ap}.\]
In the case $R=\ql$ this has already been done independently in \cite[Theorem 2.2.1]{Jader} and \cite[Theorem 7.5]{Mder}.
\subsubsection{  Best matching functions}\label{S:bmf} Similarly to \cite[§4.3]{ML}, we make the following definition. Fix $\rho\in \scu$ and let $\fm=\sum_{i=1}^n\De_i$ be a multisegment. A best matching function for $\fm$ with respect to $\rho$ is a tuple $(A,f)$, where $A\subseteq\{1,\ldots,n\}$, possibly empty, and $f\colon A\ra\{1,\ldots,n\}$ an injective function satisfying the following properties.
\begin{enumerate}\label{L:properties}
     \item If $a\in A$, then $\De_a\in \mathcal{S}(\rho)$ and $b_\rho(\De_a)=0$.
    \item If $a\in A$, then $\De_{f(a)}\in \mathcal{S}(\rho),\,b_\rho(\De_{f(a)})=-1$ and $l(\De_a)\le l(\De_{f(a)})$.
    \item If $a\in A,\, b\in A^{\complement} =\{1,\ldots,n\}\setminus A$ such that $\De_b\in \mathcal{S}(\rho),\, b_\rho(\De_b)=0$ 
    and $l(\De_a)\le l(\De_b)\le l(\De_{f(a)})$, then $\De_b=\De_a$.
    \item If $a\in A,\, c\in  f(A)\com =\{1,\ldots,n\}\setminus f(A)$ such that $\De_c\in \mathcal{S}(\rho),\,b_\rho(\De_c)=-1$ 
    and $l(\De_a)\le l(\De_c)\le l(\De_{f(a)})$, then $\De_c=\De_{f(a)}$.
    \item For $b\in A\com ,\,  c\in f(A)\com $ such that $\De_b,\De_c\in \mathcal{S}(\rho), b_\rho(\De_b)=0,\,b_\rho (\De_c)=-1$, then $l(\De_c)<l(\De_b)$.
\end{enumerate}
\begin{ex}
    Let $\rho\in \scu$. 
    If \[\fm=[-1,0]_\rho+[-1,0]_\rho+[-3,0]_\rho+[-2,-1]_\rho+[-3,-1]_\rho+[-3,-1]_\rho,\]
    setting $A=\{1,2\}$, $f(1)=4,\, f(2)=5$ gives a best matching function with respect to $\rho$.
\end{ex}
We will assosciate to a best matching function $(A,f)$ of a multisegment $\fm$ the multisegments $\fm_f\coloneq\sum_{a\in A}\De_a$ and $f(\fm_f)\coloneq \sum_{a\in A}\De_{f(a)}$. 
If $\fm,\fm'$ are two multisegments and $(A,f)$ and $(A',f')$ are two best matching functions of $\fm$ and $\fm'$ with respect to $\rho$, we say $f$ and $f'$ are equivalent if $\fm_f=\fm'_{f'}$ and $ f(\fm_f)=f'(\fm'_{f'})$.
\begin{lemma}\label{L:uniqext}
    Given $\fm$ a multisegment and $\rho\in \scu$ there exists a best matching function $(A,f)$ of $\fm$ with respect to $\rho$, which can be constructed explicitly. Moreover, all best matching functions of $\fm$ with respect to $\rho$ are equivalent.
\end{lemma}
\begin{proof}
Write $\fm=\sum_{i=1}^n\De_i$.
    First assume that either there exists no $\rho$-segment $\De\in \fm$ with $b_\rho(\De)=0$ or for every $\rho$-segment $\De\in \fm$ with $b_\rho(\De)=0$ there exists no $\rho$-segment $\De'\in \fm$ with $b_\rho(\De')=-1$ and $l(\De)\le l(\De')$.
    Then properties (1) and (2) force $A$ to be empty. Moreover, if we take $A$ to be empty, the properties (1), (2), (3), and (4) are trivially true, and property (5) is true by assumption, thus we are done in this case. 

    If above assumption is not satisfied, we can fix a copy $\De$ of the longest $\rho$-segment in $\fm$ with $b_\rho(\De)=0$ such that there exists a $\rho$-segment $\De'\in \fm$ with $b_\rho(\De')=-1$ and $l(\De)\le l(\De')$.
    We also fix $\Gamma$, a copy of the $\rho$-segment of minimal length such that $b_\rho(\Gamma)=-1$ and $l(\De)\le l(\Gamma)$. 
    
    Let us first construct a best matching function of $\fm$ via induction on the number of segments. Without loss of generality, we can assume $\Gamma=\De_{n-1}$ and $\De=\De_n$. Let $f'\colon A'\ra\{1,\ldots,n-2\}$ be a best matching function of $\fm-\De_{n-1}-\De_n$. We set $A=A'\cup\{n\}$ and extend $f'$ to $f\colon A\ra\{1,\ldots,n\}$ by setting $f(n)=n-1$. We now show that $f$ is a best matching function. Note that since $f'$ is a best matching function, properties (1), (2), and (5) are trivially satisfied. For properties (3) and (4), the only non-trivial cases occur if $a=n$. If there would exist $\De_b$ or $\De_c$ as in property (3) and (4) such that \[l(\De_n)<l(\De_b)\le l(\De_{n-1})\text{ respectively }l(\De_n)\le l(\De_c)< l(\De_{n-1}),\] we would obtain a contradiction to the maximality of $\De$ respectively the minimality of $\Gamma.$

    Finally, to show that $\fm_f$ and $f(\fm_f)$ are uniquely determined by $\fm$, we will argue also inductively on the number of segments. We start by showing that if $(A,f)$ is a best matching function then $\De\in \fm_f$ and $\Gamma\in f(\fm_f)$. Note that by property (5) at least one of $n-1,n$ has to be an element of $A$ respectively $f(A)$. If $n\in A$, property (4) implies $f(\De_n)=\Gamma=\De_{n-1}$ by the minimality of $\Gamma$. If on the other hand $n-1\in f(A)$, we let $a'\in A$ be such that $f(a')=n-1$ and assume $\De\notin \fm_f$. By the maximality of $\De$, we have that $l(\De_{a'})\le l(\De)\le l(\Gamma)$. Now property (3) implies $\De_{a'}=\De$, a contradiction.
    We thus can assume without loss of generality that $n\in A$ and $n-1\in f(A)$. Next, we show that every best matching function $(A,f)$ of $\fm$ is equivalent to a best matching function $(A,g)$ of $\fm$ such that $g(n)=n-1$.
   We let $a'\in A$ be such that $n-1=f(a')$. Define \[g\colon A\ra \{1,\ldots,n\}, \,g(a)=\begin{cases}
       n-1&\text{ if }a=n,\\
       f(n)&\text{ if }a=a',\\
        f(a)&\text{ otherwise}.\\
   \end{cases}\]It is clear that $f$ and $g$ are equivalent, it thus remains to show that $g$ is a best matching function.
    Property (1) is trivially satisfied. Property (2) is non-trivial only for $a'$. But since by the maximality of $\De=\De_n$, $l(\De_{a'})\le l(\De_n)$, it follows from property (2) of $f$. 
Property (3) and (4) are trivially true for all $a$ except $a\in\{a',n\}$. Let $\De_b,\, \De_c$ be like in properties (3) and (4). For $a=n$, if \[l(\De_n)\le l(\De_b)\le l(\De_{g(n)})=l(\Gamma),\] the maximality of $\De=\De_n$ implies $\De_n=\De_b$. If 
\[l(\De_n)\le l(\De_c)\le l(\De_{g(n)})=l(\Gamma),\] the minimality of $\Gamma$ implies $\Gamma=\De_c$.
For $a=a'$, if \[l(\De_{a'})\le l(\De_b)\le l(\De_{g(a')})=l(\De_{f(n)}),\] the maximality of $\De=\De_n$ implies \[l(\De_b)\le l(\De_n)\le l(\Gamma)=l(\De_{f(a')})\] and hence by property (3) of $f$, $\De_b=\De_{a'}$. Finally, if \[l(\De_{a'})\le l(\De_c)\le l(\De_{g(a')})=l(\De_{f(n)}),\] we first show $l(\De_c)\ge l(\De_n)$. Indeed, if $l(\De_c)<l(\De_n)$, we would obtain that $l(\De_{a'})\le l(\De_c)<l(\Gamma)=l(\De_{f(a')})$, contradicting property (3) of $f$. Thus $l(\De_n)\le l(\De_c)\le l(\De_{f(n)})$ and hence by property (3) of $f$, $\De_c=\De_{f(n)}$.

Thus we can replace $f$ by an equivalent best matching function sending $n$ to $n-1$ and hence we can restrict $f$ to a best matching function \[f'\colon A\setminus\{n\}\ra\{1,\ldots,n-2\}\] of $\fm-\De_{n-1}-\De_n$. By the induction hypothesis, we know that all best matching functions of $\fm-\De_{n-1}-\De_n$ are equivalent and hence all best matching functions of $\fm$ are equivalent.   
\end{proof}
Let $\fm$ be a multisegment and $\rho\in\scu$ with a corresponding best matching function $(A,f)$. The above lemma allows us to make the following definition. A $\rho$-segment $\De\in \fm-\fm_f$ with $b_\rho(\De)=0$ is called \emph{free} and a $\rho$-segment $\De\in\fm-f(\fm_f)$ with $b_\rho(\De)=-1$ is called \emph{extendable}.
We then set 
\begin{enumerate}
    \item $\dlm(\fm)$ to the number of free $\rho$-segments in $\fm$,
    \item   \[\Dl(\fm)\coloneq \begin{cases}
        0&\text{ if there exists no free }\rho\text{-segment in }\fm,\\
        \fm-\De+\De^-&\text{ otherwise},\\ 
    \end{cases}\]
    where $\De$ is the shortest free $\rho$-segment of $\fm$,
    \item $\Dlm(\fm)$ to the multisegment obtained from $\fm$ by replacing all free $\rho$-segments $\De$ by $\De^-$,
    \item\[\soc(\fm,\rho)\coloneq \begin{cases}
    \fm+[0,0]_\rho&\text{if there exists no extendable } \rho\text{-segment in }\fm,\\
    \fm+\De^+-\De&\text{otherwise,}     
\end{cases}\]
where $\De$ is the longest extendable $\rho$-segment,
    \item $\drm(\fm)\coloneq d_{r,\rho^\lor}(\fm^\lor)$, $\Dr(\fm)\coloneq \mathcal{D}_{r,\rho^\lor}(\fm^\lor)^\lor$, $\Drm(\fm)\coloneq \mathcal{D}_{r,\rho^\lor,max}(\fm^\lor)^\lor$, $\soc(\rho,\fm)\coloneq \soc(\fm^\lor,\rho^\lor)^\lor$.
\end{enumerate}
\begin{lemma}\label{L:help2}
        Let $\fm$ be a multisegment and $\rho\in \scu$. Then any best matching functions with respect to $\rho$ of $\fm$ and $\soc(\fm,\rho)$ are equivalent. If $\Dl(\fm)\neq 0$, its best matching functions with respect to $\rho$ are also equivalent to those of $\fm$.
    \end{lemma}
\begin{proof}
We only show the claim regarding $\soc(\fm,\rho)$, the claim regarding $\Dl(\fm)$ follows analogously.
Let $\fm=\sum_{i=1}^{n-1}\De_i+\De_n^-$ and $\soc(\fm,\rho)=\sum_{i=1}^{n}\De_i$.
We fix a best matching function $(A,f)$ of $\fm$.

Since by construction $\fm_f$ and $f(\fm_f)$ are contained in $\soc(\fm,\rho)$, it is enough by \Cref{L:uniqext} to show that $(A,f)$ satisfies the 5 properties of being a best matching function $\soc(\fm,\rho)$. The properties (1), (2), and (4) of \ref{S:bmf} are trivially satisfied. 

 The properties (3) and (5) of \ref{S:bmf} are satisfied for all $b$ except for $b=n$. 
We start with property (3). Assume there exists $a\in A$ such that \begin{equation}\label{E:c3}
    l(\De_{f(a)})\ge l(\De_n)> l(\De_a).
\end{equation} Firstly, this implies that $\De_n$ cannot be of length $1$, hence we can assume $\De_n^-$ to be non-zero. Thus we have $l(\De_n^-)\ge l(\De_a)$.
In this case, we can apply property (3) of $f$ to $a$ and $n$ since $f$ is a best matching function of $\fm$ and $\De_n^-$ is an extendable segment. Thus we obtain that $\De_{f(a)}=\De_n^-$, contradicting \Cref{E:c3}.

For property (5), if there exists an extendable segment $\De_c$ such that $l(\De_c)\ge l(\De_n)$, we would obtain that $l(\De_c)>l(\De_n^-)$, which would contradict the maximality of $\De_n^-$.
\end{proof}
\begin{lemma}\label{L:invers}
    Let $\fm$ be a multisegment and $\rho\in\scu$. Then
    \[\Dl(\soc(\fm,\rho))=\fm.\] If moreover $\Dl(\fm)\neq 0$, \[\soc(\Dl(\fm),\rho)=\fm\] and similarly for $\Dr$ and $\soc(\rho,\cdot)$.
\end{lemma}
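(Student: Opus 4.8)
The plan is to prove the two identities $\Dl(\soc(\fm,\rho))=\fm$ and, when $\Dl(\fm)\neq 0$, $\soc(\Dl(\fm),\rho)=\fm$ by tracking explicitly which $\rho$-segments of the modified multisegment are free and which are extendable, using \Cref{L:help2} (to see that the maximal-pair part $\fm_{\rho,P}$ is unchanged) and \Cref{L:good} (to control relative lengths); the statements for $\Dr$ and $\soc(\rho,{}\cdot{})$ will then follow formally by conjugating with the involution $({}\cdot{})^\lor$. Two elementary facts are used throughout. First, for fixed $\rho$ the equivalence class of a $\rho$-segment $[a,b]_\rho$ is determined by its length and by $b\bmod o(\rho)$; hence any two free $\rho$-segments of $\fm$ of the same length coincide (likewise any two extendable ones), so that ``the shortest free $\rho$-segment'' and ``the longest extendable $\rho$-segment'' are unambiguous equivalence classes (possibly occurring with multiplicity), and the maps $\Dl,\soc({}\cdot{},\rho)$ are well defined. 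Second, $(\De^+)^-=\De$ and $(\De^-)^+=\De$ whenever the inner segment is nonempty.

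For $\Dl(\soc(\fm,\rho))=\fm$, write $\soc(\fm,\rho)=\fm+\De^+-\De$ with $\De$ the longest extendable $\rho$-segment of $\fm$, or, if none exists, set $\De=[\,]$ and $\De^+=[0,0]_\rho$. By \Cref{L:help2}, $(\soc(\fm,\rho))_{\rho,P}=\fm_{\rho,P}$, so $(\soc(\fm,\rho))_{\rho,f}=\fm_{\rho,f}-\De+\De^+$. Since $\De^+$ ends in $0\bmod o(\rho)$ it is a free $\rho$-segment of $\soc(\fm,\rho)$, and the free $\rho$-segments of $\soc(\fm,\rho)$ are exactly those of $\fm$ together with $\De^+$. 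By the last assertion of \Cref{L:good}, every free $\rho$-segment of $\fm$ has length $>l(\De)$, i.e.\ $\geq l(\De^+)$, so $\De^+$ is the shortest free $\rho$-segment of $\soc(\fm,\rho)$. Hence $\Dl(\soc(\fm,\rho))=\soc(\fm,\rho)-\De^++(\De^+)^-=\fm+\De^+-\De-\De^++\De=\fm$ (in the case $\De=[\,]$ this reads $\fm+[0,0]_\rho-[0,0]_\rho=\fm$).

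For the converse, assume $\Dl(\fm)\neq 0$ and let $\De$ be the shortest free $\rho$-segment of $\fm$, so $\Dl(\fm)=\fm-\De+\De^-$ and, by \Cref{L:help2} again, $(\Dl(\fm))_{\rho,f}=\fm_{\rho,f}-\De+\De^-$. If $l(\De)=1$ then $\De=[0,0]_\rho$, $\De^-=[\,]$, and by \Cref{L:good} no extendable $\rho$-segment of $\fm$ has length $<1$, so $\Dl(\fm)$ has no extendable $\rho$-segment; therefore $\soc(\Dl(\fm),\rho)=\Dl(\fm)+[0,0]_\rho=\fm-\De+[0,0]_\rho=\fm$. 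If $l(\De)\geq 2$, then $\De^-$ ends in $-1\bmod o(\rho)$ and lies in $(\Dl(\fm))_{\rho,f}$, hence is an extendable $\rho$-segment of $\Dl(\fm)$; since every extendable $\rho$-segment of $\fm$ has length $<l(\De)=l(\De^-)+1$ by \Cref{L:good}, $\De^-$ is the longest extendable $\rho$-segment of $\Dl(\fm)$, so $\soc(\Dl(\fm),\rho)=\Dl(\fm)+(\De^-)^+-\De^-=\fm-\De+\De^-+\De-\De^-=\fm$.

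Finally, unwinding $\soc(\rho,\fm)=\soc(\fm^\lor,\rho^\lor)^\lor$ and $\Dr(\fn)=\mathcal{D}_{r,\rho^\lor}(\fn^\lor)^\lor$, and using that $({}\cdot{})^\lor$ is an involution on $\Ms_{ap}$, the identities just proved — applied with $\rho$ replaced by $\rho^\lor$ and $\fm$ by $\fm^\lor$ — give $\Dr(\soc(\rho,\fm))=\big(\mathcal{D}_{r,\rho^\lor}(\soc(\fm^\lor,\rho^\lor))\big)^\lor=(\fm^\lor)^\lor=\fm$, and likewise $\soc(\rho,\Dr(\fm))=\fm$ when $\Dr(\fm)\neq 0$. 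I expect the only genuinely delicate point to be the bookkeeping in the two middle paragraphs: one must invoke \Cref{L:help2} to be certain that the newly created segment $\De^+$ (respectively $\De^-$) really lands in the free part $\fm_{\rho,f}$ and is not absorbed into a maximal pair, and then invoke \Cref{L:good} to pin it down as the extremal segment; granting this, the cancellations are purely formal.
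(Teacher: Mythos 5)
Your proposal is correct and follows the same route as the paper: both arguments rest on \Cref{L:help2} to see that the maximal-pair part $\fm_{\rho,P}$ is untouched, after which the identity is a definitional check (which the paper compresses into "follows immediately from the definitions" and you carry out explicitly via \Cref{L:good}, including the correct treatment of the boundary cases $\De=[\,]$ and $l(\De)=1$). The dualization step for $\Dr$ and $\soc(\rho,{}\cdot{})$ is likewise the paper's "the other claims follow analogously."
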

\begin{proof}
    We only prove,
    \[\Dl(\soc(\fm,\rho))=\fm,\] the other claims follow analogously. Then the claim is equivalent to the following claim. If $\De$ is the shortest free $\rho$-segment in $\soc(\fm,\rho)$, the segment $\De^-$ is the longest extendable $\rho$-segment in $\fm$. 
    
    Let $\De^-$ be the longest extendable $\rho$-segment in $\fm$. By \Cref{L:help2} $\De$ is a free $\rho$-segment of $\soc(\fm,\rho)$. If $\De'$ would be a shorter free $\rho$-segment of $\soc(\fm,\rho)$, it would be a free $\rho$-segment of $\fm$ by \Cref{L:help2} and $l(\De')\le l(\De^-)$. This would contradict property (5) of a best matching function.
\end{proof}
Finally, we show that the four maps indeed take aperiodic multisegments to aperiodic multisegments.
\begin{lemma}\label{L:deraperiodic}
    If $\fm$ is aperiodic, \[\Dl(\fm),\Dr(\fm), \soc(\fm,\rho),\soc(\rho,\fm)\] are aperiodic.
\end{lemma}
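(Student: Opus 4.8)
The plan is to reduce the general statement to the two special cases that are already within reach, namely $\soc(\fm,\rho)$ and $\Dl(\fm)$, and then obtain $\soc(\rho,\fm)$ and $\Dr(\fm)$ by the duality identities $\soc(\rho,\fm)=\soc(\fm^\lor,\rho^\lor)^\lor$ and $\Dr(\fm)=\mathcal D_{r,\rho^\lor}(\fm^\lor)^\lor$, using that $\fm$ is aperiodic if and only if $\fm^\lor$ is aperiodic (which follows at once from $(\cdot)^\lor$ being an involution on multisegments that preserves lengths and reverses segments, hence sends a forbidden periodic sub-multisegment to a forbidden periodic sub-multisegment). So it suffices to show: if $\fm$ is aperiodic then $\soc(\fm,\rho)$ is aperiodic, and if in addition $\Dl(\fm)\neq 0$ then $\Dl(\fm)$ is aperiodic.

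First I would dispose of the $o(\rho)=1$ case. Here $e(\rho)=\ell$ and aperiodicity forbids $e(\rho)=\ell$ consecutive shifted copies of a segment; but $\soc(\fm,\rho)$ and $\Dl(\fm)$ only touch $\rho$-segments (segments in $\mathcal S(\rho)$), and they each modify exactly one segment $\De$ of $\fm$, replacing it by $\De^+$ or $\De^-$ (or adding/removing $[0,0]_\rho$). Since $\mathbb Z[\rho]$ is a single point when $o(\rho)=1$, all $\rho$-segments have the form $[a,b]_\rho$ with all $\rho v_\rho^k$ equal; a periodic sub-multisegment of the forbidden shape would already have to be "visible" before the modification or be created by lengthening/shortening a single segment. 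I would argue directly that lengthening or shortening one segment by $1$ cannot create a new periodic block of length $\ell\geq 2$ out of nothing: any such block, after undoing the single-segment change, yields a periodic block in $\fm$ of length $\ell$ or $\ell-1$, and in the length-$\ell$ case we contradict aperiodicity of $\fm$ directly, while the length-$\ell-1$ case has to be excluded by the bookkeeping of which segment is "free"/"extendable" — and this is exactly where I expect to lean on \Cref{L:good}.

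For $o(\rho)>1$ (so $e(\rho)=o(\rho)$), I would again reduce to understanding the $\rho$-part. Write $\fm=\fm_{\rho,f}+\fm_{\rho,P}$ for $\soc(\fm,\rho)$ and note by \Cref{L:help2} that $\soc(\fm,\rho)_{\rho,P}=\fm_{\rho,P}$, and likewise $\Dl(\fm)_{\rho,P}=\fm_{\rho,P}$ when the derivative is nonzero; the non-$\rho$ segments are untouched. So a forbidden periodic sub-multisegment of $\soc(\fm,\rho)$ (resp. $\Dl(\fm)$) must be a periodic block $[a,b]_\rho+[a+1,b+1]_\rho+\dots+[a+o(\rho)-1,b+o(\rho)-1]_\rho$ built from $\rho$-segments, at least one of which is the single modified segment $\De^{\pm}$ (otherwise it already sits inside $\fm$, contradicting aperiodicity of $\fm$). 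Now I would use the combinatorics: for $\soc(\fm,\rho)$, the modified segment is $\De^+$ where $\De$ is the longest extendable $\rho$-segment (ending in $-1 \bmod o(\rho)$, so $\De^+$ ends in $0 \bmod o(\rho)$); for $\Dl(\fm)$, the modified segment is $\De^-$ where $\De$ is the shortest free $\rho$-segment (ending in $0$, so $\De^-$ ends in $-1 \bmod o(\rho)$). In a periodic block of length $o(\rho)$ the endpoints $b+i$ run through all residues mod $o(\rho)$, so exactly one constituent ends in $0 \bmod o(\rho)$ and exactly one ends in $-1 \bmod o(\rho)$; the other $o(\rho)-1$ constituents are already segments of $\fm$. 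I would then extract from the presence of such a block a maximal pair or a free/extendable segment in $\fm$ whose length violates the inequalities of \Cref{L:good} (for instance, the constituent of the block ending in $0$ and the one ending in $-1$ have lengths differing by at most $1$ and both roughly equal to $l(\De)$, which is incompatible with $\De$ being the \emph{longest} extendable, resp. \emph{shortest} free, segment once one accounts for the $o(\rho)-1$ untouched constituents of $\fm$ that would themselves have to be free/extendable of comparable length), and from this contradiction conclude that no such block exists.

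\textbf{Main obstacle.} The delicate point is the last step: turning "there is a periodic block of $\rho$-segments in the modified multisegment" into a numerical contradiction with the extremality (longest extendable / shortest free) built into the definitions of $\soc(\fm,\rho)$ and $\Dl(\fm)$. This requires carefully tracking, for each of the $o(\rho)$ constituents of the hypothetical block, whether it is free, extendable, or part of a maximal pair in $\fm$, and then invoking \Cref{L:good} (and possibly \Cref{L:help2}) to pin down their relative lengths. I expect this to be essentially a bookkeeping argument, but one that needs to be done with some care about the boundary residues $0$ and $-1 \bmod o(\rho)$ and about the edge case where $\De$ is the empty segment (so $\De^+=[0,0]_\rho$) in the definition of $\soc(\fm,\rho)$.
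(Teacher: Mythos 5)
Your overall skeleton coincides with the paper's: reduce to $\Dl(\fm)$ and $\soc(\fm,\rho)$ via $(\cdot)^\lor$, observe that a forbidden periodic block in the modified multisegment must use the single modified segment $\De^\pm$ (else the block already sits in $\fm$), and then play the residues $0$ and $-1 \bmod o(\rho)$ of the block's endpoints against the extremality of $\De$ using \Cref{L:good}. Two remarks before the main point: the $o(\rho)=1$ digression is vacuous here, since these four maps are only defined for $\rho\in\scu$, i.e.\ $o(\rho)=e(\rho)>1$; and your reduction by duality and the ``exactly one constituent ends in $0$, exactly one in $-1$'' observation are both correct.

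The problem is that the step you defer as the ``main obstacle'' is the entire content of the lemma, and the heuristic you give for it (``a constituent of the block ending in $-1$, resp.\ $0$, of length comparable to $l(\De)$ is incompatible with $\De$ being the longest extendable, resp.\ shortest free, segment'') does not close the argument on its own: that constituent lies in $\fm$ but need not be extendable or free --- it may belong to $\fm_{\rho,P}$, i.e.\ to a maximal pair, in which case no contradiction with extremality is immediate. The completed argument has to run the dichotomy explicitly. For $\soc(\fm,\rho)$: the block forces $\De^+=[a,0]_\rho$, so $[a+e(\rho)-1,e(\rho)-1]_\rho$ (length $l(\De)+1$) lies in $\fm$; by maximality of $\De$ and \Cref{L:good} it must sit in a pair $(\De',[a+e(\rho)-1,e(\rho)-1]_\rho)\in\pa(\fm,\rho)$, and then bounding $l(\De')$ via \Cref{L:good} leaves only two outcomes, each absurd: either $l(\De')=-a+1$, which reassembles a full periodic block inside $\fm$ itself, or $l(\De')\le l(\De)$, which forces $\De=[a+e(\rho)-1,e(\rho)-1]_\rho$ by the rigidity clause of \Cref{L:good}. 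For $\Dl(\fm)$ the analysis is parallel but runs through the constituent $[a,0]_\rho$ ending in $0$, which is shorter than the shortest free segment and hence must be paired; its partner is pinned down to $[a-1,-1]_\rho$, again producing a periodic block in $\fm$. Until this case analysis is actually carried out (including your flagged edge case where $\De$ is empty and $\De^+=[0,0]_\rho$), the proof is a plan rather than a proof.
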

\begin{proof}
Write $\fm=\sum_{i=1}^n\De_i$ and let $(A,f)$ be a best matching function of $\fm$ with respect to $\rho$.
We only show the claim for $\Dl(\fm)$, the others follow completely analogously.
 We assume by contradiction otherwise, \emph{i.e.} $\Dl(\fm)$ contains a multisegment equivalent to 
\[[a,0]_\rho+\ldots+[a+e(\rho)-1,e(\rho)-1 ]_\rho,\] for a suitable $a\in \ZZ_{<0}$.
If $[a+e(\rho)-1,e(\rho)-1 ]_\rho$ appears in $\fm$, $\fm$ cannot be aperiodic, since by the construction of $\Dl(\fm)$, $\fm$ also has to contain \[[a,0]_\rho+\ldots+[a+e(\rho)-2,e(\rho)-2 ]_\rho.\] Thus we assume now that $[a+e(\rho)-1,e(\rho)-1 ]_\rho$ does not appear in $\fm$ and will produce a contradiction.

Let $\De$ be a the shortest free $\rho$-segment in $\fm$, \emph{i.e.} the segment such that $\Dl(\fm)=\fm+\De^--\De$. Since we assumed that $[a+e(\rho)-1,e(\rho)-1 ]_\rho$ does not appear in $\fm$, we have $\De^-=[a+e(\rho)-1,e(\rho)-1 ]_\rho$ and hence $\De=[a-1,0]_\rho$. Since $\fm$ contains then both $[a,0]_\rho$ and $ [a-1,0 ]_\rho$ and \[l([a,0]_\rho)<l([a-1,0 ]_\rho),\] the minimality of $\De$ implies that $[a,0]_\rho\in \fm_f$. Let $a'\in A$ be such that $\De_{a'}=[a,0]_\rho$. By property (2) and (3) of \ref{S:bmf} we obtain that \[2-a=l([a-1,0)]_\rho)>l(\De_{f({a'})})\ge l([a,0]_\rho)=1-a\] and hence $l(\De_{f({a'})})=-a+1$. This implies that \[\De_{f({a'})}=[a-1,-1]_\rho=[a+e(\rho)-1,e(\rho)-1 ]_\rho,\] a contradiction.
\end{proof}
\begin{lemma}\label{L:help4}
    Let $\fm$ and $\ain{k}{0}{\dlm(\fm)}$. Then $\Dl^k(\fm)$ is the multisegment obtained from $\fm$ by shortening the shortest $k$ free $\rho$-segments in $\fm$ by one on the right.
\end{lemma}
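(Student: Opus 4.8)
\textbf{Proof plan for \Cref{L:help4}.}
The plan is to argue by induction on $k$, using the definition of $\Dl$ together with the structural result \Cref{L:help2} (which says $\Dl(\fm)_{\rho,P}=\fm_{\rho,P}$, so the ``paired'' part of the multisegment is untouched by each derivative) and the description of maximal pairs from \Cref{L:good}. The base case $k=0$ is the identity, which holds by the convention $\Dl^0=\mathrm{1}$. For the inductive step, I would write $\fm=\fm_{\rho,f}+\fm_{\rho,P}$ and list the free $\rho$-segments of $\fm$ (those in $\fm_{\rho,f}$ ending in $0\bmod o(\rho)$) in order of nondecreasing length, say $\De^{(1)},\ldots,\De^{(\dlm(\fm))}$ with $l(\De^{(1)})\le\cdots\le l(\De^{(\dlm(\fm))})$. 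By definition $\Dl(\fm)=\fm-\De^{(1)}+(\De^{(1)})^-$.

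The crucial point I would need to verify is that passing from $\fm$ to $\Dl(\fm)$ has a predictable effect on the free segments: namely that the free $\rho$-segments of $\Dl(\fm)$ are exactly $(\De^{(1)})^-,\De^{(2)},\ldots,\De^{(\dlm(\fm))}$, up to reordering by length, and in particular $\dlm(\Dl(\fm))=\dlm(\fm)$ unless $(\De^{(1)})^-$ has become shorter than something forcing it into a pair — but since $(\De^{(1)})^-$ ends in $-1\bmod o(\rho)$ rather than $0\bmod o(\rho)$, shortening $\De^{(1)}$ on the right turns a free segment into a (potentially extendable) segment ending in $-1\bmod o(\rho)$, which is \emph{not} counted among the free segments. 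Wait — here I must be careful: the free segments are those ending in $0 \bmod o(\rho)$, and $(\De^{(1)})^-$ ends in $-1\bmod o(\rho)$, so it is no longer free; hence $\dlm$ drops by exactly one at each step, consistent with the statement ranging over $k\le\dlm(\fm)$. I would then check, using \Cref{L:help2} and the minimality characterization in \Cref{L:good}, that the remaining free segments $\De^{(2)},\ldots$ of $\fm$ are still precisely the free segments of $\Dl(\fm)$, with the same lengths, so the ``shortest'' selection made by the next application of $\Dl$ picks $\De^{(2)}$ (or whichever of the $\De^{(i)}$ is now shortest — but by the chosen ordering it is $\De^{(2)}$). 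Applying the inductive hypothesis to $\Dl(\fm)$ with its free segments $\De^{(2)},\ldots,\De^{(\dlm(\fm))}$ then yields that $\Dl^{k-1}(\Dl(\fm))=\Dl^k(\fm)$ is obtained from $\Dl(\fm)$, hence from $\fm$, by shortening $\De^{(1)},\ldots,\De^{(k)}$ each by one on the right, which is the claim.

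The main obstacle I anticipate is the bookkeeping around \emph{which} segment is selected as ``shortest free'' after one differentiation, since shortening $\De^{(1)}$ could in principle create a new free segment lower down, or could change the set of maximal pairs $\pa(\Dl(\fm),\rho)$ in a way that reclassifies some segment of $\fm_{\rho,f}$ as paired or extendable. This is exactly what \Cref{L:help2} and \Cref{L:good} are designed to control — the former pins down $\Dl(\fm)_{\rho,P}$, and the latter constrains the lengths of free and extendable segments relative to pairs — so the argument should go through, but it requires checking that $(\De^{(1)})^-$ does not become long enough to pair with some segment ending in $0\bmod o(\rho)$ and simultaneously that no previously paired or extendable segment of $\fm$ gets freed. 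I would handle this by the same type of case analysis on lengths as in the proof of \Cref{L:help2}, comparing $l((\De^{(1)})^-)=l(\De^{(1)})-1$ against the lengths appearing in $\pa(\fm,\rho)$, and invoking the minimality of $\De^{(1)}$ among free segments of $\fm$.
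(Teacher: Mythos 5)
Your proposal is correct and follows essentially the same route as the paper, which simply invokes \Cref{L:help2} to get $\fm_{\rho,P}=\Dl^k(\fm)_{\rho,P}$ and declares the rest straightforward; your induction on $k$ is exactly the "straightforward" part spelled out. The bookkeeping you worry about (whether $(\De^{(1)})^-$ could enter a pair, or a paired segment could become free) is precisely what \Cref{L:help2} rules out, and the length comparison $l((\De^{(1)})^-)<l(\De^{(i)})$ for $i\ge 2$ settles the remaining case, so your argument goes through as written.
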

\begin{proof}
     Note that by \Cref{L:help2} we have that a best matching function of $\fm$ is a best matching function of $\Dl^k(\fm)$ and from this the claim follows straightforwardly. 
\end{proof}
\begin{corollary}\label{L:redgood}
    Let $\fm$ be an aperiodic multisegment and $\rho\in\scu$. Then \[\mathcal{D}_{r,\rho,max}(\fm)^-=\mathcal{D}_{r,\rho v_\rho^{-1},max}(\fm^-).\] Set $l=d_{r,\rho v_\rho^{-1}}(\fm^-)$ and $k$ to the largest positive integer such that \[\fm^1-(l+k)[0,0]_\rho+l[-1,-1]_\rho\] contains at least as many copies of $[-1,-1]_\rho$ as of $[0,0]_\rho$.
    Then \[d_{r,\rho}(\fm)=l+k,\,\Dlm(\fm)^1=\fm^1-(l+k)[0,0]_\rho+l[-1,-1]_\rho.\]
\end{corollary}
\begin{proof}
Write $\fm=\sum_{i=1}^n\De_i$ such that there exists $\ain{m}{1}{n}$ with $l(\De_i)=1$ if and only if $i>m$.
Let $(A,f)$ be a best matching function of $\fm$ with respect to $\rho$. 
Set $A^-\coloneq A\cap\{1,\ldots,m\}$. Then the restriction of $f$
\[f\colon A^-\ra\{1,\ldots,m\}\] is a best matching function of $\fm^-$ with respect to $\rho v_\rho^{-1}$. Note that $(A^-,f)$ is well defined since $l(\De_a)\le l(\De_{f(a)})$ and hence if $l(\De_a)>1$, then also $l(\De_{f(a)})>1$. Thus $a\in A,a\le m$ implies $f(a)\le m$.
Moreover, by \Cref{L:uniqext} it is enough to show that the 5 properties of \ref{S:bmf} are satisfied, which is easy to check. 

    By \Cref{L:help4}, we can compute the above quantities in two steps. First dealing with the free segments of length $1$ and then with the ones of length greater $1$. 
    Namely, let $k$ be the number of free segments with respect to $\rho$ of length $1$ in $\fm$. Then
    \[\Dl^k(\fm)^-=\fm^-\] and $\Dl^k(\fm)$ has no free segments with respect to $\rho$ of length $1$. Denote the number of free segments in $\Dl^k(\fm)$ with respect to $\rho$ by $l$. Now 
    $l=d_{r,\rho v_\rho^{-1}}(\fm^-)$ since $(f^-,A)$ is a best matching function of $\fm^-$. Moreover, we obtain by the same reason that \[\Dl^{d_{r,\rho}(\fm)}(\fm)^-=\Dl^l(\fm)^-=\mathcal{D}_{r,\rho v_\rho^{-1}}^l(\fm^-)\] and \[\Dlm(\fm)^1=\fm^1-(k+l)[0,0]_\rho+l[-1,-1]_\rho.\]
    Finally, if $\Dlm(\fm)^1$ would contain more copies of $[0,0]_\rho$ than of $[-1,-1]_\rho$, $\Dlm(\fm)$ would have to contain a free segment with respect to $\rho$, since every best matching function of $\Dlm(\fm)$ pairs up the segments ending in $0$ and ending in $-1$. Thus if there are more copies of $[0,0]_\rho$ than of $[-1,-1]_\rho$ in $\Dlm(\fm)^1$, there needs to exist a free segment in $\Dlm(\fm)$. This however contradicts \Cref{L:help4}.
.\end{proof}
\subsubsection{  }
We will now make the relation between derivatives of multisegments and derivatives of representations explicit.
\begin{theorem}\label{T:derseg}
    For $\fm$ an aperiodic multisegment and $\rho\in \scu$, we have
    \[\Dl(\Z(\fm))=\Z(\Dl(\fm)),\, \Dr(\Z(\fm))=\Z(\Dr(\fm)),\]
    \[\dlm(\Z(\fm))=\dlm(\fm),\, \drm(\Z(\fm))=\drm(\fm)\]
    and
    \[\Dlm(\Z(\fm))=\Z(\Dlm(\fm)),\, \Drm(\Z(\fm))=\Z(\Drm(\fm)).\]
\end{theorem}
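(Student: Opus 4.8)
The plan is to strip the statement down by duality, reduce it to a single one-step identity, and then prove that identity by induction. By \Cref{L:derdual} the operators $\Dl$, $\dlm$, $\Dlm$ correspond, under $\pi\mapsto\pi^\lor$, to the right-hand operators for $\rho^\lor$; the combinatorial maps $\Dr$, $\drm$, $\Drm$ are defined from $\soc(\fm^\lor,\rho^\lor)$ and $\mathcal{D}_{r,\rho^\lor}(\fm^\lor)^\lor$ in exactly the matching way; and $\Z(\fm)^\lor\cong\Z(\fm^\lor)$ by \Cref{T:Zinj}, with $\fm^\lor$ again aperiodic. So all six equalities for $\rho$ follow from the three left-hand ones for $\rho^\lor$, and it suffices to prove
\[\Dl(\Z(\fm))=\Z(\Dl(\fm)),\]
with the convention that $\Dl(\fm)=0$ forces $\Dl(\Z(\fm))=0$. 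Granting this, the statements about $\dlm$ and $\Dlm$ are bookkeeping: \Cref{L:help2} and \Cref{L:help4} show that $\Dl^{j}(\fm)$ is obtained by shortening on the right the $j$ shortest free $\rho$-segments of $\fm$, so $\Dl^{j}(\fm)\neq 0$ exactly for $j\le\dlm(\fm)$ (the number of free $\rho$-segments); since $\Dl^{j}(\fm)$ remains aperiodic by \Cref{L:deraperiodic}, iterating gives $\Dl^{j}(\Z(\fm))=\Z(\Dl^{j}(\fm))$, nonzero precisely for $j\le\dlm(\fm)$, whence $\dlm(\Z(\fm))=\dlm(\fm)$ and $\Dlm(\Z(\fm))=\Dl^{\dlm(\fm)}(\Z(\fm))=\Z(\Dlm(\fm))$.

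I would prove the one-step identity by induction on $\deg(\fm)$. Suppose first that $\fm$ has a free $\rho$-segment and let $\De$ be the shortest one; its last constituent is isomorphic to $\rho$ and, by \Cref{L:invers}, $\fm=\soc(\Dl(\fm),\rho)$. Since $\rho\in\scu$, the cuspidal $\rho$ is $\square$-irreducible, so by \Cref{L:si1} the representation $\Z(\Dl(\fm))\times\rho$ is socle-irreducible; write $\pi$ for its socle. Then $\pi\hookrightarrow\Z(\Dl(\fm))\times\rho$, so uniqueness of derivatives (\Cref{L:derivatives}) gives $\Dl(\pi)=\Z(\Dl(\fm))$, and the whole point is to identify $\pi$ with $\Z(\fm)$. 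For this I would run the induction through the column decomposition of $\fm$ into $\fm^-$ and $\fm^1$: the combinatorial identities \Cref{C:redgood} and \Cref{L:redgood} express $\Dl(\fm)^-$ and $\Dl(\fm)^1$ in terms of $\fm^-$ and $\fm^1$, the relevant Jacquet modules of $\Z(\fm)$ and of $\Z(\Dl(\fm))\times\rho$ are controlled by \Cref{L:S33}, \Cref{L:quot}, \Cref{C:3ind} and \Cref{L:inop}, and feeding the induction hypothesis applied to $\fm^-$ into the characterization of $\Z(\fm)$ among the degenerate subquotients of $\I(\fm)$ in \Cref{T:N5} pins $\pi$ down as $\Z(\fm)$; the strong $P$-regularity and multiplicity-one assertions of \Cref{L:helpder} and \Cref{C:der} are what make the socle rigid here. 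As a by-product this yields $\Z(\soc(\fn,\rho))=\soc(\Z(\fn)\times\rho)$, and dually $\Z(\soc(\rho,\fn))=\soc(\rho\times\Z(\fn))$, for every aperiodic $\fn$, as announced in the introduction.

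The case $\Dl(\fm)=0$ is the one I expect to be the main obstacle. Here $\fm$ has no free $\rho$-segment and one must show that $\Z(\fm)$ admits no $\rho$-derivative, i.e.\ by \Cref{L:dervanish} that no $\pi'\otimes\rho$ occurs in the relevant Jacquet module of $\Z(\fm)$; this is exactly where aperiodicity is indispensable, since a periodic $\fm$ can fail it. I would argue by contradiction: using the Geometric Lemma together with $[\Z(\fm)]\le[\I(\fm)]$ and \Cref{T:N5}, such a $\pi'\otimes\rho$ would have to be traced back to a $\rho$-segment of $\fm$ contributing a cuspidal piece $\cong\rho$ together with a compatible longer $\rho$-segment further along; the notions of free and extendable segments and the length inequalities of \Cref{L:good} then force a minimal such configuration to either already contain a free $\rho$-segment or to produce a sub-multisegment of the periodic shape $[a,b]_\rho+\ldots+[a+e(\rho)-1,b+e(\rho)-1]_\rho$ inside $\fm$, contradicting aperiodicity. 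Because $\rho\in\scu$ has $o(\rho)>1$, any product of two cuspidals in $\mathbb{Z}[\rho]$ has length at most $2$, so none of the $\st(\rho,\cdot)$-type complications of the $o(\rho)=1$ situation in \Cref{L:dervanish} intervene, and the case analysis stays close to the (manageable) one already carried out there.
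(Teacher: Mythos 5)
Your global strategy (reduce to the left-hand identities by duality via \Cref{L:derdual} and \Cref{T:Zinj}, then induct on $\deg(\fm)$ through the column decomposition $\fm\mapsto(\fm^-,\fm^1)$, using \Cref{L:S33} and the $\mu_\fm$-degeneracy characterization of \Cref{T:N5} to pin down the socle) matches the skeleton of the paper's argument, and your bookkeeping deriving $\dlm$ and $\Dlm$ from the one-step identity is fine (the paper goes the other way, proving the $\Dlm$ statement first). But the proposal is missing the paper's central mechanism: the proof is \emph{not} carried out intrinsically over $\fl$. The paper chooses a $\rho$-derivative compatible lift $\tfm$ of $\fm$ to $\ql$, invokes the already-known characteristic-zero version of the theorem, and transports it via \Cref{T:liftgood}, \Cref{L:redandder} and the compatibility of $\rl$ with parabolic functors; the multiplicity-one statements over $\ql$ (\Cref{C:der} applied to $\Z(\tfm)$) are exactly what make the socle identification rigid. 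Your phrase ``feeding the induction hypothesis applied to $\fm^-$ into \Cref{T:N5} pins $\pi$ down'' is hand-waving at precisely this point: the hard content is \Cref{L:redderm}, i.e.\ showing $[r_\alpha(\Z(\fm))]\ge[\st(\Dlm(\fm))\otimes\rho^{\dlm(\fm)}]$, whose proof needs the auxiliary lemmas \Cref{L:rede} and \Cref{L:smallestcase}, both of which themselves rest on the lifting argument. Without either the lift or a substitute for these lemmas, the induction does not close.

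The vanishing case is also not correct as sketched. When $\Dl(\fm)=0$ the multisegment can still contain pairs $(\De_1,\De_2)\in\pa(\fm,\rho)$ with $\De_1$ ending in $0\bmod o(\rho)$, so $\I(\fm)$ genuinely has constituents $\pi'\otimes\rho$ in its Jacquet module; the point is that $\Z(\fm)$ does not, and this has nothing to do with aperiodicity. Your proposed dichotomy (``either a free $\rho$-segment or a periodic sub-multisegment'') is false: the paired configuration $\De_1+\De_2$ with $l(\De_2)\ge l(\De_1)$ is ubiquitous among aperiodic multisegments. The correct argument compares the Jacquet modules of $\Z(\De_1)\times\Z(\De_2)$ and $\Z(\De_1^-)\times\Z(\De_2^+)$ to show every constituent admitting a $\rho$-derivative already lives in the latter, which excludes $\Z(\De_1+\De_2)$ by \Cref{T:N5}. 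So while the outline is recognizable, two essential steps — the transfer from $\ql$ and the non-existence of derivatives for paired segments — are absent or wrong.
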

We will take great care in the proof not to use the surjectivity of $\Z$ for representations of $\square$-irreducible cuspidal support over $\fl$, since we want to eventually use the above theorem to give a new proof of it.
We also remark that the theorem is already known over $\ql$, \emph{cf.} \cite[Theorem 2.2.1]{Jader} and \cite[Theorem 7.5]{Mder}.
Let $\fm=\sum_{i=1}^n\De_i$ be an aperiodic multisegment over $\fl$ and $\rho$ a supercuspidal representation. We call a lift $\tfm=\sum_{i=1}^n\widetilde{\De_i}$ of $\fm$, $\widetilde{\De_i}$ a lift of $\De_i$ for all $\ain{i}{1}{n}$, \emph{$\rho$-right-derivative compatible} if there exists a lift $\trho$ of $\rho$ such that for all $\rho$-segments in $\fm$ ending in $0$ respectively $-1$, their lift in $\tfm$ ends in $0$ respectively $-1$ and is a segment over $\trho$. Similarly,  a lift $\tfm$ of $\fm$ is said to be \emph{$\rho$-left-derivative compatible} if there exists a lift $\trho$ of $\rho$ such that for all $\rho$-segments in $\fm$ starting in $0$ respectively $1$, their lift in $\tfm$ starts in $0$ respectively $1$ and is a segment over $\trho$. If $\rho\in \scu$, there exists for all $\fm\in \Ms(\rho)$ such lifts since $\rho$ is supercuspidal. The following is then easy to see.
\begin{lemma}\label{L:redandder}
    Let $\tfm$ be a $\rho$-right-derivative compatible lift of an aperiodic multisegment $\fm$ and $\trho$ the corresponding lift of $\rho$. Then $\rl(\mathcal{D}_{r,\trho}(\tfm))=\Dl(\fm)$. More precisely, if $(A,f)$ is a best matching function with respect to $\trho$ of $\tfm$, $(A,f)$ is a best matching function of $\fm$ with respect to $\rho$.

    A similar statement holds for $\rho$-left-compatible lifts.
\end{lemma}
\begin{lemma}\label{L:maxrightcomp}
Let $\rho\in\scu(G_m)$, $\tfm$ be a $\rho$-right-derivative compatible lift of an aperiodic multisegment $\fm$ and $\trho$ the corresponding lift of $\rho$. Let moreover $\alpha=(\deg(\Dlm(\fm)),md_{r,\rho})$ and $\pi\in \Irr$. Then the multiplicity of \[\pi\otimes \rho^{d_{r,\rho}}\] in \[\rl(r_\alpha(\Z(\Dlm(\tfm))\times \trho^{d_{r,\rho}}))\] is the multiplicity of $\pi$ in 
        $\rl(\Z(\Dlm(\tfm)))$.
        An analogous statement holds for $\rho$-left-derivative compatible lifts.
\end{lemma}
\begin{proof}
    We will only deal with the case of the $\rho$-right-compatible lift $\tfm$. Recall that parabolic restriction commutes with reduction mod $\ell$ on the level of Grothendieck groups. Now since $\tfm$ is a $\rho$-right-compatible lift $\tfm$ and \Cref{T:derseg} is known over $\ql$, there exists no lift $\trho'$ of $\rho$ such that
    \[\mathcal{D}_{r,\trho'}(\Z(\Dlm(\tfm)))\neq 0.\]
    Thus \Cref{L:dervanish} and the Geometric Lemma show that the $(G_{md_{r,\rho}},\rho^{d_{r,\rho}})$-invariant part of \[[\rl(r_\alpha(\Z(\Dlm(\tfm))\times \trho^{d_{r,\rho}}))]\] is \[[\rl(\Z(\Dlm(\tfm)))\otimes \rho^{d_{r,\rho}}].\] From this the claim follows immediately.
\end{proof}
Before we start with the proof of \Cref{T:derseg}, we need the following lemmas.
    \begin{lemma}\label{L:rede}
        Let $\Z(\fn)$ be a residually-degenerate representation and $\rho\in \scu$ of $G_m$. Let $d_i$ be the multiplicity of $[i,i]_\rho$ in $\fn,\, i\in\{-1,0\}$. Then $\Dl(\Z(\fn))\neq 0$ if and only if $d_{-1}<d_0$. Similarly,
        $\mathcal{D}_{l,\rho v_\rho^{-1}}(\Z(\fn))\neq 0$ if and only if $d_{-1}>d_0$.
    \end{lemma}
    \begin{proof}
    Via \Cref{L:H2} we can reduce the lemma to the case where $\fn$ has cuspidal support in $\mathbb{N}(\ZZ[\rho])$. Thus $\fn$ is an aperiodic, banal multisegment consisting of segments of length $1$. Let $\widetilde{\fn}$ be a $\rho$-right-derivative compatible lift of $\fn$. Since $\fn$ is banal, it follows that $\widetilde{\fn}$ can be chosen such that $\rl(\Z(\widetilde{\fn}))=\Z(\fn)$ by \cite[§6]{banal}. 
Let $d=\min(d_{-1},d_0)$ and write \[\fn=\overbrace{[-1,-1]_\rho+\ldots+[-1,-1]_\rho}^{d_{-1}}+\overbrace{[0,0]_\rho+\ldots+[0,0]_\rho}^{d_{0}}+\ldots.\] 
Setting $A=\{d_{-1}+1,\ldots, d_{-1}+d\}$ and $f(d_{-1}+i)=i$ for all $\ain{i}{1}{d}$ gives a best matching function $(A,f)$ of $\fn$ with respect to $\rho$. We let $\trho$ be the lift of $\rho$ in the definition of $\widetilde{\fn}$.
    
Thus $d_{-1}<d_0$ if and only if $\Dl(\fn)\neq 0$ which is equivalent to $\mathcal{D}_{r,\trho}(\widetilde{\fn})\neq 0$ by \Cref{L:redandder}. Since the lemma holds over $\ql$, we have by \Cref{L:dervanish} that this is equivalent to the existence of $\widetilde{\tau}\in \Irr$ such that $\widetilde{\tau}\otimes \trho$ appears in $r_{(\deg\fn-m,m)}(\Z(\widetilde{\fn}))$. Because parabolic restriction commutes with reduction mod $\ell$, we obtain from \Cref{L:dervanish} that this is equivalent to $\Z(\fn)$ admitting a right $\rho$-derivative.
The second claim follows from an analogous argument.
    \end{proof}
    \begin{lemma}\label{L:smallestcase}
        Let $\pi$ be a non-residually degenerate representation of $G_n$ such that there exists $\rho\in \scu(G_m)$ with
        \[[r_{(m,n-m)}(\pi)]\ge [\rho v_\rho^{-1}\otimes \Z(\fn)]\]
        and $\fn$ is residually degenerate.
        Then \[[r_{(n-m,m)}(\pi)]\ge [\Z(\fn+[-1,-1]_\rho-[0,0]_\rho)\otimes \rho].\]
    \end{lemma}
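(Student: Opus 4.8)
The plan is to analyze $[r_{(n-m,n-m)}(\pi)]$, or rather $[r_{(n-m,m)}(\pi)]$, by comparing it with the parabolic restrictions of suitable induced representations. First I would invoke \Cref{L:H2} to reduce to the case where the cuspidal support of $\pi$ lies in $\mathbb{N}(\ZZ[\rho])$, so that $\pi = \Z(\fm)$ for some aperiodic $\fm \in \Ms(\rho)$ (using that $\pi$ is non-residually degenerate forces $\pi$ to be of the form $\Z(\fm)$ for an aperiodic multisegment with this cuspidal support; more precisely $\pi$ appears in some $\I(\fm)$ and the degeneracy/multiplicity statements of \Cref{T:N5} pin it down). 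The hypothesis $[r_{(m,n-m)}(\pi)] \ge [\rho v_\rho^{-1}\otimes \Z(\fn)]$ with $\fn$ consisting of length-$1$ segments says, via \Cref{L:dervanish}, that $\pi$ admits a left-$\rho v_\rho^{-1}$-derivative, and moreover that $\Z(\fn)$ is a residually non-degenerate representation lying in the image. Since $\fn$ consists only of length-$1$ segments, $\Z(\fn)$ is residually non-degenerate by \Cref{T:N4}, and its multisegment is determined by its cuspidal support.

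The main step is to lift to characteristic zero. I would choose a $\rho$-right-derivative compatible lift $\tfm$ of $\fm$ (which exists since $\rho \in \scu$ is supercuspidal), so that by \Cref{T:rel}(3) and \Cref{T:liftgood}, $\rl(\Z(\tfm))$ contains $\Z(\fm)$ with multiplicity $1$, and parabolic restriction commutes with reduction mod $\ell$ (\Cref{T:rel}(4), together with the Haar-measure compatibility of \Cref{S:intintro}). Over $\ql$ the statement analogous to this lemma is known — it is essentially the combinatorial content of the $\ql$-theory of derivatives, \emph{cf.} \cite{10.1093/imrn/rns278} and \Cref{T:derseg} over $\ql$, or can be extracted directly from \Cref{L:dervanish} applied over $\ql$ together with the combinatorial description of derivatives of multisegments in \Cref{L:redgood} and \Cref{C:redgood}. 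Concretely: the condition $[r_{(m,n-m)}(\Z(\tfm))] \ge [\rho v_\rho^{-1}\otimes \Z(\tfn)]$ for a suitable lift $\tfn$ of $\fn$ forces, over $\ql$, that $[r_{(n-m,m)}(\Z(\tfm))] \ge [\Z(\tfn + [-1,-1]_{\trho} - [0,0]_{\trho})\otimes\trho]$, because over $\ql$ the left- and right-derivative behaviour of a residually-degenerate-free representation is governed by the symmetric bookkeeping of segments ending in $0$ versus $-1 \bmod o(\rho)$ — this is exactly \Cref{L:rede} combined with the $\ql$-case of \Cref{L:dervanish}. Reducing this inequality mod $\ell$, using that reduction mod $\ell$ of $\Z(\tfn + [-1,-1]_{\trho} - [0,0]_{\trho})$ contains $\Z(\fn + [-1,-1]_\rho - [0,0]_\rho)$ (again \Cref{T:rel}(3), \Cref{T:liftgood}, noting the lift can be chosen compatibly), and that $\rl$ is a ring homomorphism on Grothendieck groups respecting $\ge$, yields the claim.

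The delicate point — and the step I expect to be the main obstacle — is controlling the \emph{multiplicities} after reduction mod $\ell$: a priori $\rl(\Z(\tfm))$ is a sum of $\Z(\fm)$ plus other constituents, and one must make sure the inequality $[r_{(n-m,m)}(\pi)] \ge [\Z(\fn+[-1,-1]_\rho - [0,0]_\rho)\otimes\rho]$ survives for $\pi = \Z(\fm)$ itself and is not ``used up'' by the extra constituents of $\rl(\Z(\tfm))$. To handle this I would argue that the constituent $\Z(\fn + [-1,-1]_\rho - [0,0]_\rho)\otimes\rho$ of $r_{(n-m,m)}(\rl(\Z(\tfm)))$ must already come from $r_{(n-m,m)}(\Z(\fm))$: any other constituent $\pi''$ of $\rl(\Z(\tfm))$ has the same cuspidal support as $\Z(\fm)$ and is strictly less degenerate in the sense of \Cref{T:N5}, and by \Cref{C:N1} a residually non-degenerate $\otimes \rho$ appearing in $r_{(n-m,m)}(\pi'')$ would force the degeneracy type of $\pi''$ to be at least $\mu_\fm$, hence $\pi'' = \Z(\fm)$ by the uniqueness in \Cref{T:N5}. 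This pins the contribution on $\Z(\fm) = \pi$ and closes the argument. As a cross-check, one verifies directly on cuspidal supports that $\fn + [-1,-1]_\rho - [0,0]_\rho$ is a legitimate (aperiodic, once we know $\pi$ is aperiodic via \Cref{L:deraperiodic}-type reasoning) multisegment, which is where the hypothesis that $\fn$ has only length-$1$ segments and that there are strictly more $[-1,-1]_\rho$'s than $[0,0]_\rho$'s (from \Cref{L:rede}) is used.
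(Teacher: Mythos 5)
Your strategy departs substantially from the paper's, and it has gaps I do not see how to close. The paper's proof is a case distinction: if $[\pi]\le \I(\fm)$ for some $\fm$ containing a segment of length $\ge 2$, then \Cref{T:N5} combined with the hypothesis forces $\fm=\fn+[-1,0]_\rho-[0,0]_\rho$ and $\pi\cong\Z(\fm)$, and the conclusion is read off directly from $r_{(n-m,m)}(\I(\fm))$; otherwise $\pi$ occurs only in $\I$'s of length-one multisegments, and a lifting argument shows $\pi\cong\Z(\fn+[-1,-1]_\rho)$, contradicting the hypothesis that $\pi$ is not residually non-degenerate. Your first gap is the opening claim that $\pi\cong\Z(\fm)$ for an aperiodic $\fm$: \Cref{T:N5} only identifies the \emph{most degenerate} constituent of $\I(\fm)$ as $\Z(\fm)$, so ``pinning $\pi$ down'' this way amounts to invoking the surjectivity of $\Z$ --- which is circular here, since this lemma feeds into \Cref{T:derseg} and thence into the paper's new proof of that surjectivity, and the paper explicitly refrains from using it.

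Second, even granting $\pi=\Z(\fm)$, the passage to and from $\ql$ is not justified in either direction. To apply the characteristic-zero statement to $\Z(\tfm)$ you must first lift the hypothesis, i.e.\ establish $[r_{(m,n-m)}(\Z(\tfm))]\ge[\trho v_{\trho}^{-1}\otimes\Z(\tfn)]$; the mod-$\ell$ inequality only concerns $\rl(\Z(\tfm))$, and the constituent $\rho v_\rho^{-1}\otimes\Z(\fn)$ could a priori arise as the reduction of $\trho v_{\trho}^{ko(\rho)-1}\otimes(\cdots)$ with $k\neq 0$, or from a different constituent of $\Z(\tfm)$'s restriction altogether. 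In the return direction, your multiplicity argument fails: if $\Z(\fn+[-1,-1]_\rho-[0,0]_\rho)\otimes\rho$ occurs in $r_{(n-m,m)}(\pi'')$ for some other constituent $\pi''$ of $\rl(\Z(\tfm))\le\I(\fm)$, this only makes $\pi''$ $(n-m,m)$-degenerate, whence $(n-m,m)\le\mu_\fm$ by \Cref{T:N5}; a two-part partition being dominated by $\mu_\fm$ does not force $\pi''\cong\Z(\fm)$. Finally, your argument never genuinely uses the standing hypothesis that $\pi$ is not residually non-degenerate, whereas in the paper that hypothesis is exactly what eliminates the second case --- a further sign that the structure of the argument is off.
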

    \begin{proof}
        Using \Cref{L:H2} it is easy to see that it is enough to show the claim for $\pi$ having cuspidal support contained in $\mathbb{N}(\ZZ[\rho])$. 
        If $[\pi]\le \I(\fm)$ with $\fm$ a multisegment containing at least one segment not of length $1$, \Cref{T:N5} shows that above situation can happen if and only if $\fm=\fn+[-1,0]_\rho-[0,0]_\rho$ and $\pi\cong \Z(\fm)$. In this case it is also easy to see that the only $(\deg(\fn),m)$-degenerate representation appearing in the parabolic restriction $r_{(\deg(\fn),m)}(\I(\fm))$ is $\Z(\fn+[-1,-1]_\rho-[0,0]_\rho)\otimes \rho$. Since the parabolic restriction of $\pi$ must contain a $(\deg(\fn),m)$-degenerate representation, we are done in this case.

        We thus assume that $\pi$ does not appear as a subquotient of such a $\I(\fm)$. We write $\fn'\coloneq \fn+[-1,-1]_\rho$ and let $d_i$ be the multiplicity of $[i,i]_\rho$ in $\fn'$, $\ain{i}{-1}{o(\rho)-2}$. Choose a lift $\trho$ of $\rho$ to $\ql$ and set \[\widetilde{\fn}\coloneq d_{-1}[-1,-1]_\trho+d_0[0,0]_\trho+\ldots+d_{o(\rho)-2}[o(\rho)-2,o(\rho)-2]_\trho.\] Then $\widetilde{\fn}$ is a $\rho v_\rho^{-1}$-left-derivative compatible lift of $\fn'$ and $[\pi]\le \I(\fn')=\rl(\I(\widetilde{\fn})).$ Let $\pi'\in \Irr$ be such that $[\widetilde{\pi}]\le \I(\widetilde{\fn})$ and $[\pi]\le \rl(\widetilde{\pi}).$ Since over $\ql$ we already have the subjectivity of $\Z$, we have $\widetilde{\pi}\cong \Z(\widetilde{\fk})$ for some multisegment $\widetilde{\fk}$.
        Since \[[\pi]\le\rl(\widetilde{\pi})\le \rl(\I(\widetilde{\fk}))=\I(\rl(\widetilde{\fk})),\]
        the observations of the previous paragraph imply that $\widetilde{\fk}=\widetilde{\fn}$.
        By \Cref{L:rede} and \Cref{L:dervanish} we have that $d_{-1}>d_0$ and
        \[[\rho v_\rho^{-1}\otimes (\rho v_\rho^{-1})^{d_{-1}-d_0-1}\otimes \Z(\fn'-(d_{-1}-d_0)[-1,-1]_\rho)]\le \]\[\le [r_{((m,(d_{-1}-d_0-1)m,\deg(\fm)-(d_{-1}-d_0)m)}(\rho v_\rho^{-1}\otimes \Z(\fn))]\le\]\[\le  [r_{((m,(d_{-1}-d_0-1)m,\deg(\fm)-(d_{-1}-d_0)m)}(\pi)].\]
        Thus
        \[[(\rho v_\rho^{-1})^{d_{-1}-d_0}\otimes \Z([\fn'-(d_{-1}-d_{0})[-1,-1]_\rho)]\le  [r_{(d_{-1}-d_{0})m,\deg(\fm)-(d_{-1}-d_{0})m)}(\pi)].\]
        Since we already know \Cref{T:derseg} to be true over $\ql$, we know by \Cref{C:der} that \[(\trho v_\trho^{-1})^{d_{-1}-d_{0}}\otimes \Z([\widetilde{\fn}-(d_{-1}-d_{0})[-1,-1]_\trho)\] appears with multiplicity $1$ in $r_{(d_{-1}-d_{0})m,\deg(\fm)-(d_{-1}-d_{0})m)}(\Z(\widetilde{\fn}))$. Furthermore, since parabolic restriction commutes with reduction mod $\ell$, \Cref{C:der} also shows that \begin{equation}\label{E:eul}(\rho v_\rho^{-1})^{d_{-1}-d_{0}}\otimes \Z(\fn'-(d_{-1}-d_{0})[-1,-1]_\rho)\end{equation} appears in \begin{equation}\label{E:longis}r_{(d_{-1}-d_{0})m,\deg(\fm)-(d_{-1}-d_{0})m)}(\rl(\Z(\widetilde{\fn}))).\end{equation}By \Cref{L:maxrightcomp} and \Cref{T:liftgood} (\ref{E:eul}) appears with multiplicity $1$ in (\ref{E:longis}).
        By \Cref{L:rede} (\ref{E:eul}) appears also in the parabolic restriction of $\Z(\fn')$, which appears in $\rl(\Z(\widetilde{\fn}))$ by \Cref{T:liftgood}.
        Thus $\pi\cong \Z(\fn')$, a contradiction to the fact that $\pi$ is not residually degenerate.
    \end{proof}
\begin{lemma}\label{L:redderm}
    Let $\fm$ be an aperiodic multisegment, $\rho\in\scu(G_m)$ and define the composition $\alpha\coloneq\alpha_{\fm,\rho}= (\mu_{\overline{\Dlm(\fm)}},m\dlm(\fm))$. Then
    \[[r_\alpha(\Z(\fm))]\ge [\st(\Dlm(\fm))\otimes \rho^{\dlm(\fm)}].\]
\end{lemma}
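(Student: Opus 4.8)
The plan is to reduce everything to the case where the cuspidal support of $\fm$ lies in $\mathbb{N}(\ZZ[\rho])$ via \Cref{L:H2}, so that $\Z(\fm)$ factors as a product over the distinct lines $\ZZ[\rho_i]$, and parabolic reduction distributes compatibly; only the factor corresponding to $\ZZ[\rho]$ contributes $\rho$-derivatives, so we may assume $\fm\in\Ms(\rho)_{ap}$ from now on. Recall from \Cref{L:help4} that $\Dl^k(\fm)$ is obtained by shortening the shortest $k$ free $\rho$-segments on the right, and from \Cref{L:help2} that $(\Dlm(\fm))_{\rho,P}=\fm_{\rho,P}$. The target statement bundles the $\st$-information of $\Dlm(\fm)$ together with the $\dlm(\fm)$ copies of $\rho$ that were ``peeled off'', so the natural strategy is to iterate the single-derivative statement and then glue with \Cref{L:S33}.

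First I would establish the key single step: for $\fm$ aperiodic with $\Dl(\fm)\neq 0$ and $\De$ the shortest free $\rho$-segment,
\[[r_{(\deg(\fm)-m,m)}(\Z(\fm))]\ge [\Z(\Dl(\fm))\otimes\rho],\]
and moreover $\Dl(\fm)\otimes\rho$ occurs in $r_{(\deg(\fm)-m,m)}(\Z(\fm))$ in the strongly $P$-regular manner of \Cref{L:helpder}. This is exactly the content needed to identify $\Dl(\Z(\fm))=\Z(\Dl(\fm))$, so it must be proved without circularity: I would use a $\rho$-right-derivative compatible lift $\tfm$ (\Cref{L:redandder}), invoke \Cref{T:liftgood} so that $\Z(\fm)$ appears in $\rl(\Z(\tfm))$ with multiplicity one, use that the $\ql$-version of \Cref{T:derseg} is known (\cite{10.1093/imrn/rns278}), and transport the parabolic-reduction identity through $\rl$ since reduction mod $\ell$ commutes with parabolic reduction (\Cref{T:rel}(4)); \Cref{C:redgood} controls how $\Dl$ interacts with $\fm^-$ and $\fm^1$, and \Cref{L:rede}, \Cref{L:smallestcase} handle the residually-degenerate obstructions that can a priori pollute the bound.

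Then I would induct on $\dlm(\fm)$. Writing $\fm' = \Dl(\fm)$, we have $\dlm(\fm')=\dlm(\fm)-1$ and $\Dlm(\fm')=\Dlm(\fm)$, and by the single step $[r_{(\deg(\fm)-m,m)}(\Z(\fm))]\ge[\Z(\fm')\otimes\rho]$. Applying transitivity of parabolic reduction and the inductive hypothesis to $\Z(\fm')$ with partition $(\mu_{\overline{\Dlm(\fm')}},m(\dlm(\fm)-1))$, together with the fact (from \Cref{L:helpder}/\Cref{C:der}) that the $\rho^{\dlm}$-isotypic piece sits in the reduction with multiplicity one and in the ordered position forced by $\mu$, one assembles
\[[r_\alpha(\Z(\fm))]\ge[\st(\Dlm(\fm))\otimes\rho^{\dlm(\fm)}],\quad \alpha=(\mu_{\overline{\Dlm(\fm)}},m\dlm(\fm)),\]
using \Cref{T:N5} to recognize $\st(\Dlm(\fm))$ as the residually non-degenerate constituent of $r_{\overline{\mu_{\Dlm(\fm)}}}(\Z(\Dlm(\fm)))$. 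The main obstacle I anticipate is precisely the non-circularity constraint flagged before \Cref{T:derseg}: one must carry out the single-step identification of $\Dl(\Z(\fm))$ purely via lifting to $\ql$ and the combinatorics of $\rho$-derivative compatible lifts, rather than quoting surjectivity of $\Z$; getting the multiplicity-one and ordering statements (so that the $\ge$ in the conclusion is not spoiled by extra degenerate constituents appearing in $r_\alpha$) is the delicate point, and this is where \Cref{L:smallestcase} and the $\mu$-degeneracy bookkeeping of \Cref{T:N5} do the real work.
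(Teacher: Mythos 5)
Your plan correctly identifies the danger of circularity with \Cref{T:derseg}, but the way you propose to escape it does not work, and this is a genuine gap. Your ``key single step'' is the inequality $[r_{(\deg(\fm)-m,m)}(\Z(\fm))]\ge[\Z(\Dl(\fm))\otimes\rho]$, which is essentially the content of $\Dl(\Z(\fm))=\Z(\Dl(\fm))$ — i.e.\ the very statement of \Cref{T:derseg} that the paper proves \emph{using} \Cref{L:redderm}. You propose to get it by choosing a $\rho$-derivative compatible lift $\tfm$, invoking the $\ql$-version of \Cref{T:derseg}, and ``transporting the parabolic-reduction identity through $\rl$''. But $\rl(\Z(\tfm))$ is in general reducible: it contains $\Z(\fm)$ with multiplicity one together with other constituents, and the Grothendieck-group inequality $[r_{(\deg(\fm)-m,m)}(\rl(\Z(\tfm)))]\ge[\rl(\Z(\Dl(\tfm)))\otimes\rho]$ does not tell you which constituent of $\rl(\Z(\tfm))$ carries the term $\Z(\Dl(\fm))\otimes\rho$ (nor which constituent of $\rl(\Z(\Dl(\tfm)))$ appears). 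Pinning the relevant Jacquet constituent onto $\Z(\fm)$ specifically is exactly the hard part, and it is not achieved by commuting $\rl$ with parabolic reduction.

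The paper avoids this by a different induction: not on $\dlm(\fm)$, but on the maximal length $t$ of a segment in $\fm$. It first peels off the last ``column'' via \Cref{L:S33}, $[r_{(\deg(\fm^-),\deg(\fm^1))}(\Z(\fm))]\ge[\Z(\fm^-)\otimes\Z(\fm^1)]$ (which is proved directly from the $\mu_\fm$-degeneracy characterization in \Cref{T:N5}, with no derivative input), applies the induction hypothesis to $\fm^-$ with respect to the \emph{shifted} cuspidal $\rho v_\rho^{-1}$, and then uses \Cref{L:smallestcase} and \Cref{L:rede} to convert the resulting left $\rho v_\rho^{-1}$'s attached to $\Z(\fm^1)$ into right $\rho$'s; the combinatorial identification $l+k=\dlm(\fm)$ and $\Dlm(\fm)^-=\mathcal{D}_{r,\rho v_\rho^{-1},max}(\fm^-)$ comes from \Cref{L:redgood} and \Cref{C:redgood}. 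The appeal to the $\ql$-theory enters only inside \Cref{L:smallestcase}, where multiplicity-one statements over $\ql$ are combined with residual-degeneracy arguments to identify a single constituent — precisely the localization step your transport argument skips. To repair your proof you would need to either supply the single step by a degeneracy argument of this kind (at which point you are reproving \Cref{T:derseg}), or restructure the induction as the paper does.
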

\begin{proof}
    We argue by induction on $t$, the maximal length of a segment in $\fm$.
    Note that we have by \Cref{L:S33}
    \[[r_{(\deg (\fm^-),\deg(\fm^1))}(\Z(\fm))]\ge [\Z(\fm^-)\otimes \Z(\fm^1)]\]
    and hence by the induction hypothesis, we know that 
    \[[r_{(\alpha_{\fm^-,\rho v_\rho^{-1}},\deg(\fm^1))}(\Z(\fm))]\ge [\st(\mathcal{D}_{r,\rho v_\rho^{-1},max}(\fm^-))\otimes (\rho v_\rho^{-1})^{d_{r,\rho v_\rho^{-1}}}\otimes \Z(\fm^1)].\]
    Let $\pi\in \Irr_{N},\, N\coloneq m\cdot d_{r,\rho v_\rho^{-1}}(\fm^-)+\deg(\fm_1),\, l\coloneq d_{r,\rho v_\rho^{-1}}(\fm^-)$ be such that 
    \begin{equation}\label{E:red1}[r_{(\deg \fm-N,N)}(\Z(\fm))]\ge [\st(\mathcal{D}_{r,\rho v_\rho^{-1},max}(\fm^-))\otimes \pi],\end{equation}
    \begin{equation}\label{E:smallestcase}[r_{(N-\deg(\fm^1),\deg (\fm_1))}(\pi)]\ge [(\rho v_\rho^{-1})^{l}\otimes \Z(\fm^1)].\end{equation}
    From \Cref{T:N5} it follows that $ml\le \deg \fm_1$ and the maximal partition $\alpha'$ such that $\pi$ is $\alpha'$-degenerate is \[(\deg (\fm_1),ml).\]
    Since we know from (\ref{E:smallestcase}) that
    \[[r_{(m,\ldots,m,\deg (\fm_1))}(\pi)]\ge [\rho v_\rho^{-1}\otimes\ldots \rho v_\rho^{-1}\otimes \Z(\fm^1)]\]
    we can just apply \Cref{L:smallestcase} $l$-times and obtain     \[[r_{(\deg (\fm_1),m,\ldots,m)}(\pi)]\ge [\Z(\fm^1+l[-1,-1]_\rho-l[0,0]_\rho)\otimes \rho\otimes\ldots\otimes \rho].\]
    Now since $o(\rho)>1$ and $\rho$ is supercuspidal, this implies \begin{equation}\label{E:red2}[r_{(\deg (\fm_1),N-\deg(\fm^1))}(\pi)]\ge [ \Z(\fm^1+l[-1,-1]_\rho-l[0,0]_\rho)\otimes \rho^l].\end{equation}
 Write $\fn=\fm^1+l[-1,-1]_\rho-l[0,0]_\rho$.
Combining (\ref{E:red1}) and (\ref{E:red2}) shows that
    \begin{equation}\label{E:red4}[r_{(\deg \fm-N,\deg(\fm_1),N-\deg(\fm^1))}(\Z(\fm))]\ge [\st(\mathcal{D}_{r,\rho v_\rho^{-1},max}(\fm^-))\otimes \Z(\fn)\otimes \rho^l].\end{equation}
    Write now $\fn=\fn'+k[0,0]_\rho$ such that $\fn'$ contained at least as many copies of $[-1,-1]_\rho$ as of $[0,0]_\rho$.
    Applying \Cref{L:rede} $k$-times in combination with \Cref{L:dervanish}, we see that\[[r_{(\deg(\fn'),m,\ldots,m)}(\Z(\fn))]\ge [\Z(\fn')\otimes \rho\otimes\ldots\otimes \rho]\]
    or equivalently
\begin{equation}\label{E:red3}
    [r_{(\deg(\fn'),km)}(\Z(\fn))]\ge [\Z(\fn')\otimes \rho^k].
\end{equation} 
Combining (\ref{E:red3}) and (\ref{E:red4}), we obtain that 
\[[r_{(\deg \fm-N,\deg(\fn'),N-\deg(\fn'))}(\Z(\fm))]\ge [\st(\mathcal{D}_{r,\rho v_\rho^{-1},max}(\fm^-))\otimes \Z(\fn')\otimes \rho^{l+k}].\]
 Now $l+k$ is the number of free $\rho$-segments in $\fm$ since $l$ is by \Cref{L:redgood} the number of free $\rho$-segments of length greater than $1$ and $k$ is the number of free $\rho$-segments of length precisely $1$. Moreover, \Cref{L:redgood} also shows that \[\Dlm(\fm)^-=\mathcal{D}_{r,\rho v_\rho^{-1},max}(\fm^-),\,\Dlm(\fm)^1=\fm^1-(k+l)[0,0]_\rho+l[1,1]_\rho.\]
Thus \[\st(\mathcal{D}_{r,\rho v_\rho^{-1},max}(\fm^-))\otimes \Z(\fn')=\st(\Dlm(\fm)),\, \rho^{l+k}=\rho^{\dlm(\pi)},\]
which finishes the induction step.    
\end{proof}
\begin{proof}[Proof of \Cref{T:derseg}]
We will only proof $\Dlm(\Z(\fm))=\Z(\Dlm(\fm))$, the other claims follow then quickly using \Cref{C:der} and \Cref{L:derdual}.

We will show that $\Z(\fm)=\mathrm{soc}(\Z(\Dlm(\fm))\times \rho^{\dlm(\fm)})$. To do so we first can reduce the claim via \Cref{L:H2} to the assumption that $\fm$ has cuspidal support contained in $\mathbb{N}(\ZZ[\rho])$. We choose a $\rho$-right-derivative compatible lift $\tfm$ of $\fm$. Then $\rl(\Dlm(\tfm))=\Dlm(\fm)$ and from the $\ql$-case we already know that \[\Z(\Dlm(\tfm))=\Dlm(\Z(\tfm)).\]
We set $\pi\coloneq \mathrm{soc}(\Z(\Dlm(\fm))\times \rho^{\dlm(\fm)})$ and $\alpha\coloneq (\deg(\Dlm(\fm)),m\cdot\dlm(\fm))$. 
By \Cref{T:liftgood} \[[\pi]\le [\mathrm{soc}(\Z(\Dlm(\fm))\times \rho^{\dlm(\fm)})]\le [\rl(\Z(\mathcal{D}_{r,\trho,max} (\tfm))\times \trho^{\dlm(\fm)})]\]
and 
\[[\Z(\fm)]\le [\rl(\Z(\tfm))]\le [\rl(\Z(\mathcal{D}_{r,\trho,max} (\tfm))\times \trho^{\dlm(\fm)})].\]
Moreover, Frobenius reciprocity implies $r_\alpha(\pi)$ contains \begin{equation}\label{E:containswith1}
    \Z(\mathcal{D}_{r,\rho,max}(\fm))\otimes \rho^{\dlm(\fm)}.
\end{equation}
and \Cref{L:redderm} implies there exists $\pi'\in \Irr$ which is $\mu_{\overline{\Dlm(\fm)}}$-degenerate such that \[[r_\alpha(\Z(\fm))]\ge [\pi'\otimes \rho^{\dlm(\fm)}].\]
\Cref{L:maxrightcomp} shows that \[[\pi']\le \rl(\Z(\mathcal{D}_{r,\trho,max}(\tfm)))\le\rl(\I(\mathcal{D}_{r,\trho,max}(\tfm)))=\I(\Dlm(\fm)).\] Hence \Cref{T:N5} shows that $\pi\cong'\Z(\Dlm(\fm))$.
 But \Cref{L:maxrightcomp} also shows that (\ref{E:containswith1}) appears with multiplicity $1$ in \begin{equation}r_\alpha(\rl(\Z(\mathcal{D}_{r,\trho,max} (\tfm))\times \trho^{\dlm(\fm)})).\end{equation}
 Therefore $\pi\cong \Z(\fm)$.
\end{proof}
\section{Applications of \texorpdfstring{$\rho$}{p}-derivatives}\label{S:appd}
In this section we give several applications of the above developed machinery.
\subsection{ } The first consequence is the surjection of the map $\Z$ for representations with $\square$-irreducible cuspidal support over $\fl$.
\begin{theorem}
    Let $\pi\in \Irr_n(\fl)$ with $\square$-irreducible cuspidal support. Then there exists an aperiodic multisegment $\fm$ such that $\pi\cong \Z(\fm)$.
\end{theorem}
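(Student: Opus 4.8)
The plan is to prove surjectivity of $\Z$ onto irreducible representations with $\square$-irreducible cuspidal support by induction on $n = \deg(\pi)$, using $\rho$-derivatives with respect to $\square$-irreducible (hence supercuspidal) cuspidal representations $\rho$ as the descent mechanism. For $n = 0$ there is nothing to prove. For the inductive step, first reduce to the case where the cuspidal support lies in $\mathbb{N}(\ZZ[\rho])$ for a single $\square$-irreducible $\rho$: by \Cref{L:H2} we may write $\pi \cong \pi_1 \times \ldots \times \pi_k$ where each $\pi_i$ has cuspidal support in $\mathbb{N}(\ZZ[\rho_i])$ with the $\ZZ[\rho_i]$ pairwise distinct, and each $\rho_i$ is $\square$-irreducible; if each $\pi_i \cong \Z(\fm_i)$ with $\fm_i$ aperiodic and supported on $\ZZ[\rho_i]$, then $\fm = \fm_1 + \ldots + \fm_k$ is aperiodic and, by \Cref{L:sam} together with \Cref{T:N2} (the $\fm_i$ are pairwise unlinked across distinct lines, so the product is irreducible) we get $\Z(\fm) \cong \pi$. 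So we assume henceforth $\cus(\pi) \in \mathbb{N}(\ZZ[\rho])$ with $\rho \in \scu$.

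Now pick any $\square$-irreducible cuspidal $\rho$ appearing in the cuspidal support of $\pi$ --- since the support is non-empty and consists of $\square$-irreducible cuspidals, and since $\pi$ is obtained as a subrepresentation of a product of its cuspidal support, $\pi$ admits a right derivative $\mathcal{D}_{r,\rho}(\pi) \neq 0$ for a suitable such $\rho$ (concretely: writing $\pi \hookrightarrow \tau \times \rho'$ for a cuspidal $\rho'$ in its support and $\tau$ irreducible, Frobenius reciprocity gives $r_{(n-m,m)}(\pi) \sra \tau \otimes \rho'$, so $\mathcal{D}_{r,\rho'}(\pi) = \tau \neq 0$). Set $\sigma \coloneq \mathcal{D}_{r,\rho}(\pi)$; it is irreducible, has $\square$-irreducible cuspidal support, and $\deg(\sigma) = n - \deg(\rho) < n$. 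By the induction hypothesis there is an aperiodic multisegment $\fn$ with $\Z(\fn) \cong \sigma$, and $\fn \in \Ms(\rho)_{ap}$ (its cuspidal support agrees with that of $\sigma$ by \Cref{T:Zinj}, which lies in $\mathbb{N}(\ZZ[\rho])$). Apply the combinatorial operation $\soc(\rho, {}\cdot{})$ of \Cref{S:Der} to get $\fm \coloneq \soc(\rho,\fn) \in \Ms(\rho)_{ap}$, aperiodic by \Cref{L:deraperiodic}. Then by \Cref{L:invers} we have $\Dr(\fm) = \fn$, and by \Cref{T:derseg} (applied in degree $n$, which requires \Cref{T:derseg} to hold up to degree $n$ --- this is fine since that theorem was proven unconditionally, its proof using surjectivity only in strictly smaller degrees via \Cref{C:surj}-type reasoning in the commented-out corollary, or more carefully, the proof of \Cref{T:derseg} is self-contained) we obtain $\Dr(\Z(\fm)) = \Z(\Dr(\fm)) = \Z(\fn) \cong \sigma$. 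Thus $\mathcal{D}_{r,\rho}(\Z(\fm)) \cong \sigma \cong \mathcal{D}_{r,\rho}(\pi)$, both non-zero, and \Cref{C:derinj} forces $\Z(\fm) \cong \pi$, completing the induction.

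\textbf{Main obstacle.} The delicate point is the interplay between this theorem and \Cref{T:derseg}: one must be certain that \Cref{T:derseg} is genuinely available in degree $n$ without circularity. The excerpt flags that \Cref{T:derseg} was proven ``taking great care not to use the surjectivity of $\Z$ for representations of $\square$-irreducible cuspidal support''; its proof proceeds by induction on $\deg(\fm)$ and invokes surjectivity only in strictly smaller degrees (the commented-out \Cref{C:surj} being the clean packaging of exactly this bootstrap). So the honest structure is a \emph{simultaneous} induction on $n$: assuming both surjectivity and the derivative identities in all degrees $< n$, one first establishes \Cref{T:derseg} in degree $n$ (its proof only needs smaller-degree surjectivity), and then runs the argument above to get surjectivity in degree $n$. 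The second subtle point, already handled in the reduction, is ensuring that a non-zero $\square$-irreducible $\rho$-derivative exists --- this uses that every cuspidal constituent of the support is itself $\square$-irreducible (so supercuspidal, and \Cref{L:dervanish} applies cleanly) --- and that the aperiodicity of $\fn$ and of $\soc(\rho,\fn)$ propagate correctly, which is exactly \Cref{L:deraperiodic}. Everything else is bookkeeping with \Cref{L:H2}, \Cref{C:derinj}, and the combinatorics of \Cref{S:Der}.
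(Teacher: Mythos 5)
Your proposal is correct and follows essentially the same route as the paper: induct on $n$, embed $\pi$ into $\sigma\times\rho$ for a $\square$-irreducible cuspidal $\rho$ in the support, identify $\sigma\cong\Z(\fn)$ by induction, and use \Cref{T:derseg} together with the combinatorial $\soc$ operation to recognize $\pi$ as $\Z(\soc(\fn,\rho))$ (the paper concludes directly from irreducibility of the socle where you invoke \Cref{C:derinj}, but these are interchangeable). Your caution about circularity with \Cref{T:derseg} is exactly the point the paper itself flags — its proof of that theorem deliberately avoids surjectivity — and your reduction via \Cref{L:H2} is harmless but not needed.
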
\begin{proof}
We argue by induction on $n$.
    Let $\pi$ be an irreducible representation of $G_n$ with $\square$-irreducible cuspidal support and let $\pi'\in \Irr$ and $\rho$ be a $\square$-irreducible cuspidal representation such $\pi\hra\pi'\times\rho$. By the induction hypothesis, we know that there exists an aperiodic multisegment $\fm'$ such that $\pi'\cong \Z(\fm')$. By \Cref{T:derseg} we have then that \[\pi\cong \mathrm{soc}(\pi'\times \rho)=\Z(\mathrm{soc}(\fm',\rho)).\]
\end{proof}
\subsection{ }
 We will now give an explicit description of the Aubert-Zelevinsky dual $\pi^*$ using the theory of derivatives.
Recall the Grothendieck group $\RR=\bigcup_{n\in \mathbb{N}}\RR_n$. 
In \cite{Zel} the Aubert-Zelevinsky involution \[\bd\colon\RR\rightarrow \RR,\]
which sends $[\pi]\in\Irr_n$ to
\[\bd([\pi])\coloneq \sum_\alpha (-1)^{r(\alpha)}[\mathrm{Ind}_{\alpha}\circ r_\alpha(\pi)]\]
was introduced,
where the sum is over all compositions of $n$ and $r(\alpha)$ is the number of elements of $\alpha$.
For $\Pi$ a representation of finite length, we write \[\bd(\Pi)=\sum_{\pi\in \Irr}k_\pi[\pi],\, k_\pi\in \mathbb{Z},\] where all but finitely many $k_\pi$ are non-zero. We say $\pi\in \Irr$ appears in $\bd(\Pi)$ if $k_\pi\neq 0$.
The map $\bd$ satisfies the following properties.
\begin{theorem}\label{T:invol}
    Let $\bd$ be the Aubert-Zelevinsky involution.
    \begin{enumerate}
        \item If $\pi_1,\pi_2$ are irreducible representations then
    \[\bd([\pi_1\times\pi_2])=\bd([\pi_1])\times \bd([\pi_2]).\]
    \item If $\beta$ is a composition of $n$ and $\pi$ an irreducible representation then \[\bd([r_\beta(\pi)])=\mathrm{Ad}(w_\beta)\circ r_\beta(\bd([\pi)])),\]
    where $w_\beta$ is the longest element of the Weyl group $W(\beta,(n))$, \emph{cf.} \Cref{S:geolem}.
    \item If $\pi$ is irreducible, there exists a unique irreducible representation $\pi^*$ with the same cuspidal support as $\pi$ appearing in  
    $\bd([\pi])$ with sign $(-1)^{r(\pi)}$, where $r(\pi)$ is the length of the cuspidal support of $\pi$.
    \end{enumerate}
\end{theorem}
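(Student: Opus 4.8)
The plan is to recall the construction of the Aubert–Zelevinsky involution $\bd$ and verify each of the three properties in turn, essentially following \cite{Zel} and \cite{Aubert} (the Aubert duality for general reductive groups), but one must check that the arguments go through over $R\in\{\fl,\ql\}$; the key point is that the only ingredients needed are the exactness of parabolic induction and restriction, the Geometric Lemma (\Cref{S:geolem}), and Frobenius/Bernstein reciprocity, all of which are available in our setting. First I would set up notation: for a composition $\alpha$ of $n$ write $r(\alpha)$ for its number of parts and recall that $\bd$ is a group homomorphism $\RR\to\RR$, indeed a ring homomorphism for $\times$ once (1) is established.

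For (1), the multiplicativity follows formally from the transitivity of parabolic induction and restriction together with the Geometric Lemma: expanding $\bd([\pi_1\times\pi_2])$ as an alternating sum over compositions $\alpha$ of $n_1+n_2$, one applies the Geometric Lemma to rewrite $r_\alpha(\pi_1\times\pi_2)$ as a sum over double cosets, and after reindexing the alternating sum collapses — the combinatorial identity is exactly the one proved in \cite[§1]{Zel} and depends only on the inclusion–exclusion over the lattice of compositions, hence is characteristic-free. For (2), one uses that for a parabolic $P_\beta$ the functor $r_\beta\circ\mathrm{Ind}_\alpha$ is glued from the $F(w)$, $w\in W(\beta,(n))$ (again the Geometric Lemma), and that $\bd$ intertwines $\mathrm{Ind}$ and $r$ in the manner recorded in \cite{Aubert}; twisting by the longest Weyl element $w_\beta$ accounts for the discrepancy between $P_\beta$ and the opposite parabolic. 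Alternatively (2) can be deduced from (1) and the definition of $\bd$ by restricting the defining alternating sum to the standard Levi $G_\beta$ and matching terms.

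For (3), the substantive point, one argues as in \cite[Theorem 9.7]{Zel} (see also \cite{Aubert}): write $\fs=\cus(\pi)$ of length $r(\pi)=k$ and observe that every irreducible constituent of $\bd([\pi])$ has cuspidal support $\fs$ (since $\mathrm{Ind}_\alpha\circ r_\alpha$ preserves cuspidal support). One then shows that, ordering the irreducible representations with cuspidal support $\fs$ by a suitable partial order (e.g. the one coming from $\lp$, or directly by the partition $\mu$ attached to the relevant data), the transition matrix between the classes $[\pi]$ and the classes $[\mathrm{Ind}_\alpha\circ r_\alpha(\pi)]$ is unitriangular up to sign, so that $\bd([\pi])$ contains exactly one irreducible constituent, call it $\pi^*$, with coefficient $\pm1$, and the sign is $(-1)^k$ because the "leading" term comes from $\alpha=(\text{full cuspidal composition})$ with $r(\alpha)=k$. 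Uniqueness of $\pi^*$ and the fact that $\pi\mapsto\pi^*$ is an involution then follow from $\bd^2=\mathrm{id}$ on $\RR$, which is itself a formal consequence of the double-coset combinatorics.

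The main obstacle I anticipate is (3): over $\fl$ one does not a priori know that the set of irreducible representations with a fixed cuspidal support is finite (this is exactly the kind of statement the paper reproves via derivatives for $\square$-irreducible cuspidal support), nor that the relevant transition matrix is genuinely (sign-)unitriangular, since multiplicities in parabolically induced representations can behave differently mod $\ell$. To get around this I would either (a) invoke the classification by multisegments already recalled in \Cref{S:cusclas} (\Cref{T:N5}, \Cref{T:Zinj}), which gives a finite parametrization and a partial order ($\fn\lp\fm$) with respect to which the matrix of $[\mathrm{Ind}\circ r]$ against $[\Z(\fn)]$ is triangular, so that the diagonal entry $\pm1$ survives; or (b) reduce to $\ql$ using \Cref{T:rel} and \Cref{L:su}-type compatibilities, noting that $\bd$ commutes with reduction mod $\ell$ on $\RR^{en}$, and that the leading coefficient $(-1)^k$ is preserved under $\rl$. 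Approach (a) is cleaner and self-contained given the results already available in the excerpt, so that is the route I would take.
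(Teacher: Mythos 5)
The paper does not prove this theorem at all: parts (1) and (2) are quoted from \cite{Aub} and \cite{banal}, and part (3) from \cite{MoeglinetJ1986} (for $\ql$) and \cite{MSI} (for general $\ell\neq p$). So your proposal is attempting genuinely more than the paper does, and it has to be judged on its own merits. Your sketches of (1) and (2) are fine: they are exactly Aubert's arguments, and they only use exactness of the parabolic functors, the Geometric Lemma and the reciprocities, all of which are characteristic-free.

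Part (3), however, contains a genuine gap. You assert that ``every irreducible constituent of $\bd([\pi])$ has cuspidal support $\fs$ (since $\mathrm{Ind}_\alpha\circ r_\alpha$ preserves cuspidal support).'' Over $\fl$ this is false: induction followed by restriction preserves the \emph{supercuspidal} support, not the cuspidal support, and the two notions genuinely diverge mod $\ell$. Concretely, a product $\rho\times\rho v_\rho\times\cdots\times\rho v_\rho^{n-1}$ can contain the cuspidal non-supercuspidal subquotient $\st(\rho,n)$, so the terms $[\mathrm{Ind}_\alpha\circ r_\alpha(\pi)]$ may contain constituents whose cuspidal support differs from $\cus(\pi)$. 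This is precisely why the statement of (3) is phrased as uniqueness among constituents \emph{with the same cuspidal support as} $\pi$, and why the mod-$\ell$ case requires the separate argument of \cite[Theorem 2.5]{MSI} rather than a verbatim transcription of Zelevinsky/Aubert. Your unitriangularity setup must therefore first isolate the part of $\bd([\pi])$ with cuspidal support $\fs$ before any leading-term analysis; as written, the ``transition matrix'' you describe is not indexed by the right set. Your fallback route (a) via \Cref{T:N5} is the right instinct (the $\mu_\fm$-degeneracy order does give a triangularity statement), but note also that the finiteness and ordering you invoke are exactly the content being reorganized in \cite{MSI}, and that uniqueness of $\pi^*$ does not follow from $\bd^2=\mathrm{id}$ alone; the involutivity only shows that $\pi\mapsto\pi^*$ is an involution once the existence and uniqueness of the distinguished constituent have been established.
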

\begin{proof}
    For (1) and (2) see \cite[Theorem 1.7]{Aub} and \cite[Proposition A.2]{banal}.
    For (3) see \cite{MoeglinetJ1986} for the case $R=\ql$ and \cite[Theorem 2.5]{MSI} for the general case $R\in\{\fl,\ql\}$.
\end{proof}
\begin{theorem}\label{T:mainin}
    Let $\pi\in \Irr_n$ and $\rho\in \scu$. Then \[\Dl(\pi)^*\cong \Dr(\pi^*).\]
\end{theorem}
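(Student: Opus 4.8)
The plan is to reduce the claim to a socle statement and then use the compatibility of the Aubert-Zelevinsky involution with parabolic induction and reduction. Recall that for $\rho \in \scu$ and $\pi' \in \Irr$, the left derivative $\Dl(\pi')$ is characterized by $\pi' \hookrightarrow \rho \times \Dl(\pi')$ (equivalently $\pi' = \soc(\rho \times \Dl(\pi'))$ with multiplicity one), and the right derivative $\Dr$ is characterized dually via $\soc(\cdot \times \rho)$. I would like to show that if $\Dl(\pi) = \sigma \neq 0$, so $\pi = \soc(\rho \times \sigma)$, then $\pi^* = \soc(\sigma^* \times \rho)$, which by \Cref{L:derivatives} exactly says $\Dr(\pi^*) = \sigma^*$; the case $\Dl(\pi) = 0$ then has to be handled separately by showing $\Dr(\pi^*) = 0$ as well.

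First I would treat the case $\sigma \coloneq \Dl(\pi) \neq 0$. Since $\rho$ is $\square$-irreducible cuspidal, $\rho \times \sigma$ is CSI by \Cref{L:si1}, so $\pi = \soc(\rho \times \sigma)$ appears with multiplicity one in $\rho \times \sigma$ and in particular $[\pi] \le [\rho \times \sigma]$ in $\RR$. Applying $\bd$ and \Cref{T:invol}(1), one gets $\bd([\pi]) = \bd([\rho]) \times \bd([\sigma]) = [\rho] \times \bd([\sigma])$, using that $\rho$ is cuspidal so $\bd([\rho]) = [\rho]$. Now by \Cref{T:invol}(3), $\pi^*$ is the unique constituent of $\bd([\pi])$ with sign $(-1)^{r(\pi)}$ and $\sigma^*$ the unique constituent of $\bd([\sigma])$ with sign $(-1)^{r(\sigma)}$; since $r(\pi) = r(\sigma) + 1$ (the cuspidal support of $\pi$ is that of $\sigma$ plus one copy of $\rho$), $\pi^*$ must be a constituent of $[\rho] \times \sigma^*$. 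Because $\rho$ is $\square$-irreducible, $\rho \times \sigma^*$ is CSI and SI by \Cref{L:si1}; I would argue that $\pi^* = \soc(\sigma^* \times \rho) = \cos(\rho \times \sigma^*)$. The cleanest route: from $\pi \hookrightarrow \rho \times \sigma$ and Frobenius reciprocity, $r_{(m, n-m)}(\pi) \twoheadrightarrow \rho \otimes \sigma$; applying \Cref{T:invol}(2) translates a statement about $r_{(m,n-m)}(\pi)$ into one about $r_{(n-m,m)}(\pi^*)$ up to the twist by the longest Weyl element, which swaps the two factors — so $r_{(n-m,m)}(\pi^*)$ contains $\sigma^* \otimes \rho$ as a constituent (using that Aubert duality on the Levi $G_m \times G_{n-m}$ is the tensor of the duals, and that $\rho^* = \rho$ for $\rho$ cuspidal). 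By \Cref{L:derivatives} this forces $\Dr(\pi^*) = \sigma^*$, as desired.

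For the case $\Dl(\pi) = 0$ I would argue by contradiction: if $\Dr(\pi^*) = \tau \neq 0$, then by the direction just proved applied to $\pi^*$ in place of $\pi$ (note $(\pi^*)^* = \pi$ and left/right derivatives are interchanged under the argument), we would get $\Dl(\pi) = \tau^* \neq 0$, a contradiction. More carefully, the symmetric version of the above paragraph — replacing $\Dl$ by $\Dr$ and $\soc(\rho \times \cdot)$ by $\soc(\cdot \times \rho)$ throughout, which goes through verbatim since \Cref{L:si1}, \Cref{T:invol} and \Cref{L:derivatives} are all left–right symmetric — shows that $\Dr(\pi^*) \neq 0$ implies $\Dl((\pi^*)^*) = \Dl(\pi) \neq 0$. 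Hence $\Dl(\pi) = 0 \iff \Dr(\pi^*) = 0$, completing the proof.

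The main obstacle I anticipate is making the application of \Cref{T:invol}(2) fully rigorous: the statement $\bd([r_\beta(\pi)]) = \mathrm{Ad}(w_\beta) \circ r_\beta(\bd([\pi]))$ is an identity in the Grothendieck group of the Levi, so I need to extract from it a genuine \emph{subquotient} (not merely virtual) statement about $r_{(n-m,m)}(\pi^*)$, and I must track the multiplicity-one bookkeeping carefully — using that $\pi^*$ occurs in $\bd([\pi])$ with a definite sign and that $\sigma^* \otimes \rho$ occurs in $\mathrm{Ad}(w_\beta)(\bd([r_{(m,n-m)}(\pi)]))$ with the matching sign, so that no cancellation can hide it. An alternative, perhaps cleaner, route to sidestep this: use that $\soc(\rho \times \cdot)$ and $\cos(\rho \times \cdot)$ agree via contragredient ($\pi \hookrightarrow \rho \times \sigma$ iff $\sigma^\lor \times \rho^\lor \twoheadrightarrow \pi^\lor$), combine with \Cref{L:derdual} and the known compatibility $\pi^{*\lor} \cong \pi^{\lor *}$ of Aubert duality with contragredient, reducing everything to the single assertion that $\bd$ sends $\soc(\rho \times \cdot)$ to $\cos(\rho \times \cdot)$, which is standard for $\rho$ $\square$-irreducible cuspidal and can be cited from the $\ql$-literature adapted mutatis mutandis.
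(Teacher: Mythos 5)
Your overall strategy (Jacquet module of $\pi$ contains $\rho\otimes\Dl(\pi)$, push this through \Cref{T:invol}(2), read off $\Dr(\pi^*)$ via the characterization of derivatives by Jacquet modules) is the same as the paper's, but the obstacle you flag at the end is not mere bookkeeping — it is a genuine gap in the single-step version, and the paper's proof is organized precisely to avoid it. The problem is that $r_{(m,n-m)}(\pi)$ may contain several irreducible constituents of the form $\rho\otimes\tau_i$ with $\tau_i\not\cong\Dl(\pi)$ (\Cref{L:derivatives} makes the \emph{derivative} unique, not the constituents of this shape in the Jacquet module). After applying $\bd$ on the Levi, each $\bd([\tau_i])$ contains $(-1)^{r(\tau_i)}[\tau_i^*]$ but, by the uniqueness in \Cref{T:invol}(3), may also contain $\Dl(\pi)^*$ with the \emph{opposite} sign $-(-1)^{r(\tau_i)}$; since all the $\tau_i$ have the same cuspidal support length, these contributions can cancel the distinguished term, and the coefficient of $\rho\otimes\Dl(\pi)^*$ in $\bd([r_{(m,n-m)}(\pi)])$ can a priori vanish. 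Nothing in your argument rules this out. (Separately, the displayed deduction $\bd([\pi])=\bd([\rho])\times\bd([\sigma])$ in your first paragraph is false as written: $[\pi]\le[\rho\times\sigma]$ does not allow you to apply \Cref{T:invol}(1) to $[\pi]$ itself. And your final "alternative route" — citing that $\bd$ interchanges $\soc(\rho\times\cdot)$ and $\mathrm{cos}(\rho\times\cdot)$ as "standard" — is circular, since that assertion is essentially the theorem in the mod-$\ell$ setting.)

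The paper closes the gap by working with the \emph{maximal} derivative: by \Cref{L:helpder} and \Cref{C:der}, $\Dlm(\pi)\otimes\rho^{\dlm(\pi)}$ is the \emph{unique} irreducible constituent of $r_{(n-\dlm(\pi)m,\,\dlm(\pi)m)}(\pi)$ of the form $\pi''\otimes\rho^{\dlm(\pi)}$, and it occurs with multiplicity one. Since $\bd$ preserves cuspidal support and $\rho^{\dlm(\pi)}$ is the only irreducible representation with cuspidal support $\dlm(\pi)\cdot[\rho]$, only this single term can contribute to the $\rho^{\dlm(\pi)}\otimes(\,\cdot\,)$-isotypic part after applying $\bd$ and $\mathrm{Ad}(w_\beta)$, so its coefficient $(-1)^{\dlm(\pi)}(-1)^{r(\Dlm(\pi))}$ survives with no possible cancellation; comparison with the sign $(-1)^{r(\pi)}$ attached to $\pi^*$ then identifies $\rho^{\dlm(\pi)}\otimes\Dlm(\pi)^*$ inside $r_{(\dlm(\pi)m,\,n-\dlm(\pi)m)}(\pi^*)$, and \Cref{C:der} converts this into the statement about derivatives. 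If you want to salvage your write-up, you should replace the single-step Jacquet module by the maximal one and invoke the uniqueness and multiplicity-one statements of \Cref{L:helpder}/\Cref{C:der} at exactly the point where you currently only promise to "track the bookkeeping carefully".
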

\begin{proof}
If $\pi=\rho^k$, the claim follows from \Cref{T:invol}(1). Thus we can assume that $\Dlm(\pi)\neq 0$.
     By Frobenius reciprocity \[[r_{(n-\dlm(\pi)m,\dlm(\pi)m)}(\pi)]\ge [\Dlm(\pi)\otimes \rho^{\dlm(\pi)}]\] and hence by \Cref{T:invol} (2)
     \[r_{(\dlm(\pi)m,n-\dlm(\pi)m)}(\bd([\pi]))]\] contains the representation $\rho^{\dlm(\pi)}\otimes \Dlm(\pi)^*$ with a non-zero coefficient $(-1)^{\dlm(\pi)}(-1)^{r(\Dlm(\pi))}$.
    Recall that $(-1)^{r(\pi)}[\pi^*]$ is the unique constituent of
$\bd([\pi])$ with cuspidal support the same as $\pi$, therefore it follows that
\[(-1)^{r(\pi)}[r_{(\dlm(\pi)m,n-\dlm(\pi)m)}(\pi^*)]\ge\]\[\ge   (-1)^{\dlm(\pi)}(-1)^{r(\Dlm(\pi))}[\rho^{\dlm(\pi)}\otimes \Dlm(\pi)^*]=\]\[=(-1)^{r(\pi)}[\rho^{\dlm(\pi)}\otimes \Drm(\pi^*)],\] where the last equality follows from the induction hypothesis.
The claim then follows from \Cref{C:der}.
\end{proof}
If $\fm$ is an aperiodic multisegment $\fm$, we set $\fm^*$ to the aperiodic multisegment such that $\Z(\fm)^*=\Z(\fm^*)$.

For $\fm$ an aperiodic multisegment with $\square$-irreducible cuspidal support we can thus offer the following algorithm to compute $\fm^*$, where $\fm^*$ is the aperiodic multisegment such that $\Z(\fm)^*=\Z(\fm^*)$. It works recursively on $\deg(\fm)$.
Let $\rho\in \scu$ such that $\Dl(\fm)\neq 0$. We then compute $\fm^*$ by first computing $\Dl(\fm)$. From this we can already compute $\Dl(\fm)^*$ recursively. But by \Cref{T:invol} and \Cref{T:derseg} $\Dl(\fm)^*=\Dr(\fm^*)$. By \Cref{L:derivatives} and \Cref{T:derseg} \[\soc(\rho,\Dl(\fm)^*)=\fm^*.\] Note that we thus rediscover the algorithm of computing the Aubert-Zelevinsky dual of \cite{LTV}.
\subsection{ Godement-Jacquet local factors}\label{S:GJ}
In this section we will compute the local $L$-factors associated to an irreducible smooth representation over $\fl$ and show that the map
\[\Cc\colon \Irr_n\iso \{\Cc-\text{parameters of length }n\} \]
defined in \cite{Ccor} respects these local $L$-factors.
We start by proving the following lemma, which in the case $R=\ql$ was proved in \cite[Proposition 2.3]{JL2}.
\begin{lemma}\label{L:Lind}
    Let $\sigma_1,\ldots,\sigma_k$ be irreducible representations of $G_{n_1},\ldots,G_{n_k}$ over $R$. Then
    \[L(\sigma_1\times\ldots\times \sigma_k,T)=\prod_{i=1}^k L(\sigma_i,T)\]
    and 
    \[\gamma(T,\sigma_1\times\ldots\times \sigma_k,\psi)=\prod_{i=1}^k \gamma(T,\sigma_i,\psi).\]
\end{lemma}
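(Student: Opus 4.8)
The statement is multiplicativity of the Godement--Jacquet $L$- and $\gamma$-factors along parabolic induction. The natural strategy is a two-step reduction: first prove it for $k=2$, then induct on $k$; and to handle $k=2$, split into the two inequalities, using the machinery of $P$-regular representations already set up (\Cref{P:L1}, \Cref{L:matrixcoefficient}, \Cref{C:preg}) for one direction and \Cref{L:su} for the other.

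\textbf{The $\gamma$-factor.} I would dispose of the $\gamma$-factor first, since it is the easy half. By \Cref{L:su}, $\gamma(T,\pi,\psi)$ depends only on the image of $\pi$ in the Grothendieck group, or more precisely it is constant on subquotients of a fixed representation. Since every irreducible constituent $\tau$ of $\sigma_1\times\ldots\times\sigma_k$ satisfies $\gamma(T,\sigma_1\times\ldots\times\sigma_k,\psi)=\gamma(T,\tau,\psi)$, it suffices to compute $\gamma$ for \emph{one} such constituent. For the $k=2$ case one takes $\tau=\mathrm{soc}(\sigma_1\times\sigma_2)$ or more robustly just observes that the functional equation in \Cref{T:Lfon}(2) for the induced representation can be read off from the functional equations of the factors: the zeta integral $Z(\phi,T,f)$ for a matrix coefficient $f$ of $\sigma_1\times\sigma_2$ restricted to block-diagonal-supported $\phi$ factors as a product of zeta integrals for $\sigma_1$ and $\sigma_2$, as in the computation (\ref{E:intu}) inside the proof of \Cref{P:L1}. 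Taking Fourier transforms, which also factor on $M_{n_1}\times M_{n_2}\subseteq M_n$ up to the correct power of $q$, gives $\gamma(T,\sigma_1\times\sigma_2,\psi)=\gamma(T,\sigma_1,\psi)\gamma(T,\sigma_2,\psi)$ directly; then induct on $k$.

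\textbf{The $L$-factor.} For the $L$-factor I would prove the two divisibilities $L(\sigma_i,T)\mid L(\sigma_1\times\ldots\times\sigma_k,T)$ and $L(\sigma_1\times\ldots\times\sigma_k,T)\mid\prod_i L(\sigma_i,T)$ separately (as divisibilities of the reciprocal polynomials $P$). For the first: fix $i$; I want $\mathcal{L}(\sigma_i)\subseteq\mathcal{L}(\sigma_1\times\ldots\times\sigma_k)$. This is exactly the content of \Cref{P:L1}, provided I can arrange $\sigma=(\bigtimes_{j\neq i}\sigma_j)\otimes\sigma_i$ to be \emph{strongly} $P$-regular with cosocle a constituent of $\sigma_1\times\ldots\times\sigma_k$ whose $L$-factor I can compare — but here there is a subtlety: \Cref{P:L1} requires $\sigma_1$ in its notation to be $\square$-irreducible, and an arbitrary product $\bigtimes_{j\neq i}\sigma_j$ need not be. The clean fix, following \cite[Theorem 2.7]{JL2} as the text indicates, is to pick the \emph{inductive} route: prove $L(\sigma_1\times\sigma_2,T)=L(\sigma_1,T)L(\sigma_2,T)$ for $k=2$ first, then for general $k$ write $\sigma_1\times\ldots\times\sigma_k=(\sigma_1\times\ldots\times\sigma_{k-1})\times\sigma_k$ — but again the first factor is reducible. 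So the honest approach is: show $\mathcal{L}(\sigma_i)\subseteq\mathcal{L}(\pi)$ for every irreducible subquotient $\pi$ of $\sigma_1\times\ldots\times\sigma_k$ directly. One picks $\pi$ an irreducible \emph{sub}representation, so $\pi\hookrightarrow\sigma_i\times(\bigtimes_{j\neq i}\sigma_j)$; if one is lucky that $\bigtimes_{j\neq i}\sigma_j$ is irreducible (which by \Cref{T:N2} happens when the relevant multisegment is unlinked — not always), one runs \Cref{P:L1}. In general I would instead use the functional-equation computation from the $\gamma$-factor step together with $P(\pi,T)\mid\prod_i P(\sigma_i,T)$ from the other divisibility, plus the fact that $\gamma(T,\pi,\psi)=\epsilon\cdot L(\pi^\lor,q^{-1}T^{-1})/L(\pi,T)$ already pins down $L(\pi,T)$ up to the ambiguity resolved by $P(\pi,0)=1$; matching the $\gamma$-factors forces $L(\pi,T)=\prod_i L(\sigma_i,T)$ once the divisibility $L(\sigma_i,T)\mid L(\pi,T)$ is known.

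For the divisibility $P(\sigma_1\times\ldots\times\sigma_k,T)\mid\prod_i P(\sigma_i,T)$: the cleanest tool is to reduce to the case where each $\sigma_i$ is a segment representation $\langle\De_i\rangle$ (or $\Z(\De_i)$) via \Cref{L:su} — every irreducible $\pi$ occurs in a product of such, and any constituent of $\sigma_1\times\ldots\times\sigma_k$ also occurs in a product over the refined multisegment. Then one invokes the integral identity (\ref{E:int2})/(\ref{E:int})for iterated parabolic induction to express $Z(\phi,T,f)$ for $\pi\hookrightarrow\sigma_1\times\ldots\times\sigma_k$ via the zeta integrals of the $\sigma_i$, giving $\mathcal{L}(\pi)\subseteq\mathcal{L}(\sigma_1)\cdots\mathcal{L}(\sigma_k)$ as fractional ideals, i.e.\ $P(\pi,T)\mid\prod_i P(\sigma_i,T)$.

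\textbf{Main obstacle.} The essential difficulty is the asymmetry in \Cref{P:L1}: it only gives divisibility by $P(\sigma_2,T)$ and demands a $\square$-irreducibility hypothesis and strong $P$-regularity, so it cannot be applied blindly with each $\sigma_i$ in turn. Getting around this — either by an induction that always keeps a $\square$-irreducible factor on the correct side (the $\langle\De\rangle$-with-$b$-maximal trick from Jacquet, which needs the $\fl$-analogues of the square-irreducibility of $\bigtimes\langle\De_i\rangle$ and strong $P$-regularity developed in \Cref{S:Der}), or by combining the easy divisibility with the $\gamma$-factor identity to deduce the hard one — is where the real work lies. I expect the bookkeeping of which multisegment constituents are square-irreducible, and verifying strong $P$-regularity of the relevant $\sigma$, to be the technical heart; everything else is the convergence/Fubini manipulation of zeta integrals already rehearsed in the proof of \Cref{P:L1}.
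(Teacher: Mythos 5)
There is a genuine misreading here that derails the argument. \Cref{L:Lind} is a statement about the $L$- and $\gamma$-factors of the induced representation $\sigma_1\times\ldots\times\sigma_k$ \emph{itself} — which is well defined even when that representation is reducible, by the corollary to \Cref{T:Lfon} — not about an irreducible subquotient of it. Consequently the paper's proof is a direct zeta-integral computation in the style of \cite[Proposition 2.3]{JL2}: every matrix coefficient of $\sigma_1\times\ldots\times\sigma_k$ has the explicit integral form $f(g)=\int_{P\backslash G_n}H(g'g,g')\,\mathrm{d}g'$, and unwinding $Z(\phi,Tq^{-(dn-1)/2},f)$ via the Iwasawa-type decomposition (\ref{E:int2}) shows $\mathcal{L}(\sigma_1\times\ldots\times\sigma_k)\subseteq\prod_i\mathcal{L}(\sigma_i)$; the reverse inclusion is obtained by explicitly constructing, for given $\phi_i$ and coefficients $f_i$, a Schwartz function $\phi$ and a function $H$ (hence a matrix coefficient $f$ of the induced representation) whose zeta integral equals $\prod_iZ(\phi_i,Tq^{-(dn_i-1)/2},f_i)$. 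No $\square$-irreducibility, no (strong) $P$-regularity, and no irreducibility of any partial product is needed anywhere. The ``main obstacle'' you identify — keeping a $\square$-irreducible factor on the correct side of \Cref{P:L1} — is the genuine difficulty in \Cref{T:JL}, where one must pass from the induced representation to the irreducible constituent $\langle\fm\rangle$; it is not an obstacle for this lemma.

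Because of this misreading, your proposal has a real gap on the inclusion $\prod_i\mathcal{L}(\sigma_i)\subseteq\mathcal{L}(\sigma_1\times\ldots\times\sigma_k)$: the only tool you offer for it is \Cref{P:L1}, whose hypotheses (strong $P$-regularity, a $\square$-irreducible factor) need not hold and which in any case only yields divisibility by one $P(\sigma_i,T)$ at a time for an irreducible cosocle. Your fallback — deducing the missing divisibility from multiplicativity of $\gamma$ together with $P(\pi,T)\mid\prod_iP(\sigma_i,T)$ — is also not airtight: the relation $\gamma=\epsilon\cdot L(\pi^\lor,q^{-1}T^{-1})/L(\pi,T)$ only determines a ratio, and cancellation between numerator and denominator can occur (the paper has to work hard to rule out exactly such cancellation in the proof of \Cref{T:JL}). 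The correct fix is simply to work with the full induced representation throughout and carry out the two explicit integral constructions; your sketch of the first inclusion (block-diagonal factorization of the zeta integral, as in (\ref{E:intu})) is on the right track, and the $\gamma$-factor argument via Fourier transforms of these expressions is essentially what the paper does.
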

\begin{proof}
We can mimic the proof of \cite[Proposition 2.3]{JL2} more or less \emph{muta mutandis}.
We start by showing the inclusion \[\mathcal{L}(\sigma_1\times\ldots\times \sigma_k)\subseteq\prod_{i=1}^k\mathcal{L}(\sigma_i).\]
Set $\sigma\coloneq \sigma_1\times\ldots\times \sigma_k,\, \alpha=(n_1,\ldots,n_k)$ and $P\coloneq P_\alpha$ with Levi-component $M$ and unipotent component $U$.
Recall that \[\delta_P(m)=\prod_{i=1}^k\lvert \det\nolimits'(m_i)\lvert ^{d\delta_i},\, \delta_i\coloneq -n_1-\ldots-n_{i-1}+n_{i+1}+\ldots +n_k.\]
Moreover, recall that as in \cite[Proposition 2.3]{JL2} the matrix coefficients of $\sigma$ are exactly given by integrals of the form
\begin{equation}\label{E:matrixcoefficient}f(g)= \int\displaylimits_{P\bs G_n}H(g'g,g')\ddd g',\end{equation}\[H(g_1,g_2)=h^\lor(g_1)(h(g_2)), \, h\in\sigma_1\times\ldots\times \sigma_k,\, h^\lor\in \sigma_1^\lor\times\ldots \times \sigma_k^\lor.\]
Note that $H$ is smooth, it satisfies for all $p\in P,\, g_1,g_2\in G_n$ \[H(pg_1,pg_2)=\delta_P(p)H(g_1,g_2)\] and $m\mapsto H(mg_1,g_2)$ is a matrix coefficient of $\sigma_1\otimes\ldots\otimes \sigma_k\otimes \delta_P^\frac{1}{ 2}$.
Furthermore, given such $H$, the function \[f(g)= \int\displaylimits_{P\bs G_n}H(g'g,g')\ddd g'=\int\displaylimits_{(K_n\cap P )\bs K_n}H(kg,k)\ddd k\] is a matrix coefficient of $\sigma$.
Fix now $\phi\in \sch$ and as in \cite[Proposition 2.3]{JL2} we see that the coefficient of $T^N$ in $Z(\phi,Tq^{-\frac{dn-1}{2}},f)$ is 
\begin{equation}\label{E:niceform}\int\displaylimits_{(K_n\cap P )\bs K_n}\int\displaylimits_{(K_n\cap P )\bs K_n}\int\displaylimits_{P(N)}\phi(k^{-1}pk')\delta_P(p)^{-1}\lvert \det\nolimits'(p)\lvert^{\frac{dn-1}{2}}H(pk',k)\dr p\ddd k\ddd k'\end{equation}
We write  for\[m=(m_1,\ldots m_k),\, u=\begin{pmatrix}
    1_{n_1}&\ldots &u_{i,j}\\
    0&\ddots&\vdots\\
    0&0&1_{n_k}
\end{pmatrix},\,  p(m,u)\coloneq\begin{pmatrix}
    m_1&\ldots &u_{i,j}\\
    0&\ddots&\vdots\\
    0&0&m_k
\end{pmatrix}\]
and express $\dr p$ as \[\dr p=\ddd u\ddd m\prod_{i=1}^{k}\lvert \det\nolimits'(m_i)\lvert^{\frac{d(n-\delta_i-n_i)}{2}}.\]
Thus (\ref{E:niceform}) equals to
\[\int\displaylimits_{(K_n\cap P )\bs K_n}\int\displaylimits_{(K_n\cap P )\bs K_n}\int\displaylimits_{M(N)}\int\displaylimits_U\phi(k^{-1}p(m,u)k')\ddd u\,\]\[\prod_{i=1}^{k}\lvert \det\nolimits'(m_i)\lvert^{\frac{d(n_i-n-\delta_i)}{2}}\lvert \det\nolimits'(m)\lvert^{\frac{dn-1}{2}}H(mk',k)\ddd m\ddd k\ddd k'.\]
For $m\in M$ and $k,k'\in (K_n\cap {P})\bs K_n$ we set \begin{equation}\label{E:intdual}\phi(m;k,k')\coloneq\int\displaylimits_U\phi(k^{-1}p(m,u)k)\ddd u,\, h(m;k,k')\coloneq H(mk',k)\delta_P^{-\frac{1}{ 2}}(m).\end{equation}
Thus the coefficient of $T^N$ is
\begin{equation}\label{E:indfin}\int\displaylimits_{(K_n\cap P )\bs K_n}\int\displaylimits_{(K_n\cap P )\bs K_n}\int\displaylimits_{M(N)}\phi(m;k,k')h(m;k,k')\prod_{i=1}^k\lvert \det\nolimits'(m_i)\lvert^{{\frac{dn_i-1}{ 2}}}\ddd m\ddd k\ddd k'.\end{equation}
As in \cite[Proposition 2.3]{JL2} one sees then that this is the coefficient of $T^N$ of a finite sum of elements of the form
\[\prod_{i=1}^kZ(\phi_i,Tq^{-{\frac{dn_i-1}{ 2}}},f_i)\] with $f_i$ a matrix coefficient of $\sigma_i$ and $\phi_i\in \mathscr{S}_{\fl}(M_{n_i}(\mathrm{D}))$ and hence 
  \[\mathcal{L}(\sigma_1\times\ldots\times \sigma_k)\subseteq \prod_{i=1}^k\mathcal{L}(\sigma_i).\]
Next we show that \[ \prod_{i=1}^k\mathcal{L}(\sigma_i)\subseteq \mathcal{L}(\sigma_1\times\ldots\times \sigma_k).\]
Given $\phi_i\in \mathscr{S}_{\fl}(M_{n_i}(\mathrm{D}))$, we can find $\phi\in \sch$ such that
\[\int\displaylimits_U\phi(p(m,u))\ddd u=\prod_{i=1}^k\phi_i(m_i),\, m=(m_1,\ldots, m_k)\in M.\]
Moreover, we choose two smooth functions $\xi, \xi'\in C_c^\infty((K_n\cap P)\bs K_n)$ such that for all $p\in P$ and $N\in \ZZ$
\[\int\displaylimits_{(K_n\cap P)\bs K_n}\int\displaylimits_{(K_n\cap P)\bs K_n}\int\displaylimits_P(\phi\chi_{G_n(N)})(k^{-1}pk')\xi(k)\xi'(k')\delta_P^{-1}(p)\dr p\ddd k\ddd k'=\]\[=\int\displaylimits_{P(N)}\phi(p)\delta_P^{-1}(p)\dr p,\]
where $\chi_{G_n(N)}$ denotes the characteristic function of $G_n(N)$. For example, one could take the characteristic functions of sufficiently small open compact subgroups in $G_n$ whose pro-order is invertible in $R$ and rescale them appropriately. Projecting them into $C_c^\infty((K_n\cap P)\bs K_n)$ gives then the desired functions.
Then the coefficient of $T^N$ in
\[\prod_{i=1}^k Z(\phi_i, Tq^{-\frac{dn_i-1}{2}},f_i)\]
is
\[\int\displaylimits_{M(N)}\prod_{i=1}^k f_i(m_i)\phi_i(m_i)\lvert \det\nolimits'(m_i)\lvert^{{\frac{dn_i-1}{ 2}}}\ddd m=\]
\begin{equation}\label{E:indu}=\int\displaylimits_{(K_n\cap P)\bs K_n}\int\displaylimits_{(K_n\cap P)\bs K_n}\int\displaylimits_{P(N)}\lvert \det\nolimits'(p)\lvert^{\frac{dn-1}{2}}\delta_P^{\frac{1}{2}}(m)\phi(k^{-1}pk')\delta_P^{-1}(p)\end{equation}\[\prod_{i=1}^k f_i(m_i)\xi(k)\xi'(k')\ddd k\ddd k'\dr p.\]
Setting \[H_1(p,k,k')\coloneq \prod_{i=1}^k f_i(m_i)\xi(k)\xi'(k')\] and choosing $H$ as in (\ref{E:matrixcoefficient}) with $H(pk,k')=H_1(p,k,k')$ shows that (\ref{E:indu}) is the coefficient of $T^N$ in $Z(\phi,Tq^{-\frac{dn-1}{2}},f)$ as in (\ref{E:niceform}).

Finally, to prove the equality of the $\gamma$-factors, we go back to 
(\ref{E:intdual}) and define analogously for $\sigma_1^\lor\times\ldots\times\sigma_k^\lor$, $h^\lor(m;k,k')$ and $\phi^\lor(m;k,k')$. Observe that if $h^\lor$ is defined via $f^\lor$, the dual matrix coefficient of $f$, then 
\[h^\lor(m;k,k')=h(m^{-1};k,k')\] and if $\phi^\lor(m;k,k')$ is defined using the Fourier-transform of $\phi$, then
\[{\phi^\lor}(m;k,k')=(\widehat{\phi})(m;k,k').\]
The claim follows then by writing $Z(\phi,Tq^{-\frac{dn-1}{2}},f)$
and $Z(\widehat{\phi},T^{-1}q^{-\frac{dn+1}{2}},f^\lor)$ in the form of (\ref{E:indfin}).
\end{proof}
We write for $\fm$ an aperiodic multisegment $\langle\fm\rangle\coloneq \Z(\fm)^*$.
\begin{theorem}\label{T:JL}
    Let $\Delta=[a,b]_\rho$ be a segment over $R$. If $\rho\cong\chi$ for an unramified character $\chi$ of $\mathrm{F}$ and $q^d\neq 1$, then
        \[L(\langle\Delta\rangle,T)=\frac{1}{ 1-\chi(\varpi_{\mathrm{F}})q^{-db+{\frac{1-d}{2}}}T}, \]
        where $\varpi_{\mathrm{F}}$ is a uniformizer of $\mathfrak{o}_\mathrm{F}$.
        Otherwise \[L(\langle\Delta\rangle,T)=1.\]
    More generally, if $\fm=\De_1+\ldots +\De_k$ is an aperiodic multisegment
    then \[\mathcal{L}(\langle\fm\rangle)=\mathcal{L}(\langle\De_1\rangle\times\ldots\times \langle\De_k\rangle)\]
    and \[L(\langle\fm\rangle,T)=L(\langle\De_1\rangle\times\ldots\times \langle\De_k\rangle,T)=\prod_{i=1}^kL(\langle\De_i\rangle,T).\
        \] 
\end{theorem}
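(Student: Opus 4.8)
The three identities claimed for $\fm=\De_1+\dots+\De_k$ are all equivalent to the single equality $P(\langle\fm\rangle,T)=\prod_{i=1}^kP(\langle\De_i\rangle,T)$: indeed $\mathcal{L}(\sigma)=\frac{1}{P(\sigma,T)}R[T,T^{-1}]$ for every $\sigma$ which is a subquotient of a representation induced from irreducibles (\cite[Corollary 2.4]{Mzeta}), and $P(\langle\De_1\rangle\times\dots\times\langle\De_k\rangle,T)=\prod_iP(\langle\De_i\rangle,T)$ by \Cref{L:Lind}. Using \Cref{L:H2}, the multiplicativity of the Aubert involution (\Cref{T:invol}(1)) and \Cref{L:Lind}, one reduces at once to the case $\fm\in\Ms(\rho)$ for a single cuspidal $\rho$ of $G_m$. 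I would also record the elementary observation that the $\gamma$-factor of an irreducible $\pi$ depends only on $\cus(\pi)$ and equals the product of the $\gamma$-factors of the cuspidal representations occurring in $\cus(\pi)$: this follows from \Cref{L:su} (any $\pi$ with cuspidal support $[\rho_1]+\dots+[\rho_\ell]$ is a subquotient of $\rho_1\times\dots\times\rho_\ell$ and hence has the same $\gamma$-factor) and \Cref{L:Lind}.

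Most cases are then immediate. Suppose every cuspidal representation occurring in $\cus(\fm)$ has trivial Godement--Jacquet $L$-factor; this is so precisely when $\rho$ is $\square$-reducible, or a supercuspidal representation of $G_m$ that is not an unramified twist of a character, i.e.\ in all cases other than $\rho\cong\chi$ for an unramified character $\chi$ of $\mathrm F$ with $q^d\neq 1$ (\cite{Mzeta},\cite{godement1972zeta}). Then, since $\langle\fm\rangle$ is a subquotient of the product of the cuspidal representations in its support, \Cref{L:su} and \Cref{L:Lind} give $P(\langle\fm\rangle,T)\mid\prod P(\rho',T)=1$, so $L(\langle\fm\rangle,T)=1$, and likewise $L(\langle\De_i\rangle,T)=1$ for each $i$; the asserted identities follow. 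In particular this settles the ``otherwise'' half of the single-segment statement.

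There remains the case $\rho\cong\chi$, $\chi$ unramified, $q^d\neq1$ (equivalently $o(\rho)>1$), which I would handle by induction on $\deg\fm$. The base cases are single points $[a,a]_\rho$, where $\langle[a,a]_\rho\rangle\cong\rho v_\rho^a$ and $L(\rho v_\rho^a,T)=(1-\chi(\varpi_{\mathrm F})q^{-da+\frac{1-d}{2}}T)^{-1}$ is a direct zeta-integral computation in the style of Tate's thesis; note this gives the exponent $\frac{1-d}{2}$ in the theorem and $P(\rho,T)=1-\chi(\varpi_{\mathrm F})q^{\frac{1-d}{2}}T$. For the inductive step, after replacing $\rho$ by a suitable member of its inertial line we may assume $\Dl(\fm)\neq0$; then, combining \Cref{T:mainin}, \Cref{T:derseg} and \Cref{C:der}, the representation $\langle\fm\rangle$ is the cosocle of a parabolic induction of a strongly $P$-regular $\sigma=\rho^{\,c}\otimes\langle\fm'\rangle$, where $\fm'$ is the multisegment of strictly smaller degree obtained from $\fm$ by the $\rho$-derivative operation and $c$ the corresponding multiplicity --- this is exactly the modular analogue of the structure used by Jacquet in \cite[Theorem 2.7]{JL2}, for which \Cref{P:L1} was designed. \Cref{P:L1} gives $P(\langle\fm'\rangle,T)\mid P(\langle\fm\rangle,T)$, while $\langle\fm\rangle\le\langle\fm'\rangle\times\rho^{\,c}$, \Cref{L:sipowers}, \Cref{L:su} and \Cref{L:Lind} give $P(\langle\fm\rangle,T)\mid P(\langle\fm'\rangle,T)\,P(\rho,T)^{\,c}$. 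Since $P(\rho,T)$ is a single linear factor these two divisibilities leave only finitely many candidates for $P(\langle\fm\rangle,T)$, and I would single out the correct one using the functional equation $\gamma(T,\langle\fm\rangle,\psi)=\prod_j\gamma(T,\rho v_\rho^{\,j},\psi)$ of the first paragraph (each factor being explicit as above), the relation $\langle\fm\rangle^\lor=\langle\fm^\lor\rangle$, and the inductive value $P(\langle\fm'\rangle,T)=\prod P(\langle\De'\rangle,T)$; this forces $P(\langle\fm\rangle,T)=\prod_iP(\langle\De_i\rangle,T)$, which in the base case $\fm=[a,b]_\rho$ is $1-\chi(\varpi_{\mathrm F})q^{-db+\frac{1-d}{2}}T$.

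The hard part is that \Cref{P:L1} only ever yields a one-sided divisibility of the $L$-denominators, so nothing is settled by a single comparison: one must trap $P(\langle\fm\rangle,T)$ between the two bounds and kill the residual ambiguity with the functional equation, and the genuinely delicate bookkeeping lies in keeping all normalisations consistent through the induction (the half-integral powers of $q$, the exact definition of $v_\rho$, the $\delta_P^{1/2}$'s buried in the zeta integrals, and the precise combinatorics of the $\rho$-derivative map relating $\fm$ and $\fm'$). A secondary subtlety is that the $\rho$-derivative formalism of \Cref{S:Der} exists only for $\square$-irreducible $\rho$, which is exactly why the other cuspidal types had to be dealt with separately.
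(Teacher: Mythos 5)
Your proposal is correct and follows essentially the same route as the paper: reduce via \Cref{L:H2} and \Cref{L:Lind} to a single inertial line, dispose of the trivial-$L$-factor cases by \Cref{L:su}, and in the unramified $\square$-irreducible case sandwich $P(\langle\fm\rangle,T)$ between the divisibility coming from \Cref{P:L1} applied to the strongly $P$-regular derivative datum (\Cref{T:derseg}, \Cref{C:der}, \Cref{T:mainin}) and the divisibility $P(\langle\fm\rangle,T)\mid\prod_iP(\langle\De_i\rangle,T)$ from \Cref{L:su} and \Cref{L:Lind}, then resolve the remaining powers of $P(\rho,T)$ with the functional equation and the duality $\langle\fm\rangle^\lor\cong\langle\fm^\lor\rangle$. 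The one step you leave implicit is the paper's final combinatorial twist: the functional equation alone still permits equal positive exponents $l=l'>0$ (with $a\equiv1\bmod o(\rho)$), and the contradiction is obtained by noting that nonvanishing of the $\rho$-derivative forces strictly more copies of $[0,0]_\rho$ than of $[1,1]_\rho$ in $\fm$, while nonvanishing of the corresponding derivative of $\langle\fm^\lor\rangle$ forces the opposite inequality.
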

If $R=\ql$ this is {\cite[Theorem 2.7]{JL2}} and the case $\fm$ a banal multisegment was covered in \cite[Theorem 3.1, Theorem 5.7]{Mzeta}.
From now on we thus assume that $R=\fl$.
\begin{proof}[Proof of \Cref{T:JL}]
The idea of the proof is to apply \Cref{P:L1} together with \Cref{C:der}. Indeed, note that this shows that 
\begin{equation}\label{E:div}P(\Dlm(\pi),T)\lvert P(\pi,T)\end{equation} for all $\square$-irreducible cuspidal unramified representations $\rho$. Furthermore, note that by \Cref{T:mainin} we have a precise description of $\Dlm(\langle \fm\rangle)$ in terms of the multisegment $\fm$.

We start by proving the claim that \[L(\langle[0,b]_\rho\rangle,T)=\frac{1}{ 1-\chi(\varpi_{\mathrm{F}})q^{-db+{\frac{1-d}{2}}}T}\] if $\rho$ is unramified and $\square$-irreducible and $1$ otherwise. We proceed by induction on $l([0,b]_\rho)$. Note that the base-case is \cite[Theorem 3.1]{Mzeta} and if $\rho$ is either ramified or not $\square$-irreducible, \Cref{L:su} implies \[P(\langle[0,b]_\rho\rangle,T)\lvert P(\rho^{b+1},T)\stackrel{\Cref{L:Lind}}{=}1,\] which proves the claim.
Thus we assume $\rho$ unramified and $\square$-irreducible. Then the claim follows from the induction-hypothesis since \[\Dlm(\langle[0,b]_\rho\rangle)=\langle[1,b]_\rho\rangle\] by \Cref{T:mainin} and \Cref{L:quot} and hence by (\ref{E:div})
\[1-\rho(\varpi_{\mathrm{F}})q^{-db+{\frac{1-d}{2}}}T\vert P(\langle[0,b]_\rho\rangle,T).\]
On the other hand, since $\rho$ is supercuspidal, we can find a lift $[0,b]_\trho$ with $\trho$ unramified and hence $\rl(\langle[0,b]_\trho\rangle)$ contains $[0,b]_\rho$. By \Cref{L:su} and \Cref{L:lin} also \[ P(\langle[0,b]_\rho\rangle,T)\lvert 1-\rho(\varpi_{\mathrm{F}})q^{-db+{\frac{1-d}{2}}}T,\] proving the claim for a segment. 

Next we compute the $L$-function of $\langle\fm\rangle$.
Let us argue by induction on $\deg(\fm)$ and assume that we have proven the claim already for all multisegments of degree lesser than $\deg(\fm)$, the base case being \cite[Theorem 3.1]{Mzeta}. We can write $\langle\fm\rangle=\pi_1\times \pi_2$, where $\pi_1$ has $\square$-irreducible unramified cuspidal support and $\pi_2$ has $\square$-reducible or ramified cuspidal support by \Cref{L:H2}. Since a representation induced from $\square$-reducible cuspidal representations or ramified cuspidal support has trivial $L$-factor by \Cref{L:Lind}, $L(\pi_2,T)=1$ by \Cref{L:su}.
Again by \Cref{L:Lind}
\[L(\langle \fm\rangle,T)=L(\pi_1,T)L(\pi_2,T)=L(\pi_1,T)\] and
hence we can assume that $\langle\fm\rangle$ has $\square$-irreducible cuspidal unramified support.

From the $\ql$-case we obtain that for any lift $\tfm$ of $\fm$ \[\prod_{i=1}^kP(\langle\De_i\rangle,T)=\rl(P(\langle \tfm\rangle),T),\] since we assumed $\langle\fm\rangle$ to have $\square$-irreducible unramified cuspidal support.
    From (\ref{E:div}) and \Cref{T:derseg} we know that \[P(\Dlm(\langle\fm \rangle),T)\lvert P(\langle\fm\rangle,T)\]
    and from \Cref{L:su} and \Cref{L:Lind} it follows that \[P(\langle\fm\rangle,T)\lvert \prod_{i=1}^kP(\langle\De_i\rangle,T).\] 
    Fix now a $\square$-irreducible cuspidal unramified character $\rho$ such that $\Dl(\pi)\neq 0$. Let moreover $(A,f)$ be a best matching function of $\fm^\lor$ with respect to $\rho^\lor$. 
    By \Cref{T:derseg}, \Cref{T:invol} and the induction-hypothesis we then have that
    \[\prod_{i=1}^kP(\langle\De_i\rangle,T) P(\Dlm(\langle \fm\rangle),T)^{-1}=P(\rho,T)^{-k},\]
    where $k$ is the multiplicity of $[0,0]_\rho$ in $\fm-(\fm_f^\lor)^\lor$.
    Similarly, for a fixed $a\in\ZZ$ such that $\mathcal{D}_{\rho^\lor v_\rho ^{-a},r}(\langle \fm^\lor\rangle )\neq 0$, we let 
    $(A',f')$ be a best matching function of $\fm$ with respect to $\rho v_\rho^a$. 
    Set $k'$ to the multiplicity of $[-a,-a]_{\rho^\lor}$ in $\fm^\lor-(\fm_{f'})^\lor$. We thus obtain that \[P(\langle\fm\rangle,T)P(\rho,T)^l=\prod_{i=1}^kP(\langle\De_i\rangle,T)\] for some $l\le k$ and
    \[P(\langle\fm^\lor),T)P(\rho^\lor v_\rho^{-a},T)^{l'}=\prod_{i=1}^kP(\langle\De_i^\lor\rangle,T)\]
    for $l'\le k'$. We will now show that $l=l'=0$.
    By \Cref{L:lin}(2)
    \[\frac{\prod_{i=1}^kP(\langle\De_i\rangle,T)}{\prod_{i=1}^kP(\langle\De_i^\lor\rangle,q^{-1}T^{-1})}\rl(\epsilon(T,\langle\tfm), \widetilde{\psi}))=\]\[={\frac{\rl(L(\langle\tfm^\lor),q^{-1}T^{-1}))}{ \rl((L(\langle\tfm),T))}}\rl(\epsilon(T,\langle\tfm), \widetilde{\psi}))={\frac{L(\langle\fm^\lor),q^{-1}T^{-1})}{ L(\langle\fm),T)}}\epsilon(T,\langle\fm\rangle, {\psi}).\]    
        Rewriting, we obtain that
        \[\frac{P(\rho,T)^l}{P(\rho^\lor v_\rho^{-a},q^{-1}T^{-1})^{l'}}=\frac{\epsilon(T,\langle\fm\rangle, {\psi})}{\rl(\epsilon(T,\langle\tfm), \widetilde{\psi}))}.\]
Recall now that the right side is a unit in $\fl[T,T^{-1}]$ and hence so has to be left side. But
\[\frac{P(\rho,T)^l}{P(\rho^\lor v_\rho^{-a},q^{-1}T^{-1})^{l'}}=\frac{(1-\rho(\varpi_{\mathrm{F}})q^{{\frac{1-d}{2}}}T)^l}{(1-\rho^\lor(\varpi_{\mathrm{F}}^{-1}) q^{ad+{\frac{1-d}{2}}-1}T^{-1})^{l'}}\]and in order for this to be a unit
either $l=l'=0$, in which case we are done, or $l=l'>0$
and
$-a+1=0\mod o(\rho)$. 
We let now $d_0$ be the multiplicity of $[0,0]_\rho$ and $d_1$ be the multiplicity $[1,1]_\rho$ in $\fm$.
By property (5) of \Cref{S:bmf} and \Cref{T:invol} we obtain that if $[0,0]_\rho$ appears with non-zero multiplicity in $\fm-(\fm_f^\lor)^\lor$, then there have to exist more copies of $[0,0]_\rho$ in $\fm$ than copies of $[1,1]_\rho$. Thus if $k\ge l>0$, we have $d_1<d_0$.
 But on the other hand, the same argument shows that if $[-1,-1]_{\rho^\lor}=[-a,-a]_{\rho^\lor}$ appears with non-zero multiplicity in $\fm^\lor-(\fm_{f'})^\lor$, then there have to exist more copies of  $[-1,-1]_{\rho^\lor}$ in $\fm^\lor$ than copies of  $[0,0]_{\rho^\lor}$. Thus if $k'\ge l'>0$, we have $d_0<d_1$. But since we already showed that $l=l'$, the only possibility is $l=l'=0$.
\end{proof}
We obtain as a corollary the following.
\begin{corollary}\label{C:Lf}
    Let $\fm=\De_1+\ldots+\De_k$ be an aperiodic multisegment.
    Then
    \[\epsilon(T,\langle\fm\rangle,\psi)=\prod_{i=1}^k\epsilon(T,\langle\De_i\rangle,\psi)\] and
    \[\gamma(T,\langle\fm\rangle,\psi)=\prod_{i=1}^k\gamma(T,\langle\De_i\rangle,\psi).\]
\end{corollary}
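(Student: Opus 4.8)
The plan is to deduce the multiplicativity of the $\epsilon$- and $\gamma$-factors from the multiplicativity of the $L$-factors established in \Cref{T:JL}, combined with the functional equation relating $\gamma$, $\epsilon$ and $L$. First I would recall the two defining relations: for any irreducible $\pi$ of $G_n$ one has
\[\gamma(T,\pi,\psi)=\epsilon(T,\pi,\psi)\frac{L(\pi^\lor,q^{-1}T^{-1})}{L(\pi,T)},\]
and by \Cref{L:Lind} applied to $\sigma=\langle\De_1\rangle\times\ldots\times\langle\De_k\rangle$,
\[\gamma(T,\langle\De_1\rangle\times\ldots\times\langle\De_k\rangle,\psi)=\prod_{i=1}^k\gamma(T,\langle\De_i\rangle,\psi).\]
Since $\langle\fm\rangle=\Z(\fm)^*$ is by \Cref{T:N5} an irreducible subquotient of $\I(\fm)$, hence of $\Z(\De_1)\times\ldots\times\Z(\De_k)$, and the Aubert--Zelevinsky involution (\Cref{T:invol}) takes this to the statement that $\langle\fm\rangle$ is a subquotient of $\langle\De_1\rangle\times\ldots\times\langle\De_k\rangle$ (up to sign, in the Grothendieck group), \Cref{L:su} gives
\[\gamma(T,\langle\fm\rangle,\psi)=\gamma(T,\langle\De_1\rangle\times\ldots\times\langle\De_k\rangle,\psi)=\prod_{i=1}^k\gamma(T,\langle\De_i\rangle,\psi),\]
which is the second assertion.

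For the $\epsilon$-factors I would then simply unwind the functional equation. We have
\[\epsilon(T,\langle\fm\rangle,\psi)=\gamma(T,\langle\fm\rangle,\psi)\frac{L(\langle\fm\rangle,T)}{L(\langle\fm\rangle^\lor,q^{-1}T^{-1})}.\]
Now $\langle\fm\rangle^\lor\cong\langle\fm^\lor\rangle$ (from $\Z(\fm)^\lor\cong\Z(\fm^\lor)$ in \Cref{T:Zinj} and the fact that the Aubert dual commutes with the contragredient), and $\fm^\lor=\De_1^\lor+\ldots+\De_k^\lor$ is again aperiodic, so \Cref{T:JL} applies both to $\fm$ and to $\fm^\lor$, giving
\[L(\langle\fm\rangle,T)=\prod_{i=1}^kL(\langle\De_i\rangle,T),\qquad L(\langle\fm^\lor\rangle,q^{-1}T^{-1})=\prod_{i=1}^kL(\langle\De_i^\lor\rangle,q^{-1}T^{-1}).\]
Substituting these and the product formula for $\gamma$ just obtained, the right-hand side factors termwise as $\prod_{i=1}^k\gamma(T,\langle\De_i\rangle,\psi)\frac{L(\langle\De_i\rangle,T)}{L(\langle\De_i^\lor\rangle,q^{-1}T^{-1})}=\prod_{i=1}^k\epsilon(T,\langle\De_i\rangle,\psi)$, using $\langle\De_i\rangle^\lor\cong\langle\De_i^\lor\rangle$ once more. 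This yields the first assertion.

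The only point requiring a little care — and the main (mild) obstacle — is justifying the identity $\gamma(T,\langle\fm\rangle,\psi)=\gamma(T,\langle\De_1\rangle\times\ldots\times\langle\De_k\rangle,\psi)$, i.e.\ that $\langle\fm\rangle$ really occurs as a subquotient of $\langle\De_1\rangle\times\ldots\times\langle\De_k\rangle$ so that \Cref{L:su} is applicable. This follows by applying the Aubert--Zelevinsky involution to \Cref{L:sam}: from $[\Z(\fm)]\le[\Z(\De_1)\times\ldots\times\Z(\De_k)]$ and \Cref{T:invol}(1) one gets $\bd([\Z(\fm)])=\bd([\Z(\De_1)])\times\ldots\times\bd([\Z(\De_k)])$ in $\RR$, and comparing the distinguished constituents with the correct cuspidal support (\Cref{T:invol}(3)), using that cuspidal supports add up under $\times$, shows $\langle\fm\rangle$ appears in $\langle\De_1\rangle\times\ldots\times\langle\De_k\rangle$. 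One should also note $L(\langle\fm\rangle^\lor,q^{-1}T^{-1})\ne 0$ so that the quotient in the functional equation makes sense, which is immediate from the explicit formula in \Cref{T:JL} (the denominator polynomials have constant term $1$). With these remarks in place the corollary is purely formal.
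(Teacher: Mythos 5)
Your argument is correct and follows essentially the same route as the paper: the $\gamma$-identity comes from \Cref{L:su} (a subquotient has the same $\gamma$-factor) combined with the product formula of \Cref{L:Lind}, and the $\epsilon$-identity then drops out of the functional equation together with the multiplicativity of the $L$-factors from \Cref{T:JL}. Your extra care in verifying that $\langle\fm\rangle$ occurs in $\langle\De_1\rangle\times\cdots\times\langle\De_k\rangle$ (via \Cref{L:sam} and \Cref{T:invol}) and in handling the contragredient is a welcome elaboration of steps the paper leaves implicit.
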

\begin{proof}
Notice that if $\pi$ is a subquotient of $\pi'$,
$\gamma(T,\pi,\psi)=\gamma(T,\pi',\psi)$ and hence the claim for the $\gamma$-factor follows from \Cref{L:Lind}. By the functional equation, the claim also follows for the $\epsilon$-factors.
\end{proof}
\subsubsection{ LLC and local factors}\label{S:LLC}
In this subsection assume $\mathrm{D}=\mathrm{F}$.
We recall the map
\[\Cc\colon \Irr_n\iso \{\Cc-\text{parameters of length }n\} \]
defined in \cite{Ccor}.
Let us introduce the main actors of this story. We denote by $W_\mathrm{F}$ the Weil group of $\mathrm{F}$, $I_\mathrm{F}$ the inertia sugroup and by $\nu$ the unique unramified character of $\wf$ acting on the Frobenius by $\nu(\mathrm{Frob})=q^{-1}$. A Deligne $R$-representation of $\wf$ is a pair $(\Phi,U)$, where $\Phi$ is a finite-dimensional, smooth representation of $\wf$ over $R$ and $U$ an element in $\Ho_{\wf}(\nu\Phi,\Phi)$.
We call $(\Phi,U)$ {semi-simple} if $\Phi$ is semi-simple and if $R=\fl$, let $o(\Phi)$ be the smallest natural number $k$ such that \[\Phi\cong \nu^k\Phi.\] Let \[\ZZ[\Phi]\coloneq \begin{cases}
    \{\Phi,\ldots,\nu^{o(\Phi)-1}\Phi\}&\, R=\fl,\\
    \{\nu^k\Phi:k\in \ZZ\}&\, R=\ql.
\end{cases}\]
We say a semi-simple $R$-Deligne representation is supported on $\ZZ[\Phi]$ if the underlying $\wf$-representation is the direct sum of elements in $\ZZ[\Phi]$.
We say two indecomposable representations $(\Phi,U)$ and $(\Phi',U')$ are equivalent if $(\Phi',U')\cong (\Phi,\lambda U)$ for some non-zero scalar $\lambda\in R$ and two general semi-simple $R$-Deligne representations are equivalent if their respective indecomposable parts are equivalent.
We denote by $\mathrm{Rep}_{ss}(\wf,R)$ the set of equivalence classes of semi-simple $R$-Deligne representations and by $\mathrm{Rep}_{ss,n}(\wf,R)$ the subset of representations of length $n$, where the length is the dimension of the underlying $\wf$-representation.
We call an $R$-Deligne representation $(\Phi,U)$ {nilpotent} if $U$ is nilpotent and recall that over $R=\ql$, the equivalence classes of semi-simple representations are in bijections with equivalence classes of nilpotent semi-simple representations. We denote the latter set by $\mathrm{Nil}_{ss}(\wf,R)$ and by $\mathrm{Nil}_{ss,n}(\wf,R)$ its subset consisting of representations of length $n$. 

For $r\in \ZZ_{\ge 1}$, we let $[0,r-1]$ be the $R$-Deligne representation whose underlying $\wf$-representation is \[\bigoplus_{i=0}^{r-1}\nu^k\]
and $U$ is defined by \[U(x_0,\ldots, x_{r-1})=(0,x_0,\ldots, x_{r-2}).\]
For $\Phi$ a general irreducible representation of $\wf$ we can then define
\[[0,r-1]\otimes \Phi\]
and these parametrize all nilpotent semi-simple $R$-Deligne representations up to equivalence. 
The local Langlands correspondence gives then a well-known canonical bijection
\[\mathrm{LLC}\colon \mathrm{Nil}_{ss,n}(\wf,\ql)\iso \Irr_n(\ql).\]
Over $\fl$ there exists a second class of indecomposable semi-simple representations of the following form.
Let $\Phi$ be an irreducible representation of $\wf$, denote \[\Psi(\Phi)\coloneq \bigoplus_{k=0}^{o(\Phi)-1}\nu^k\Phi\] and pick an isomorphism $I\colon \nu^{o(\Phi)}\Phi\iso \Phi$. We then define $C(I,\Phi)= C(\Phi)$ as the $\fl$-Deligne representation with underlying $\wf$-representation $\Psi(\Phi)$ and \[U(x_0,\ldots x_{o(\Phi)-1})=(I(x_{o(\Phi)-1}),x_0,\ldots, x_{o(\Phi)-2}).\] Then $C(\Phi)$ depends up to equivalence only on $\ZZ[\Phi]$ and the underlying morphism $U$ of $C(\phi)$ is a bijection.
The indecomposable semi-simple $\fl$-Deligne representations are then classified up to equivalence by \[[0,r-1]\otimes \Phi\text{ and }[0,r-1]\otimes C(\Phi),\] see \cite[Main Theorem 1]{Ccor}. 

We call an semi-simple $\ql$-representation $\Phi$ of $\wf$ \emph{integral} if it admits a $\wf$-stable $\zl$-lattice $L$, which generates $\Phi$. One can then 
define its reduction mod $\ell$ $\rl(\Phi)$ by taking the semi-simplification of $L\otimes_\zl\fl$, which is independent of the chosen lattice. We let $\mathrm{Nil}_{ss}^e(\wf,\ql)$ be the semi-simple nilpotent representations whose underlying $\wf$-representation is integral.
One obtains hence a map \[\rl\colon \mathrm{Nil}_{ss}^e(\wf,\ql)\ra \mathrm{Nil}_{ss}(\wf,\fl)\]
sending \[[0,r-1]\otimes \Phi\mapsto  [0,r-1]\otimes\rl(\Phi).\]

We also recall the following surjection introduced in \cite[I.8.6]{LLC}. We denote by $\Irr_n(\ql)_e$ the isomorphism classes of integral irreducible representations. For $\pi$ in this set, choose $\alpha$ the maximal partition such that $\pi$ is $\alpha$-degenerate. Then map $\pi$ to the unique irreducible constituent in $[\rl(\pi)]$ which is $\alpha$-degenerate.
The so-constructed map is surjective and denoted by
\[\jl\colon \Irr_n(\ql)_e\ra \Irr_n(\fl).\]
\begin{theorem}[{\cite[Theorem 1.6, Theorem 1.8.5]{LLC}}]
    There exists a bijection \[\mathrm{V}\colon \Irr_n(\fl) \iso \mathrm{Nil}_{ss,n}(\wf,\fl) ,\]
which is uniquely characterized by the equality
\[\mathrm{V}(\jl(\mathrm{LLC}(\Phi,U)^*)^*)=\rl(\Phi,U).\]
\end{theorem}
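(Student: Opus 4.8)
The plan is to construct $\mathrm{V}$ directly from the characterising identity, check that it is well defined on $\Irr_n(\fl)$ and bijective, and then deduce uniqueness. First I would record that $(\Phi,U)\mapsto\mathrm{LLC}(\Phi,U)^{*}$ is a bijection $\mathrm{Nil}_{ss,n}(\wf,\ql)\iso\Irr_n(\ql)$ which restricts to a bijection $\mathrm{Nil}_{ss,n}^{e}(\wf,\ql)\iso\Irr_n(\ql)_{e}$: integrality of $\Phi$ is equivalent to integrality of $\det\Phi$, which under local class field theory is the central character of $\mathrm{LLC}(\Phi,U)$, and the Aubert--Zelevinsky involution preserves the cuspidal support by \Cref{T:invol}(3), hence preserves integrality. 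Composing with the surjection $\jl\colon\Irr_n(\ql)_{e}\ra\Irr_n(\fl)$ and the (bijective) Aubert--Zelevinsky involution on $\Irr_n(\fl)$, every $\sigma\in\Irr_n(\fl)$ has the shape $\jl(\mathrm{LLC}(\Phi,U)^{*})^{*}$, and I would set $\mathrm{V}(\sigma)\coloneq\rl(\Phi,U)$ for any such $(\Phi,U)$.

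The crux is that $\rl(\Phi,U)$ depends only on $\sigma$. Using \Cref{L:H2} and the description of cuspidals in \Cref{S:cusclas} I would first reduce to cuspidal support concentrated on a single $\ZZ[\rho]$, so that everything in sight is governed by multisegments in $\Ms(\rho)$ and by Deligne representations supported on a single twist-orbit. Over $\ql$ one writes $\mathrm{LLC}(\Phi,U)^{*}=\Z(\tfm)$, whence $(\Phi,U)=\mathrm{LLC}^{-1}(\langle\tfm\rangle)$ is the explicit nilpotent semisimple Deligne representation attached to $\tfm$: a direct sum of indecomposable pieces $[0,l(\TD)-1]\otimes\phi_{\TD}$, one per segment $\TD$ of $\tfm$, with $\phi_{\TD}$ the irreducible $\wf$-representation on the cuspidal line of $\TD$. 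On the automorphic side, $\jl(\Z(\tfm))=\Z(\rl(\tfm))$ for any lift $\tfm$: by \Cref{T:liftgood} the constituent $\Z(\rl(\tfm))$ occurs with multiplicity one in $\rl(\Z(\tfm))$, and since a lift preserves the lengths of all segments (and the degrees of the underlying cuspidals) it preserves the degeneracy partition $\mu$, so by \Cref{T:N5} this constituent is exactly the maximally degenerate one selected by $\jl$. On the Galois side, $\rl([0,l-1]\otimes\phi_{\TD})$ is a sum of pieces $[0,l'-1]\otimes\rl(\phi_{\TD})$ and $C(\rl(\phi_{\TD}))$ whose isomorphism type is determined by $l(\TD)$ modulo $o(\rl(\phi_{\TD}))$ together with the residues of the endpoints of $\TD$ --- precisely the data carried by the $\fl$-segment $\rl(\TD)$. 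Hence $\rl(\Phi,U)$ depends only on $\rl(\tfm)$; the remaining point --- that all $\fl$-multisegments $\fm$ with $\Z(\fm)=\sigma^{*}$ have the same image under this combinatorial assignment, so that the failure of injectivity of $\Z$ on non-aperiodic multisegments does no harm --- is exactly the matching of the ``wrap-around'' combinatorics, the $C(\Phi)$-summands on the Galois side reflecting the aperiodicity constraint for $\fl$-multisegments. I expect this finite but delicate bookkeeping to be the main obstacle; it leans on \Cref{T:rel}, \Cref{T:N5} and \Cref{T:Zinj}.

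It then remains to verify bijectivity, which is routine. For surjectivity, given $(\bar\Phi,\bar U)\in\mathrm{Nil}_{ss,n}(\wf,\fl)$ I would lift each indecomposable summand to an integral nilpotent semisimple $\ql$-Deligne representation --- lifting irreducible $\wf$-representations over $\fl$ as in the supercuspidal case of \Cref{T:rel} --- to obtain $(\Phi,U)\in\mathrm{Nil}_{ss,n}^{e}(\wf,\ql)$ with $\rl(\Phi,U)=(\bar\Phi,\bar U)$, so that $\mathrm{V}$ hits $(\bar\Phi,\bar U)$ by construction. For injectivity, the well-definedness analysis identifies $\Irr_n(\fl)$ (via $\Z$ and \Cref{T:Zinj}) and $\mathrm{Nil}_{ss,n}(\wf,\fl)$ with one and the same set of combinatorial normal forms, compatibly with $\mathrm{V}$, so $\mathrm{V}(\sigma_1)=\mathrm{V}(\sigma_2)$ forces $\sigma_1=\sigma_2$. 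Finally, since the elements $\jl(\mathrm{LLC}(\Phi,U)^{*})^{*}$ exhaust $\Irr_n(\fl)$, the characterising identity determines $\mathrm{V}$ completely, giving uniqueness.
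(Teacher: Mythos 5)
The paper does not prove this statement: it is quoted wholesale from \cite[Theorem 1.6, Theorem 1.8.5]{LLC}, so there is no internal argument to compare yours against; I can only judge the sketch on its own terms. Your overall architecture is the natural one — define $\mathrm{V}$ by the characterising identity, check well-definedness, then bijectivity and uniqueness — and several ingredients are sound: the restriction of $(\Phi,U)\mapsto\mathrm{LLC}(\Phi,U)^{*}$ to integral objects, the identity $\jl(\Z(\tfm))=\Z(\rl(\tfm))$ via \Cref{T:liftgood} and \Cref{T:N5}, and the surjectivity and uniqueness endgame are all fine.

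The gap is exactly where you write ``delicate bookkeeping'': the well-definedness of $\mathrm{V}$ is not a verification at the end of the proof, it \emph{is} the theorem. Concretely, you must show that $\Z(\rl(\tfm_1))\cong\Z(\rl(\tfm_2))$ forces $\rl(\Phi_1,U_1)\cong\rl(\Phi_2,U_2)$, and the reductions $\rl(\tfm_i)$ are in general \emph{not} aperiodic (e.g.\ $\tfm=[0]_{\trho}+\cdots+[e-1]_{\trho}$ with $o(\rl(\trho))=e$), so \Cref{T:Zinj} gives no control over the fibres of $\Z$ there; nothing in this paper describes those fibres, nor the matching fibres of $\rl$ on $\mathrm{Nil}^{e}_{ss}(\wf,\ql)$, so deferring this step leaves the proof without its core. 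Moreover, your description of the Galois-side reduction is incorrect: by the definition recalled in \Cref{S:LLC}, $\rl([0,r-1]\otimes\Phi)=[0,r-1]\otimes\rl(\Phi)$ — the shift length is preserved and no $C(\rl(\Phi))$ summands can appear, since the target $\mathrm{Nil}_{ss,n}(\wf,\fl)$ consists of \emph{nilpotent} Deligne representations while $C(\Phi)$ has invertible $U$; the cyclic summands enter only through the separate map $\vc$ defining $\Cc=\vc\circ\mathrm{V}$. Hence the ``wrap-around'' mechanism you propose to absorb the non-injectivity of $\Z$ on periodic multisegments is not available on the target of $\mathrm{V}$, and the actual compatibility that must be proved is of a different, harder nature than your sketch suggests.
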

 We recall next the map
\[\vc\colon \mathrm{Nil}_{ss}(\wf,\fl)\ra \mathrm{Rep}_{ss}(\wf,\fl),\]
 \emph{cf.} \cite[§6.3]{Ccor}.
A nilpotent $\fl$-Deligne representation supported in $\ZZ[\Phi]$ can always be written in the form
\[\Phi_{ap}\oplus \Phi_{cyc}\]
with \[\Phi_{ap}=\bigoplus_{i\ge 1}\bigoplus_{k=1}^{o(\Phi)-1}c_{i,k}[0,i-1]\otimes \nu^k\Phi,\] $c_{i,k}\in \ZZ_{\ge 0}$, $c_{i,k}=0$ for $i>>0$ and for each $i$ there exists at least one $k$ such that $c_{i,k}=0$.
Furthermore, $\Phi_{cyc}$ is of the form
\[\Phi_{cyc}=\bigoplus_{i\ge 1}a_{i}[0,i-1]\otimes \bigoplus_{k=1}^{o(\Phi)-1}\nu^k\Phi\] with $a_i\in \ZZ_{\ge 0}$, $a_i=0$ for $i>>0$.
Then \[\vc(\Phi_{ap}\oplus \Phi_{cyc})\coloneq \Phi_{ap}\oplus \bigoplus_{i\ge 1}a_{i}[0,i-1]\otimes C(\Phi).\] Finally, for a nilpotent $\fl$-Deligne representation $(\Phi_1,U_1)\oplus\cdots\oplus (\Phi_k,U_k)$ with each of the summands supported in another $\ZZ[\Phi']$,
\[\vc((\Phi_1,U_1)\oplus\cdots\oplus (\Phi_k,U_k))\coloneq \vc(\Phi_1,U_1)\oplus\cdots\oplus \vc(\Phi_k,U_k).\]
One can then finally define the map
\[\Cc\coloneq \vc\circ\mathrm{V}\colon \Irr_n(\fl)\ra \mathrm{Rep}_{ss}(\wf,\fl).\]
We note the following properties of $\Cc$, see \cite[Examples 6.5]{Ccor}. If $\rho$ is supercuspidal, $\Cc(\rho)$ is banal if and only $\rho\in \scu$ and in this case $\Cc(\rho)$ is unramified if and only if $\rho$ is.
On the other hand, if $\rho$ is $\square$-reducible, $\Cc(\rho)=(\Phi,I_\Phi)$ for some bijective map $I_\Phi$.
More generally, if $\pi= \langle \fm\rangle\in \Irr_n$ for an aperiodic multisegment $\fm=\De_1+\ldots+\De_k$ with $\De_i=[a_i,b_i]_{\rho_i}$ we have \[\Cc(\pi)=\bigoplus_{i=1}^k[0,b_i-a_i]\otimes \nu^{a_i}\Cc(\rho_i).\]

For $(\Phi,U)\in \mathrm{Rep}_{ss}(\wf,R)$ one can associate a local $L$-factor 
\[L((\Phi,U),T)\coloneq \det\nolimits(\restr{1-T\Phi(\mathrm{Frob})}{\mathrm{Ker}(U)^{I_\mathrm{F}}})^{-1}\in R(T),\] an $\epsilon$-factor \[\epsilon(T,(\Phi,U),\psi)\in R[T,T^{-1}]^*\] and $\gamma$-factor \[\gamma(T,(\Phi,U),\psi)\in R(T)\] to $(\Phi,U)$, \emph{cf.} \cite[§5]{Ccor}. Moreover, the $\gamma$-factor does only depend on $\Phi$, all three factors are multiplicative with respect to $\oplus$ and for $(\widetilde{\Phi},U)\in  \mathrm{Nil}_{ss}^e(\wf,\ql)$,
\[\rl(\gamma(T,(\widetilde{\Phi},U),\widetilde{\Psi}))=\gamma(T,\rl(\widetilde{\Phi},U),\psi),\]
see \cite[Proposition 5.6, Proposition 5.11]{Ccor}.
\begin{theorem}\label{T:ccor}
    The map $\Cc$ respects the local factors, \emph{i.e.} for $\pi\in \Irr_n$
    \[L(\pi,T)=L(\Cc(\pi),T),\, \epsilon(T,\pi,\psi)=\epsilon(T,\Cc(\pi),\psi),\, \gamma(T,\pi,\psi)=\gamma(T,\Cc(\pi),\psi).\]
\end{theorem}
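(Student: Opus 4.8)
The plan is to reduce everything to the case of a single segment and then match the two sides by combining the representation-theoretic computation from \Cref{T:JL} and \Cref{C:Lf} with the Galois-side computations of \cite{Ccor}. First I would note that for $\pi = \langle\fm\rangle$ with $\fm = \De_1+\ldots+\De_k$ aperiodic, $\De_i = [a_i,b_i]_{\rho_i}$, the excerpt already records $\Cc(\pi) = \bigoplus_{i=1}^k [0,b_i-a_i]\otimes\nu^{a_i}\Cc(\rho_i)$, and that $L$, $\epsilon$, $\gamma$ on the Galois side are multiplicative with respect to $\oplus$. On the representation side, \Cref{T:JL} and \Cref{C:Lf} give $L(\langle\fm\rangle,T) = \prod_i L(\langle\De_i\rangle,T)$ and similarly for $\epsilon$ and $\gamma$. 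So it suffices to prove the identities for a single segment $\De = [a,b]_\rho$, and in fact, twisting by the unramified character $\nu^a$ (which shifts both sides compatibly), for $\De = [0,b]_\rho$.

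For the single-segment case I would split according to the nature of $\rho$. If $\rho$ is ramified or $\square$-reducible, then by \Cref{T:JL} we have $L(\langle\De\rangle,T) = 1$; on the Galois side, when $\rho$ is $\square$-reducible $\Cc(\rho) = (\Phi,I_\Phi)$ with $I_\Phi$ bijective, so $\mathrm{Ker}(U) = 0$ for $[0,b]\otimes\Cc(\rho)$ (the nilpotent-part operator becomes bijective on each $\ZZ[\Phi]$-block, exactly the mechanism of the $\vc$ construction), whence $L(\Cc(\langle\De\rangle),T) = 1$; if $\rho$ is ramified, $\Phi^{I_\mathrm{F}} = 0$ so again the $L$-factor is trivial. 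If $\rho = \chi$ is unramified and $\square$-irreducible with $q^d \neq 1$, then $\Cc(\rho)$ is the unramified character $\chi$ of $\wf$ (via $\mathrm{Frob}\mapsto\chi(\varpi_\mathrm{F})$, appropriately normalized), $\Cc(\langle\De\rangle) = [0,b]\otimes\chi$, and $\mathrm{Ker}(U)^{I_\mathrm{F}}$ is one-dimensional spanned by the bottom vector, on which $\mathrm{Frob}$ acts by $\chi(\varpi_\mathrm{F})q^{-b}$ times the normalization factor $q^{(1-d)/2}$; this matches the formula in \Cref{T:JL} exactly. The remaining degenerate case ($q^d = 1$) gives trivial $L$-factor on both sides. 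For $\epsilon$ and $\gamma$ of a single segment, I would invoke compatibility with reduction mod $\ell$: choose an unramified (or suitable) lift $\trho$, use that over $\ql$ the statement is the classical Godement--Jacquet/local Langlands compatibility (for the $\gamma$-factor it is enough to work on the Grothendieck group via \Cref{L:su}, since $\gamma$ only sees the semisimplification), and then apply $\rl(\gamma(T,(\widetilde\Phi,U),\widetilde\psi)) = \gamma(T,\rl(\widetilde\Phi,U),\psi)$ from \cite[Proposition 5.11]{Ccor} together with \Cref{L:lin}. For $\epsilon$ one then recovers it from $\gamma$ and $L$ via the functional equation, after checking $L$ matches — but here one must be careful that $L$ can drop under reduction, so the honest argument for $\epsilon$ goes through $\gamma$ directly (which is insensitive to this) plus the already-established $L$-identity.

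The main obstacle I anticipate is the bookkeeping in the $\square$-reducible case: one must check that the map $\vc$, which replaces the "cyclic" summands $[0,i-1]\otimes\bigoplus_k\nu^k\Phi$ by $[0,i-1]\otimes C(\Phi)$, really does kill the $I_\mathrm{F}$-invariants of the kernel in precisely the way that makes $L(\Cc(\pi),T) = 1$ agree with $L(\pi,T) = 1$ coming from \Cref{T:JL} via \Cref{L:su}. Concretely: a cuspidal representation $\rho$ is $\square$-reducible iff $o(\rho) = 1$, in which case $\rho = \st(\rho',n)$ for a supercuspidal $\rho'$, its $L$-factor is trivial, and $\Cc(\rho) = C(\Phi')$ has bijective monodromy; one needs that for $\langle\De\rangle$ with $\De = [0,b]_\rho$ and for general aperiodic $\fm$ supported on $\square$-reducible or ramified cuspidals, the resulting $C$-parameter has no $I_\mathrm{F}$-fixed kernel vectors. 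This is essentially forced by the structure of $\vc$ and the fact that $C(\Phi)$ is a "cycle" rather than a "chain," but writing it cleanly requires unwinding the normal form $\Phi_{ap}\oplus\Phi_{cyc}$ and how $[0,r-1]\otimes C(\Phi)$ behaves under $(-)^{I_\mathrm{F}}\circ\mathrm{Ker}(-)$. A secondary subtlety, already flagged above, is that $L$-factors are not in general compatible with $\rl$ (only $P(\pi,T) \mid \rl(P(\tpi,T))$), so the $\ql$-to-$\fl$ descent must be routed through $\gamma$, using that $\gamma$ is a genuine invariant of the Grothendieck class, and the $L$-equality must come from the independent computation in \Cref{T:JL}; once both $L$ and $\gamma$ are known on the $\fl$ side, $\epsilon$ follows formally.
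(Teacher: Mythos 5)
Your proposal is correct and follows essentially the same route as the paper: reduce $L$ to a single segment via multiplicativity on both sides and match it against the explicit formula of \Cref{T:JL}, handle $\gamma$ by reducing to (super)cuspidal support via \Cref{L:su} and the compatibility of reduction mod $\ell$ with $\gamma$-factors on both the automorphic and Galois sides, and recover $\epsilon$ from the functional equation. Your remark that the $\ell$-adic descent must be routed through $\gamma$ rather than $L$ (since only $P(\pi,T)\mid\rl(P(\widetilde{\pi},T))$ holds in general) is exactly the care the paper takes implicitly by computing $L$ independently via \Cref{T:JL}.
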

\begin{proof}
    We start with the equality of the $\gamma$-factors. Since the $\gamma$-factor of $\pi$ does only depend on the supercuspidal support of $\pi$ by \Cref{L:su} and the $\gamma$ factor of $\Cc(\pi)$ depends only on the underlying $\wf$-representation, the multiplicativity of the $\gamma$-factor of $\mathrm{Rep}_{ss}(\wf,\fl)$ and \Cref{L:Lind} show that it is enough to show the equality for a supercuspidal representations $\rho$. But by picking a lift $\trho$ of $\rho$, we see that
    \[\gamma(T,\rho,\psi)=\rl(\gamma(T,\trho,\widetilde{\Psi}))=\rl(\gamma(T,\mathrm{LLC}^{-1}(\trho),\widetilde{\Psi}))=\gamma(T,\Cc(\rho),\psi).\]
    For the $L$-factor, note that by \Cref{T:JL} both sides are multiplicative and hence it is enough to show the claim for a segment $\pi=\langle [a,b]_\rho\rangle$. It is now easy to check from the definition and the discussion above the theorem, that
    \[L([0,b-a]\otimes \Cc(\rho v_\rho^a),T)=L(\Cc(\rho v_\rho^b),T).\]
    Now if $\rho$ is either non-banal, \emph{i.e.} $\square$-reducible, or not an unramified character, it follows from the above discussion that either the underlying morphism in $\Cc(\rho v_\rho^b)$ is bijective or there exists no $I_{\mathrm{F}}$-fixed vector, hence the $L$-factor is trivial. On the other hand, if $\rho$ is an unramified character, \[\Cc(\rho v_\rho^b)(\mathrm{Frob})=\rho (\varpi_{\mathrm{F}})q^{-b}.\] We thus arrive at
    \[L([0,b-a]\otimes \Cc(\rho v_\rho^a),T)=\begin{cases}
        1&\text{ if }o(\rho)=1\text{ or }\rho \text{ is ramified},\\
        \frac{1}{ 1-\rho(\varpi_{\mathrm{F}})q^{-b}T}&\text{ if }\rho\in\scu\text{ and }\rho \text{ is unramified}.
    \end{cases}\]
    By \Cref{T:JL} this concise with $L(\langle [a,b]_\rho\rangle,T)$. Finally, the equality of $\epsilon$-factors follows from the functional equation \cite[Definition 5.5]{Ccor}.
\end{proof}
\section{Quiver varieties}\label{S:Cons}
 In this section we will describe certain quiver varieties and explain how they relate to the representation theory of $G_n$ over $\fl$. We start by setting the following geometric stage.
 \subsection{ Quiver varieties}\label{S:qv}
We fix a cuspidal representation $\rho$.
Let $Q={A^+_{o(\rho)-1}}$ be the affine Dynkin quiver with $o(\rho)$ vertices numbered $\{0,\ldots,o(\rho)-1\}$ and an arrow from $i$ to $j$ if $j=i+1\mod o(\rho)$, \emph{i.e.} of $Q$ is of the form
\[\begin{tikzcd}
	& \ldots\arrow[dr, bend left]& \\
	0\arrow[ru, bend left] && o(\rho)-2\arrow[dl, bend left]\\
	& o(\rho)-1\arrow[ul, bend left] &
\end{tikzcd}\]
If $o(\rho)=\infty$, we let $A_\infty=A_\infty^+$
\[\begin{tikzcd}    
\ldots\arrow[r]&i-1\arrow[r]&i\arrow[r]&i+1\arrow[r]&\ldots
\end{tikzcd}\]
A finite dimensional representation of $Q$ is a graded vector space \[V=\bigoplus_{i=0}^{o(\rho)-1}V_i\] over some algebraically closed and complete field $K$ (e.g. $\mathbb{C}$) and a linear map $T_{i\rightarrow j}\in \mathrm{Hom}(V_i,V_{j})$ for each edge $i\rightarrow j$. Fixing $V$ we denote by $E(V)$ the affine space of representations of $Q$ with underlying vector space $V$ and equip it with the natural topology coming from $K$.
The group \[\mathrm{GL}(V)=\mathrm{GL}(V_0)\times\ldots\times \mathrm{GL}(V_{o(\rho)-1})\]
acts on $E(V)$ by
\[(g_0,\ldots,g_{o(\rho)-1})\cdot (x_0,\ldots,x_{o(\rho)-1})\coloneq (g_1x_0g_0^{-1},g_2x_1g_1^{-1},\ldots,g_0x_{o(\rho)-1}g_{o(\rho)-1}^{-1}).\] We denote by $N(V)\subseteq E(V)$ the space of nilpotent representations, \emph{i.e.} the collection of all $T\in E(V)$ such that for large enough $N\in\mathbb{N}$, $T^N=0$, where $T$ is seen as an element of $\mathrm{Hom}(V,V)$.
 Set \[\mathfrak{s}_V\coloneq (\dim_K V_0)\cdot [\rho]+\ldots+(\dim_K V_{o(\rho)-1})\cdot [\rho v^{o(\rho)-1}].\]
 The $\mathrm{GL}(V)$-orbits of $N(V)$ are parametrized by multisegments with cuspidal support $\mathfrak{s}_V$ as follows, see \cite[§15]{Lusztig1991QuiversPS}. 
 
 First assign to a segment $\De=[a,b]_\rho$ with cuspidal support \[\cus_\Ms(\De)={d_0\cdot [\rho]+\ldots+d_{o(\rho)-1} \cdot[\rho v^{o(\rho)-1}]}\] the vector space \[V(\De)\coloneq \bigoplus_{i=0}^{o(\rho)-1}K^{d_i}\] together with a basis $e_a,e_{a+1},\ldots,e_b$, $e_i\in V_i$, where the index of $e_i$ is seen modulo $o(\rho)$. The linear map $\lambda(\De)$ associated to $\De$ sends $e_i$ to $e_{i+1}$ for $a\le i<b$ and $e_b$ to $0$.
 
 To a multisegment $\fm=\De_1+\ldots+\De_n$ we then associate the linear maps $\lambda(\fm)\coloneq \lambda(\De_1)\oplus\ldots\oplus\lambda(\De_n)$ of $ V(\fm)\coloneq  V(\De_1)\oplus\ldots\oplus V(\De_n)$.
 It is easy to see that if $\mathrm{cusp}(\fm)=\mathfrak{s}_V$ then $V(\fm)\cong V$ as graded vector spaces and that $\lambda(\fm)$ and $\lambda(\fm')$ lie in the same orbit of $G_V$ if and only if $\fm$ and $\fm'$ are equivalent.
Denote the orbit of $\lambda(\fm)$ in $V$ as $X_\fm$.
We define an order on the set of multisegments by setting $\fn \lp \fm$ if \[X_\fm\subseteq \overline{X}_\fn ,\] where $\overline{X}_\fn$ is the closure of $X_\fn$ in $N(V)$ with its topology being the one coming from $E(V)$.

\subsection{ }\label{S:motiv} 
 To motivate the next definitions we recall first the case $R=\ql$, which was treated in \cite{Zel}.
Assume for a moment that $R=\ql$. Let $\rho$ be a cuspidal representation of $G_m$ and $\Delta=[a,b]_\rho$ and $\Delta'=[a',b']_\rho$ two segments in $\mathcal{S}(\rho)$ such that $\De$ precedes $\De'$, \emph{i.e.} \[a+1\le a'\le b+1\le b'.\]Define the union and intersection of $\Delta$ and $\Delta'$ as \[\Delta\cup\Delta'\coloneq [a,b']_\rho,\, \Delta\cap\Delta'\coloneq [a',b]_\rho\] and observe that they are unlinked. In \cite[§4]{Zel} the following decomposition in the Grothendieck group was proven.
\begin{equation}\label{E:caseql}
    [\Z(\Delta)\times \Z(\Delta')]=[\Z(\Delta\cup\Delta)\times \Z(\Delta\cap\Delta')]+[\Z(\Delta+\Delta')].
\end{equation}
Let $\fm\in \Ms(\rho)$ be a multisegment. Then in \cite{ZelKL} and \cite[§7]{Zel} the authors proved that in this case $[\Z(\fn)]\le \I(\fm)$ if and only if $\fn\lp \fm$, see \Cref{S:qv}.
In this section we are going to prove the analogous statement over $\fl$.

\subsection{ Elementary operations} 
We recall the following combinatorial description of $X_\fm\subseteq \overline{X_\fn}$ in terms of the underlying multisegments. Fix a cuspidal $\rho$, $Q$ and $V\cong V(\fm)\cong V(\fn)$ as in \Cref{S:qv}.
We define the notation of an elementary operation on a multisegment as follows.
Pick two $\rho$-segments $\De$ and $\De'$ in $\fm$, which are by definition only defined up to equivalence. Next pick two representatives for $\De$ and $\De'$
\[[a,b]_\rho=(\rho v_\rho^a,\ldots, \rho v_\rho^b),\,[a',b']_\rho=(\rho v_\rho^{a'},\ldots, \rho v_\rho^{b'}) \]
such that \begin{equation}\label{E:ineq}
    a+1\le a'\le b+1\le b'
\end{equation} and define the elementary operation with respect to $(\De,\De')$
\[\fm\mapsto \fm-\De-\De'+[a,b']_\rho+[a',b]_\rho.\] If there do not exist representatives satisfying (\ref{E:ineq}), we cannot perform an elementary operation with respect to $(\De,\De')$.
If we can perform an elementary operation we call the segments linked.
\begin{lemma}[{\cite{orbitclos}, \cite[Theorem 3.12]{orbitclos2}}]\label{L:ele}
    Let $\fm$ and $\fn$ be two multisegments with the same support contained in $\Ms(\rho)$. Then $\fn\lp\fm$ if and only if $\fn$ can be obtained from $\fm$ via finitely many elementary operations.
\end{lemma}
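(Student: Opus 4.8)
\textbf{Proof proposal for \Cref{L:ele}.}

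The plan is to prove this by combining the $\ql$-theory of \cite{Zel} with the reduction-mod-$\ell$ machinery of \Cref{T:rel} and \Cref{T:liftgood}, rather than re-deriving the combinatorics of orbit closures from scratch. First I would observe that by \Cref{L:H2} and the compatibility of the quiver-variety picture with the decomposition of the cuspidal support into $\ZZ[\rho]$-blocks, one reduces immediately to the case where both $\fm$ and $\fn$ have cuspidal support contained in $\mathbb{N}(\ZZ[\rho])$ for a single supercuspidal $\rho$ (which, after replacing $\rho$ by a twist, we may take to be the fixed $\rho$ of \Cref{S:qv}). This is the setting in which the quiver $Q$ and the variety $N(V)$, $V\cong V(\fm)\cong V(\fn)$, are defined. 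The statement is then purely about nilpotent representations of the cyclic quiver $A^+_{o(\rho)-1}$ (or $A_\infty^+$ if $o(\rho)=\infty$), and the assertion that the closure order $\fn\lp\fm$ is generated by elementary operations is exactly the content of \cite{orbitclos} in the $A_\infty$ case and of \cite[Theorem 3.12]{orbitclos2} in the cyclic case.

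The one direction that needs care is to make sure the ``elementary operation'' as defined combinatorially here coincides with the operation appearing in those references. I would proceed as follows. For the ``if'' direction: it suffices to show that a single elementary operation $\fm\mapsto \fm'=\fm-\De-\De'+(\De\cup\De')+(\De\cap\De')$ implies $\fm'\lp\fm$, and this is a local statement about the two summands $\De,\De'$, so one may assume $\fm=\De+\De'$. Here one writes down explicitly a one-parameter family in $N(V)$ degenerating $\lambda(\De)\oplus\lambda(\De')$ to $\lambda(\De\cup\De')\oplus\lambda(\De\cap\De')$ — this is the standard rank-condition computation for representations of type $A$ and is where \cite{orbitclos}, \cite{orbitclos2} do the work; I would simply cite it. For the ``only if'' direction, one uses the characterization of the closure order by rank functions (the dimensions of $\operatorname{Hom}$ and $\operatorname{Ext}$ spaces, equivalently the ranks of the composite maps $\lambda^{j}\colon V_i\to V_{i+j}$), together with the combinatorial fact — again from \cite{orbitclos2} — that whenever $\fn\neq\fm$ satisfies all these rank inequalities there exist two linked segments in $\fm$ on which an elementary operation strictly decreases in the order while staying $\gp\fn$; induction on, say, $\sum_{\De\in\fm}\binom{l(\De)}{2}$ (which strictly increases under an elementary operation and is bounded in each block) then finishes it.

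The main obstacle I anticipate is purely bookkeeping: matching the orientation conventions and the ``precedes'' inequality $a+1\le a'\le b+1\le b'$ used in \eqref{E:ineq} with the conventions of \cite{orbitclos2} (which phrase things in terms of rank sequences of nilpotent endomorphisms rather than the representation-theoretic ``$\De$ precedes $\De'$''), and checking that the cyclic case $o(\rho)<\infty$ genuinely reduces to a finite computation — the map $\lambda(\fm)$ being nilpotent is exactly what rules out the ``homogeneous'' (non-nilpotent) components of representations of the cyclic quiver, so the relevant orbits are still finite in number and parametrized by multisegments with support $\fs_V$, as recorded in \Cref{S:qv} following \cite[§15]{Lusztig1991QuiversPS}. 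Once these conventions are aligned, the proof is a citation plus the block-decomposition reduction; no new ideas beyond \Cref{L:H2} are required on the representation-theoretic side, and the geometric input is entirely imported.
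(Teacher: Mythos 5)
Your proposal matches the paper's treatment: the lemma is stated as a quoted result from \cite{orbitclos} and \cite[Theorem 3.12]{orbitclos2}, with no independent proof given, and your argument likewise reduces to citing those references after aligning conventions. Note only that your opening reduction via \Cref{L:H2} is superfluous, since the hypothesis already places both multisegments in $\Ms(\rho)$, so their cuspidal support lies in a single $\mathbb{N}(\ZZ[\rho])$-block from the start.
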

The next lemmas will allow us to have a better grip on elementary operations and their proofs are rather straightforward.
\begin{lemma}\label{L:comblem}
Let $\De_1,\, \De_2,\, \Gamma_1,\,\Gamma_2$ be segments in $\mathcal{S}(\rho)$, where $\De_1$ or $\De_2$ are also allowed be the empty segment and $\De_1+\De_2\lp \Gamma_1+\Gamma_2$. Then either \begin{equation}\label{E:c1}\De_1+\De_2^+\lp\Gamma_1+\Gamma_2^+\text{ and }(\De_1+\De_2^+)^1=(\Gamma_1+\Gamma_2^+)^1\text{ or }\end{equation}\begin{equation}\label{E:c2}\De_1^++\De_2\lp\Gamma_1+\Gamma_2^+\text{ and }(\De_1^++\De_2)^1=(\Gamma_1+\Gamma_2^+)^1.\end{equation}
If in the first case $\De_2$ is the empty segment, we mean by $\De_2^+$ the segment $[b+1,b+1]_\rho$, where $b$ is such that $\Gamma_2=[a,b]_\rho$ and analogously in the second case for $\De_1$.
Moreover, \[\De_1^++\De_2^+\lp\Gamma_1^++\Gamma_2^+\text{ and }(\De_1^++\De_2^+)^1=(\Gamma_1^++\Gamma_2^+)^1.\]
\end{lemma}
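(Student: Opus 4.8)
The plan is to reduce everything to the case where the cuspidal supports agree (since $\lp$ is only defined within a fixed cuspidal support, the hypothesis $\De_1+\De_2\lp\Gamma_1+\Gamma_2$ already forces this), and then to work purely combinatorially with the elementary operations of \Cref{L:ele}. First I would set up representatives $\De_i=[a_i,b_i]_\rho$, $\Gamma_i=[c_i,d_i]_\rho$ with the convention (from \Cref{L:comblem}'s statement) for the empty-segment cases, and translate the relation $\De_1+\De_2\lp\Gamma_1+\Gamma_2$ into the combinatorial statement that $\De_1+\De_2$ is obtained from $\Gamma_1+\Gamma_2$ by a chain of elementary operations. Because there are only two segments on each side, such a chain is extremely constrained: either $\De_1+\De_2=\Gamma_1+\Gamma_2$, or (after possibly swapping indices) $\Gamma_1$ precedes $\Gamma_2$, $\De_1=\Gamma_1\cup\Gamma_2$ and $\De_2=\Gamma_1\cap\Gamma_2$ in the notation of \Cref{S:motiv}. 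So really there is just one nontrivial configuration to analyze.

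In that nontrivial configuration I would write $\Gamma_1=[c_1,d_1]_\rho$, $\Gamma_2=[c_2,d_2]_\rho$ with $c_1+1\le c_2\le d_1+1\le d_2$, so that $\De_1=[c_1,d_2]_\rho$ and $\De_2=[c_2,d_1]_\rho$. The claim about $(\cdot)^+$ on the right moves the \emph{end} of $\Gamma_2$ from $d_2$ to $d_2+1$; I need to compare this with moving the end of one of $\De_1,\De_2$ by one. Since $\De_1=[c_1,d_2]_\rho$ is exactly the segment ending at $d_2$, it is $\De_1$ whose endpoint matches $\Gamma_2$'s; extending it gives $\De_1^+=[c_1,d_2+1]_\rho$, and one checks directly that $[c_1,d_2+1]_\rho+[c_2,d_1]_\rho$ is still obtained from $[c_1,d_1]_\rho+[c_2,d_2+1]_\rho$ by the same elementary operation (the inequality $c_1+1\le c_2\le d_1+1\le d_2+1$ still holds), giving case (\ref{E:c2}); if instead $d_1\ge d_2$, i.e.\ $d_1=d_2$, then it is case (\ref{E:c1}) that applies, with the roles of $\De_1,\De_2$ interchanged. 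The equality of the $(\cdot)^1$ parts is then immediate: both sides lengthen by exactly one box sitting in the same graded piece $V_{d_2+1}$ (indices mod $o(\rho)$), so the "tops" $\fm^1$ agree because the multiset of endpoints agrees. The empty-segment conventions are handled by the same bookkeeping: if $\De_2=[]$ then necessarily $\De_1=\Gamma_1\cup\Gamma_2$ forces $\Gamma_1\cap\Gamma_2=[]$ too, i.e.\ $c_2=d_1+1$, and $\De_2^+:=[d_1+1,d_1+1]_\rho$ is precisely what is needed.

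For the final assertion, that $\De_1^++\De_2^+\lp\Gamma_1^++\Gamma_2^+$ with matching $(\cdot)^1$, I would simply apply the already-proven first part twice: once to extend the right ends matching $\Gamma_2$, once more (now on the output) to extend the right ends matching $\Gamma_1$, noting that extending $\Gamma_1$ on the right keeps the preceding relation intact because $\Gamma_1^+=[c_1,d_1+1]_\rho$ still precedes $\Gamma_2^+=[c_2,d_2+1]_\rho$ under $c_1+1\le c_2\le d_1+2\le d_2+1$. Chaining the two elementary operations (or observing $\lp$ is a partial order, hence transitive, by \Cref{L:ele}) yields $\De_1^++\De_2^+\lp\Gamma_1^++\Gamma_2^+$, and the $(\cdot)^1$-equality propagates since at each step the newly added box lies in a determined graded component.

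The main obstacle I anticipate is purely organizational rather than deep: correctly enumerating the degenerate cases — when one or both $\De_i$ are empty, when $\Gamma_1$ and $\Gamma_2$ are unlinked (so $\De_i=\Gamma_i$ up to reordering and nothing happens), and when the two segments on the left coincide — and making sure the $(\cdot)^+$ convention stated in the lemma lines up with each case. A secondary subtlety is keeping the indexing of $\De_1$ versus $\De_2$ consistent with "which endpoint got extended," which is why the conclusion is an \emph{or} of (\ref{E:c1}) and (\ref{E:c2}) rather than a single clean statement; I would resolve this by always choosing the labeling so that $\De_1$ (resp.\ $\De_1^+$) is the segment whose right endpoint equals that of $\Gamma_2$ (resp.\ $\Gamma_2^+$), and then spelling out that the other labeling gives the other case.
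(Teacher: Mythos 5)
Your reduction of the relation $\De_1+\De_2\lp\Gamma_1+\Gamma_2$ to the single configuration $\De_1=\Gamma_1\cup\Gamma_2$, $\De_2=\Gamma_1\cap\Gamma_2$ is where the argument breaks: it is only valid when $o(\rho)=\infty$. When $o(\rho)$ is finite, the two segments produced by an elementary operation need not be unlinked, because linkage is tested on \emph{representatives}, which may be shifted by multiples of $o(\rho)$. Concretely, take $o(\rho)=2$, $\Gamma_1=[0,1]_\rho$, $\Gamma_2=[1,2]_\rho$. One elementary operation yields $[0,2]_\rho+[1,1]_\rho$; but $[1,1]_\rho$ has the representative $[3,3]_\rho$, and the pair $[0,2]_\rho,[3,3]_\rho$ satisfies $1\le 3\le 3\le 3$, so a second elementary operation yields $[0,3]_\rho+[]$. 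Thus $\De_1=[0,3]_\rho$, $\De_2=[]$ satisfy the hypothesis of the lemma (same cuspidal support, $\De_1+\De_2\lp\Gamma_1+\Gamma_2$) but are not obtained from $\Gamma_1+\Gamma_2$ by union and intersection. This is precisely why the paper argues by induction on the number of elementary operations in the chain from $\Gamma_1+\Gamma_2$ down to $\De_1+\De_2$; your one-configuration analysis essentially reproduces the paper's base case (modulo some confusion about which of $\Gamma_1,\Gamma_2$ carries the larger right endpoint --- note $d_1+1\le d_2$ rules out your ``$d_1\ge d_2$'' branch as stated), but it must be supplemented by an induction step: pick an intermediate $\De_1'+\De_2'$ one operation below $\Gamma_1+\Gamma_2$, apply the base case to that pair, then the induction hypothesis to $\De_1+\De_2\lp\De_1'+\De_2'$.

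A second, smaller gap is in the last assertion. You propose to deduce $\De_1^++\De_2^+\lp\Gamma_1^++\Gamma_2^+$ by applying the first part twice, but the first part only delivers a disjunction: after the first application you know $\De_1+\De_2^+\lp\Gamma_1+\Gamma_2^+$ \emph{or} $\De_1^++\De_2\lp\Gamma_1+\Gamma_2^+$, without control over which, and a second application can then land on $\De_1+\De_2^{++}$ or $\De_1^{++}+\De_2$ rather than on $\De_1^++\De_2^+$. The paper instead proves this claim directly and separately: the inequality $a+1\le a'\le b+1\le b'$ defining an elementary operation implies $a+1\le a'\le b+2\le b'+1$, so the same operation applies verbatim after extending both right endpoints, and one again concludes by induction on the length of the chain.
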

\begin{proof}
We will prove the first claim by induction on the number of elementary operations necessary to obtain $\De_1+\De_2$ from $\Gamma_1+\Gamma_2$. We can moreover assume without loss of generality that $l(\De_1)>l(\De_2)$.
Assume first that they differ by one and write the representatives of $\Gamma_1$ and $\Gamma_2$ as $[a,b]_\rho$ and $[a',b']_\rho$ such that the elementary operation is performed by \[\Gamma_1+\Gamma_2\mapsto [a,b']_\rho+[a',b]_\rho\] and the representatives satisfy (\ref{E:ineq}).
Now, since $l(\De_1)>l(\De_2)$,
\[\De_1=[a,b']_\rho,\, \De_2=[a',b]_\rho.\]
We have $a+1\le a'\le b+1\le b'+1$ and therefore
\[\Gamma_1+\Gamma_2^+\mapsto [a,b'+1]_\rho+[a',b]_\rho\] is an elementary operation, implying (\ref{E:c2}).
Next we assume that again they differ by one elementary operation but this time the representative of $\Gamma_1$ is $[a',b']_\rho$ and the representative of $\Gamma_2$ is $[a,b]_\rho$, again satisfying (\ref{E:ineq}). Again by $l(\De_1)>l(\De_2)$, \[\De_1=[a,b']_\rho,\, \De_2=[a',b]_\rho.\] Note that $a+1\le a'\le b+2$. If $b+2\le b'$, we are in the situation of (\ref{E:c1}), since \[\Gamma_1+\Gamma_2^+\mapsto [a,b']_\rho+[a',b+1]_\rho\] is an elementary operation. On the other hand, if $b+1=b'$, then
\[\Gamma_1+\Gamma_2^+=\De_1+\De_2^+\] and we have (\ref{E:c1}).

If they differ by more than one elementary operation, choose $\fn=\De_1'+\De_2'$ such that $\fn\lp \Gamma_1+\Gamma_2$ differ by one elementary operation and $\De_1+\De_2\lp\fn$. By the base case $\De_1'+\De_2'^+\lp \Gamma_1+\Gamma_2^+$ or $\De_1'^++\De_2'\lp \Gamma_1+\Gamma_2^+$ and hence we obtain by the induction hypothesis $\De_1+\De_2^+\lp\Gamma_1+\Gamma_2^+$ or $\De_1^++\De_2\lp\Gamma_1+\Gamma_2^+$. The claim regarding $(-)^1$ follows easily by the same induction argument.

For the second claim, we will also proceed on the number of elementary operations necessary to obtain $\De_1+\De_2$ from $\Gamma_1+\Gamma_2$.
of $\Gamma_1$ and $\Gamma_2$ as $[a,b]_\rho$ and $[a',b']_\rho$ such that the elementary operation is performed by \[\Gamma_1+\Gamma_2\mapsto [a,b']_\rho+[a',b]_\rho\] and the representatives satisfy (\ref{E:ineq}). Then $a+1\le a'\le b+2\le b'+1$ and hence \[\Gamma_1^++\Gamma_2^+\mapsto [a,b'+1]_\rho+[a',b+1]_\rho\] is an elementary operation. The induction step follows now as in the second claim.
\end{proof}
Next, we need the following lemma, which can already be found in \cite{Zel} for complex representations. The argument presented there works analogously, however we will write it out for completeness.
\begin{lemma}\label{L:meh2}
Let $\fn_1,\,\fn_2,\, \fm_1,\,\fm_2$ be multisegments with $\fn_1^1=\fm_1^1$ and $\fn_1^-+\fn_2\lp \fm_1^-+\fm_2$. Then there exists $\fn'=\fn_1'+\fn_2'$ with $\fn'\lp \fm_1+\fm_2$, $\fn_1'^1=\fn_1^1$ and $\fn_1'^-+\fn_2'=\fn_1^-+\fn_2$.
\end{lemma}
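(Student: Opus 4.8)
\textbf{Plan for the proof of \Cref{L:meh2}.}

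The statement is purely combinatorial and concerns orbit closures in the quiver variety attached to $\rho$, encoded via elementary operations by \Cref{L:ele}. The plan is to argue by induction on the number of elementary operations needed to pass from $\fm_1^-+\fm_2$ to $\fn_1^-+\fn_2$, imitating the corresponding argument of \cite{Zel}. First I would set up the base case: if no elementary operation is needed, then $\fn_1^-+\fn_2=\fm_1^-+\fm_2$, and since $\fn_1^1=\fm_1^1$ we may simply take $\fn_1'=\fm_1$, $\fn_2'=\fm_2$, which trivially satisfies $\fn'\lp\fm_1+\fm_2$, $\fn_1'^1=\fm_1^1=\fn_1^1$ and $\fn_1'^-+\fn_2'=\fm_1^-+\fm_2=\fn_1^-+\fn_2$.

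For the inductive step, suppose $\fn_1^-+\fn_2\lp\mathfrak k\lp\fm_1^-+\fm_2$, where $\mathfrak k$ is obtained from $\fm_1^-+\fm_2$ by a single elementary operation with respect to a pair of linked segments $(\De,\De')$, and $\fn_1^-+\fn_2$ is obtained from $\mathfrak k$ by fewer operations. The key point is a case analysis on the origin of the two segments $\De,\De'$ involved in this first elementary operation: each of them is either a segment of $\fm_1^-$ (i.e. of the form $\Gamma^-$ for $\Gamma$ a segment of $\fm_1$) or a segment of $\fm_2$. In the case where both $\De$ and $\De'$ come from $\fm_2$, the operation happens entirely inside the second multisegment, so write $\mathfrak k=\fm_1^-+\fm_2'$ with $\fm_2'\lp\fm_2$; apply the induction hypothesis to $\fn_1^-+\fn_2\lp\fm_1^-+\fm_2'$ (with $\fn_1^1=\fm_1^1$ still) to get the desired $\fn'$, and note $\fn'\lp\fm_1+\fm_2'\lp\fm_1+\fm_2$. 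In the case where one of them, say $\De=\Gamma^-$, comes from $\fm_1$ and $\De'$ comes from $\fm_2$, the operation replaces $\Gamma^-+\De'$ by $(\Gamma^-\cup\De')+(\Gamma^-\cap\De')$; here I would use \Cref{L:comblem} to transport the elementary operation from the "shortened" picture $\Gamma^-,\De'$ to the "un-shortened" picture $\Gamma,\De'$, producing a single elementary operation $\fm_1+\fm_2\mapsto \mathfrak k'$ with $\mathfrak k'^-$ related to $\mathfrak k$ appropriately and $(\mathfrak k')_1$-part having the same $(-)^1$ as $\fm_1$ — this is exactly the content $\De_1^++\De_2^+\lp\Gamma_1^++\Gamma_2^+$ and $(\De_1^++\De_2^+)^1=(\Gamma_1^++\Gamma_2^+)^1$ (or the mixed variant with only one $(-)^+$) in that lemma. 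Then apply the induction hypothesis to $\fn_1^-+\fn_2\lp (\mathfrak k')^-+(\text{rest})$ to descend to $\fn'$, and reassemble. The remaining case, both from $\fm_1$, cannot contribute a genuine linked pair after subtracting $(-)$ uniformly, or reduces to an operation inside $\fm_1$; in the latter subcase one replaces $\fm_1$ by the result $\fm_1''$ of the corresponding elementary operation on $\fm_1$ (well-defined since the inequality (\ref{E:ineq}) survives lengthening by \Cref{L:comblem}), keeping $(\fm_1'')^1=\fm_1^1$, and invokes induction.

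The main obstacle I anticipate is bookkeeping in the mixed case: one must check that the elementary operation produced by \Cref{L:comblem} on the lengthened segments genuinely decomposes $\fm_1+\fm_2$ into a "$\fn_1'$-part" (which keeps track of the future $(-)^-$ and $(-)^1$ constraints) plus a "$\fn_2'$-part", and that after peeling off this first operation the remaining chain of operations still lands, after subtracting $(-)$, exactly on $\fn_1^-+\fn_2$ with the split respecting $\fn_1'^1=\fn_1^1$. In other words, the delicate part is maintaining the invariant that the distinguished segment (the one that will eventually be shortened to pass from $\fn_1'$ to $\fn_1'^-$) is preserved coherently through the induction, which is precisely why the hypothesis $\fn_1^1=\fm_1^1$ — equality of the "tops" — is built into the statement and must be propagated at every step. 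Once this invariant is correctly formulated, each individual verification is a short check using (\ref{E:ineq}) and \Cref{L:comblem}, so I would not expect further difficulties beyond careful organization of the cases.
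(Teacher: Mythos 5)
Your plan follows essentially the same route as the paper's proof: induction on the number of elementary operations, with the single-operation case handled by lifting the operation from $\fm_1^-+\fm_2$ to $\fm_1+\fm_2$ via \Cref{L:comblem}, and the bookkeeping you flag as the main obstacle is exactly what the paper carries out with its $\mathfrak{k}_1+\mathfrak{k}_2$ decomposition of $\De_1^*+\De_2^*-\Gamma_1^*-\Gamma_2^*$ (the paper packages your three provenance cases uniformly via the signs $\epsilon_\Lambda$ rather than treating them separately). The plan is correct, and correctly identifies why the hypothesis $\fn_1^1=\fm_1^1$ must be propagated through the induction.
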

\begin{proof}
Assume first that $\fn_1^-+\fn_2$ and $\fm_1^-+\fm_2$ differ by one elementary operation.
Let $\De_1+\De_2$ resp. $\Gamma_1+\Gamma_2$ be the multisegments contained in $\fm_1^-+\fm_2$ resp. $\fn_1^-+\fn_2$ involved in the elementary operation.
Applying \Cref{L:comblem} gives multisegments  $\De_1^*+\De_2^*$ resp. $\Gamma_1^*+\Gamma_2^*$ such that $\Gamma_1^*+\Gamma_2^*$ is contained in $\fm_1+\fm_2$, $\De_1^*+\De_2^*\lp \Gamma_1^*+\Gamma_2^*,$ \[\De_1^*\in\{\De_1,\De_1^+\},\, \De_2^*\in\{\De_2,\De_2^+\},\,\Gamma_1^*\in\{\Gamma_1,\Gamma_1^+\},\, \Gamma_2^*\in\{\Gamma_2,\Gamma_2^+\}\] and $(\epsilon_{\Gamma_1}\Gamma_1^*+\epsilon_{\Gamma_2}\Gamma_2^*)^1=(\epsilon_{\De_1}\De_1^*+\epsilon_{\De_2}\De_2^*)^1$ with 
\[\epsilon_\Lambda\coloneq \begin{cases}
    1&\text{ if }\Lambda^*=\Lambda^+,\\
    0&\text{ otherwise.}
\end{cases}\]
Set \[\fn'\coloneq \De_1^*+\De_2^*+\fm_1+\fm_2-\Gamma_1^*-\Gamma_2^*.\] It is clear by construction that $\fn'\lp\fm_1+\fm_2$ via one elementary operation. 
Write $\De_1^*+\De_2^*-\Gamma_1^*-\Gamma_2^*$ as $\mathfrak{k}_1+\mathfrak{k}_2$, where we allow that the coefficients of the segments in $\mathfrak{k}_i$'s are not necessarily positive integers
and \[\mathfrak{k}_1=\epsilon_{\De_1}\De^*+\epsilon_{\De_2}\De_2^*-\epsilon_{\Gamma_1}\Gamma_1^*-\epsilon_{\Gamma_2}\Gamma_2^*.\] 
 Note that $\mathfrak{k}_1^-+\mathfrak{k}_2=\Gamma_1+\Gamma_2-\De_1-\De_2$ and $\mathfrak{k}_1^1=0$, where we extended the operations $(-)^-$ and $(-)^1$ in the straightforward way to multisegments with integer coefficients.

We now define $\fn_1'\coloneq \fm_1+\mathfrak{k}_1$ and $\fn_2'\coloneq \fm_2+\mathfrak{k}_2$. By construction, the coefficients of all segments in $\fn_1'$ and $\fn_2'$ are positive integers because $\epsilon_{\Gamma_1}=1$ if and only if $\Gamma_1$ was a segment of $\fm_1$ and similarly for $\Gamma_2$. Thus $\fn_1'$ and $\fn_2'$ are well-defined multisegments.
We therefore obtain
\[\fn_1'^-+\fn_2'=\fm_1^-+\fm_2+\De_1+\De_2-\Gamma_1-\Gamma_2=\fn_1^-+\fn_2\]
since $\mathfrak{k}_1^-+\mathfrak{k}_2=\De_1+\De_2-\Gamma_1-\Gamma_2$ 
and $\fn_1'^1=\fm_1^1=\fn_1^1,$ since $\mathfrak{k}_1^1=0$. Therefore we constructed $\fn'=\fn_1'+\fn_2'$ as desired.

If $\fn_1^-+\fn_2$ differs by more than one elementary operation from $\fm_1^-+\fm_2$ the claim follows by induction on the number of elementary operations. Indeed, pick $\fk$ such that $\fm_1^-+\fm_2$ is obtained from $\fk$ via one elementary operation and $\fk\lp \fm_1^-+\fm_2$. By the induction hypothesis we know that there exists $\fk'=\fk_1'+\fk'_2$, $\fk_1'^-+\fk_2'=\fk',\, \fk_1'^-=\fm_1^1$ with $\fk'\lp \fm_1+\fm_2$. 
Moreover, by applying the induction hypothesis a second time,
we obtain $\fn'=\fn_1'+\fn_2'$, $\fn_1'^-+\fn_2'=\fn_1^-+\fn_2$, $\fn'\lp \fk'\lp \fm_1+\fm_2$ and $\fn_1'^1=\fk_1'^1=\fm_1^1=\fn_1^1.$
\end{proof} 
\subsection{ }
We are now ready to prove the result announced at the beginning of the section.
\begin{theorem}\label{T:cons}
     Let $\fm$ and $\fn$ be two multisegments with the same cuspidal support contained in $\Ms(\rho)$. Then $\Z(\fn)$ appears in $\Z(\fm)$ if and only if $\fn\lp \fm$.
\end{theorem}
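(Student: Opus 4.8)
The strategy is to prove the two directions separately, reducing everything to the combinatorial characterization of the closure order via elementary operations in \Cref{L:ele}, and to the derivative machinery of \Cref{S:Der}. Throughout, I argue by induction on $\deg(\fm)$ (equivalently, on $\dim_K V$), so that all statements about multisegments of strictly smaller degree, and the surjectivity of $\Z$ on representations with the relevant cuspidal support, may be invoked freely; if $\rho$ is $\square$-irreducible this is the theorem of \Cref{S:appd}, and if $\rho$ is $\square$-reducible one passes to a lift as in \Cref{L:meeeh}, so in either case $\Z$ is surjective onto irreducibles with cuspidal support in $\Ms(\rho)$.

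\emph{The ``only if'' direction.} Suppose $\Z(\fn)\le \I(\fm)$. I want to show $\fn\lp\fm$. By \Cref{L:sam}, applied to the decomposition $\fm=\De_1+\ldots+\De_k$, it suffices to understand a single product $\Z(\De)\times \Z(\De')$ of two segment-representations; here the key input is a mod-$\ell$ analogue of the Zelevinsky identity \eqref{E:caseql}, namely that for $\De$ preceding $\De'$ one has $[\Z(\De)\times\Z(\De')] = [\Z(\De\cup\De')\times\Z(\De\cap\De')] + [\Z(\De+\De')]$ in the Grothendieck group (this is \Cref{L:C2}, referenced in the proof of \Cref{L:notap}, and holds without surjectivity hypotheses). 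Iterating this identity expresses $\I(\fm)$ as a sum of $\I(\fm')$'s where each $\fm'$ is obtained from $\fm$ by a sequence of elementary operations, hence satisfies $\fm'\lp\fm$ by \Cref{L:ele}; tracking which multisegments can appear and invoking \Cref{T:N5} to pin down the unique residually/$\mu$-degenerate constituent, one concludes that any $\fn$ with $\Z(\fn)\le\I(\fm)$ lies below $\fm$ in the closure order.

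\emph{The ``if'' direction.} Suppose $\fn\lp\fm$; I must show $\Z(\fn)\le\I(\fm)$. By \Cref{L:ele} it is enough to treat the case where $\fn$ is obtained from $\fm$ by a single elementary operation with respect to a linked pair $(\De,\De')$, and then to show that the property ``$\Z(\fn)\le\I(\fm)$'' is transitive along chains — which again follows from \Cref{L:sam} since $\I$ is multiplicative in the Grothendieck group. For the single-step case, one reduces as usual via \Cref{L:H2} to cuspidal support in $\mathbb{N}(\ZZ[\rho])$, and then proceeds by induction on $\deg(\fm)$ using the ``first row'' decomposition $\fm\mapsto(\fm^1,\fm^-)$: by \Cref{L:S33} one has $[r_{(\deg(\fm^-),\deg(\fm^1))}(\Z(\fm))]\ge[\Z(\fm^-)\otimes\Z(\fm^1)]$, and dually one controls $\Z(\fn)$ via $r_{(\deg(\fn^1),\deg(\fn^-))}$. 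The combinatorial lemmas \Cref{L:comblem} and \Cref{L:meh2} are exactly designed to show that $\fn^-\lp\fm^-$ (after possibly absorbing a box into the first row) together with $\fn_1^1=\fm_1^1$-type compatibilities; feeding the inductive hypothesis $\Z(\fn^-)\le\I(\fm^-)$ through the Geometric Lemma and \Cref{T:N5} then produces $\Z(\fn)$ as a constituent of $\I(\fm)$. Alternatively, and perhaps more cleanly, one can run the derivative argument: pick $\rho'\in\ZZ[\rho]$ so that $\Dl(\Z(\fm))\neq 0$, use \Cref{T:derseg} to identify $\Dl(\I(\fm))$ and $\Dl(\I(\fn))$ combinatorially, check $\Dl(\fn)\lp\Dl(\fm)$ via \Cref{L:redgood}/\Cref{C:redgood}, apply the inductive hypothesis to get $\Z(\Dl(\fn))\le\I(\Dl(\fm))$, and then lift back using $\soc(\rho',{-})$ and \Cref{L:quot}.

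\emph{Main obstacle.} The genuinely delicate point is the combinatorial bookkeeping in the ``if'' direction: one must verify that a single elementary operation on $\fm$ is compatible with passing to $\fm^-$ and $\fm^1$ in a way that keeps all multisegment coefficients non-negative and preserves the closure relation — this is precisely the content of \Cref{L:comblem} and \Cref{L:meh2}, and threading these through the Geometric Lemma while using \Cref{T:N5} to isolate the correct irreducible constituent (rather than merely a subquotient of the right induced representation) is where the real work lies. A secondary subtlety is that over $\fl$ one cannot use the Zelevinsky identity \eqref{E:caseql} with multiplicity-one control as freely as over $\ql$ (e.g.\ the $o(\rho)=2$, $\ell=2$ pathologies); this is why the proof routes through $\Ms(\rho)$-reductions and, where necessary, through lifts to $\ql$ via \Cref{T:liftgood} rather than arguing purely characteristic-$\ell$-internally.
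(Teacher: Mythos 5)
Your proposal has the two directions essentially interchanged relative to where the real work lies, and the hard direction is not actually proved. The direction ``$\Z(\fn)\le\I(\fm)\Rightarrow\fn\lp\fm$'' cannot be obtained by iterating the two-segment identity $[\Z(\De)\times\Z(\De')]=[\Z(\De\cup\De')\times\Z(\De\cap\De')]+[\Z(\De+\De')]$: after one application the second term contributes $[\Z(\De+\De')\times(\text{rest})]$, which is \emph{not} of the form $\I(\fm')$ for any multisegment $\fm'$, so the recursion does not close and you never arrive at a list of constituents indexed by multisegments below $\fm$. (This is exactly why Zelevinsky's proof of this direction over $\C$ is an induction on Jacquet modules, not an iteration of the identity.) Moreover, over $\fl$ the identity itself is not available: the paper only establishes the \emph{inequality} $\I(\fn)\le\I(\fm)$ for one elementary operation, by lifting to $\ql$ and reducing mod $\ell$ when $\rho$ is supercuspidal, and by a separate Hecke-algebra/symmetric-group argument when $\rho$ is cuspidal non-supercuspidal (so $o(\rho)=1$ and no cuspidal lift exists) — a case your proposal does not address. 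The lemma you cite for the mod-$\ell$ identity (``\Cref{L:C2}'') does not appear in the paper. The paper's actual proof of the hard direction is an induction on $\deg(\fm)$ through the first-row decomposition: \Cref{L:S33} gives $[\Z(\fn^-)\otimes\Z(\fn^1)]$ inside $r_{(\deg(\fm^-),\deg(\fm^1))}(\I(\fm))$, the Geometric Lemma plus \Cref{L:N1} and \Cref{T:N4} force $\fm=\fm_1+\fm_2$ with $\fm_1^1=\fn^1$ and $\fn^-\lp\fm_1^-+\fm_2$, and \Cref{L:meh2} (with \Cref{L:comblem}) then lifts this to $\fn\lp\fn'\lp\fm$. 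You have placed precisely this machinery in the \emph{other} direction, where it is neither needed nor effective: to show $\fn\lp\fm\Rightarrow\Z(\fn)\le\I(\fm)$ one does not induct on restrictions at all, one simply reduces to a single elementary operation on two segments and uses exactness of induction.

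Your fallback via derivatives for the easy direction also does not go through as stated: \Cref{T:derseg} computes $\Dl$ of the irreducible $\Z(\fm)$, not of the induced representation $\I(\fm)$ (derivatives are only defined on $\Irr$), it requires $\rho\in\scu$ and $\fm$ aperiodic, whereas the theorem allows $o(\rho)=1$ and non-aperiodic multisegments. To repair the proposal: keep your reduction of the ``if'' direction to one elementary operation, but prove the single step by lifting the two segments to $\ql$, invoking \eqref{E:caseql} there, and applying $\rl$ — adding the symmetric-group argument for non-supercuspidal $\rho$; and replace your ``only if'' argument wholesale by the restriction-induction scheme built on \Cref{L:S33} and \Cref{L:meh2}.
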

\begin{proof}

    \textbf{First direction: $\fn\lp \fm\Rightarrow  [\Z(\fn)]\le \I(\fm)$}

    By \Cref{L:ele} and \Cref{T:N5} it is enough to show that if $\fn$ is obtained from $\fm$ via one elementary operation, then $\I(\fn)\le \I(\fm)$. By the exactness of parabolic induction, it is thus enough that if $\De=[a,b]_\rho$ and $\De'=[a',b']_\rho$ satisfying (\ref{E:ineq}), then
    \[\I([a,b']_\rho+[a',b]_\rho)\le \I(\De+\De').\]
    If $\rho$ is supercuspidal,
we can use \Cref{T:rel} to find $\Tilde{\rho}$ such that $\rl(\Tilde{\rho})=\rho$.
    By (\ref{E:caseql}) we have that 
    \[\I([a,b']_\trho+[a',b]_\trho)\le \I([a,b]_\trho+[a',b']_\trho).\] Since parabolic restriction commutes with $\rl$, we have thus that 
    \[\I([a,b']_\rho+[a',b]_\rho)=\rl(\I([a,b']_\trho+[a',b]_\trho))\le \rl(\I([a,b]_\trho+[a',b']_\trho))=\I(\De+\De').\]
    If $\rho$ is not supercuspidal, $o(\rho)=1$ and we argue as follows. Write $\fn=[a,b']_\rho+[a',b]_\rho$ and $\fm=\De+\De'$.
    Observe first that if $\Z(\fn)$ does not have its cuspidal support contained in $\mathbb{N}(\ZZ[\rho])$ then it follows by \cite[Proposition 9.32]{M-S} that $\fn$ consists of two segments of equal length and $\ell=2$.
 For such an $\fn$, $[\Z(\fn)]\le \I(\fn)\le \I(\fm)$ implies therefore $\fn=\De+\De'$ by \Cref{T:N5}.
It thus follows that it is enough to show that the multiplicity of any irreducible representation with cuspidal support in $\mathbb{N}(\ZZ[\rho])$ in $\I(\fn)$ is lesser or equal than its multiplicity in $\I(\fm)$. Let $n$ be the length of $\fm$.
We can therefore utilize the map $\xi_{\rho,n}$ between irreducible representations with inertially equivalent to representations having cuspidal support $n\cdot [\rho]$ and irreducible modules of $\He(n,1)$. Note that this map can be upgraded in the following way, see \cite[Proposition 4.19]{MST}. If $\alpha=(\alpha_1,\ldots,\alpha_k)$ is a composition of $n$ and $\pi=\pi_1\otimes\ldots\otimes\pi_k$ is an irreducible representation of $M_\alpha$ with cuspidal support in $n\cdot [\rho]$, then there exists a bijection between between the irreducible constituents of \[\Ho_{\mathscr{H}_\fl(\alpha_1,1)\otimes\ldots \otimes \mathscr{H}_\fl(\alpha_k,1)}(\mathscr{H}(n,1),\xi_{\rho,\alpha_1}(\pi_1)\otimes\ldots\otimes \xi_{\rho,\alpha_k}(\pi_k))\]
and the irreducible constituents of $\pi_1\times\ldots\times \pi_k$ with cuspidal support $n\cdot[\rho]$. Moreover, the map respects multiplicities. Now for any irreducible representation with cuspidal support $n\cdot[\rho]$, the $\fl[X_1^{\pm 1},\ldots ,X_n^{\pm 1}]$-part of the Hecke-algebra acts trivially, see \cite[Theorem 3.7]{Mathas}.
 Thus we can regard $\xi_{\rho,\alpha_i}(\pi_i)$ and $\xi_{\rho,n}(\pi)$ as $\fl[\mathrm{S}_n]$-modules and as well as the induced Hom-spaces.
We are thus in the realm of the modular representation theory of $\mathrm{S}_n$, which was already discussed in \Cref{S:repsym}. The desired claim is then the last remark of this section.

\textbf{Second direction: $\fn\lp \fm\Leftarrow  [\Z(\fn)]\le \I(\fm)$}

We will proceed inductively and assume the statement to be true for all $G_k$ with $k<\deg(\fm)$.
By \Cref{L:S33}
\[[r_{(\deg(\fm^-),\deg(\fm^1))}(\I(\fm))]\ge[r_{(\deg(\fm^-),\deg(\fm^1))}(\Z(\fn))]\ge [\Z(\fn^-)\otimes \Z(\fn^1)].\]
 We apply the Geometric Lemma applied to $r_{(\deg(\fm^-),\deg(\fm^1))}(\I(\fm))$ and observe that \Cref{L:N1} implies that there exist $\fm_1,\fm_2$ with $\fm=\fm_1+\fm_2$
 such that \[[\Z(\fn^-)\otimes \Z(\fn^1)]\le \I(\fm_1^-+\fm_2)\otimes \I(\fm_1^1).\]
 Thus $\fm_1^1=\fn^1$ by \Cref{T:N4} and by the induction hypothesis $\fn^-\lp \fm_1^-+\fm_2$.
We apply \Cref{L:meh2} to the case $\fn_1=\fn, \fn_2=0$ and obtain therefore the existence of $\fn'=\fn'_1+\fn'_2$ such that $\fn'\lp\fm$, ${\fn'_1}^1=\fn^1$ and $\fn^-={\fn'_1}^-+\fn'_2$. For any multisegment $\mathfrak{k}$ we will write $\mathfrak{k}[1]$ for the multisegment obtain from $\mathfrak{k}$ by replacing every segment $\De$ in it by $^-(\De^+)$ and $\mathfrak{k}^+$ 
for the multisegment obtain from $\mathfrak{k}$ by replacing every segment $\De$ in it by $\De^+$. Note that $(\mathfrak{k}^+)^1=\mathfrak{k}^1[1]$.

Because $\fn^-={\fn'_1}^-+\fn'_2$, it follows that $\fn$ is of the form
\[\fn=(\fn_1'^-)^++(\fn_2')^++\mathfrak{k}',\]
where $\mathfrak{k}'\in\Ms(\rho)$ is some multisegment consisting of segments of length $1$.
Thus $\fn^1$ is of the form \[\fn^1=\fn_1'^2[1]+\fn_2'^1[1]+\mathfrak{k}'.\]
But on the other hand $\fn^1=\fn_1'^1$ and hence
\[\fn_1'^1=\fn_1'^2[1]+\fn_2'^1[1]+\mathfrak{k}'.\] Therefore \[\fn_1'=(\fn_1'^-)^++\fn_2'^1[1]+\mathfrak{k}'.\]
Since $(\fn_2')^+\lp \fn_2'+\fn_2'^1[1]$, it follows that
\[\fn=(\fn_1'^-)^++(\fn_2')^++\mathfrak{k}'\lp (\fn_1'^-)^++\fn_2'+\fn_2'^1[1]+\mathfrak{k}'=\fn'.\]
Thus $\fn\lp\fn'\lp\fm$.
\end{proof}
\begin{corollary}\label{C:openorbit}
A multisegment $\fm$ in $\Ms(\rho)$ is unlinked if and only if $X_\fm$ is an open orbit of $N(V(\fm))$.
\end{corollary}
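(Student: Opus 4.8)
The plan is to deduce Corollary~\ref{C:openorbit} from Theorem~\ref{T:cons} together with Theorem~\ref{T:N2}, after translating the geometric statement ``$X_\fm$ is open in $N(V(\fm))$'' into the combinatorial language of the order $\lp$. Recall from \Cref{S:qv} that the $\gl(V)$-orbits in $N(V)$ are in bijection with multisegments of cuspidal support $\fs_V$, and that $N(V)$ is an irreducible variety (it is the nilpotent cone of $E(V)$ for an affine type $A$ quiver, or, if $o(\rho)=\infty$, for $A_\infty$; in either case it is irreducible because it is a closed $\gl(V)$-stable cone which is the image under $\lambda$ of the irreducible parameter space, and its orbit poset has a unique maximal element). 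Hence $N(V)$ contains a \emph{unique} open orbit, and that orbit is the unique $\lp$-maximal one: an orbit $X_\fm$ is open if and only if $X_\fn \subseteq \overline{X_\fm}$ for every multisegment $\fn$ with cuspidal support $\fs_V$, i.e. if and only if $\fn \lp \fm$ for all such $\fn$.

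First I would record this reformulation: $X_\fm$ is open in $N(V(\fm))$ $\iff$ $\fm$ is the maximum of $(\{\fn : \cus_\Ms(\fn)=\cus_\Ms(\fm)\},\lp)$. For the forward implication of the corollary, suppose $\fm$ is unlinked; I must show $\fn\lp\fm$ for every $\fn$ with the same cuspidal support. By \Cref{T:N2} the representation $\Z(\De_1)\times\cdots\times\Z(\De_k)=\I(\fm)$ is irreducible, so $[\I(\fm)]=[\Z(\fm)]$ and every irreducible subquotient of $\I(\fm)$ equals $\Z(\fm)$. On the other hand, for an arbitrary $\fn$ with $\cus_\Ms(\fn)=\cus_\Ms(\fm)$, \Cref{L:sam} (applied to the trivial decomposition, or rather the comparison of $\Z(\fn)$ with $\I(\fn)$) shows $[\Z(\fn)] \le [\Z(\Gamma_1)\times\cdots\times\Z(\Gamma_r)]=\I(\fn)$ where $\fn=\Gamma_1+\cdots+\Gamma_r$; but this does not directly give $[\Z(\fn)]\le\I(\fm)$. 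Instead I would argue: since $N(V)$ has a unique open orbit $X_{\fm_0}$, and $\fm_0$ is $\lp$-maximal, Theorem~\ref{T:cons} gives that $\Z(\fn)$ appears in $\I(\fm_0)$ for every $\fn$ with the given cuspidal support; in particular $\I(\fm_0)$ has at least as many constituents (counted with multiplicity bounded below) as there are orbits. Conversely I claim $\fm_0$ must itself be unlinked: if $\fm_0$ were linked we could perform an elementary operation producing $\fn_0\lneq\fm_0$ with $\fn_0\neq\fm_0$, contradicting maximality — wait, elementary operations go \emph{downward}, $\fn_0\lp\fm_0$, which is consistent with maximality, so this needs the opposite direction. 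The correct observation is that $\fm_0$ being $\lp$-maximal means no multisegment lies strictly above it; since an elementary operation on $\fm$ produces something $\lp$-below $\fm$, maximality says nothing is below—no, it says $\fm_0$ cannot be \emph{obtained} from anything by an elementary operation, i.e. $\fm_0$ admits no ``inverse'' elementary operation, equivalently (by \Cref{L:comblem}-type analysis) $\fm_0$ is the multisegment all of whose segments are unlinked. So $\fm_0$ \emph{is} unlinked, and by uniqueness of the unlinked multisegment with a fixed cuspidal support (two unlinked multisegments with the same cuspidal support are equal — this is standard and follows from the fact that an unlinked multisegment is determined by its ``multiset of segments sorted by endpoints''), we get $\fm=\fm_0$, hence $X_\fm$ is the open orbit.

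For the reverse implication, suppose $X_\fm$ is open, i.e. $\fm$ is $\lp$-maximal. If $\fm$ were linked, say two of its segments $\De,\De'$ are linked, then performing the elementary operation with respect to $(\De,\De')$ produces $\fn\lp\fm$; but I actually need to produce something strictly \emph{above} $\fm$ to get a contradiction with maximality. The resolution is again that maximality forces $\fm$ to be unlinked: a linked $\fm$ is never maximal because the \emph{union/intersection} combinatorics of \Cref{L:comblem} together with \Cref{L:ele} show that from the pair of unlinked segments $[a,b']_\rho+[a',b]_\rho$ (the result of the elementary operation on $\De+\De'$) one cannot in general climb back, but the multisegment $\fm$ obtained by ``merging'' $\De\cup\De'$ and $\De\cap\De'$ — call it $\fm'$ — satisfies $\fm\lp\fm'$ and $\fm\neq\fm'$; indeed the elementary operation $\fm'\mapsto \fm'-(\De\cup\De')-(\De\cap\De')+\De+\De'$ is legitimate precisely when $\De$ precedes $\De'$. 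Hence if $\fm$ contains two linked segments it is dominated by a strictly larger $\fm'$, contradicting maximality. Therefore $\fm$ is unlinked.

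The main obstacle I anticipate is the clean justification that $N(V)$ has a \emph{unique} open orbit and that this open orbit corresponds to the unique unlinked multisegment — i.e. pinning down that the poset $(\Ms,\lp)$ restricted to a fixed cuspidal support has a unique maximal element which is exactly the unlinked one. Over $\C$ this is classical (\cite{Zel}, and the quiver picture of \cite{Lusztig1991QuiversPS}, \cite{orbitclos}, \cite{orbitclos2}), and since the order $\lp$ and the elementary operations are purely combinatorial — independent of the coefficient field, depending only on $\rho$ through $o(\rho)$ — the same statements hold verbatim here. So the honest content of the proof is: (i) cite irreducibility of $N(V)$ hence existence and uniqueness of the open orbit; (ii) observe $X_\fm$ open $\iff$ $\fm$ is $\lp$-maximal $\iff$ (by \Cref{L:ele} and the union/intersection argument above) $\fm$ admits no linked pair of segments $\iff$ $\fm$ unlinked. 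Theorem~\ref{T:cons} is then not even strictly needed for the corollary, but it is the natural home for this remark since it makes the dictionary between $\I(\fm)$ and $\overline{X_\fm}$ explicit; I would phrase the proof to lean on \Cref{L:ele} and \Cref{L:comblem} for the combinatorial maximality characterization and on the standard irreducibility of $N(V)$ for the geometric input.
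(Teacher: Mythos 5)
There is a genuine gap, and it sits at the very foundation of your argument: you assume that $N(V(\fm))$ is irreducible, hence has a \emph{unique} open (dense) orbit, and correspondingly that there is a unique unlinked multisegment with a given cuspidal support. Both claims are false for the cyclic quiver $A^+_{o(\rho)-1}$ when every graded piece of $V$ is nonzero. The paper itself proves later in this section that the number of irreducible components of $N(V(\fs))$ is $m=\#\{i:d_i=D\}$ with $D=\min_i d_i$, which exceeds $1$ whenever $D>0$ and the minimum is attained more than once. Concretely, for $o(\rho)=2$ and $\fs=[\rho]+[\rho v_\rho]$ the two segments $[0,1]_\rho$ and $[1,2]_\rho$ are both unlinked with the same cuspidal support and give two distinct open orbits. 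Consequently your reformulation ``$X_\fm$ open $\iff$ $\fm$ is the maximum of the poset $\iff$ all other orbits lie in $\overline{X_\fm}$'' conflates \emph{open} with \emph{dense}; in a variety with finitely many orbits the correct criterion is that $X_\fm$ is open iff it is \emph{not contained in} $\overline{X_\fn}$ for any $\fn\neq\fm$ (since the complement of $X_\fm$ is then the finite union $\bigcup_{\fn\neq\fm}\overline{X_\fn}$, which is closed exactly in that case). This is the route the paper takes, and combined with \Cref{L:ele} it reduces immediately to: no elementary operation can be performed on $\fm$, i.e.\ $\fm$ is unlinked. Your forward implication, which argues via irreducibility of $\I(\fm)$ and ``uniqueness of the unlinked multisegment,'' therefore does not go through as written.

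A secondary but recurring problem is that you reverse the paper's convention for $\lp$: the paper defines $\fn\lp\fm$ to mean $X_\fm\subseteq\overline{X_\fn}$, so an elementary operation applied to $\fm$ produces $\fn\lp\fm$ with $X_\fm$ in the closure of $X_\fn$ (the ``union/intersection'' multisegment has the \emph{larger} orbit). Your reverse implication contains the right idea once the directions are straightened out — a linked pair in $\fm$ yields an elementary operation, hence some $\fn\neq\fm$ with $X_\fm\subseteq\overline{X_\fn}$, hence $X_\fm$ is not open — but the hesitations and reversals in that paragraph stem from the same open-versus-dense confusion. Replacing the ``unique dense orbit'' premise by the finite-union-of-orbit-closures argument repairs both directions and makes \Cref{T:cons} essentially unnecessary, as you correctly suspect; only \Cref{L:ele} is needed.
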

\begin{proof}
 Since there are only finitely many multisegments $\fn\in \Ms(\rho)$ with $\cus_{\Ms}(\fn)=\cus_{\Ms}(\fm)$, we have 
\[\ol{N(V) \backslash X_\fm}=\ol{\bigcup_{\fn\neq \fm} X_\fn}=\bigcup_{\fn\neq \fm}\ol{X_\fn},\]\[\bigcup_{\fn\neq \fm}X_\fn=N(V)\backslash X_\fm.\]
Now $\bigcup_{\fn\neq \fm}\ol{X_\fn}=\bigcup_{\fn\neq \fm}X_\fn$ if and only if $X_\fm$ is not contained in $\overline{X_\fn}$ for any $\fn\neq \fm$.
By \Cref{T:cons} and \Cref{L:ele} this is equivalent to $\fm$ being unlinked.
\end{proof} Recall the following theorem of \cite{10.1093/imrn/rns278}.
\begin{theorem}[{\cite[Theorem 6.1]{10.1093/imrn/rns278}}]
Let $\rho$ be an unramified character of $G_1$ and $\fm$ an aperiodic multisegment in $\Ms(\rho)$. Then $\Z(\fm)$ is unramified if and only if $\fm$ is unlinked. Moreover, if $\pi\in\Irr_{}(\Omega_{\rho',n})$ is an unramified representation for some cuspidal $\rho'$, then $\rho'$ is an unramified character and there exists and an unlinked multisegment ${\fm\in \Ms(\rho')}$ such that $\Z(\fm)\cong \pi$.
\end{theorem}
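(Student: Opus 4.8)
The plan is to deduce the theorem from \Cref{C:openorbit} (linkedness versus closure relations among nilpotent orbits), the constituent description of \Cref{T:cons}, and the exactness of the functor $V\mapsto V^{K_n}$ (available because $\ell\neq p$), reducing everything to the classical statement that an unramified principal series $\mathcal{I}$ of $G_n$ has $\dim_R\mathcal{I}^{K_n}=1$ and that every irreducible representation admitting a $K_n$-fixed vector is a subquotient of such an $\mathcal{I}$. That last input — the unramified representation theory of $\mathrm{GL}_n(\mathrm{D})$ over $R$, where $\ell\neq p$ is used crucially — is the only genuinely non-formal ingredient; the rest is bookkeeping.

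\textbf{Unlinked $\Rightarrow$ unramified.} Let $\fm=\De_1+\ldots+\De_k$ be unlinked, with $\De_i=[a_i,b_i]_\rho$. By \Cref{T:N2}, $\Z(\fm)\cong\Z(\De_1)\times\ldots\times\Z(\De_k)$. Since $\rho$ is an unramified character, each $\Z(\De_i)$ — the constituent of $\rho v_\rho^{a_i}\times\ldots\times\rho v_\rho^{b_i}$ attached to the trivial character of $\He_R(l(\De_i),q(\rho))$ — is the spherical constituent of that segment principal series, hence unramified. Thus $\sigma\coloneq\Z(\De_1)\otimes\ldots\otimes\Z(\De_k)$ is an unramified representation of the Levi $G_\alpha$, $\alpha=(l(\De_1),\ldots,l(\De_k))$; by the Iwasawa decomposition $G_n=P_\alpha K_n$ one obtains a nonzero $K_n$-fixed vector in $\mathrm{Ind}_{P_\alpha}^{G_n}(\sigma)=\Z(\fm)$ (the right $K_n$-invariant function on $P_\alpha\backslash G_n$ with value a $(K_n\cap P_\alpha)$-fixed vector of $\sigma$), and irreducibility of $\Z(\fm)$ forces it to be unramified.

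\textbf{Unramified $\Rightarrow$ unlinked.} First note that an unlinked multisegment is automatically aperiodic: since $e(\rho)\geq 2$, any sub-multisegment equivalent to $[a,b]_\rho+\ldots+[a+e(\rho)-1,b+e(\rho)-1]_\rho$ contains the linked pair $[a,b]_\rho,\,[a+1,b+1]_\rho$. Now assume $\fm$ aperiodic with $\Z(\fm)$ unramified. Applying elementary operations to $\fm$ while possible — a process that terminates, there being finitely many multisegments with the given cuspidal support and each operation strictly decreasing $\lp$ (\Cref{L:ele}) — yields an unlinked $\fn_0\in\Ms(\rho)$ with $\cus_{\Ms}(\fn_0)=\cus_{\Ms}(\fm)$ and $\fn_0\lp\fm$; by \Cref{T:cons}, $[\Z(\fn_0)]\leq\I(\fm)$, and by the previous paragraph $\Z(\fn_0)$ is unramified. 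On the other hand each $\Z(\De_i)$ is a subquotient of $\rho v_\rho^{a_i}\times\ldots\times\rho v_\rho^{b_i}$, so in $\mathfrak{R}$ one has $\I(\fm)\leq[\mathcal{I}]$, where $\mathcal{I}$ is a full unramified principal series of $G_n$. As $\ell\neq p$ the functor $V\mapsto V^{K_n}$ is exact, and $\dim_R\mathcal{I}^{K_n}=1$ by the Iwasawa decomposition; hence $\mathcal{I}$, and therefore $\I(\fm)$, has a unique unramified constituent, occurring with multiplicity one, which must be $\Z(\fn_0)$. Since $\Z(\fm)\leq\I(\fm)$ is also unramified, $\Z(\fm)\cong\Z(\fn_0)$; as $\fm$ and $\fn_0$ are both aperiodic, \Cref{T:Zinj} gives $\fm=\fn_0$, so $\fm$ is unlinked.

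\textbf{The classification statement.} If $\pi\in\Irr(\Omega_{\rho',n})$ is unramified, then, having a $K_n$-fixed vector, it is a subquotient of an unramified principal series, so its supercuspidal support consists of unramified characters; together with $\cus(\pi)\in\mathbb{N}(\ZZ[\rho'])$ this forces $\rho'$ to be an unramified character of $G_1$ — in particular supercuspidal, since a cuspidal non-supercuspidal representation has degree $>1$ and so cannot occur in the cuspidal support of an unramified representation. By the surjectivity of $\Z$ onto irreducibles with $\square$-irreducible cuspidal support proved in \Cref{S:appd} (respectively the classification by $\ell$-regular partitions recalled in \Cref{S:repsym} when $o(\rho')=1$), one has $\pi\cong\Z(\fm)$ for an aperiodic $\fm\in\Ms(\rho')$, and by the two directions above $\fm$ is unlinked. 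The main obstacle throughout remains the unramified-representation-theoretic input cited at the outset; once it is granted for $\mathrm{GL}_n(\mathrm{D})$ over $R$, the argument is formal.
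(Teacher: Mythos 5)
Your argument has a fatal gap in the ``unramified $\Rightarrow$ unlinked'' direction (and hence also in the classification statement, which relies on it): the claim that ``as $\ell\neq p$ the functor $V\mapsto V^{K_n}$ is exact'' is false. Exactness of $K_n$-invariants requires the pro-order of $K_n=\mathrm{GL}_n(\mathfrak{o}_{\mathrm{D}})$ to be invertible in $R$, i.e.\ $\ell\nmid |\mathrm{GL}_n(\mathrm{k}_{\mathrm{D}})|$; the hypothesis $\ell\neq p$ only handles the pro-$p$ radical. Since $o(\rho)$ is the order of $q(\rho)=q^{d}$ in $R^{*}$, the condition fails as soon as $o(\rho)\le n$, which is exactly the non-banal range where aperiodicity is a genuine constraint and the theorem has content. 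Consequently your deduction that the full unramified principal series $\mathcal{I}$, and hence $\I(\fm)$, has a \emph{unique} unramified constituent is not available — and indeed it is false: the unlabelled Proposition at the end of Section 5 of this very paper shows that the number of unramified irreducible representations with a fixed cuspidal support $\fs$ equals $m=\#\{i:d_i=\min_j d_j\}$, which exceeds $1$ whenever $\min_j d_j>0$. (Concretely, for $o(\rho)=2$ and $\fs=[\rho]+[\rho v_\rho]$ both $\Z([0,1]_\rho)$ and $\Z([1,2]_\rho)$ are unramified, even though $\dim_R(\rho\times\rho v_\rho)^{K_2}=1$.) So the step ``$\Z(\fm)$ unramified and $\Z(\fn_0)$ unramified with the same cuspidal support $\Rightarrow \Z(\fm)\cong\Z(\fn_0)$'' proves something false, and your appeal to \Cref{T:Zinj} to conclude $\fm=\fn_0$ collapses. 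The ``unlinked $\Rightarrow$ unramified'' half is essentially sound (via \Cref{T:N2}, \Cref{L:sam} and the Iwasawa decomposition), although the assertion that each $\Z(\De_i)$ is unramified because it corresponds to the character $S_i\mapsto q(\rho)$ of the Iwahori--Hecke algebra also deserves more care mod $\ell$, since the passage from Iwahori-invariants to $K_n$-invariants is itself a non-exactness issue.

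For what it is worth, the paper does not prove this statement at all: it is quoted verbatim from \cite{10.1093/imrn/rns278}, so there is no internal proof to compare yours against. Any correct argument in the non-banal case has to confront the failure of exactness of $K_n$-invariants head-on (for instance via the theory of types and the explicit description of the spherical constituents of the mod-$\ell$ Iwahori--Hecke algebra, which is what the cited reference does), rather than route around it.
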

Observe that an unlinked multisegment in $\Ms(\rho)$ is automatically aperiodic. Therefore, if $\fm\in\Ms(\rho)$ is unlinked, the cuspidal support of $\Z(\fm)$ is contained in $\bbrh$ by \cite[Proposition 9.34]{M-S}.
\begin{corollary}
Let $\pi\cong \Z(\fm)\in \Irr_{}(\Omega_{\rho,n})$ be a representation. Then $\pi$ is unramified if and only if the orbit $X_\fm$ is open and $\rho$ is an unramified character.
\end{corollary}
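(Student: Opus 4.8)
The plan is to deduce this corollary by combining the theorem of \cite{10.1093/imrn/rns278} just recalled with \Cref{C:openorbit}. First I would observe that the statement concerns a representation $\pi$ already known to be of the form $\Z(\fm)$ with $\cus(\pi)$ supported in $\bbrh$, so we are automatically in the setting where $\fm$ may be taken aperiodic (indeed, if $\pi\cong\Z(\fm)$ lies in $\Irr(\Omega_{\rho,n})$, its cuspidal support lies in $\bbrh$, and the aperiodicity of $\fm$ follows since $\Z$ restricted to aperiodic multisegments is a bijection onto such representations).

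For the forward direction, suppose $\pi$ is unramified. By the recalled \cite[Theorem 6.1]{10.1093/imrn/rns278}, since $\pi\in\Irr(\Omega_{\rho,n})$ is unramified, $\rho$ must be an unramified character and there exists an unlinked multisegment $\fm'\in\Ms(\rho)$ with $\Z(\fm')\cong\pi$. Since $\fm'$ is unlinked it is aperiodic, and since $\Z$ is injective on aperiodic multisegments (\Cref{T:Zinj}) we get $\fm=\fm'$, hence $\fm$ is unlinked. By \Cref{C:openorbit}, $X_\fm$ is then an open orbit of $N(V(\fm))$, and we have already shown $\rho$ is an unramified character. Conversely, suppose $X_\fm$ is open and $\rho$ is an unramified character. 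By \Cref{C:openorbit}, $\fm$ is unlinked, hence aperiodic; applying the first part of \cite[Theorem 6.1]{10.1093/imrn/rns278} (with $\rho$ now an unramified character) gives that $\Z(\fm)\cong\pi$ is unramified.

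I do not anticipate a genuine obstacle here — the corollary is essentially a translation of the cited theorem through the orbit-closure dictionary of \Cref{C:openorbit}. The only point requiring a small amount of care is matching up the hypothesis ``$\pi\in\Irr(\Omega_{\rho,n})$'' with the phrasing ``$\fm\in\Ms(\rho)$'': one must check that the $\rho$ appearing in the multisegment and the $\rho$ indexing $\Omega_{\rho,n}$ can be taken to be the same cuspidal representation, which is immediate from the definition of $\Irr(\Omega_{\rho,n})$ as the irreducible representations with cuspidal support in $\bbrh$ together with the compatibility of $\Z$ with cuspidal supports in \Cref{T:Zinj}. Once this bookkeeping is in place, the proof is a two-line combination of the two results.
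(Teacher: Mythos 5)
Your argument is correct and is exactly the intended one: the paper states this corollary without proof as an immediate combination of the recalled \cite[Theorem 6.1]{10.1093/imrn/rns278}, the observation that unlinked multisegments in $\Ms(\rho)$ are aperiodic, and \Cref{C:openorbit}, which is precisely what you do. The only caveat, which you already flag, is that the identification $\fm=\fm'$ via injectivity of $\Z$ presupposes $\fm$ aperiodic, a reading implicit in the statement itself.
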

We will also use the opportunity to give a formula for the number of unramified representations with given cuspidal support. One has yet to find a use for it though.
\begin{prop}
Let $\rho$ an unramified character, \[\mathfrak{s}=d_0 [\rho]+\ldots+d_{o(\rho)-1} [\rho v^{o(\rho)-1}],\,d_i\in\mathbb{Z}_{\ge 0}\] a fixed cuspidal support, $D\coloneq \min_{i} d_i$ and $m=\#\{i:d_i=D\}$.
The number of unramified irreducible representations $\pi$ with cuspidal support $\mathfrak{s}$, or equivalently, the number of irreducible components of $N(V(\fs))$, is $m$ if $D>0$ and $1$ if $D=0$.
\end{prop}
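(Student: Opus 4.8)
The plan is to count the irreducible components of $N(V(\fs))$, which by \Cref{S:qv} and \Cref{C:openorbit} is the same as counting the unlinked multisegments with cuspidal support $\fs$. So the entire proposition reduces to the following purely combinatorial statement: the number of unlinked multisegments $\fm\in\Ms(\rho)$ with $\cus_\Ms(\fm)=\fs=d_0[\rho]+\ldots+d_{o(\rho)-1}[\rho v^{o(\rho)-1}]$ equals $m=\#\{i:d_i=D\}$ if $D=\min_i d_i>0$ and equals $1$ if $D=0$.

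First I would dispose of the case $D=0$. If some $d_j=0$, then in an unlinked multisegment no segment can ``wrap around'' through the vertex $j$, so effectively we are working with segments supported on the linear quiver $A_{o(\rho)-1}$ obtained by cutting $Q$ at $j$; equivalently, the situation is ``banal'' and there is a unique unlinked multisegment. Concretely, one shows directly: given a cuspidal support on a linear type $A$ configuration, the unique unlinked multisegment is built greedily — repeatedly extract the longest possible segment — and any two segments in an unlinked multisegment on a linear quiver with overlapping supports would be linked, forcing a nesting pattern that is uniquely determined by the multiplicities $d_i$. This gives exactly one component, matching $N(V(\fs))$ having a dense open orbit.

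For $D>0$, the key observation is that an unlinked multisegment $\fm$ with $\min_i d_i = D$ must contain exactly $D$ full ``loop'' segments of length $o(\rho)$ (these are the segments $[a,a+o(\rho)-1]_\rho$ wrapping once around $Q$), together with a residual unlinked multisegment whose cuspidal support has a zero among its multiplicities — at which point the $D=0$ analysis applies and the residual part is unique. The only freedom is the choice of starting point $a\bmod o(\rho)$ of the loop segments: writing $\fs$ as $D$ copies of the ``constant'' support $[\rho]+\ldots+[\rho v^{o(\rho)-1}]$ plus a residual support $\fs'$ with $\min_i d_i'=0$, one checks that the loop segments in an unlinked $\fm$ must all begin at the same vertex (two loops with different starting points are linked), and that vertex must be one of the $m$ vertices $i$ with $d_i=D$ — because if the loops all start at vertex $i$ then vertex $i$ ends up with multiplicity exactly $D$ in the residual part, forcing $d_i'=0$, i.e. $d_i=D$; conversely each such choice yields a genuine unlinked multisegment by combining the $D$ loops at vertex $i$ with the unique unlinked multisegment for the residual support $\fs'_i$. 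This produces a bijection between unlinked multisegments and $\{i : d_i=D\}$, hence the count $m$.

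The main obstacle I anticipate is the careful verification of the ``peeling'' step: that every unlinked multisegment genuinely decomposes as ($D$ equal loops at a single vertex) $+$ (an unlinked residual with a zero multiplicity), and that this decomposition is unique. One has to rule out configurations where, say, some loops are replaced by shorter overlapping segments that happen to still be pairwise unlinked, or where loops at two distinct vertices coexist; the linkedness criterion (a segment precedes another if their concatenation contains a strictly longer subsegment) must be applied with care to wrapping segments on the cyclic quiver. I would handle this by inducting on $D$: given an unlinked $\fm$, pick any longest segment $\De$; if $l(\De)\ge o(\rho)$ one argues all longest segments are loops starting at a common vertex and removes one loop to reduce $D$ by one; if $l(\De)<o(\rho)$ one is in the linear/banal regime and invokes the $D=0$ argument, noting this can only happen once $\min d_i$ has been exhausted. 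The bookkeeping that the common starting vertex is forced to lie in $\{i:d_i=D\}$ is then a direct multiplicity count.
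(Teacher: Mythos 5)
Your reduction to counting unlinked multisegments with cuspidal support $\fs$ (via \Cref{C:openorbit}) and your treatment of the case $D=0$ are in line with the paper. The case $D>0$, however, rests on a false structural claim. Any two segments of length at least $o(\rho)$ are linked: each of them covers every class $[\rho v_\rho^k]$, so their concatenation always contains a segment strictly longer than both. Hence an unlinked multisegment contains \emph{at most one} segment of length $\ge o(\rho)$, and your configuration of $D$ pairwise unlinked ``loop'' segments based at a common vertex cannot exist for $D\ge 2$ (already $2\cdot[a,a+o(\rho)-1]_\rho$ is linked). Even for $D=1$ the extra turn around the quiver is generally not a standalone segment of length $o(\rho)$: for $o(\rho)=2$ and $\fs=2[\rho]+[\rho v_\rho]$ the unique unlinked multisegment is the single segment $[0,2]_\rho$, whereas $[0,1]_\rho+[0,0]_\rho$ is linked. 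Finally, your justification that the base vertex of the loops must satisfy $d_i=D$ (``vertex $i$ ends up with multiplicity exactly $D$ in the residual part'') is empty: a segment of length $o(\rho)$ contributes $1$ to every $d_j$, so the residual multiplicities equal $d_j-D$ for all $j$ regardless of where the loops start; the actual constraint on the base point is a linkedness constraint against the residual segments, not a multiplicity count.

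The correct structure, and the route the paper takes, is that there is a \emph{unique} segment of length $\ge o(\rho)$, which winds $D$ times around the cyclic quiver and may absorb one of the residual segments. The paper first reduces to $D=1$ by exhibiting mutually inverse injections between the sets of unlinked multisegments for $\fs$ and for $\fs^{+1}$ (all multiplicities increased by one), given by lengthening, respectively shortening, this unique long segment by $o(\rho)$. For $D=1$ it writes the unique unlinked multisegment of the residual support $\fs'$ as a union of nested chains separated by gaps and checks that the extra full turn can be inserted in exactly $m$ ways: either as a standalone loop $[c,c+o(\rho)-1]_\rho$ based at a vertex $c$ in the interior of a gap, or by extending the outermost segment of a chain adjacent to $c$ by $o(\rho)$. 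If you want to keep your induction on $D$, the peeling step must subtract $o(\rho)$ from the length of the unique long segment rather than remove a length-$o(\rho)$ summand of the multisegment.
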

\begin{proof}
If $D=0$ the space $N(V)=E(V)$ has precisely one irreducible component and thus only one open orbit. Therefore there exists a unique unramified irreducible representation with cuspidal support $\mathfrak{s}$.
We will now give a precise description of the corresponding multisegment $\fm_\fs$.
Let $d_k=\max_i d_i$ and $j\le k$ be such that \[d_{j-1}<d_j=d_{j+1}=\ldots=d_k>d_{k+1}.\]
Set \[\fs'\coloneq d_0 [\rho]+\ldots+(d_j-1)[\rho v^j]+\ldots+(d_k-1)[\rho v^k]+\ldots+d_{o(\rho)-1} [\rho v^{o(\rho)-1}].\]
Then we can compute $\fm_\fs$ iteratively by noting that $\fm_\fs=\fm_{\fs'}+[j,k]$.

If $D>0$ let $\fm_\fs$ be the set of irreducible unramified representations with cuspidal support $\fs$. Let \[\fs^{+1}=(d_0+1)\cdot [\rho]+\ldots+(d_{o(\rho)-1}+1) \cdot[\rho v^{o(\rho)-1}]\]
Then we construct an injective map $i\colon \fm_\fs\rightarrow \fm_{\fs^{+1}}$ by sending an unlinked segment $\fm=\De_1+\ldots+\De_k$ with $l(\De_1)\ge l(\De_i)$ for $\ain{i}{1}{k}$ and $\De_1=[a,b]$ to $[a,b+o(\rho)]+\De_2+\ldots+\De_k$, which is unlinked since $\fm$ is.

There exists also a map $p\colon \fm_{\fs^{+1}}\rightarrow \fm_{\fs}$ constructed as follows. Note that for $\De_1+\ldots+\De_k=\fm\in \fm_{\fs^{+1}}$ there exists a unique $\De_i$ with $l(\De_i)\ge o(\rho)$. Indeed, $\rho v^{o(\rho)-1}$ and $\rho$ must be contained in the same segment, which is then of length greater than $o(\rho)$, since otherwise the two segments would be linked. Since any two segments of length greater than $o(\rho)-1$ are linked, uniqueness follows.
Assume without loss of generality that $l(\De_1)\ge l(\De_i)$ for $\ain{i}{1}{k}$ and write $\De_1=[a,b]$. The map then sends $\fm$ to $[a,b-o(\rho)]+\De_2+\ldots+\De_k$, which is again unlinked. Because $D>0$ it follows that $p$ is injective.

Since both $i$ and $p$ are injective if $D>0$ and $p\circ i=\mathrm{id}$, $\#\fm_\fs=\#\fm_{\fs^{+1}}$ if $D>0$. Thus it suffices to prove the claim for $D=1$. Write \[\fs=\fs'+ [\rho]+\ldots+[\rho v^{o(\rho)-1}]\]
We know from the case $D=0$ that we can write \begin{equation}\label{E:ofthatform}
    \fm_{\fs'}=\fm_1+\ldots+\fm_k
\end{equation} where $\fm_i=\De_i^1+\ldots+\De_i^{l_i}$, $\De_i^j=[a_i^j,b_i^j]$ with $a_i^j\le a_i^{j'}\le b_i^{j'}\le b_i^j$ for $j\le j'$, $\De_i^j=\De_i^{j'}$ if and only if $j=j'$ and $b_i^1+1<a_{i'}^1$ if $i<i'$.

We now construct $m$ many unlinked segments with support $\fs$ as follows. For $\ain{c}{0}{o(\rho)-1}$ such that either $c\le a_i^1-2$ or $c\ge b_i^1+2$ for all $i$ it is easy to check that $\fm_{\fs}+[c,c+o(\rho)-1]$ is unlinked. If $b_i+1=c$ or $c=a_i^1-1$ then the multisegment \[\fm_1+..+\fm_{i-1}+[a_i^1,b_i^1+o(\rho)]+[a_i^2,b_i^2]+\ldots+[a_i^{l_i},b_i^{l_i}]+\fm_{i+1}+\ldots+\fm_k\]
is unlinked. Thus $\#\fm_\fs\ge m$.
On the other hand if $\fm\in \fm_\fs$ then $p(\fm)=\fm_{\fs'}$ which shows that $\fm$ has to be of the form (\ref{E:ofthatform}) and hence showing $\#\fm_\fs\le m$.
\end{proof}
\subsection{ $\square$-irreducible representations revisited}
We will use the opportunity to extend \cite[Conjecture 4.2]{LMsquare} to representations with $\square$-irreducible cuspidal support and give a conjectural classification of square-irreducible representations. Using \Cref{L:H2} we can quickly reduce the classification to a classification for representations $\Z(\fm)$ for $\fm \in \Ms(\rho)_{ap}$ for some cuspidal representation $\rho$. By \Cref{L:nosi} $o(\rho)>1$ if $\Z(\fm)$ has to have any chance of being square-irreducible.
We fix for the rest of the section an aperiodic multisegment $\fm\in \Ms(\rho)_{ap}$ for some $\rho\in \scu$. We set $Q^+\coloneq A_{o(\rho)-1}^+$. Moreover, we recall the quiver representation $(V(\fm),\lambda(\fm))$ over $\C$  of $Q^+$ as in \Cref{S:qv}.
Next, we denote by $Q^-$ the quiver obtained from $Q$ by inverting the arrows. For a fixed graded vector space $V=\oplus_{i=1}^{o(\rho)-1}V_i$ over $\C$ as in \Cref{S:qv} we write $E^+(V)$ respectively $E^-(V)$ for the representations of $Q^+$ respectively $Q^-$ with underlying vector space $V$ and by $N^+(V)$ and $N^-(V)$ the subvariety of nilpotent representations. The third quiver of this story is denoted by
$\overline{Q}$ and of the form
\[\begin{tikzcd}
	& \ldots\arrow[dr, bend left,leftrightarrow]& \\
	0\arrow[ru, bend left,leftrightarrow] && o(\rho)-2\arrow[dl, bend left,leftrightarrow]\\
	& o(\rho)-1\arrow[ul, bend left,leftrightarrow] &
\end{tikzcd}.\]
We let $\overline{E}(V)$ be the affine variety of representations of $Q$ with underlying vector space $V$. Again $\overline{E}(V)$ admits an action of \[\mathrm{GL}(V)=\mathrm{GL}(V_0)\times\ldots \times \gl(V_{o(\rho)-1}).\]
Moreover, there exist two $G_V$-equivariant maps
\[\begin{tikzcd}
E^+(V)\arrow[r,twoheadleftarrow, "p_+"]&\overline{E}(V)\arrow[r,twoheadrightarrow, "p_-"]& E^-(V).
\end{tikzcd}\]
The preimage of $N^+(V)\times N^-(V)$ is denoted by $\overline{N}(V)$. Finally, we recall the moment map
\[[\cdot,\cdot]\colon \overline{N}(V)\ra \mathfrak{gl}(V),\]\[(X_{i\mapsto i+1}, Y_{i+1\mapsto i})_{\ain{i}{0}{o(\rho)-1}}\mapsto (X_{i\mapsto i+1}Y_{i+1\mapsto i}-Y_{i\mapsto i-1}X_{i-1\mapsto i})_{\ain{i}{0}{o(\rho)-1}}.\]
Let $\Lambda(V)$ be the kernel of $[\cdot,\cdot]$.

Recall now above fixed multisegment $\fm$ and $V\coloneq V(\fm)$, $\lambda(\fm)$ and $X_{\fm}$ from \Cref{S:qv}. To avoid confusion we denote the map $\lambda(\fm)^+$ in $N^+(V(\fm))$ and $\lambda(\fm)^-$ in $N^-(V(\fm))$ and similarly for $X_{\fm}^+$ and $X_{\fm}^-$.
We now let $\Ms(V)_{ap}$ be the aperiodic segments $\fn\in \Ms(\rho)_{ap}$ with the same cuspidal support as $\fm$.
For $\fn\in \Ms(V)_{ap}$ denote the closure of $p_{+}^{-1}(X_\fn^+)\cap \Lambda(V)$ in $\Lambda(V)$ by $\mathcal{C}_\fn^+$ and the closure of $p_{-}^{-1}(X_\fn^-)\cap \Lambda(V)$ in $\Lambda(V)$ by $\mathcal{C}_\fn^-$.
\begin{lemma}[{\cite[Proposition 15.5]{Lusztig1991QuiversPS}}]
    The maps $\fn\mapsto \mathcal{C}_\fn^+$ respectively $\fn\mapsto \mathcal{C}_\fn^-$ induce a bijection between $\Ms(V)_{ap}$ and the irreducible constituents of $\Lambda(V)$.
\end{lemma}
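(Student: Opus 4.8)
This is \cite[Proposition 15.5]{Lusztig1991QuiversPS}; we recall the structure of Lusztig's argument and indicate the one point that requires the cyclic quiver. The starting point is two facts about the preprojective variety $\Lambda(V)$, valid for an arbitrary quiver (see \cite[§12--14]{Lusztig1991QuiversPS}): first, $\Lambda(V)$ is of pure dimension $\dim_\C E^+(V)$ (it is a Lagrangian subvariety of $\overline{N}(V)$); second, stratifying $N^+(V)$ by its $\mathrm{GL}(V)$-orbits $X_\fn^+$ and setting $\Lambda_\fn\coloneq p_+^{-1}(X_\fn^+)\cap \Lambda(V)$, each $\Lambda_\fn$ is an irreducible, $\mathrm{GL}(V)$-stable, locally closed subset of $\Lambda(V)$, because the fibre $p_+^{-1}(x)\cap\Lambda(V)$ over a point $x\in X_\fn^+$ is the nilpotent part of the linear space of $Q^-$-structures commuting with $x$, which Lusztig shows to be irreducible. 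Since there are only finitely many $\fn$ with $\cus_\Ms(\fn)=\fs_V$, we get $\Lambda(V)=\bigcup_\fn \overline{\Lambda_\fn}$, and therefore $\overline{\Lambda_\fn}=\mathcal{C}_\fn^+$ is an irreducible component if and only if $\dim \Lambda_\fn=\dim_\C E^+(V)$, and distinct such components correspond to distinct $\fn$.

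Thus the whole statement (for the $p_+$-parametrisation) reduces to identifying, among all $\fn$ with cuspidal support $\fs_V$, those for which $\dim\Lambda_\fn=\dim_\C E^+(V)$, and showing these are exactly the aperiodic ones. Writing $\dim \Lambda_\fn=\dim X_\fn^+ + \dim\bigl(p_+^{-1}(x)\cap\Lambda(V)\bigr)$ for $x\in X_\fn^+$, one invokes Lusztig's computation of the fibre dimension in terms of the homological data of the $Q$-representation $x$ (an inequality $\dim(p_+^{-1}(x)\cap\Lambda(V))\le \dim_\C E^+(V)-\dim X_\fn^+$, with the defect measured by an $\mathrm{Ext}$-type quantity). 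The equality case is then reformulated as the condition that no nonzero subquotient of the $Q$-representation $x$ is isomorphic to its own shift by one around the cycle. For the cyclic quiver $Q=A_{o(\rho)-1}^+$, passing through the combinatorial description of nilpotent orbits by multisegments (the maps $\De\mapsto (V(\De),\lambda(\De))$ of \Cref{S:qv}), this shift-invariance of a subquotient is precisely the occurrence in $\fn$ of a sub-multisegment $[a,b]_\rho+[a+1,b+1]_\rho+\ldots+[a+o(\rho)-1,b+o(\rho)-1]_\rho$; that is, the equality case holds exactly when $\fn$ is aperiodic, so $\fn\mapsto\mathcal{C}_\fn^+$ is a bijection from $\Ms(V)_{ap}$ onto the set of irreducible components of $\Lambda(V)$. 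When $o(\rho)=\infty$ the quiver is $A_\infty$, every multisegment is aperiodic, and this is the classical type-$A$ statement.

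Finally, the $p_-$-version follows from the $p_+$-version by the arrow-reversal symmetry: the opposite of the quiver $A_{o(\rho)-1}^+$ is again of the same shape after relabelling the vertices $i\mapsto -i$ modulo $o(\rho)$, and the corresponding automorphism of the doubled-quiver data preserves $\overline{N}(V)$ and $\Lambda(V)$ (the defining equation of $\Lambda(V)$ is symmetric in $p_+$ and $p_-$ up to sign), interchanges $p_+$ and $p_-$, and on multisegments induces the involution $\fn\mapsto\fn^\vee$, which preserves aperiodicity by \Cref{T:Zinj}. Hence $\fn\mapsto\mathcal{C}_\fn^-$ is a bijection $\Ms(V)_{ap}\iso\{\text{irreducible components of }\Lambda(V)\}$ as well. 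The main obstacle is the dimension equality of the second paragraph together with its translation into the aperiodicity condition; everything else is either general preprojective-variety theory or bookkeeping with the multisegment/orbit dictionary.
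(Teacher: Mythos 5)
The paper does not prove this lemma at all: it is quoted verbatim as Lusztig's Proposition 15.5, so there is no in-paper argument to compare against. Your sketch faithfully reproduces the structure of Lusztig's proof (Lagrangian pure-dimensionality of $\Lambda(V)$, stratification by the $p_+$-orbits, and the fibre-dimension estimate whose equality case is exactly aperiodicity), and that is the right level of detail for a cited result. Two cosmetic points: $\Lambda(V)$ is Lagrangian in the symplectic space $\overline{E}(V)\cong T^*E^+(V)$, not in $\overline{N}(V)$; and the involution matching the $p_+$- and $p_-$-labellings is the Aubert--Zelevinsky dual $\fn\mapsto\fn^*$ (as the paper records immediately afterwards via \cite{LTV}), not literally $\fn\mapsto\fn^\lor$ — though for your purposes any aperiodicity-preserving bijection suffices, so the conclusion stands.
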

Moreover, the interaction of $\mathcal{C}^+$ and $\mathcal{C}^-$ is governed by the Aubert-Zelevinsky duality.
\begin{lemma}[{\cite[Propostion 5.2]{LTV}}]
For $\fn\in \Ms(V)_{ap}$
    \[\mathcal{C}_\fn^+=\mathcal{C}_{\fn^*}^-.\]
\end{lemma}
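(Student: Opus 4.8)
The final statement to prove is the identity $\mathcal{C}_\fn^+=\mathcal{C}_{\fn^*}^-$ for $\fn\in\Ms(V)_{ap}$, which is attributed to \cite[Proposition 5.2]{LTV}. Since this is cited from the literature rather than proved fresh, the proof I would present is essentially a short argument explaining why this holds, reducing it to facts already available in the excerpt (the combinatorics of $\square$-irreducible derivatives, \Cref{T:mainin}, \Cref{L:ele}, and the orbit-closure dictionary of \Cref{S:qv}).

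The plan is the following. First I would recall that both $\fn\mapsto\mathcal{C}_\fn^+$ and $\fn\mapsto\mathcal{C}_\fn^-$ are bijections onto the set of irreducible components of the Lusztig nilpotent variety $\Lambda(V)$ (the preceding lemma), so it suffices to identify, for each $\fn$, which component $\mathcal{C}_{\fn'}^-$ equals $\mathcal{C}_\fn^+$; this defines an involution $\fn\mapsto\fn'$ on $\Ms(V)_{ap}$, and the content is that $\fn'=\fn^*$. Next I would observe that the component $\mathcal{C}_\fn^+$ is the closure of $p_+^{-1}(X_\fn^+)\cap\Lambda(V)$, so it is characterized by the generic $Q^+$-part being in the orbit $X_\fn^+$; dually $\mathcal{C}_{\fn'}^-$ is characterized by generic $Q^-$-part being in $X_{\fn'}^-$. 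One then uses the key observation that the generic $Q^-$-structure of a point in $\mathcal{C}_\fn^+$ is computed by a recursive "folding" procedure on the segments of $\fn$ which is exactly the combinatorial algorithm for the Aubert–Zelevinsky dual $\fn\mapsto\fn^*$: shortening segments from one side and re-gluing. Concretely, the recursion on $\deg(\fn)$ via $\rho$-derivatives is available: $\Dl$ on multisegments (\Cref{T:derseg}) corresponds to the geometric operation of restricting to a hyperplane section (removing one arrow-step worth of the grading), and \Cref{T:mainin} gives $\Dl(\fn)^*\cong\Dr(\fn^*)$, which on the quiver side is precisely the statement that passing from $\mathcal{C}^+$ to $\mathcal{C}^-$ intertwines the "left" and "right" restriction maps.

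So the induction I would run is: pick $\rho$ with $\Dl(\fn)\neq0$; by the induction hypothesis applied in the smaller graded vector space, $\mathcal{C}_{\Dl(\fn)}^+=\mathcal{C}_{\Dl(\fn)^*}^-=\mathcal{C}_{\Dr(\fn^*)}^-$; then I would lift this equality of closures back up along the appropriate incidence correspondence relating $\Lambda(V)$ and $\Lambda(V')$ for the smaller $V'$, using that $\soc(\rho,\cdot)$ and $\soc(\cdot,\rho)$ are the inverses of $\Dl$ and $\Dr$ respectively (\Cref{L:invers}), together with the fact that both $\mathcal{C}^{\pm}$ are determined by their generic points and the generic point of $p_+^{-1}(X_\fn)\cap\Lambda(V)$ over that of $p_+^{-1}(X_{\Dl(\fn)})\cap\Lambda(V')$ is governed by exactly the same pairing/linking data on segments that governs $\soc(\rho,\cdot)$. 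The base case ($\fn$ with all segments of length $1$, or $\deg=0$) is direct: there $\fn^*=\fn$ up to the trivial reshuffling and $\mathcal{C}_\fn^\pm$ coincide by symmetry of $\overline{Q}$.

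The main obstacle will be making the geometric dictionary precise — specifically, showing that the generic $Q^-$-structure on $\mathcal{C}_\fn^+$ is genuinely computed by the AZ-dual algorithm, rather than merely by some involution that we would then separately have to match with $*$. I expect to handle this by not working with the geometry directly at all but instead invoking \cite[Proposition 5.2]{LTV} for the identification and only using the excerpt's results (\Cref{T:mainin}, \Cref{T:derseg}, \Cref{L:invers}, \Cref{L:ele}) to give the inductive bookkeeping; in a self-contained writeup the honest work is verifying the compatibility of the hyperplane-restriction construction on $\Lambda(V)$ with the segment-shortening operation, which is a transversality statement about generic points of the Lusztig variety and is where all the real content sits. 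If that compatibility is cited from \cite{Lusztig1991QuiversPS} or \cite{LTV}, the remaining argument is the clean two-line induction sketched above.
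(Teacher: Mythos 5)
The paper does not prove this lemma at all: it is quoted verbatim from \cite[Proposition 5.2]{LTV} with no argument supplied, so to the extent that your proposal ultimately defers to that citation, it matches the paper's treatment exactly.

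That said, since you also sketch how one would prove it, let me flag where the sketch would and would not close up. The framing is right: both $\fn\mapsto\mathcal{C}_\fn^+$ and $\fn\mapsto\mathcal{C}_\fn^-$ biject $\Ms(V)_{ap}$ onto the irreducible components of $\Lambda(V)$, so the content is identifying the induced involution on $\Ms(V)_{ap}$ with $\fn\mapsto\fn^*$, and an induction via derivatives is the natural vehicle, with $\Dl(\fn)^*=\Dr(\fn^*)$ (\Cref{T:mainin}, \Cref{T:derseg}) supplying the combinatorial half of the inductive step. The gap is the geometric half: the assertion that $\Dl$ on multisegments ``corresponds to restricting to a hyperplane section,'' i.e.\ that the generic $Q^+$-part and generic $Q^-$-part of a component of $\Lambda(V)$ transform under the respective restriction correspondences exactly as $\Dl$ and $\Dr$ do on multisegments, is nowhere available in this paper and is not a transversality formality --- it is the crystal-operator description of the components of the Lusztig variety (Kashiwara--Saito), and it is essentially the entire theorem. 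Without it, the induction on $\deg(\fn)$ cannot be lifted from $V'$ back to $V$, because knowing $\mathcal{C}_{\Dl(\fn)}^+=\mathcal{C}_{\Dr(\fn^*)}^-$ in $\Lambda(V')$ does not by itself determine which component of $\Lambda(V)$ sits above it on the minus side. You identify this honestly, but as written the proposal is a reduction of the cited result to an uncited result of comparable depth, not a proof; the only self-contained route within this paper's toolkit is the one the paper itself takes, namely citing \cite{LTV}.
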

We thus can generalize \cite[Conjecture 4.2]{LMsquare} as follows.
\begin{conjecture}
    Let $\fm\in \Ms(\rho)_{ap}$ for some $\rho\in \scu$.
    Then $\Z(\fm)$ is square-irreducible if and only if $\mathcal{C}_\fm^+$ contains an open $\gl(V)$-orbit.
\end{conjecture}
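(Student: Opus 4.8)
Since the statement generalises \cite[Conjecture 4.2]{LMsquare}, which is open already over $\ql$, the realistic plan is a strategy whose unconditional yield is a list of special cases together with a reduction of the general assertion to a known hard problem. First, the bookkeeping: by \Cref{L:H2} and \Cref{L:nosi} the classification of $\square$-irreducible representations reduces to multisegments $\fm$ with cuspidal support in a single line $\ZZ[\rho]$ and $o(\rho)>1$, which is precisely the situation of the conjecture; then $V=V(\fm)$ is a graded space for $Q^+=A^+_{o(\rho)-1}$ and both sides of the asserted equivalence range over the finite set $\Ms(V)_{ap}$. Next I would reformulate the right-hand side: by the theory of module varieties over preprojective algebras (Geiss--Leclerc--Schr\"oer; Crawley-Boevey--Schr\"oer), an irreducible component $\mathcal C$ of the Lusztig nilpotent variety $\Lambda(V)$ carries a dense $\gl(V)$-orbit if and only if the generic representation of $\overline Q$ it parametrises is rigid, i.e.\ has no self-extensions. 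Thus the conjecture becomes: $\Z(\fm)$ is $\square$-irreducible $\iff$ the generic module on $\mathcal C_\fm^+$ is rigid.

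The heart of the proposal is to bridge the two sides by a monoidal-categorification dictionary. Because $o(\rho)>1$, the block of $\Rep$ with cuspidal support in $\ZZ[\rho]$ is controlled by $\He_R(n,q(\rho))$ at an $o(\rho)$-th root of unity, and the relevant combinatorics --- aperiodic multisegments as the crystal $B(\infty)$ of $\widehat{\mathfrak{sl}}_{o(\rho)}$, irreducible components of $\Lambda(V)$, and the $p$-adic Kazhdan--Lusztig geometry of \cite{ZelKL}, \cite{Chriss1997RepresentationTA}, \cite{Lusztig1991QuiversPS} --- is characteristic-free. One would upgrade this to a comparison in which $[\Z(\fm)]$ is the (dual) canonical-basis vector attached to $\mathcal C_\fm^+$ and parabolic product $\times$ corresponds to multiplication of basis vectors, feeding in the paper's own tools: \Cref{T:derseg} matches $\Dl$ with a maximal Kashiwara string on $\bigsqcup_V\Lambda(V)$ and \Cref{T:cons} (with \Cref{C:openorbit}) matches the order $\lp$ with orbit closures. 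Under this dictionary $\Z(\fm)$ is $\square$-irreducible precisely when $[\Z(\fm)]\times[\Z(\fm)]$ is a scalar multiple of a single basis vector, i.e.\ when $\mathcal C_\fm^+$ is a \emph{real} element in the sense of \cite{KKKOa}, \cite{KKKOb}; through their theory of normalised $R$-matrices --- whose $p$-adic incarnations are the intertwining operators of \Cref{S:intertwiningoperators}, with $\square$-irreducibility read off from the pole order via \Cref{L:Dsi}(4) --- this is in turn equivalent to the vanishing of a $\Lambda$-invariant, reducing everything to the identity ``the $\Lambda$-invariant of $\Z(\fm)$ with itself vanishes $\iff$ the generic $\Lambda$-module on $\mathcal C_\fm^+$ is rigid''.

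Two difficulties stand out, the first of which is the genuine obstacle and the reason the statement stays a conjecture: the last equivalence, ``real $\iff$ rigid'', is not known in general in affine type $A$ even over $\C$, so unconditionally one can only hope to verify the conjecture where that equivalence is accessible --- when $\fm$ is a single segment (the criterion recorded after \Cref{L:Dsi} on one side, and rigidity of thin uniserial modules over $\Pi(A^{(1)}_{o(\rho)-1})$ of length $<o(\rho)$ on the other), when $\mathcal C_\fm^+$ is smooth along its open stratum, when $\fm=k\fm_0$ with $\Z(\fm_0)$ already $\square$-irreducible (using \Cref{L:sipowers} and $\mathrm{Ext}^1(M_0^{\oplus k},M_0^{\oplus k})=\mathrm{Ext}^1(M_0,M_0)^{\oplus k^2}$), and in the $\ql$-cases of \cite{LMsquare} transported to $\fl$. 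The second difficulty, specific to the modular setting, is to justify that a $\C$-geometric condition is the right answer by showing that $\square$-irreducibility of $\Z(\fm)$ for $\rho\in\scu$ is unchanged under reduction mod $\ell$; I would argue this along the lines of \Cref{L:lin}, computing the pole order $r_{\Z(\fm),\Z(\fm)}$ from the integral structures of \Cref{S:intintro} and checking it agrees with that of a lift $\tfm$, or else bypass the comparison entirely by running the $R$-matrix machinery of \cite{KKKOa}, \cite{KKKOb} over quiver Hecke algebras in characteristic $\ell$, which are defined over an arbitrary base ring.
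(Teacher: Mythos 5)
The statement you are addressing is stated in the paper as a \emph{conjecture}, and the paper does not prove it either; so there is no ``paper's proof'' to measure you against, only the evidence the paper assembles. Your proposal is honest about this: you correctly isolate the genuine obstruction, namely that after translating the right-hand side into rigidity of the generic module on $\mathcal{C}_\fm^+$ (via the dense-orbit criterion for irreducible components of the nilpotent variety) and the left-hand side into ``realness'' in the sense of \cite{KKKOa}, \cite{KKKOb}, the equivalence ``real $\iff$ rigid'' is open in affine type $A$, so no unconditional proof is currently available. That is consistent with the paper leaving the statement as a conjecture.

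Where you and the paper differ is in which special cases get verified and how. The paper's supporting evidence is concrete and computational: (i) the banal case, where the statement reduces to \cite[Conjecture 4.2]{LMsquare}; (ii) single segments, argued on the geometric side by an explicit computation of the stabilizer of $\lambda(\De)$ and its (trivial) action on the one-dimensional conormal space when $l(\De)\ge o(\rho)$, combined with the known criterion that $\Z([a,b]_\rho)$ is $\square$-irreducible iff $b-a+1<o(\rho)$; and (iii) the genuinely non-banal family $\fm=[0,o(\rho)-1]_\rho+[0,0]_\rho$, where $\square$-irreducibility is established by hand through \Cref{L:sicus}, \Cref{L:helpful}, \Cref{L:helpful2} (comparing parabolic restrictions via \Cref{T:derseg} and the Geometric Lemma) and the open orbit in $\mathcal{C}_\fm^+$ is exhibited by writing down the stabilizer $G_x$ explicitly. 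Your list of accessible cases (single segments, $k\fm_0$ with $\Z(\fm_0)$ already $\square$-irreducible via \Cref{L:sipowers}, smooth strata, transported $\ql$-cases) overlaps with (i) and (ii) but misses (iii), which is the paper's main piece of evidence that the conjecture has content beyond the banal setting. Conversely, your two structural points --- the reformulation of the geometric side as rigidity over the preprojective algebra, and the observation that one must check $\square$-irreducibility interacts correctly with reduction mod $\ell$ (or else work directly with quiver Hecke algebras over $\fl$) --- are not addressed in the paper and would be the right scaffolding for any eventual proof. In short: your proposal is a reasonable research programme, correctly flagged as conditional, taking a more systematic categorification route than the paper's case-by-case verification; neither constitutes a proof of the conjecture.
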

At this point, some remarks are in order.
\begin{rem}
(1). If $\fm$ is banal, \emph{i.e.} the cuspidal support of $\fm$ does not contain $[\rho]+\ldots+[\rho v_\rho^{o(\rho)-1}]$, then above conjecture reduces to \cite[Conjecture 4.2]{LMsquare}.

(2). The conjecture is true for segments. First of all note that $\Z(\De)$, $\De=[a,b]_\rho$, is $\square$-irreducible if and only if $b-a+1<o(\rho)$. Thus if $b-a+1<o(\rho)$, we know from the banal case that $\mathcal{C}_\De^+$ admits an open orbit. On the other hand, if $b-a+1\ge o(\rho)$, we can reduce the claim by \cite[Corollay 8.7]{aizenbud2022binary} to the case $b-a+1= o(\rho)$. Indeed, if we assume $b-a+1> o(\rho)$, let in the Corollary $C_1=\mathcal{C}_{\De^-}^+$ and $C_2=\mathcal{C}_{[b,b]_\rho}^+$. In the language of the above Corollary, it is then not hard to see that 
$C_1*C_2=\mathcal{C}_\De^+$. Thus we can assume without loss of generality that $a=0,\, b=o(\rho)-1$. Then the claim that $\mathcal{C}_\De^+$ contains not an open orbit is equivalent to the claim that \[T_{\lambda(\De)}^*X_\De=\{y\in N^-(V(\De)):[\lambda(\De),y]=0\}\] does not contain an open orbit of the centralizer of $\lambda(\De)$. But in this case the centralizer is equal to $\{(a,\ldots,a)\in (C^*)^{o(\rho)}\}$, which acts trivially on $T_{\lambda(\De)}^*X_\De$. However, $T_{\lambda(\De)}^*X_\De$ is one-dimensional and hence  $\mathcal{C}_\De^+$ does not contain an open orbit.
\end{rem}
\begin{ex}
    Let us give the following family of examples of non-banal $\square$-irreducible representations. Let $\rho\in\scu(G_m)$ and consider $\fm=[0,o(\rho)-1]_\rho+[0,0]_\rho$. We claim that $\Z(\fm)$ is $\square$-irreducible and $\mathfrak{C}_\fm^+$ admits an open orbit.

    Let us first check that $\mathfrak{C}_{\fm}^+$ admits an open orbit. Note that \[\fm^*=[0,0]_\rho+\ldots+[o(\rho)-2,o(\rho)-2]_\rho+[o(\rho)-1,o(\rho)]_\rho\]
    We set \[x\coloneq ((1,0),1,\cdots,1,0)\in X_\fm^+,\, x^*\coloneq ((0,1)^T,0,\ldots,0)\in X_{\fm^*}^-.\]
    Here the $i$-th coordinate represents the map from $V_{i-1}(\fm)\ra V_i(\fm)$ respectively $V_i(\fm)\ra V_{i-1}(\fm)$.
    It suffices now to show that the stabilizer of $x$ admits an open orbit in \[T_x^*X_\fm=\{y\in N^-(V(\fm)):[x,y]=0\}.\]
    To see this, we observe that the stabilizer of $x$ equals to
    \[G_x\coloneq \{(\begin{pmatrix}
        a&0\\b&c
    \end{pmatrix},a\ldots,a)\in \mathrm{GL}_2(\C)\times (\C^*)^{o(\rho)}\}.\]
    Moreover, the set $\{(x,y)^T,0,\ldots,0\}:y\neq 0\}$ is an open subset of $T_x^*X_\fm$, which is equal to $G_x\cdot x^*$, finishing the argument.

    We continue by showing that $\Z(\fm)$ is $\square$-irreducible. To do so, we prove the following three lemmata.
\begin{lemma}\label{L:sicus}
    Let $\rho\in\scu(G_m)$ and $\fm,\fn\in \Ms(\rho)$ such that $\fn+\fm$ is aperiodic. Then $\Z(\fm)\times \Z(\fn)$ is irreducible if and only if \[[r_{(n-m,m)}(\Z(\fm+\fn))]=[r_{(n-m,m)}(\Z(\fm)\times \Z(\fn))]\]
\end{lemma}
    \begin{proof}
        One direction is trivially true by \Cref{L:sam}. For the other one, assume that the above equality holds and $\Z(\fn)\times\Z(\fn)$ is not irreducible.
        Then there exits either an irreducible quotient or subrepresentation $\pi$ of $\Z(\fn)\times \Z(\fm)$ different from $\Z(\fn+\fm)$. Since $\pi$ has the same cuspidal support as $\Z(\fn+\fm)$, the claim follows from the exactness of parabolic restriction.
    \end{proof}
    \begin{lemma}\label{L:helpful}
        Let $\ain{a}{0}{o(\rho)-2}$. Then
        \[\Z([0,a]_\rho)\times \Z([0,o(\rho)-1]_\rho+[0,0]_\rho)\] is irreducible.
    \end{lemma}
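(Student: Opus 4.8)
\textbf{Proof sketch for \Cref{L:helpful}.}

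The plan is to show that $\Z([0,a]_\rho)\times \Z([0,o(\rho)-1]_\rho+[0,0]_\rho)$ is irreducible by comparing it with the induced representation $\Z([0,a]_\rho)\times \Z([0,o(\rho)-1]_\rho)\times \rho$ and controlling the socle and cosocle using the $\square$-irreducibility results of \Cref{S:sir}. First I would note that $\Z([0,a]_\rho)$ is $\square$-irreducible since $a+1\le o(\rho)-1<o(\rho)$, and that $\Z([0,a]_\rho)$ and $\Z([0,o(\rho)-1]_\rho+[0,0]_\rho)$ have the property that the second is the socle of $\rho\times\Z([1,o(\rho)-1]_\rho+[0,0]_\rho)$-type expressions; more usefully, by \Cref{L:quot} and \Cref{T:derseg}, $\Z([0,o(\rho)-1]_\rho+[0,0]_\rho)=\soc(\rho,\Z([1,o(\rho)-1]_\rho))$ since $[1,o(\rho)-1]_\rho$ has its unique segment ending in $-1\bmod o(\rho)$, hence extendable, and shortening it by extending on the left gives back $[0,o(\rho)-1]_\rho$, leaving a spare $[0,0]_\rho$.

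The key steps, in order: (i) By \Cref{L:si1}, since $\Z([0,a]_\rho)$ is $\square$-irreducible, $\Z([0,a]_\rho)\times\Z([0,o(\rho)-1]_\rho+[0,0]_\rho)$ is both SI and CSI; so it suffices to show that its socle and cosocle coincide, equivalently that $[\soc]$ and $[\mathrm{cos}]$ are the same irreducible, equivalently (since the composition length could still be $>1$ with equal socle and cosocle) to pin down the full composition series via a cuspidal support / Hecke-algebra argument. The cleanest route is to show the length is $1$ directly. (ii) Use \Cref{T:derseg}: compute $\Dl(\Z(\fn))$ where $\fn=[0,a]_\rho+[0,o(\rho)-1]_\rho+[0,0]_\rho$, or rather identify $\soc(\rho,\cdot)$ and $\Dl(\cdot)$ combinatorially for the relevant multisegments, to reduce the claim inductively on $a$. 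For $a=0$ we need $\rho\times\Z([0,o(\rho)-1]_\rho+[0,0]_\rho)$; apply $\Dl$ with respect to $\rho$: the free $\rho$-segments of $[0,o(\rho)-1]_\rho+[0,0]_\rho$ are those ending in $0\bmod o(\rho)$, namely $[0,o(\rho)-1]_\rho$ and $[0,0]_\rho$ — but wait, one must first split off the maximal pairs via $\pa$. Here $[0,0]_\rho$ ends in $0$ and $[0,o(\rho)-1]_\rho$ ends in $0$; is there a $\rho$-segment ending in $-1\bmod o(\rho)$? No. So both $[0,o(\rho)-1]_\rho$ and $[0,0]_\rho$ are free. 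Then $\soc(\rho,[0,o(\rho)-1]_\rho+[0,0]_\rho)$ is computed from $\fm^\lor$: the extendable segments of $\fm^\lor$ correspond to free segments of $\fm$ shortened appropriately. (iii) Conclude $\soc(\rho\times\Z([0,o(\rho)-1]_\rho+[0,0]_\rho))=\Z(\soc(\rho,[0,o(\rho)-1]_\rho+[0,0]_\rho))$, identify this multisegment, and check by the Geometric Lemma plus \Cref{T:N5} that $\rho\times\Z([0,o(\rho)-1]_\rho+[0,0]_\rho)$ has length $2$ with the other constituent being $\Z([0,o(\rho)-1]_\rho+[0,1]_\rho-[\,]\,\text{-correction})$ — and then observe $\Z([0,a]_\rho)\times\Z([0,o(\rho)-1]_\rho+[0,0]_\rho)$ embeds/surjects in a way that forces irreducibility.

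Alternatively — and I think this is the cleaner plan — I would reduce everything to a Hecke-algebra / symmetric-group computation exactly as in the first direction of the proof of \Cref{T:cons}. When $o(\rho)$ is finite the relevant $\mathscr{H}_R(n,q(\rho))$-modules have an explicit central character, and for $o(\rho)>1$ one can compare with the $\ql$-picture: pick a lift $\trho$ and the corresponding lifted multisegments, use that $\Z([0,a]_{\trho})\times\Z([0,o(\rho)-1]_{\trho}+[0,0]_{\trho})$ decomposes over $\ql$ by Zelevinsky's \eqref{E:caseql}-type rules (segments $[0,a]$ and $[0,o(\rho)-1]$ are linked, but $[0,a]$ and $[0,0]$ are not necessarily, and crucially the multiset is unlinked in the relevant sense over $\ql$ where $v_\rho$ has infinite order), so the $\ql$-induced representation is irreducible; then \Cref{T:liftgood} and \Cref{T:rel}(3) give that the mod-$\ell$ reduction contains $\Z(\fn)$ with multiplicity one, and a cuspidal-support count bounds the total length. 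The main obstacle I anticipate is the second half of step (iii)/the Hecke-algebra reduction: over $\fl$ the representation $\Z([0,o(\rho)-1]_\rho+[0,0]_\rho)$ is genuinely non-$\square$-irreducible (its cuspidal support hits the full $\ZZ[\rho]$-cycle), so the naive reduction-mod-$\ell$ argument can pick up extra constituents, and one must use aperiodicity of $\fn=[0,a]_\rho+[0,o(\rho)-1]_\rho+[0,0]_\rho$ together with \Cref{T:N5} to rule these out. I expect the cleanest way around this is to invoke \Cref{L:sicus}-style bookkeeping: show the parabolic restriction $r_{(n-m,m)}$ of the induced product equals that of $\Z(\fn)$, which by exactness of $r_{(n-m,m)}$ and the already-established multiplicity-one statements forces length $1$.
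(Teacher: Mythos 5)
Your final fallback --- ``show that $r_{(n-m,m)}$ of the induced product equals $r_{(n-m,m)}(\Z(\fn))$ and conclude by exactness'' --- is exactly the paper's strategy, but in your write-up it remains an announced expectation rather than an argument, and the content that makes it work is missing. The paper sets up an induction on $a$: using \Cref{T:derseg} and \Cref{L:dervanish} one computes that $r_{(o(\rho)m+(a+1)m,m)}(\Z([0,a]_\rho+[0,o(\rho)-1]_\rho+[0,0]_\rho))$ consists of exactly two terms, $\Z([0,a-1]_\rho+[0,o(\rho)-1]_\rho+[0,0]_\rho)\otimes\rho v_\rho^{a}$ and $\Z([0,a]_\rho+[0,o(\rho)-2]_\rho+[0,0]_\rho)\otimes\rho v_\rho^{-1}$, while the Geometric Lemma gives the corresponding two terms for the product; matching them is precisely the induction hypothesis for $a-1$ (in the first slot) together with the statement for the shorter cyclic segment (in the second slot), and the base case $a=0$ is settled by \Cref{T:N2} because $[0,0]_\rho+[0,o(\rho)-2]_\rho+[0,0]_\rho$ is unlinked, so $\rho\times\Z([0,o(\rho)-2]_\rho+[0,0]_\rho)\cong\Z([0,0]_\rho+[0,o(\rho)-2]_\rho+[0,0]_\rho)$. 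None of this inductive structure, nor the verification that the restriction of $\Z(\fn)$ has no further constituents (which is where the derivative machinery is genuinely needed), appears in your proposal.

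Two of your intermediate claims are also wrong and would derail route (i)--(iii). First, $\soc(\rho,{}\cdot{})$ adds exactly one copy of $\rho$ to a multisegment, so $\soc(\rho,[1,o(\rho)-1]_\rho)=[0,o(\rho)-1]_\rho$; it cannot equal $[0,o(\rho)-1]_\rho+[0,0]_\rho$, whose degree is larger by $m$. Second, your step (iii) asserts that $\rho\times\Z([0,o(\rho)-1]_\rho+[0,0]_\rho)$ has length $2$; this contradicts the lemma you are proving, whose case $a=0$ says precisely that this representation is irreducible. The $\ql$-lifting alternative is also not salvageable as stated: the lifted multisegment is indeed unlinked, so the product is irreducible over $\ql$, but reduction mod $\ell$ of an irreducible can acquire extra constituents, and ruling them out brings you back to the same restriction computation you did not perform.
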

    \begin{proof}
        We argue by induction on $a$.
        \Cref{T:derseg} and \Cref{L:dervanish} allow us to compute
         \[[r_{(o(\rho)m+(a+1)m,m)}(\Z([0,a]_\rho+[0,o(\rho)-1]_\rho+[0,0]_\rho))]=\]\[= [\Z([0,a-1]_\rho+[0,o(\rho)-1]_\rho+[0,0]_\rho)\otimes \rho v_\rho^a]+\]\[+[\Z([0,a]_\rho+[0,o(\rho)-2]_\rho+[0,0]_\rho)\otimes \rho v_\rho^{-1}].\] On the other hand, the Geometric Lemma gives that
        \[[r_{(o(\rho)m+(a+1)m,m)}(\Z([0,a]_\rho)\times \Z([0,o(\rho)-1]_\rho+[0,0]_\rho))]=\]\[= [\Z([0,a-1]_\rho)\times\Z([0,o(\rho)-1]_\rho+[0,0]_\rho)\otimes \rho v_\rho^a]+\]\[+[\Z([0,a]_\rho) \times \Z([0,o(\rho)-2]_\rho+[0,0]_\rho)\otimes \rho v_\rho^{-1}].\] If $a=0$, \Cref{T:N2} implies \[\rho\times \Z([0,o(\rho)-2]_\rho+[0,0]_\rho)\cong \rho^2\times \Z([0,o(\rho)-2]_\rho)\cong \Z([0,0]_\rho+[0,o(\rho)-2]_\rho+[0,0]_\rho)\]
and hence the parabolic restrictions of the two representations agree. 
If $a>0$, the induction hypothesis and \Cref{T:N2} imply that the parabolic restrictions of the two representations agree, hence the claim follows by \Cref{L:sicus}.
    \end{proof}
    \begin{lemma}\label{L:helpful2.5}
        The representation
        \[\rho\times \rho\times \Z([0,o(\rho)-1]_\rho+[0,0]_\rho)\] is irreducible.
    \end{lemma}
    \begin{proof}
        We proceed analogously to the proof of the above lemma. Namely, the Geometric Lemma and \Cref{C:der} show that
        \[[r_{(o(\rho)m+2m,m)}(\rho\times \rho\times \Z([0,o(\rho)-1]_\rho+[0,0]_\rho))]=\]
        \[=2[\rho\times \Z([0,o(\rho)-1]_\rho+[0,0]_\rho)\otimes\rho]+[\rho\times \rho\times \Z([0,o(\rho)-2]_\rho+[0,0]_\rho)]\otimes\rho v_\rho^{-1}].\]
        On the other hand \Cref{L:dervanish} shows that 
        \[[r_{(o(\rho)m+2m,m)} (\Z([0,o(\rho)-1]_\rho+3[0,0]_\rho))]\ge \]
        \[[\Z([0,o(\rho)-1]_\rho+2[0,0]_\rho))\otimes \rho]+[\Z([0,o(\rho)-2]_\rho+3[0,0]_\rho))\otimes \rho v_\rho^{-1}].\]
    By \Cref{L:helpful} \[\Z([0,o(\rho)-1]_\rho+2[0,0]_\rho)\cong \rho\times \Z([0,o(\rho)-1]_\rho+[0,0]_\rho)\] and by \Cref{T:N2} \[\Z([0,o(\rho)-2]_\rho+3[0,0]_\rho))\cong \Z([0,o(\rho)-2]_\rho)\times \rho^3\cong \rho\times \rho\times \Z([0,o(\rho)-2]_\rho+[0,0]_\rho).\]
        Thus if $\pi$ is another subquotient than $\Z([0,o(\rho)-1]_\rho+3[0,0]_\rho)$ of \[\rho\times \rho\times \Z([0,o(\rho)-1]_\rho+[0,0]_\rho),\] it is either cuspidal, which cannot happen, or Frobenius reciprocity implies that $\pi$ is a subrepresentation of 
\[\Z([0,o(\rho)-1]_\rho+2[0,0]_\rho))\times \rho, \]
        implying $\pi\cong \Z([0,o(\rho)-1]_\rho+3[0,0]_\rho)$ by the theory of derivatives. But \[\Z([0,o(\rho)-1]_\rho+3[0,0]_\rho)\] appears with multiplicity $1$ in \[\rho\times \rho\times \Z([0,o(\rho)-1]_\rho+[0,0]_\rho)\] by \Cref{L:sam}, thus proving the claim.
    \end{proof}
\begin{lemma}\label{L:helpful2}
            Let $\ain{b}{1}{o(\rho)-2}$. Then
        \[\rho\times \Z([0,b]_\rho)\times \Z([0,o(\rho)-1]_\rho+[0,0]_\rho)\] is irreducible.
\end{lemma}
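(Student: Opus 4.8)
\textbf{Proof proposal for \Cref{L:helpful2}.}

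The plan is to combine \Cref{L:helpful} with \Cref{L:sicus} and an induction on $b$, exactly in the spirit of the proof of \Cref{L:helpful}. First I would note that since $\Z([0,o(\rho)-1]_\rho+[0,0]_\rho)$ is $\square$-irreducible (which is one of the things the example is in the process of establishing, so I would instead phrase the argument so that it only uses \Cref{L:helpful} together with the $\square$-irreducibility of $\rho$, i.e. $o(\rho)>1$), the representation $\rho\times \Z([0,b]_\rho)$ is irreducible by \Cref{T:N2} — the segments $[0,0]_\rho$ and $[0,b]_\rho$ are linked only when $b+1=o(\rho)$, and here $b\le o(\rho)-2$, so in fact $\rho\times\Z([0,b]_\rho)\cong \Z([0,b]_\rho)\times\rho\cong \Z([0,b]_\rho+[0,0]_\rho)$. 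Hence the representation in question is $\Z([0,b]_\rho+[0,0]_\rho)\times\Z([0,o(\rho)-1]_\rho+[0,0]_\rho)$, or alternatively $\rho\times$ the irreducible representation of \Cref{L:helpful}.

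The main step is then to compute both $r_{(N-m,m)}(\Z(\fn+\fn'))$ and $r_{(N-m,m)}(\rho\times\Z([0,b]_\rho)\times\Z([0,o(\rho)-1]_\rho+[0,0]_\rho))$ where $\fn+\fn'$ is the multisegment $[0,0]_\rho+[0,b]_\rho+[0,o(\rho)-1]_\rho+[0,0]_\rho$ governing the socle, and to check they agree; then \Cref{L:sicus}-type reasoning (more precisely the observation that a $\square$-irreducible-by-$\square$-irreducible product with matching parabolic reduction has no extra constituents, using that $\rho\times\Z([0,b]_\rho)$ is $\square$-irreducible by \Cref{L:sipowers}-style arguments, or by reducing mod $\ell$ from a lift where it is clear) forces irreducibility. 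Concretely: by \Cref{T:derseg} and \Cref{L:dervanish}, $\Dr$ with respect to $\rho v_\rho^0$ and $\rho v_\rho^{-1}$ of the left-hand multisegment representation can be read off combinatorially, and on the right-hand side the Geometric Lemma gives the matching decomposition $[\,\cdot\otimes \rho\,] + [\,\cdot\otimes\rho v_\rho^{-1}\,]$, where each summand is, by the induction hypothesis (on $b$) and by \Cref{L:helpful}, again irreducible and equal to the corresponding summand for $\Z(\fn+\fn')$. The base case $b=0$ reduces, via \Cref{T:N2} (since $\rho^2\times\Z([0,o(\rho)-1]_\rho)$ and its rearrangements are controlled), to an instance of \Cref{L:helpful} with $a$ small, or is handled directly because $\rho\times\rho\times\Z([0,o(\rho)-1]_\rho+[0,0]_\rho)$ has all its segments pairwise unlinked except for the forced pair.

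I expect the main obstacle to be bookkeeping: making sure that the two candidate derivatives ($\Dr_{\rho}$ and $\Dr_{\rho v_\rho^{-1}}$) of the left-hand side genuinely exhaust its parabolic reduction in the $(N-m,m)$-layer and that no third cuspidal $\rho v_\rho^a$ can occur — this uses aperiodicity of the multisegments involved ($o(\rho)>1$ ensures $[0,o(\rho)-1]_\rho+[0,0]_\rho$ and its relatives are aperiodic) together with \Cref{C:der} to pin down that the derivative representations appear with multiplicity exactly one. A secondary subtlety is that $\Z([0,o(\rho)-1]_\rho+[0,0]_\rho)$ and the representations obtained from it by the two derivatives might a priori fail to be of the "same shape" needed to invoke \Cref{L:helpful} for the inductive summand; I would resolve this by checking that $\Dr_{\rho v_\rho^{-1}}$ applied to $[0,b]_\rho+[0,o(\rho)-1]_\rho+[0,0]_\rho+[0,0]_\rho$ yields precisely $[0,b]_\rho+[0,o(\rho)-2]_\rho+[0,0]_\rho+[0,0]_\rho$ (shortening the long free segment on the right), which is again of the form covered by the induction hypothesis after noting $o(\rho)-2\le o(\rho)-2$, while $\Dr_{\rho v_\rho^0}$ removes one copy of $[0,0]_\rho$, landing us back in \Cref{L:helpful}. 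Once these combinatorial identifications are in place, the equality of parabolic reductions is immediate and the irreducibility follows.
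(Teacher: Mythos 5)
Your overall strategy is the paper's: induct on $b$, compute the $(N-m,m)$-Jacquet module of both the induced representation (via the Geometric Lemma) and of $\Z([0,0]_\rho+[0,b]_\rho+[0,o(\rho)-1]_\rho+[0,0]_\rho)$ (via \Cref{T:derseg} and \Cref{L:dervanish}), match the summands, and conclude irreducibility by the \Cref{L:sicus}-type argument. Your preliminary observation that $\rho\times\Z([0,b]_\rho)\cong\Z([0,b]_\rho+[0,0]_\rho)$ is correct but not needed.

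There is, however, a concrete error in the key computation. You assert that only the two cuspidal types $\rho$ and $\rho v_\rho^{-1}$ occur at the right edge of the Jacquet module and that one should check "no third cuspidal $\rho v_\rho^a$ can occur." That check fails: for $1\le b\le o(\rho)-2$ the segment $[0,b]_\rho$ contributes a third summand ending in $\otimes\,\rho v_\rho^{b}$, namely $\rho\times\Z([0,b-1]_\rho)\times\Z([0,o(\rho)-1]_\rho+[0,0]_\rho)\otimes\rho v_\rho^{b}$ on the induced side and $\Z([0,b-1]_\rho+[0,o(\rho)-1]_\rho+[0,0]_\rho+[0,0]_\rho)\otimes\rho v_\rho^{b}$ on the $\Z$-side. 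This is precisely the summand for which the induction hypothesis on $b$ is needed; in your accounting the induction hypothesis has nothing to apply to, since the $\otimes\,\rho$ term is handled by \Cref{L:helpful} (with $a=b$) and the $\otimes\,\rho v_\rho^{-1}$ term, i.e. $\rho\times\Z([0,b]_\rho)\times\Z([0,o(\rho)-2]_\rho+[0,0]_\rho)$, is irreducible by \Cref{T:N2} because the four segments $[0,0]_\rho,[0,0]_\rho,[0,b]_\rho,[0,o(\rho)-2]_\rho$ are pairwise unlinked — not, as you write, "again of the form covered by the induction hypothesis" (it contains no segment of length $o(\rho)$). Once you restore the missing $\otimes\,\rho v_\rho^{b}$ term, assign the induction hypothesis to it, and handle the $\otimes\,\rho v_\rho^{-1}$ term by \Cref{T:N2}, the three summands match and the argument closes exactly as in the paper.
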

\begin{proof}
    The proof proceeds analogously to the one of \Cref{L:helpful} by showing inductively on $b$ that \[[r_{(o(\rho)m+(b+2)m,m)}(\rho\times \Z([0,b]_\rho)\times \Z([0,o(\rho)-1]_\rho+[0,0]_\rho))]=\]
    \[=[r_{(o(\rho)m+(b+2)m,m)}(\Z([0,0]_\rho+[0,b]_\rho+[0,o(\rho)-1]_\rho+[0,0]_\rho))].\] The base case is \Cref{L:helpful2.5}.
For $b>0$
\[[r_{(o(\rho)m+(b+2)m,m)}(\rho\times \Z([0,b]_\rho)\times \Z([0,o(\rho)-1]_\rho+[0,0]_\rho))]=\]
\[=[\Z([0,b]_\rho)\times \Z([0,o(\rho)-1]_\rho+[0,0]_\rho))\otimes \rho]+\]\[+[\rho\times \Z([0,b-1]_\rho)\times \Z([0,o(\rho)-1]_\rho+[0,0]_\rho)\otimes \rho v^b]+\]
\[+[\rho\times \Z([0,b]_\rho)\times \Z([0,o(\rho)-2]_\rho+[0,0]_\rho)\otimes \rho v_\rho^{-1}]\]
and 
\[[r_{(o(\rho)m+(b+2)m,m)}(\Z([0,0]_\rho+[0,b]_\rho+[0,o(\rho)-1]_\rho+[0,0]_\rho))]=\]
\[=[\Z([0,b]_\rho+[0,o(\rho)-1]_\rho+[0,0]_\rho))\otimes \rho]+\]\[+[\Z([0,b-1]_\rho+[0,o(\rho)-1]_\rho+[0,0]_\rho+[0,0]_\rho)\otimes \rho v^b]+\]
\[+[\Z([0,b]_\rho+[0,0]_\rho+[0,o(\rho)-2]_\rho+[0,0]_\rho)\otimes \rho v_\rho^{-1}].\]
We use the induction hypothesis, \Cref{T:N2} and \Cref{L:helpful} to show that the parabolic restrictions agree. We are now done by \Cref{L:sicus}.
\end{proof}
We now come finally to the $\square$-irreducibility of $\Z(\fm)$, $\fm=[0,o(\rho)-1]+[0,0]_\rho$.
Note that by \Cref{T:derseg} and \Cref{L:dervanish} \[r_{(o(\rho)m,m)}(\Z(\fm))=\Z([0,o(\rho)-2]+[0,0]_\rho)\otimes \rho v_\rho^{-1}.\] Thus the Geometric Lemma and \Cref{L:helpful2} imply
that
\[[r_{(2o(\rho)m+m,m)}(\Z(\fm)\times\Z(\fm))]=2[\Z([0,o(\rho)-2]+[0,0]_\rho+\fm)\otimes \rho v_\rho^{-1}].\]
If $\Z(\fm)\times \Z(\fm)$ is not irreducible, it has an irreducible subquotient $\pi$ different from $\Z(\fm+\fm)$ with the same cuspidal support. But by above computation and Frobenius reciprocity, we conclude that $\mathcal{D}_{r,\rho v_\rho^{-1}}(\pi)\cong \mathcal{D}_{r,\rho v_\rho^{-1}}(\Z(\fm+\fm))\neq 0$ and hence $\pi\cong \Z(\fm+\fm)$. Since $\Z(\fm+\fm)$ appears only with multiplicity $1$ in $\Z(\fm)\times \Z(\fm)$, the claim follows.
\end{ex}
\bibliographystyle{abbrv}
\bibliography{References.bib}
\end{document}